


\documentclass[a4paper,10pt,reqno]{amsart}
\numberwithin{equation}{section}
 \parskip0.5mm
\usepackage{latexsym,amssymb}
\usepackage{hyperref}
\usepackage{amsmath,amsthm}
\usepackage{amsfonts}
\usepackage[american]{babel}
 \usepackage[T1]{fontenc}
 \usepackage[utf8]{inputenc}
\usepackage{mathrsfs}
\usepackage{psfrag}
\usepackage{epsfig,inputenc}
\usepackage{graphpap,latexsym,epsf}
\usepackage{color}
\usepackage{amssymb,eucal,paralist,color,enumerate}
\usepackage{graphicx}

\usepackage{tgheros}

\usepackage[eulergreek]{sansmath}


\setlength{\voffset}{-.7truein} \setlength{\textheight}{9.8truein}
\setlength{\textwidth}{6.05truein} \setlength{\hoffset}{-.7truein}

\addtolength{\hoffset}{-0,5cm} \addtolength{\textwidth}{1,5cm}

\newcommand{\bbM}{\mathbb{M}}


\newcommand{\R}{\mathbb{R}}
\newcommand{\N}{\mathbb{N}}
\newcommand{\C}{\mathbb{C}}

\newcommand{\Om}{\Omega}

\mathchardef\emptyset="001F

\numberwithin{equation}{section}

\newtheorem{theorem}{Theorem}[section]

\newtheorem{lemma}[theorem]{Lemma}
\newtheorem{remark}[theorem]{Remark}

\newtheorem{definition}[theorem]{Definition}
\newtheorem{proposition}[theorem]{Proposition}
\newtheorem{problem}[theorem]{Problem}

\newtheorem{notation}[theorem]{Notation}


\newcommand{\disp}{\displaystyle}

\newcommand{\eps}{\varepsilon}

\newcommand{\weakto}{\rightharpoonup} 

\newcommand{\aein}{\text{a.e.\ in }}



\newcommand{\down}{\downarrow}
\newcommand{\weaksto}{\overset{*}{\rightharpoonup}}

\newcommand{\AC}{\mathrm{AC}}

\newcommand{\dualoperator}

  
 \def\calE{{\mathcal E}} \def\calF{{\mathcal F}}
 \def\calH{{\mathcal H}} \def\calI{{\mathcal I}}
 \def\calK{{\mathcal K}} \def\calL{{\mathcal L}}
\def\calM{{\mathcal M}} \def\calN{{\mathcal N}} 
\def\calP{{\mathcal P}} \def\calQ{{\mathcal Q}} \def\calR{{\mathcal R}}
\def\calS{{\mathcal S}}  \def\calU{{\mathcal U}}
\def\calV{{\mathcal V}} \def\calW{{\mathcal W}} 
 

 \def\rmn{{\mathrm n}}

  \def\rmC{{\mathrm C}}
\def\rmD{{\mathrm D}} \def\rmE{{\mathrm E}}

\def\rmM{{\mathrm M}}  
  \def\rmR{{\mathrm R}}

\def\FG{\mathbf}

\def\bfD{{\FG D}}

 \def\bfQ{{\FG Q}}


\def\dd{\;\!\mathrm{d}} 

\newcommand{\pairing}[4]{ \sideset{_{ #1 }}{_{ #2 }}  {\mathop{\langle #3 , #4
\rangle}}}

\newcommand{\nchi}{{\raise.2ex\hbox{$\chi$}}}

\definecolor{ddcyan}{rgb}{0,0.1,0.9}
\definecolor{ddmagenta}{rgb}{0.8,0,0.8}
\definecolor{dmagenta}{rgb}{0.9,0,0.7}
\definecolor{orange}{rgb}{0.6,0.2,0}
\definecolor{vgreen}{rgb}{0.1,0.5,0.2}
\definecolor{dred}{rgb}{.8,0,0}
\definecolor{Turk}{rgb}{0,0.5,0.6}

\newcommand{\piecewiseConstant}[2]{\overline{#1}_{\kern-1pt#2}}
\newcommand{\pwc}{\piecewiseConstant}

\newcommand{\upiecewiseConstant}[2]{\underline{#1}_{\kern-1pt#2}}

\newcommand{\upwc}{\upiecewiseConstant}
\newcommand{\piecewiseLinear}[2]{{#1}_{\kern-1pt#2}}
\newcommand{\pwl}{\piecewiseLinear}
\newcommand{\pwwll}[2]{\widehat{#1}_{\kern-1pt#2}}

\newcommand{\piecewiseVariational}[2]{\tilde{#1}_{\kern-1pt#2}}

\newcommand{\ds}[3]{{#1}_{#2}^{#3}}
\newcommand{\dsd}[3]{\dot{#1}_{#2}^{#3}}

\newcommand{\DDD}[3]{\begin{array}[t]{c}#1\vspace*{-1em}\\_{#2}\vspace*{-.5em}\\_{#3}\end{array}}
\newcommand{\DDDn}[2]{\begin{array}[t]{c}#1\vspace*{-1em}\\_{#2}\end{array}}
\newcommand{\ddd}[3]{\DDD{\begin{array}[t]{c}\underbrace{#1}\vspace*{.6em}\end{array}}{\text{\footnotesize #2}}{\text{\footnotesize #3}}}
\newcommand{\dddn}[2]{\DDDn{\begin{array}[t]{c}\underbrace{#1}\vspace*{.6em}\end{array}}{\text{\footnotesize #2}}}

\newcommand{\foraa}{\text{for a.a.}}





\newcommand{\norm}[2]{\| #1\|_{#2}}

\newcommand{\BV}{\mathrm{BV}}
\newcommand{\BD}{\mathrm{BD}}

\newcommand{\Dir}{\mathrm{Dir}}
\newcommand{\Neu}{\mathrm{Neu}}

 \def\trait #1 #2 #3 {\vrule width #1pt height #2pt depth #3pt}

 \def\fin{\hfill
         \trait .3 5 0
         \trait 5 .3 0
         \kern-5pt
         \trait 5 5 -4.7
         \trait 0.3 5 0
 \medskip}
 
\newcommand{\QED}{\mbox{}\hfill\rule{5pt}{5pt}\medskip\par}

\newcommand{\bbC}{\mathbb{C}}

\newcommand{\bbD}{\mathbb{D}}

\newcommand{\mt}{\bbM}

\newcommand{\sym}{\mathrm{sym}}
\newcommand{\dev}{\mathrm{D}}

\newcommand{\dip}[2]{\mathrm{H}(#1,#2)}
\newcommand{\dipname}{\mathrm{H}}

\newcommand{\did}[1]{\mathrm{R}(#1)}

\newcommand{\didname}{\mathrm{R}}

\newcommand{\Gdir}{\Gamma_{\Dir}}
\newcommand{\Gneu}{\Gamma_{\Neu}}

\newcommand{\As}{A_{\mathrm{m}}}
\newcommand{\Hs}{H^{\mathrm{m}}}
\newcommand{\ass}{a_{\mathrm{m}}}




\newcommand{\calHt}[1]{\mathcal{H}^{\mathrm{tot}}_{#1}}
\newcommand{\calRt}{\mathcal{R}^{\mathrm{tot}}}

\newcommand{\RRR}{\color{black}}

\definecolor{violet}{rgb}{0.5,0.3,0.8}

\newcommand{\RRRN}{\color{black}}
\newcommand{\RCZ}{\color{black}}
\newcommand{\GGG}{\color{black}} 





\newcommand{\disv}[2]{\calV_{#1,#2}}
\newcommand{\dish}[2]{\calH_{#1,#2}}
\newcommand{\disvo}[1]{\calV_{#1}}
\newcommand{\disho}[1]{\calH_{#1}}
\newcommand{\densh}[2]{H_{#1,#2}}
\newcommand{\psin}[1]{\Psi_{#1}}
\newcommand{\psie}[2]{\Psi_{#1,#2}}


\newcommand{\Mnn}{{\mathbb{M}^{n\times n}_{\mathrm{sym}}}}
\newcommand{\MD}{{\mathbb M}^{n{\times}n}_{\mathrm{D}}}
\newcommand{\Sym}{\mathrm{Sym}}
\newcommand{\Lin}{\mathrm{Lin}}
\newcommand{\Mb}{\mathrm{M}_{\mathrm{b}}}
\newcommand{\MbD}{{\Mb(\Omega \cup \Gdir; \MD)}}
\newcommand{\Lnn}{{L^2(\Omega; \Mnn)}}
\newcommand{\Linftyn}{{L^\infty(\Omega; \MD)}}

\newcommand{\men}{^\mu_{\varepsilon,\nu}}
\newcommand{\MredVito}{ \mathcal{N}_{\varepsilon,\nu}^{\mu, \mathrm{red}}}
\newcommand{\MredVitok}{ \mathcal{N}_{\varepsilon_k,\nu}^{\mu, \mathrm{red}}}
\newcommand{\MredVitokk}{ \mathcal{N}_{\varepsilon_k,\nu_k}^{\mu, \mathrm{red}}}
\newcommand{\MredVitokkk}{ \mathcal{N}_{\varepsilon_k,\nu_k}^{\mu_k, \mathrm{red}}}
\newcommand{\MVito}{\mathcal{N}_{\varepsilon,\nu}^\mu}
\newcommand{\MeVitoname}{ \calN_{\eps,\nu}^\mu}

\newcommand{\DVito}{\mathcal{D}}
\newcommand{\DVitos}{\mathcal{D}_\nu^{*,\mu}}
\newcommand{\DVitosk}{\mathcal{D}_{\nu_k}^{*,\mu}}
\newcommand{\DVitoskk}{\mathcal{D}_{\nu_k}^{*,\mu_k}}

\newcommand{\DVitosred}{\mathcal{D}^{*,\mu}}
\newcommand{\DVitosredzero}{\mathcal{D}^{*,0}}

\newcommand{\diver}{\mathrm{div}}
\newcommand{\Diver}{\widehat{\mathrm{Div}}}
\newcommand{\hn}{\mathscr{H}^{n-1}}

\newcommand{\ol}{\overline}
\newcommand{\ole}{\overline{e}_\tau}
\newcommand{\olu}{\overline{u}_\tau}
\newcommand{\olp}{\overline{p}_\tau}
\newcommand{\olz}{\overline{z}_\tau}


\newcommand{\dutau}{{u}'_\tau}

\newcommand{\dpt}{{p}'_\tau}
\newcommand{\dzt}{{{z}'_\tau}}
\newcommand{\tk}{_\tau^k}
\newcommand{\tm}{_\tau^{k-1}}

\newcommand{\sig}[1]{\mathrm{E}(#1)}

\newcommand{\sft}{\mathsf{t}}
\newcommand{\sfu}{\mathsf{u}}
\newcommand{\sfz}{\mathsf{z}}
\newcommand{\sfp}{\mathsf{p}}
\newcommand{\sfe}{\mathsf{e}}
\newcommand{\serifsigma}{{\sansmath \sigma}}
\newcommand{\sfq}{\mathsf{q}}
\newcommand{\Me}[4]{ \calM_{\eps,\nu}^\mu(#1,#2,#3,#4)}

\newcommand{\Mename}{ \calM_{\eps,\nu}^\mu}

\newcommand{\Mredname}{\calM_{\eps,\nu}^{\mu, \mathrm{red}}}

\newcommand{\Mli}[4]{ \calM_{0,\nu}^\mu(#1,#2,#3,#4)}

\newcommand{\Mliname}{ \calM_{0,\nu}^\mu }

\newcommand{\Mliredname}{\calM_{0,\nu}^{\mu,\mathrm{red}}}
\newcommand{\oMli}[4]{\calM_{0,0}^0(#1,#2,#3,#4)}

\newcommand{\oMliname}{{\calM}_{0,0}^0}

\newcommand{\oMliredname}{{\calM}_{0,0}^{0,\mathrm{red}}}

\newcommand{\Mlizero}[4]{ \calM_{0,0}^\mu(#1,#2,#3,#4)}
\newcommand{\Mlieczero}[8]{\calM_{0,0}^\mu(#1,#2,#3,#4,#5,#6,#7, #8)}
\newcommand{\Mlinamezero}{\calM_{0,0}^\mu}
\newcommand{\Mliredzero}[8]{\calM_{0,0}^{\mu,\mathrm{red}}(#1,#2,#3,#4,#5,#6,#7, #8)}
\newcommand{\Mlirednamezero}{\calM_{0,0}^{\mu,\mathrm{red}}}
\newcommand{\Hsm}{H_{-}^{\mathrm{m}}(\Omega)}
\newcommand{\tilded}{\widetilde{d}}
\newcommand{\dLtwo}{d_{L^2}}
\newcommand{\enen}[1]{\calE_{#1}}
\newcommand{\newmu}{{ \mu}}

\newcommand{\Qpp}{\bfQ_{\scriptstyle \tiny \mathrm{PP}}}
\newcommand{\Hpp}{\mathcal{H}_{\scriptstyle \tiny \mathrm{PP}}}

\begin{document}
\title[Balanced Viscosity solutions to a coupled system for damage and plasticity]{Balanced Viscosity solutions to a rate-independent coupled elasto-plastic damage system}

\author{Vito Crismale \and Riccarda Rossi}

\address{V.\ Crismale, CMAP, \'Ecole Polytechnique, 91128 Palaiseau Cedex, France}
\email{vito.crismale@polytechnique.edu}

\address{R.\ Rossi, DIMI, Universit\`a degli studi di Brescia,
via Branze 38, 25133 Brescia, Italy}
\email{riccarda.rossi\,@\,unibs.it}

\thanks{The authors 
 are very grateful to 
 Giuliano Lazzaroni for several interesting discussions and suggestions.  V.C.\ is supported by the Marie Sk\l odowska-Curie Standard European Fellowship No.\ 793018. He also acknowledge the financial support from the Investissement d’avenir project, reference ANR-11-LABX-0056-LMH, LabEx LMH, and from the Laboratory Ypatia. R.R.\ has been partially supported by the  Gruppo Nazionale per  l'Analisi Matematica, la
  Probabilit\`a  e le loro Applicazioni (GNAMPA)
of the Istituto Nazionale di Alta Matematica (INdAM)}



\begin{abstract}
A rate-independent model coupling small strain associative elasto-plasticity and damage is studied via a \emph{vanishing-viscosity} analysis with respect to all the variables describing the system.
This extends the analysis performed  for the same system in \cite{Crismale-Lazzaroni}, where a  vanishing-viscosity regularization involving only the damage variable was set forth.
In the present work, an additional approximation featuring vanishing plastic hardening is introduced  in order to deal with the vanishing viscosity in the plastic variable. 
Different regimes are considered, leading to  different notions of Balanced Viscosity solutions for the perfectly plastic damage system, and for its version  with hardening.
\end{abstract}
\maketitle
\noindent
\textbf{2010 Mathematics Subject Classification:}  
35A15, 
34A60, 
35Q74, 
74C05, 
\par
\noindent
\textbf{Key words and phrases:} rate-independent systems, variational models, vanishing viscosity, $\BV$ solutions, damage, elasto-plasticity

\medskip

\section{Introduction}
In this paper we address the analysis of a rate-independent system coupling small-strain associative elasto-plasticity and damage. We construct weak solutions for the related initial-boundary value problem via a 
 \emph{vanishing-viscosity}  regularization  that affects  all the  variables describing the system.
Before entering into the details of this procedure, let us briefly illustrate  the rate-independent model we are interested in.
\par
In a time interval $[0,T]$, for a bounded open Lipschitz domain $\Omega\subset \R^n$, $n\geq 2$, and  time-dependent volume  and surface forces $f$ and $g$,  we consider a PDE system coupling the evolution of the  \emph{displacement} 
$u: (0,T) \times \Omega \to \R^n$, of the \emph{elastic} and \emph{plastic strains} $e: (0,T)\times \Omega \to \Mnn$ and $p: (0,T)\times \Omega \to \MD$, and  of a \emph{damage variable} $z: (0,T)\times \Omega \to  [0,1]$ that assesses the soundness of the material: for $z(t,x)=1$ ($z(t,x)=0$, respectively) the material is in the undamaged (fully damaged, resp.) state,  at the time $t\in (0,T)$ and ``locally'' around the point $x\in \Omega$. In fact, the PDE system consists of 
 \begin{subequations}
 \label{RIS-intro}
 \begin{itemize}
\item[-] the momentum balance
\begin{align}
\label{mom-balance-intro}
 - \mathrm{div}\,\sigma = f  \quad \text{ in } \Omega \times (0,T)\,, \qquad \quad  \sigma \rmn =g \text{ on } \Gneu \times (0,T), 
 \end{align}
(with $\Gneu $ the Neumann part of the boundary $\partial \Omega$), where  the \emph{stress} tensor is  given by 
\begin{equation}
\label{stress-intro}
 \sigma =  \mathbb{C}(z)e  
 \quad   \text{ in } \Omega \times (0,T), 
   \end{equation}
 and the kinematic admissibility condition for the \emph{strain} $ \rmE(u)  = \frac{\nabla u + \nabla u^T}{2}$ reads
 \begin{equation}
 \label{kam-intro}
    \rmE(u)  = e+p   \quad  \text{ in } \Omega \times (0,T)\,;
 \end{equation} 
 \item[-] the flow rule for the damage variable $z$
 \begin{align}
\label{flow-rule-dam-intro}
\partial\did{\dot{z}}  + \As (z)+ W'(z) \ni - \tfrac12\bbC'(z)e : e  \quad  \text{ in } \Omega \times (0,T),
\end{align}
where, above and in \eqref{flow-rule-plast-intro}, the symbol $\partial$ denotes the convex analysis subdifferential of  the density of dissipation potential
\[
\didname:\R \to [0,+\infty]  \  \text{ defined by } \   \did{\eta}:= \left\{ \begin{array}{ll}
|\eta| & \text{ if } \eta\leq 0,
\\
+\infty & \text{ otherwise},
\end{array}
\right.
\]
encompassing the unidirectionality in the evolution of damage,  $\As$ is 
 the  $\mathrm{m}$-Laplacian operator, with $\mathrm{m}>\tfrac d2$,
 and $W$ is a suitable nonlinear, possibly nonsmooth, function; 
  \item[-] the flow rule for the plastic tensor 
 \begin{align}
&
\label{flow-rule-plast-intro}
\partial_{\dot{p}}  \dip{z}{\dot{p}}  \ni \sigma_{\mathrm{D}}  \quad \text{ in } \Omega \times (0,T),
\end{align}
with $\sigma_\dev$ the deviatoric part of the stress tensor $\sigma$ and $\dipname(z, \cdot)$ the density of plastic dissipation potential; $\dipname(z, \cdot)$ is  the support function of the \emph{constraint set} $K(z)$.
The PDE system is supplemented with initial conditions and 
the boundary conditions 
\begin{equation}
 \label{viscous-bound-cond}
 u=w  \text{ on } \Gdir \times (0,T), \quad \qquad \partial_n z =0 \text{ on } \partial\Omega \times (0,T), 
  \end{equation}
with  $\Gdir$  the Dirichlet part of the boundary $\partial \Omega$.
 \end{itemize}
 \end{subequations}
Let us highlight that the damage variable  $z$ influences both the Hooke tensor $\bbC$, which determines the elastic stiffness of the material, and the constraint set $K$ for the deviatoric part of the stress, which is such that the material undergoes plastic deformations only if $\sigma_\dev$ reaches the boundary $\partial K$. By our choice of the dissipation potential $\rmR$, the variable $z$ is forced to decrease in time: it is then usual to assume that $[0,1]\ni z \mapsto\C(z)$ is non-increasing and that $[0,1]\ni z \mapsto K(z)$ is non-decreasing, with respect to the natural ordering for positive definite tensors and to the  inclusion of sets (cf.\ Section~\ref{s:2} for the precise assumptions).

The elasto-plastic damage model \eqref{RIS-intro}, which   reduces to the Prandtl-Reuss model for  perfect plasticity (cf.\ e.g.\ \cite{DMDSMo06QEPL, Sol09, FraGia2012, Sol14}) if no dependence on damage is assumed, was first proposed and studied  in \cite{AMV14, AMV15}. Subsequently,
in  \cite{Crismale} (with refinements in \cite{CO17}, see also \cite{CO19}), the existence of Energetic solutions  \`a la Mielke-Theil (cf.\ \cite{MieThe99MMRI, MieThe04RIHM}) was proved. We recall that this weak solvability concept for rate-independent processes,  
also known as \emph{quasistatic evolution} (cf.\ e.g.\ \cite{DM-Toa2002}),
consists of \emph{(i)} a   \emph{global stability} condition, which prescribes that  at each process  time the current configuration minimizes the sum of the  total internal energy and  the dissipation potential; \emph{(ii)} an Energy-Dissipation balance featuring the variation of the   internal energy between the current and  the initial times, the total dissipated energy, and the work of the external loadings.
Thus,  the Energetic formulation is  derivative-free and hence very flexible and suitable for limit passage procedures. 
 In the framework of Energetic-type solution concepts,
the study of models coupling damage and plasticity indeed
seems to have attracted some attention over the last years: in this respect, we may e.g.\ quote \cite{Cri17} for a damage model coupled with  strain-gradient plasticity, as well as  \cite{BonRocRosTho16, Rossi-Thomas-SIAM, RouVal17} for plasticity with hardening, \cite{Roub-Valdman2016} accounting also for damage healing, \cite{MelScaZem19} for finite-strain plasticity with damage,  and \cite{DavRouSte19} for perfect plasticity and damage in viscoelastic solids in a dynamical setting.
\par
System  \eqref{RIS-intro} has however  been  analyzed also from a perspective different from that of Energetic solutions. 
Indeed, despite their manifold advantages, Energetic solutions have a catch: when the energy functional driving the system is nonconvex, Energetic solutions as functions of time may have  ``too early''  and  ``too long''  jumps  between energy wells,
cf.\ e.g.\ 
\cite[Ex.\ 6.3]{KnMiZa07?ILMC},
 \cite[Ex.\ 6.1]{MRS09}, and the full characterization of Energetic solutions to $1$-dimensional rate-independent systems from \cite{RosSav12}. Essentially,
this
due to  the rigidity of the global stability condition that involves  the global, rather than the local, energy landscape. 
 These considerations have motivated the quest of alternative weak solvability  notions for rate-independent systems. In this paper,
 we focus on 
notions obtained by a 
 \emph{vanishing-viscosity} approximation  of the original rate-independent process.
\par
The vanishing-viscosity approach stems from the idea that 
rate-independent processes  originate in the limit of systems governed by  
two  time scales: the  inner scale of the system and the time scale of the external loadings.
The latter scale  is considerably slower  than the former, but it is dominant, and from its viewpoint viscous dissipation is negligible.
But viscosity is expected to re-enter into the picture in the description of the system behavior at jumps, 
which should be indeed considered as
\emph{viscous transitions}  between metastable states, cf.\ \cite{EfeMie06RILS}. 
Thus, one  selects those solutions to the original rate-independent system that arise as limits 
of solutions to the viscously regularized system. What is more, following an idea from  \cite{EfeMie06RILS},
in order to capture the viscous transition path between two jump points one reparameterizes the viscous trajectories
and performs the vanishing-viscosity analysis for curves in an extended phase space that
also comprises the rescaling function. For this, it is
crucial to control the length (or a  ``generalized length'')  of the viscous curves, uniformly w.r.t.\ the viscosity parameter.
This limit procedure then leads to reparameterized solutions (functions of
an ``artificial'' time variable $s\in [0,S]$) of the original rate-independent system, such that the reparameterized 
state variable(s) is  (are) coupled with a rescaling function $\sft:[0,S]\to [0,T]$ that takes values in the original time interval.
In this way,
 equations for the paths connecting the the left and right limits (stable states, themselves) of the system at a jump point 
 may be derived; the (possibly viscous) 
 path followed by the (reparameterized) limit solution at a jump  point is also accounted for 
   in a suitable Energy-Dissipation balance. 
Furthermore, the solution concept obtained by vanishing viscosity
 is  supplemented by a first-order, \emph{local} stability condition, which holds
  in the ``artificial'' time  intervals corresponding to those  in which  the system does not jump in the original (fast) time-scale.
\par
Moving from the pioneering  \cite{EfeMie06RILS}, in 
\cite{MRS09, MRS12,  MRS13} (cf.\ also \cite{Negri14}) this idea has been formalized in an abstract setting,
codifying  the properties of these ``vanishing-viscosity solutions''  in
the notion of 
\emph{Balanced Viscosity} (hereafter often shortened as $\BV$) solution to a rate-independent system. In parallel,
the vanishing-viscosity technique has been developed and refined in various concrete applications, 
 ranging from plasticity  (cf., e.g., \cite{DalDesSol11, BabFraMor12, FrSt2013}), 
	 to damage,  fracture, and fatigue  (see for instance \cite{KnMiZa08ILMC, Lazzaroni-Toader, KRZ13, Almi17, CL17fra, ACO2018, ALL19}).
\par
For the present elasto-plastic damage system \eqref{RIS-intro}, the vanishing-viscosity approach was
first addressed in
\cite{Crismale-Lazzaroni}. There, $\BV$ solutions to system \eqref{RIS-intro} were constructed by passing to the limit in the viscously
 regularized system
 featuring
  viscosity \emph{only in the flow rule for the damage variable} $z$. 
  Namely, the momentum balance \eqref{mom-balance-intro} (with \eqref{stress-intro} \& \eqref{kam-intro}) and the plastic flow rule
  were coupled with the 
    \emph{rate-dependent} subdifferential inclusion 
\begin{equation*}\label{0709191140}
\partial\did{\dot{z}} +\eps \dot{z} + \As (z) + W'(z) \ni - \tfrac12\bbC'(z)e : e  \quad  \text{ in } \Omega \times (0,T),
\end{equation*}
(with $0<\eps \ll 1$), in place of \eqref{flow-rule-dam-intro}.
Accordingly, the dissipation potential governing  \eqref{RIS-intro} was augmented by a viscosity contribution 
featuring the $L^2$-norm for the damage rate $\dot z$. Actually, in
\cite{Crismale-Lazzaroni} the authors succeeded in deriving 
estimates  (uniform w.r.t.\ the viscosity parameter $\varepsilon$) for the length  of the viscous solutions $(z_\eps)_\eps$ 
in the  $\Hs$-norm, even ($\Hs(\Omega)$ being the reference space for the damage variable).
Relying on these bounds and on the reparameterization procedure above described, 
they obtained a notion of 
$\BV$ solution such that  only viscosity in $z$ (possibly) enters in the description of the transition path followed by the 
system at jumps. Accordingly, this is reflected in the 
 Energy-Dissipation balanced satisfied by $\BV $ solutions.
\par
Nonetheless, jumps in the other variables are not excluded during jumps for $z$, and 
the  ``reduced''  vanishing-viscosity approach carried out in \cite{Crismale-Lazzaroni}
does not provide information on the (possibly) viscous trajectories  followed by those variables at jumps.
Furthermore, specific examples in simplified situations from
 \cite[Section 5]{AMV14} show that,   where damage nucleates, one could expect a close interaction
  between the damage and the plastic variable, which, in turn, is  intrinsically related to the displacement via the 
  kinematic admissibility condition \eqref{kam-intro}.
  In this connection, we may also quote 
   \cite{DMOrlToa16CalcVar}, where the limit passage from elasto-plastic damage to elasto-plastic fracture static models was 
   justified 
    in the antiplane  setting.
\par
These considerations have motivated us to develop a 
 \emph{``full'' vanishing-viscosity approach} to system \eqref{RIS-intro}. Namely,
 we have approximated  \eqref{RIS-intro}
 by a viscously regularized system featuring a viscosity contribution 
  for the plastic and the displacement variables, besides the damage variable and, correspondingly, obtained a
   notion of 
  Balanced Viscosity solution for \eqref{RIS-intro}  different from the one in \cite{Crismale-Lazzaroni}.
%

%
 \subsection*{The ``full'' vanishing-viscosity approach}
Upon viscously regularizing 
all variables $u$, $z$, and $p$, 
the scenario 
 turns out to be more complicated than the one in \cite{Crismale-Lazzaroni}
  from an analytical point of view.  The first challenge is 
  related to the derivation of (uniform, w.r.t.\ the viscosity parameter) estimates for the  length (in a suitable sense)
   of the viscous solutions.
Adding just a $L^2$-viscous regularization for the plastic variable $p$ (and, consequently,
 a $L^2$-viscous regularization for the total strain  $\rmE(u)$,  tightly related to $p$ via \eqref{kam-intro}),
 does not lead 
to any \emph{a priori} length estimate for $p$ with respect to  the norm of its reference space,
 that is the space $\Mb(\Omega;\MD)$ of bounded Radon measures with values in $\MD$. 
 \par
On the one hand, this could be due to the fact that the usual techniques for proving
 \emph{a priori} estimates, based on testing the viscously regularized equations
  with the time derivatives of the corresponding variables, 
  seem suitable to get good length estimates only in Hilbert spaces. Now, 
   estimates for $p$ in Hilbert spaces contained in $\Mb(\Omega;\MD)$, such as $L^2(\Omega;\MD)$, 
  would be unnatural and incompatible with the concentration effects (in space) that one would see in the limiting, perfectly plastic, evolution.
\par
On the other hand,  adding  \emph{directly}, to the plastic flow rule,
 a viscous regularization
  that features the $L^2$-norm, stronger than the one in  the reference space $\Mb(\Omega;\MD)$,
   does not seem to be the right procedure from a heuristic point of view. Indeed, the idea associated with
    the vanishing-viscosity approach
is to let the system explore the energy landscape around the starting configuration 
and choose an arrival configuration that is preferable from an energetic viewpoint, but close enough in terms of the viscosity norm
(this   becomes more evident on  the level of the time discretization of the viscous system, 
    cf.\ \eqref{def_time_incr_min_problem_eps}).
When  viscosity vanishes, the evolution still keeps track of this procedure. 
Therefore,
in this respect it is reasonable to take, for the viscous regularization, 
a norm that is not stronger than the reference norm. In this way, the system  is free to detect the updated configuration in all the reference space.
 This is the case, for instance,  of the $L^2$-viscous regularization for $z$, whose reference space is $\Hs(\Omega)$,
used for the damage flow rule here and in \cite{KRZ13, Crismale-Lazzaroni, Neg16}.
\par
In order to mimic this approach also for the variable $p$ (and, consequently, for $u$), 
\begin{compactitem}
\item[-]
we have  introduced a further hardening regularizing term to the plastic flow rule,
 tuned by a parameter $\mu>0$.
 In this way,
  the reference space for  the plastic strain $p$ becomes $L^2(\Omega;\MD)$; 
  \item[-]
  we have  address a viscous regularization for $p$ such that the viscosity parameter $\varepsilon$ is modulated  by an additional
parameter $\nu$, with $\nu \leq \mu$.
\end{compactitem}
All in all, 
we consider the following  \emph{rate-dependent} system for damage coupled with viscoplasticity, 
 featuring the three parameters $\eps,\, \nu,\, \mu>0$: 
 \begin{subequations}
 \label{RD-intro}
\begin{itemize}
\item[-] the viscous \RRRN (albeit \emph{quasi-static}, as inertial forces are neglected),   momentum balance
\begin{align}
\label{VISC-mom-balance-intro}
 - \mathrm{div}(\eps\nu  \mathbb{D}   \rmE{(\dot{u})}   + \sigma ) = f  \quad \text{ in } \Omega \times (0,T), \qquad \quad  (\eps\nu  \mathbb{D}  \rmE{(\dot{u})}  + \sigma) \rmn = g \quad\text{ in }\Gneu \times (0,T) 
 \end{align}
 (with $\bbD$  a fixed positive-definite fourth-order tensor),
coupled with the expression for $\sigma$ from \eqref{stress-intro} and
 the kinematic admissibility condition \eqref{kam-intro};
 \item[-] the rate-dependent damage flow rule for $z$ 
 \begin{align}
\label{flow-rule-dam-intro-RD}
&
\partial\did{\dot{z}} +\eps \dot{z} + \As (z) + W'(z) \ni - \tfrac12\bbC'(z)e : e  \quad  \text{ in } \Omega \times (0,T);
\end{align}
  \item[-] the viscous flow rule for the plastic tensor 
 \begin{align}
&
\label{flow-rule-plast-intro-RD}
\partial_{\dot{p}}  \dip{z}{\dot{p}} + \eps\nu  \dot{p} +\newmu  p \ni   \sigma_{\mathrm{D}}  \quad \text{ in } \Omega \times (0,T).
\end{align}
 \end{itemize}
  \end{subequations} 
The system is supplemented with the boundary conditions \eqref{viscous-bound-cond}.
We highlight that viscosity for the $u$ variable has been encompassed  in the stress
 tensor
 (in accord with \emph{Kelvin-Voigt rheology}) through the term $\sig{\dot u}$. In fact, the other possible choice, $\dot e$, would 
 not have preserved the \emph{gradient structure} of the system, which is crucial for our analysis. 
 \par
 Let us emphasize that, for the rate-dependent system with hardening (i.e.\ with fixed $\eps, \nu, \mu>0$)
  both the reference space and the viscosity space for $p$ are  $L^2(\Omega;\MD)$. Furthermore,  the
 choice $\nu\leq \mu$ (one could take $\nu \leq C \mu$ as well) guarantees that we do not lose the desired ``order''
  between viscosity and reference norm for $p$ as $\nu$, $\mu$ vanish. 
  This has enabled us to derive  \emph{a priori} estimates
  for the viscous solutions 
   that are uniform not only w.r.t.\
  $\varepsilon$, but also w.r.t.\ $\mu$ (and $\nu$).
  \par 
  We shall refer to $\nu$ as a \emph{rate} parameter. Indeed, for fixed  $\nu>0$  
  and $\eps \down 0$, 
the displacement and the plastic rate converge to equilibrium and rate-independent
  evolution, respectively, at the same rate at which the damage parameter
  converges to rate-independent evolution. When $\eps\down0$ and  $\nu \down 0$
  \emph{simultaneously}, relaxation to equilibrium and rate-independent behavior occurs at a faster rate for 
  $u$ and $p$ than for $z$. 
  The vanishing-viscosity analysis 
  then acquires a \emph{multi-rate} character. Balanced Viscosity to \emph{multi-rate} systems have been explored in an abstract, albeit finite-dimensional setting, 
  in \cite{MRS14} (cf.\ the forthcoming \cite{MieRosBVMR} for the extension to the infinite-dimensional setup). 
  \subsection*{Our results}
  In what follows, we shall address \emph{three different problems}.
  \par
  First of all, we shall carry out 
the vanishing-viscosity analysis of \eqref{RD-intro} as $\eps \down 0$, with $\mu >0$
 fixed.
  This will lead to  the existence of (two different types of)
  Balanced Viscosity solutions to a  rate-independent system for damage and plasticity \emph{with hardening}, consisting of 
  (\ref{mom-balance-intro}, \ref{stress-intro}, \ref{flow-rule-dam-intro}, \ref{kam-intro}, \ref{viscous-bound-cond}) coupled with 
  \begin{equation}
  \label{pl-hard-intro}
  \partial_{\dot{p}}  \dip{z}{\dot{p}} + \newmu p  \ni \sigma_{\mathrm{D}}  \quad \text{ in } \Omega \times (0,T).
  \end{equation}
 In fact, we  shall consider two cases in the vanishing-viscosity analysis  as  $\eps \down 0$,
 with $\mu>0$ fixed:
  \begin{enumerate}
\item first, we shall keep the rate parameter $\nu>0$ fixed, so that  $(u,z,p)$ relax to equilibrium (for $u$) and
rate-independent evolution (for $z$ and $p$) with the same rate. In this way, we shall prove the existence of 
  \emph{$\BV$ solutions to the rate-independent system with hardening} (\ref{mom-balance-intro}, \ref{stress-intro}, \ref{flow-rule-dam-intro}, \ref{kam-intro}, \ref{viscous-bound-cond} \ref{pl-hard-intro}), see Definition~\ref{def:BV-solution-hardening} and Theorem~\ref{teo:rep-sol-exist};
\item  second, we shall let $\nu \down 0$ together with $\eps\down0$, so that  $u$ and $p$ relax to equilibrium and rate-independent 
evolution faster than $z$, relaxing to rate-independent evolution.
In this way, we  shall obtain \emph{$\BV$ solutions to the multi-rate system with hardening} (\ref{mom-balance-intro}, \ref{stress-intro}, \ref{flow-rule-dam-intro}, \ref{kam-intro}, \ref{viscous-bound-cond} \ref{pl-hard-intro}), see Definition~\ref{def:multi-rate-BV-solution-hardening} and Theorem~\ref{teo:rep-sol-exist-2}. 
\end{enumerate}
\par
Balanced Viscosity solutions to the rate-independent system with hardening arising from the ``full'' vanishing-viscosity approach are parameterized curves 
$(\sft,\sfu,\sfz,\sfp)$ 
defined on an ``artificial'' time interval  (with $\sft$ the rescaling function)  that satisfy a suitable (scalar) Energy-Dissipation balance encoding all information on the evolution of the system. This is in   accord with the notion that has been codified, in an abstract (finite-dimensional) setup, in  \cite{MRS14}. More in general, this solution concept stems from a variational approach to gradient flows and general gradient systems; indeed, it is in the same spirit as  the notion  of 
\emph{curve of maximal slope} \cite{AGS08}. The Energy-Dissipation balance characterizing (parameterized) $\BV$ solutions features
a \emph{vanishing-viscosity contact potential}, namely
 a functional $\calM = \calM (\sft,\sfq,\sft',\sfq') $ (hereafter, we shall often use $\sfq$ as a place-holder for the triple $(\sfu,\sfz,\sfp)$), whose expression (and notation) depends on the different regimes considered.
\par
In all cases, $\mathcal{M}$ encodes the possible onset of viscous behavior of the system at jumps. Indeed, in the ``artificial'' time, jumps  occur at instants at which  the rescaled slow time variable $\sft$ is frozen, i.e.\ $\sft'=0$. Now,   (only) at the jump instants the system may not satisfy (a weak version of the) first order stability conditions in the variables $u$, $p$, $z$,  and for this it dissipates energy in a way that is described by the specific expression of $\calM$ for $\sft'=0$. In  particular, 
\begin{compactitem}
\item[\emph{(i)}] for the $\BV$ solutions obtained  via vanishing viscosity with $\nu>0$ fixed, the  contact potential 
$\calM(\sft,\sfq,0,\sfq')$ features
 a term with the (viscous) $H^1{\times}L^2{\times}L^2$-norm of \emph{the full triple} $(\sfu', \sfp', \sfz')$. While referring to 
Section \ref{s:6-nu-fixed} for more comments, here we highlight  that the expression of $\calM$ reflects the fact that, at a jump, the system may be 
switch to a regime where viscous dissipation in the three variables intervenes ``in the same way''. This mirrors the fact that the variables $u,\,z,\,p$ relax to static equilibrium and rate-independent evolution with the same rate;
\item[\emph{(ii)}] for the $\BV$ solutions obtained  in the limit   as $\eps,\,\nu \down 0$ jointly, in the expression of $\calM(\sft,\sfq,0,\sfq')$ 
two distinct terms account for the roles of the rates $(\sfu',\sfp')$ and of $\sfz$'. A careful analysis,   carried out in Section 
\ref{s:6-nu-vanishes}, in particular shows that, at a jump, $\sfz$ is frozen until   $\sfu,\, \sfp$ have reached the elastic equilibrium/attained the local stability condition, respectively. This reflects the fact that  $u,\, p$ relax to equilibrium/rate-independent behavior faster than $z$, hence the  multi-rate  character of the evolution.
\end{compactitem}
The above considerations can be easily inferred from the
 PDE characterization of (parameterized)  $\BV$ solutions that we provide in Propositions~\ref{prop:charact-BV} \& \ref{prop:diff-charact-BV-zero}; we also refer to Remarks~\ref{rmk:mech-beh} \& \ref{rmk:mech-interp} for further comments and for a comparison between the two notions of solutions for the system with hardening. 
\par
After the discussion of plasticity with fixed hardening, 
\begin{itemize}
\item[(3)]
we  shall consider the case when also $\mu$ vanishes and thus address the asymptotic analysis of
system \eqref{RD-intro} as 
the parameters $\varepsilon$, $\nu$, $\mu \down 0$ \emph{simultaneously}. 
With our main result, Theorem~\ref{teo:exparBVsol}, we shall prove that, after a suitable reparameterization, viscous solutions converge to 
Balanced Viscosity solutions for the perfectly plastic  system \eqref{RIS-intro} that differ from the ones obtained in \cite{Crismale-Lazzaroni}  in this respect:
the description of the trajectories during jumps may possibly involve viscosity in \emph{all} the variables $u$, $p$, $z$.
Since $\eps $ and $\nu$ vanish jointly, the system
has again a multi-rate character.
\end{itemize}
However, in the perfectly plastic case
the situation is more complex than for the case with hardening. Indeed, for 
perfect plasticity the reference function space for  (the rescaled plastic strain) $\sfp$ is $\Mb(\Omega;\MD)$ instead of $L^2(\Omega;\MD)$, 
while the viscous dissipation that (possibly) intervenes at jumps features the $L^2$-norm of $\dot{\sfp}$. 
  In particular,
 at jumps the expression  of the  contact  potential $\calM$ guarantees that $\sfp$ is in $L^2(\Omega;\MD)$ and $\sfu$ is in $H^1(\Omega;\R^n)$, which is reminiscent of the approximation through plastic hardening.
The change in  the functional framework  occurring at the jump regime has important consequences for the 
analysis. On the one hand, we have to exploit density arguments and equivalent characterizations of the stability conditions to pass from the 
$L^2(\Omega;\MD)$-framework  to the $\Mb(\Omega;\MD)$-setting. On the other hand, a suitable reparameterization and abstract tools are needed to reveal more spatial regularity for $\sfu$ and $\sfp$ along jumps, in the spirit of \cite[Subsection~7.1]{MRS13} (cf.\ Section \ref{s:van-hard} for more details).  Another interesting point is that the present approximation through plasticity with hardening completely alleviates the need for a classical Kohn-Temam duality between stress and plastic strain, so we can use only the duality in \cite{FraGia2012} and therefore we do not have to impose more regularity on $\Omega$ or more regularity on the external loading (cf.\ Remark~\ref{rem:3009191901}).  
\par
A natural question is whether the passage to the limit in Section~\ref{s:van-hard} as $\varepsilon$, $\nu$, $\mu$ tend to zero simultaneously commutes with the passage to the limit as $\mu\down 0$ in the ``multi-rate'' $\BV$ solutions obtained in Section \ref{s:6-nu-vanishes}.
In other words, one wonders if the simultaneous limit passage as $\varepsilon$, $\nu$, $\mu$  vanish in system \eqref{RD-intro} results in the same notion of solution as 
the one obtained by first letting
 $\varepsilon, \, \nu \down 0$  with $\mu >0$ fixed and 
 then letting $\mu\down0$. 
 It will be shown that the two approaches lead to the same evolution in a forthcoming paper, where we shall also develop a more thorough comparison with the 
 notion of solution obtained in \cite{Crismale-Lazzaroni}.
 \paragraph{\bf Plan of the paper.} In Section \ref{s:2} we  fix all the standing assumptions on the constitutive functions and on the problem data, and prove some preliminary results.
  Section \ref{s:grsyst} focuses on
  the gradient structure that underlies the rate-dependent system \eqref{RD-intro}, and that is at the core of its vanishing-viscosity analysis.
 Based on this structure, we set out to prove the existence of solutions to  \eqref{RD-intro} by passing to the limit in a carefully devised time-discretization scheme.
 A series of  priori estimates on the time-discrete solutions are proved in  Section~\ref{s:4}. Such bounds serve as a basis both for the existence proof 
 for the viscous problem, and for its vanishing-viscosity analysis. Indeed, in Proposition~\ref{prop:enh-energy-est} we obtain  estimates for the total variation  of the discrete solutions that are uniform w.r.t.\ the viscosity parameter $\varepsilon$ and w.r.t.\ $\nu$ and, in some cases,
 $\mu$ as well. For this, the condition $\nu \leq \mu$ plays a crucial role. Such bounds will lead to the estimates on the lengths of the curves needed for the arc-length repameterizations and the vanishing-viscosity limit passages.
We then derive the existence of solutions for the viscous system \eqref{RD-intro} in Section~\ref{s:exist-viscous}. This is the common ground for the subsequent analysis as either some or all parameters vanish.
The limit passages in \eqref{RD-intro} with $\mu>0$ fixed are carried out in Section~\ref{s:van-visc}: in particular, Section~\ref{s:6-nu-fixed} focuses on the analysis as  $\eps\down0$ with fixed $\nu>0$, while the limit as $\eps,\ \nu\down 0$ is discussed in Section~\ref{s:6-nu-vanishes}. 
The limit passage as $\eps,\nu,\mu\down 0$ is performed in 
Section~\ref{s:van-hard}.  Therein,  we  do not reparameterize the  viscous solutions by their ``classical'' arclength but by an \emph{Energy-Dissipation} arc-length that 
somehow encompasses the onset, for the limiting $\BV$ solutions, of rate-dependent behavior and additional spatial regularity during jumps.

\section{Setup for the rate-dependent and rate-independent systems}
\label{s:2}
In this section we   establish the setup and the assumptions
 on the constitutive functions and on the problem data, 
 both for the rate-dependent system
\eqref{RD-intro} and for its rate-independent limits. Namely, we  shall  propose a framework of conditions suiting 
\begin{compactitem}
\item[-]
  \emph{both} 
 the rate-independent process with hardening, i.e.\ that obtained by taking the vanishing-viscosity limit 
 of \eqref{RD-intro} as $\eps \down0$, 
  (and, possibly, $\nu\down0$ in the \emph{multi-rate} case), 
 with $\mu>0$ fixed,   \emph{and} 
 \item[-]
 the  rate-independent process for perfect plasticity and damage (i.e., that obtained in the further limit passage as $\mu\down 0$).
\end{compactitem}
Further definitions and auxiliary results for the perfectly plastic damage system will be 
expounded in Section~\ref{s:van-hard}. 
\par
First of all, let us fix some notation that will be used throughout the paper. 
\begin{notation}[General notation  and preliminaries] 
\label{not:1.1}
\upshape
 Given a Banach space $X$, we shall denote by 
$\pairing{}{X}{\cdot}{\cdot}$   the duality pairing between $X^*$ and $X$ (and, for simplicity, also between $(X^n)^*$ and $X^n$). We  will   just write $\pairing{}{}{\cdot}{\cdot}$ for the inner Euclidean product in $\R^n$. 
  Analogously, we  shall indicate  by $\| \cdot \|_{X}$ the norm in $X$ and  often use the same symbol for the norm in $X^n$, as well,  and just write $|\cdot| $ for the Euclidean norm in $\R^m$, $m\geq1$. 
We  shall  denote by  
 $B_r(0)$  the open ball of radius $r$, centered at $0$, in the Euclidean space $X=\R^m$. 
\par
We  shall denote  by $\Mnn$ the space of the symmetric  $(n{\times}n)$-matrices, and by $\MD$ the subspace of the deviatoric matrices   with null trace. In fact, 
$\Mnn = \MD \oplus \R I$ ($I$ denoting the identity matrix), since every $\eta \in \Mnn$ can be written as 
$
\eta = \eta_\dev+ \frac{\mathrm{tr}(\eta)}n I
$
with $\eta_\dev$ the orthogonal projection of $\eta$ into $\MD$. We  refer  to $\eta_\dev$ as the deviatoric part of $\eta$.  
 We  write  for  $\Sym(\MD;\MD)$  the set of symmetric endomorphisms on $\MD$.
\par
We  shall often use  the short-hand notation $\| \cdot \|_{L^p}$, $1\leq p<+\infty$, for the $L^p$-norm on the space $L^p(O;\R^m)$, with $O$ a measurable subset of $\R^n$,  and analogously we  write  $\| \cdot \|_{H^1}$.  We  shall denote  by $\Mb(O;\R^m)$ the space of  bounded Radon measures on $O$ with values in $\R^m$. 
\par
As already mentioned in the Introduction, as in \cite{KRZ13,Crismale-Lazzaroni} the mechanical energy shall encompass   a gradient regularizing contribution  for the damage variable, featuring the bilinear form
\begin{align}
\label{bilinear-s_a}
\ass : \Hs(\Omega) \times\Hs(\Omega) \to \R \quad 
\ass(z_1,z_2): =
 \int_\Omega \int_\Omega\frac{\big(\nabla z_1(x) - \nabla
   z_1(y)\big)\cdot \big(\nabla z_2(x) - \nabla
   z_2(y)\big)}{|x-y|^{n  + 2 (\mathrm{m} - 1)}}\dd  x \dd y \text{ with } \mathrm{m} > \frac n2\,.
\end{align}
We  shall  denote by 
$\As: \Hs(\Omega) \to \Hs(\Omega)^*$ the associated operator, viz.
\[
 \langle \As(z), w \rangle_{\Hs(\Omega)}  := \ass(z,w) \quad \text{for
every $z,\,w \in \Hs(\Omega)$.}
\]
We recall that $\Hs(\Omega)$ is a Hilbert space with the inner product $\pairing{}{\Hs(\Omega)}{z_1}{z_2}: = \int_\Omega z_1 z_2 \dd x + \ass(z_1,z_2)$. Since we assume $\mathrm{m}>\tfrac n2$, we have the compact embedding $\Hs(\Omega)\Subset \mathrm{C}(\overline\Omega)$. 
\par
Whenever working with a real function $v$ defined on a space-time cylinder $\Omega\times (0,T)$ and differentiable w.r.t.\ time a.e.\ on $\Omega\times (0,T)$, we  shall  denote by $\dot{v}:\Omega\times (0,T)\to\R$ its (almost everywhere defined) partial time derivative. However, as soon as we consider $v$ as a (Bochner) function from $(0,T)$ with values in a suitable Lebesgue/Sobolev space $X$
 (with the Radon-Nikod\'ym property)  and $v$ is in  the space $\AC ([0,T];X)$,  we  shall  denote by $v':(0,T) \to X$ its  (almost everywhere defined)  time derivative.
\par
 Finally, we shall use the symbols
$c,\,c',\, C,\,C'$, etc., whose meaning may vary even within the same   line,   to denote various positive constants depending only on
known quantities. Furthermore, the symbols $I_i$,  $i = 0, 1,... $,
will be used as place-holders for several integral terms (or sums of integral terms) appearing in
the various estimates: we warn the reader that we  shall  not be
self-consistent with the numbering, so that, for instance, the
symbol $I_1$ will occur several times with different meanings.  
\end{notation}
Let us recall some basic facts about the space  $\BD(\Omega)$ of  \emph{functions of bounded deformations},   defined by
\begin{equation}
\BD(\Omega): = \{ u \in L^1(\Omega;\R^n)\, : \ \sig{u} \in  \Mb(\Omega;\Mnn)  \},
\end{equation}
 where $\Mb(\Omega;\Mnn) $ is   the space of  bounded Radon measures on $\Omega$ with values in $\Mnn$,  with norm $\| \lambda\|_{\Mb(\Omega;\Mnn) }: = |\lambda|(\Omega)$ and  $|\lambda|$ the variation of the measure. 
Recall that, by the Riesz representation theorem, 
 $\Mb(\Omega;\Mnn) $ can be identified with the dual of the space $\mathrm{C}_0(\Omega;\Mnn )$. The space $ \BD(\Omega)$ is endowed with the graph norm
\[
\| u \|_{\BD(\Omega)}: = \| u \|_{L^1(\Omega;\R^n)}+ \| \sig{u}\|_{\Mb(\Omega;\Mnn) },
\]
which makes it a Banach space. It turns out that  $\BD(\Omega)$ is the dual of a normed space, cf.\ \cite{Temam-Strang80}.
\par
In addition to the strong convergence induced by $\norm{\cdot}{\BD(\Omega)}$,  the duality 
from \cite{Temam-Strang80} 
defines  a notion of weak$^*$ convergence on $\BD(\Omega)$: 
a sequence $(u_k)_k $ converges weakly$^*$ to $u$ in $\BD(\Omega)$ if $u_k\weakto u$ in $L^1(\Omega;\R^n)$ and $\sig{u_k}\weaksto \sig{u}$ in $ \Mb(\Omega;\Mnn)$.  The space $\BD(\Omega)$ is contained in $L^{n/(n{-}1)}(\Omega; \R^n)$;  every bounded sequence in $\BD(\Omega)$ has  a weakly$^*$ converging subsequence and, furthermore, a subsequence converging weakly in $L^{n/(n{-}1)}(\Omega;\R^n)$ and strongly in $L^{p}(\Omega;\R^n)$  for every $1\leq p <\frac n{n-1}$. 
\par
 Finally, we recall that for  every $u \in \BD(\Omega)$  the trace $u|_{\partial\Omega}$ is well defined as an element in $L^1(\partial\Omega;\R^n)$, and that (cf.\ \cite[Prop.\ 2.4, Rmk.\ 2.5]{Temam83}) a Poincar\'e-type inequality holds:
\begin{equation}
\label{PoincareBD}
\exists\, C>0  \ \ \forall\, u \in \BD(\Omega)\, : \ \ \norm{u}{L^1(\Omega;\R^n)} \leq C \left( \norm{u}{L^1(\Gamma_\Dir;\R^n)} + \norm{\sig u}{\rmM(\Omega;\Mnn)}\right)\,.
\end{equation}

\subsection{Assumptions and preliminary results}
\label{ss:2.1}
 \paragraph{\bf The reference configuration}
%
Let $\Omega \subset \R^n$,  $n\in \{2,3\}$,   be a bounded  Lipschitz  domain.  
 The minimal assumption for our analysis is that $\Omega$ is a \emph{geometrically admissible multiphase domain} in the sense of \cite[Subsection~1.2]{FraGia2012} with only \emph{one phase}, that is $i=1$ therein, where $(\Omega_i)_i$ is a partition corresponding to\ the phases. Referring still to \cite{FraGia2012}, this corresponds to   assuming  that $\partial|_{\partial\Omega} \Gdir$ is \emph{admissible} in the sense of \cite[(6.20)]{FraGia2012}. As observed in \cite[Theorem~6.5]{FraGia2012}, a sufficient condition  for this  is the so-called \emph{Kohn-Temam condition}, that we recall below and assume throughout the paper: 
\begin{equation}
\label{Omega-s2}
\tag{2.$\Omega$}
\begin{gathered}
\partial \Omega = \Gdir \cup
\Gneu \cup \Sigma \quad \text{ with $\Gdir, \,\Gneu, \, \Sigma$ pairwise disjoint,}
\\
\text{
 $\Gdir$ and $\Gneu$ relatively open in $\partial\Omega$, and $ \partial\Gdir = \partial \Gneu = \Sigma$ their relative boundary in $\partial\Omega$,}
  \\
  \text{ with $\Sigma$ of class $\mathrm{C}^2$ and $\calH^{n-1}(\Sigma)=0$, and with $\partial\Omega$ Lipschitz and of class $\mathrm{C}^2$ in a neighborhood of $\Sigma$.}
  \end{gathered}
  \end{equation} 
We  shall  work with the  spaces 
\[
H_\Dir^1(\Omega;\R^n) := \left\{ u\in H^1(\Omega;\R^n) \colon u=0 \ \text{on}\ \Gdir \right\}
\]
 and 
\begin{equation}\label{2809191333}
\widetilde{\Sigma}(\Omega):= \{ \sigma \in \Lnn \colon \mathrm{div}(\sigma)  \in L^2(\Omega;\R^n) \}\,.
\end{equation}
For $\sigma \in \widetilde{\Sigma}(\Omega)$ one may define the distribution $[\sigma \rmn]$ on $\partial \Omega$ by
\begin{equation}\label{2809192054}
\langle [\sigma \rmn] , \psi \rangle_{\partial\Omega}:= \langle \mathrm{div}(\sigma) , \psi \rangle_{L^2} + \langle \sigma , \rmE(\psi) \rangle_{L^2} 
\end{equation}
for $\psi \in H^1(\Omega;\R^n)$. It is known (see e.g.\ \cite[Theorem~1.2]{Kohn-Temam83} or \cite[(2.24)]{DMDSMo06QEPL}) that $[\sigma \rmn] \in H^{-1/2}(\partial \Omega; \R^n)$ and that if $\sigma \in \mathrm{C}^0(\Omega;\Mnn)$  the distribution  $[\sigma \rmn] $ coincides with $ \sigma \rmn$,  that is 
the pointwise product matrix-normal vector in $\partial \Omega$. 
 With  each $\sigma \in \widetilde{\Sigma}(\Omega)$ we associate
an elliptic operator 
 in  $H_\Dir^1(\Omega;\R^n)^*$  denoted by $-\mathrm{Div}(\sigma)$ and defined  by 
%
\begin{equation}
\label{div-Gdir}
\langle -\mathrm{Div} (\sigma) , v \rangle_{ H_\Dir^1(\Omega;\R^n) }  : = \langle - \mathrm{div}(\sigma),   v \rangle_{L^2(\Omega; \R^n )}  + \langle [\sigma \rmn], v \rangle_ {H^{1/2}(\partial\Omega; \R^n)} = \int_\Omega \sigma : \sig v \dd x 
\end{equation}
for all $v \in H_\Dir^1(\Omega;\R^n)$, where the equality above is an integration by parts formula based on  the  Divergence Theorem. 
\paragraph{\bf The elasticity and viscosity tensors}
We assume that the elastic tensor $\C : [0,+\infty) \to   \Lin(\Mnn;\Mnn)  $
 fulfills  the following conditions
\begin{align}
&
\tag{$2.\mathbb{C}_1$}
 \C \in \mathrm{C}^{1,1}([0,+\infty);  \Lin(\Mnn;\Mnn))\,,
 \label{spd}
\\
&
\tag{$2.\mathbb{C}_2$}
 z  \mapsto \C(z) \xi : \xi \ \text{is nondecreasing for every}\ \xi \in \Mnn\,,   \label{C3} \\ 
&
\tag{$2.\mathbb{C}_3$}
\exists\, \gamma_1,\, \gamma_2>0  \ \ \forall\, z \in [0,+\infty)  \ \forall\, \xi \in \Mnn \, : \quad  \gamma_1 |\xi |^2  \leq \C(z) \xi : \xi \leq \gamma_2 |\xi |^2  \,, \label{C2}
\\
&
\tag{$2.\mathbb{C}_4$}
\label{deviat}
\C(z)\xi:=\C_{\mathrm{D}}(z)\xi_{\mathrm{D}}+\kappa(z)(\text{tr}\,\xi)I \quad \text{with  }\C_{\mathrm{D}} \in L^\infty (0,1; \Sym(\MD;\MD))\,, \ \kappa \in L^\infty (0,1)\,.
\end{align}
Again, observe that \eqref{deviat}  is relevant for the perfectly plastic damage system, only.
Even in that context, \eqref{deviat}  is not needed for the analysis, but it is just assumed for mechanical reasons, since purely volumetric deformations do not
affect plastic behavior. 
\par
We introduce the stored elastic energy
$\calQ: L^2(\Omega;\Mnn) \times \rmC^0(\overline\Omega) \to \R$ 
\begin{equation}
\label{elastic-energy}
\calQ (z,e) := \frac12 \int_\Omega \bbC(z) e: e \dd x\,.
\end{equation}
\par
As for the viscosity tensor $\mathbb{D}$, we require that 
\begin{align}
\label{visc-tensors-1} 
\tag{$2.\bbD_1$}
&
  \bbD
  \in \mathrm{C}^0(\overline{\Omega};  \Sym(\MD;\MD))\,,  \text{ and } 
  \\
&
\label{visc-tensors-2} 
\tag{$2.\bbD_2$}
 \ \exists\,  \delta_1,\, \delta_2>0  \  \forall x \in \Omega \   \ \forall A \in \mt_\sym^{d\times d} \, : \quad \ \delta_1 |A|^2 \leq  \bbD(x) A : A \leq  \delta_2 |A|^2,
  \end{align}
  For later use, we introduce the dissipation potential 
  \begin{equation}
  \label{visc-dissip-u}
  \disv 2{\nu}(v) : =   \frac{\nu}{2}  \int_\Omega \bbD \sig{v} : \sig{v} \dd x\,.
  \end{equation}
   Throughout the paper, we shall use that $\bbD$ induces an equivalent (by a Korn-Poincaré-type inequality)
   Hilbert norm on $H_\Dir^1(\Omega;\R^n)$, namely
\begin{equation}
\label{norma-equivalente}
\| u \|_{H^1, \bbD} : = \left(  \int_{\Omega} \bbD \sig{u} : \sig{u} \dd x \right)^{1/2} \quad\text{with }
\| u \|_{H^1, \bbD} \leq K_{\bbD} \| \sig{u}\|_{L^2} \qquad \text{for } u \in  H_\Dir^1(\Omega;\R^n),
\end{equation}
and the
 ``dual norm''
\begin{equation}
\label{norma-duale}
\| \eta \|_{(H^1, \bbD)^*}: = \left( \int_\Omega \bbD^{-1} \xi : \xi \right)^{1/2}  \text{ for all } \eta \in H_\Dir^1(\Omega;\R^n)^* \text{ with } \eta = \mathrm{Div}(\xi)  \text{ for some } \xi \in \widetilde{\Sigma}(\Omega)\,. 
\end{equation} 
\paragraph{\bf The overall mechanical energy.}
Besides the elastic energy $\calQ$ from \eqref{elastic-energy}
and the regularizing, nonlocal gradient contribution featuring the bilinear form $\ass$, the mechanical energy functional shall feature a further term acting on the damage variable $z$, with density $W$ satisfying 
\begin{align}
&
\tag{$2.W_1$}
W\in \rmC^{2}((0,+\infty); \R^+)\cap  \rmC^0([0,+\infty); \R^+{\cup}\{+\infty\})\,,\label{D1}\\
&
\tag{$2.W_2$}
s^{2n} W(s)\rightarrow +\infty \text{ as }s\rightarrow 0^+\,. \label{D2}
\end{align}
 In particular, it follows from \eqref{D2} that 
$W(z)\uparrow +\infty$ if $z\down 0$.  
Clearly, these requirements on $W$ force  $z$ to be strictly  positive (cf.\ also the upcoming Remark \ref{rmk3.2});  consequently, the material never reaches the most damaged state at any point.
We also have the contribution of  a time-dependent loading   $F:   [0,T]  \to  H^1(\Omega;\R^n)^* $,   specified in \eqref{body-force} below,
which subsumes  the volume and the surface forces $f$ and $g$. 
All in all, the  energy functional driving   the rate-dependent and rate-independent systems  \emph{with hardening}  
is
 $\enen{\mu} \colon [0,T] \times H_\Dir^1(\Omega;\R^n) \times \Hs(\Omega) \times L^2(\Omega;\MD) \to \R  \cup \{+\infty\} $,  defined  for $\mu > 0$  by 
  \begin{equation}
 \label{RIS-ene}
 \begin{aligned}
\enen{\mu} (t,u,z,p): =  \calQ( z,\sig{u{+}w(t)} {-}p)   + \int_\Omega \left(  W(z) {+} \GGG \frac{\mu}{2} |p|^2 \right) \,\mathrm{d}x 
+ \frac12 \ass(z,z) - \langle F(t), u+w(t) \rangle_{H^1(\Omega;\R^n)}
\end{aligned}
\end{equation}
with $w$ the time-dependent Dirichlet loading specified in \eqref{dir-load} ahead. 
\begin{remark}
\upshape
The structure of $\enen{\mu}$ reflects the way we impose the  time-dependent  Dirichlet boundary condition on $u$, 
cf.\ \eqref{viscous-bound-cond}.
 Indeed, $\enen{\mu} (t,\cdot, p,z)$  depends  
 on the current displacement at time $t$, given by $u{+}w(t)$, with
 $ u\in H_\Dir^1(\Omega;\R^n) $ and 
  $u{+}w(t)=w(t)$ on $\Gdir$.  
Notice that the dissipation potential  
$\disv 2\nu$
 from \eqref{visc-dissip-u} will instead be calculated at $u'$ (and not at $u'{+}w'(t)$), see
   \eqref{visc-prob} and 
 \eqref{overall-dissip}.  
These choices allow us to observe that system \eqref{RD-intro} has a gradient structure, 
 cf.\ \eqref{abs-gen-gradsyst} ahead. 
It would be possible to slightly change the problem by considering a dissipation potential featuring
$\disv 2\nu(u'{+}w'(t))$.  However, in that case we would loose the  gradient structure of the viscous system
 illustrated in  the upcoming Section \ref{s:grsyst}.  This structure  is 
at the heart of our vanishing-viscosity analysis. That is why, we have decided not to pursue this path. 
\end{remark}
\paragraph{\bf{The plastic  dissipation potential and the overall  plastic dissipation functional}} 
The plastic dissipation potential reflects the constraint that the admissible stresses belong to given constraint sets.  In turn, such sets
depend on the damage variable $z$: this, and the $z$-dependence of the matrix $\mathbb{C}(z)$ of elastic coefficients, provides a strong coupling between the plastic and the damage
flow rules. 
More precisely, in a softening framework, along the footsteps of 
\cite{Crismale-Lazzaroni} we require that the constraint sets $(K(z))_{z\in [0,+\infty)}$   fulfill
\begin{align}
&
\label{Ksets-1}
\tag{$2.K.1$}
K(z) \subset \MD \text{ is closed and convex for all }  z\in [0,+\infty),
\\
&
\label{Ksets-2}
\tag{$2.K.2$}
\exists\,  0<\bar{r} <  \bar{R} \quad   \forall\, 0\leq z_1\leq z_2  \, : \qquad B_{\bar r} (0) \subset K(z_1)\subset K(z_2) \subset   B_{\bar{R}} (0),  
\\
&
\label{Ksets-3}
\tag{$2.K.3$}
\exists\, C_K>0 \quad \forall\, z_1,\, z_2 \in   [0,+\infty)\, :   \qquad d_{\mathscr{H}} (K(z_1),K(z_2)) \leq C_K  |z_1{-}z_2|, 
\end{align}
with 
$d_{\mathscr{H}}$ the Hausdorff distance between two subsets of $\MD$, defined by
\[
 d_{\mathscr{H}} (K_1,K_2): =  \max \left( \sup_{x\in K_1} \mathrm{dist}(x,K_2), \, \sup_{x\in K_2} \mathrm{dist}(x,K_1) \right),
\]
and  $\mathrm{dist}(x,K_i): = \min_{y\in K_i} |x-y|$,  $i=1,2$.
We now introduce the support function  $H:[0,+\infty)  \times  \MD \to [0,+\infty)$  defined by
\begin{equation}\label{0307191050}
H(z,\pi): = \sup_{\sigma \in K(z)} \sigma : \pi \qquad \text{for all } (z,\pi) \in [0,+\infty)\times  \MD \,.
\end{equation}
It was shown in \cite[Lemma 2.1]{Crismale-Lazzaroni} that,  thanks to 
\eqref{Ksets-1}--\eqref{Ksets-3}, $H$ enjoys the following properties:
\begin{subequations}
\label{propsH}
\begin{align}
&
\label{propsH-1}
H \text{ is continuous},
\\
&
\label{propsH-2}
0\leq H(z_2,\pi) - H(z_1,\pi)  \qquad \text{for all } 0 \leq z_1\leq z_2   \text{ and all } \pi \in \MD \text{ with } |\pi|=1\,,
\\
&
\label{propsH-2+1/2}
\exists\, C_K>0 \ \forall\, z_1,\, z_2 \in [0,+\infty) \ \forall\,  \pi \in \MD 
\qquad |H(z_2,\pi) - H(z_1,\pi) |\leq  C_K |\pi| |z_2{-}z_1| \,, 
\\
&
\label{propsH-3}
\pi \mapsto H(z,\pi) \text{ is convex and $1$-positively homogeneous for all } z \in [0,1]\,,
\\
&
\label{propsH-4}
\bar{r} |\pi| \leq H(z,\pi) \leq \bar{R}|\pi|\,.
\end{align}
\end{subequations}
%
%
%
As observed in  \cite{Crismale-Lazzaroni}, properties   \eqref{Ksets-1}--\eqref{Ksets-3} are satisfied by 
constraint sets  in the  ``multiplicative form''   $K(z)=V(z)K(1)$, with $V \in \rmC^{1,1}([0,+\infty))$ non-decreasing and such that $\overline{m}\leq V(z)\leq \overline{M}$ for all $z\in [0,+\infty)$ and some $\overline{m},\, \overline{M}>0$. 
\par
The \emph{plastic dissipation  potential} $\mathcal{H} \colon \rmC^0(\overline{\Omega}; [0,+\infty))  \times  L^1(\Omega;\MD) 
\to \mathbb{R}$ is defined by
\begin{equation}
\label{plast-diss-pot-visc}
\mathcal{H}(z,\pi):= \int_{ \Omega} H(z(x),\pi(x))\dd x \,.
\end{equation}  
Clearly, 
it follows from 
\eqref{propsH-1}--\eqref{propsH-4} that 
\begin{subequations}
\label{props-calH}
\begin{align}
&
\label{props-calH-1}
\pi \mapsto \mathcal{H}(z,\pi) \,\,\text{is convex and positively one-homogeneous for every  } z \in \rmC^0(\overline{\Omega};[0,+\infty)),
\\
&
\label{props-calH-2}
\bar{r} \|\pi\|_1\leq \mathcal{H}(z, \pi) \leq \bar{R} \|\pi\|_1 \quad \text{for all } z \in \rmC^0(\overline{\Omega};[0,+\infty)) \text{ and } \pi \in  L^1(\Omega;\MD),
\\
&
\label{props-calH-3}
 0\leq \mathcal{H}(z_2, \pi) - \mathcal{H}(z_1, \pi)  \quad
\text{for all }  z_1\leq z_2 \in  \rmC^0(\overline{\Omega};[0,+\infty)) \quad \text{for all } \pi \in  L^1(\Omega;\MD)\,,
\\
&
\label{props-calH-3+1/2}
\begin{aligned}
&
\left| \mathcal{H}(z_2, \pi) - \mathcal{H}(z_1, \pi) \right| \leq C_K \|z_1{-}z_2\|_{L^\infty(\Omega)} \| \pi \|_{1}  \quad 
\text{for all }  z_1, z_2 \in  \rmC^0(\overline{\Omega};[0,+\infty)),\,  \pi \in  L^1(\Omega;\MD) \,.
 \end{aligned} 
\end{align}
\end{subequations}
 Let us introduce the set
\begin{equation}\label{2909191029}
\widetilde{\mathcal{K}}_z(\Omega):= \{ \sigma \in \widetilde{\Sigma}(\Omega) \colon \sigma_\dev(x) \in K(z(x)) \text{ for a.e.}\ x \in \Omega\} \,.
\end{equation}
 By standardly approximating   (in the  $L^1$-norm) $\pi$ by  piecewise constant functions, we show that if $z\mapsto K(z)$ is constant,
namely $K(z) \equiv \ol K\subset \MD$,  then 
 \begin{equation}\label{2909191033}
 \calH(z, \pi)=\sup_{\sigma \in \widetilde{K}_z(\Omega)} \langle \sigma_D , \pi \rangle_{L^1}\,. 
 \end{equation} 
 For  a general map   $z\mapsto K(z)$ the argument in \cite[Theorem~3.6 and Corollary~3.8]{Sol09} shows that \eqref{2909191033} still holds.

The convex analysis subdifferential
$\partial_\pi \calH \colon \rmC^0(\overline{\Omega}; [0,+\infty)) \times L^1(\Omega;\MD)\rightrightarrows L^\infty(\Omega;\MD)$, given  by
\[
\omega \in \partial_\pi \calH(z, \pi) \quad \text{ if and only if } \quad  \calH(z,\varrho)- \calH(z,\pi) \geq \int_\Omega \omega (\varrho - \pi) \dd x  \quad \text{for all } \varrho \in L^1(\Omega;\MD)
\]
fulfills
\begin{equation}
\label{subdiff-calH}
\omega \in \partial_\pi \calH(z, \pi) \quad \text{ if and only if } \quad  \omega(x)\in \partial H(z(x),\pi(x)) \quad \foraa\, x \in \Omega\,.
\end{equation} 
 %
 \par
The rate-dependent system \eqref{RD-intro}  with the viscously regularized plastic flow rule 
\eqref{flow-rule-plast-intro-RD}  features 
the dissipation potential $ \calHt\nu:    \rmC^0(\overline{\Omega}; [0,+\infty))  \times L^2(\Omega;\MD) \to [0,+\infty)$ defined by
 \begin{equation}
 \label{visc-diss-pot-plast}
 \calHt\nu(z,\pi): =
  \calH(z,\pi) +\dish 2\nu(\pi)  \qquad \text{with } \dish 2\nu(\pi): =  \frac{\nu}2 \| \pi \|_{L^2(\Omega)}^2\,. 
 %
 \end{equation}
By the sum rule for 
 convex analysis subdifferentials 
 (cf.\ e.g.\ \cite[Corollary IV.6]{Aubin-Ekeland}), the subdifferential
 $\partial_\pi \calH \colon  \rmC^0(\overline{\Omega}; [0,+\infty)) \times   L^2(\Omega;\MD) \rightrightarrows L^2(\Omega;\MD) $ is given by 
 \begin{equation}
 \label{sum-rule-calH}
\partial_\pi\calHt\nu(z,\pi) = \partial_\pi \calH(z,\pi) +  \{ \nu\pi \}  \qquad \text{for all } \pi \in L^2(\Omega;\MD) \text{ and for all } z \in \rmC^0(\overline{\Omega}; [0,+\infty))\,.
 \end{equation} 
 \RRR
 \paragraph{\bf The damage dissipation potential.}
 We consider the damage dissipation density $ \mathrm{R} : \R \to [0,+\infty] $ defined by
\[ 
  \mathrm{R}(\zeta) : = P(\zeta) + I_{(-\infty,0]}(\zeta) 
 \qquad \text{with } 
 P(\zeta): = -\kappa \zeta, \quad \text{and} \quad
 I_{(-\infty,0]} \text{ the indicator function of } (-\infty,0]\,, 
  \]
so that
 \[
 \mathrm{R}(\zeta): = \begin{cases}
 -\kappa \zeta & \text{if } \zeta \leq 0,
 \\
 +\infty & \text{otherwise.}
 \end{cases}
 \]
 With $\mathrm{R}$ 
 we associate the dissipation  potential
$
 \calR : L^1(\Omega) \to [0,+\infty] $ defined by $ \calR(\zeta) : = \int_{\Omega}\mathrm{R}(\zeta(x)) \dd x$. 
 In fact, since the flow rule for the damage variable will be posed in $\Hs(\Omega)^*$ (cf.\ \eqref{1509172300} ahead),
 it will be convenient to consider the restriction of  $\calR$ to the space $\Hs(\Omega)$
 which, 
with a slight abuse of notation, we  shall  denote by the same symbol, namely
 \begin{equation}
 \label{dam-diss-pot}
 \calR: \Hs(\Omega) \to [0,+\infty], \quad \calR(\zeta) = \int_\Omega \mathrm{R}(\zeta(x)) \dd x  = \calP(\zeta) +\calI(\zeta) \ \text{with } \begin{cases}
 \calP(\zeta): = \int_\Omega P(\zeta(x)) \dd x, 
 \\
 \calI(\zeta): = \int_\Omega I_{(-\infty,0]}(\zeta(x)) \dd x 
 \end{cases}
 \end{equation}
 \par
 The viscously regularized damage flow rule \eqref{flow-rule-dam-intro-RD} in fact features the dissipation potential
 \begin{equation}
 \label{visc-diss-pot-dam}
 \calRt:  \Hs(\Omega) \to [0,+\infty], \qquad \calRt(\zeta) =   \calR(\zeta) +
 \calR_2(\zeta) \qquad \text{with } \calR_2(\zeta) := 
  \frac12 \|\zeta\|_{L^2(\Omega)}^2\,.
 \end{equation}
 We shall denote by $\partial \calR : \Hs(\Omega) \rightrightarrows  \Hs(\Omega)^*$ and $\partial \calRt : \Hs(\Omega) \rightrightarrows  \Hs(\Omega)^*$ the  subdifferentials of $\calR$ and $\calRt$
 in the sense of convex analysis. 
 Observe that $\mathrm{dom}(\partial\calR) = \mathrm{dom}(\partial\calRt) = \Hsm$. We shall provide explicit formulae for both subdifferentials in Lemma \ref{l:expl-subd} at the end of this section. 

 \paragraph{\bf  The initial data, the  body forces, and  the Dirichlet loading.}
 We shall consider initial data
\begin{equation}
\label{init-data} 
u_0 \in  H_\Dir^1(\Omega;\R^n),  \qquad z_0 \in \Hs(\Omega) \text{ with } W(z_0) \in L^1(\Omega) \text{ and } z_0 \leq 1 \text{ in } \ol\Omega,  \qquad p_0 \in L^2(\Omega;\MD). 
\end{equation}
 The assumptions that we require on the  volume and surface  forces depend on the type of plasticity considered. In the analysis of  systems with hardening 
we may assume less regularity on the body forces, while the study of the perfectly-plastic damage  system
\eqref{RIS-intro}
 hinges on further regularity  and on a \emph{uniform safe-load condition}. 
\par
Hence, for the analysis of systems with hardening in Sections~\ref{s:exist-viscous} and \ref{s:van-visc}, the conditions assumed on the volume force $f$ and the assigned traction $g$ are 
 \begin{subequations} \label{hyp-data}
 \begin{equation}\label{1808191033}
 f\in H^1(0,T;  L^2(\Omega;\R^n) ), \qquad g \in H^1(0,T; H^{1/2} (\Gamma_\Neu;\R^n)^*);
 \end{equation}
 to shorten notation,  we  shall  often incorporate the forces $f$ and  $g$
  into the induced  total load, namely the function
$F\colon [0,T] \to H^1(\Omega;\R^n)^*$ defined at $t\in (0,T)$ by 
\begin{equation}
\label{body-force}
\langle F(t), v \rangle_{H^1(\Omega;\R^n)}: = \langle f(t), v\rangle_{ L^2(\Omega;\R^n)} + \langle g (t), v \rangle_{H^{1/2}(\Gamma_\Neu;\R^n)} 
\end{equation} 
for all $v \in H^1(\Omega;\R^n)$.
\par
 Conversely, for the treatment of the perfectly-plastic damage system of Section~\ref{s:van-hard}, we require that    
 \begin{equation}
\label{forces-u}
f \in H^1(0,T;  L^n(\Omega;\R^n) ) \,, \quad g \in H^1(0,T; L^\infty(\Gneu;\R^n))\,,
\end{equation}
 so that $F$ turns out to take values in $ \BD(\Omega)^*$, defining 
\[
\langle F(t), v \rangle_{\BD(\Omega)}: = \langle f(t), v\rangle_{ L^{n/(n{-}1)}(\Omega;\R^n)} + \langle g (t), v \rangle_{L^1(\Gamma_\Neu;\R^n)}
\]
for all $v \in \BD(\Omega)$ (recall the first properties of $\BD(\Omega)$ in Section~\ref{s:2} and the fact that $L^\infty(\Omega; \R^n)=L^1(\Omega; \R^n)^*$,   since $\Omega$ has finite measure).   Both for the analysis of the system with hardening and of the perfectly plastic one,  
 we  shall assume a \emph{uniform safe load condition}, namely that  there exists 
\begin{equation}\label{2909191106}
\rho \in H^1(0,T; \Lnn) \quad\text{ with }\quad\rho_\dev \in H^1 (0,T;\Linftyn)
\end{equation} and there exists $\alpha>0$ such that  for every $t\in[0,T]$  (recall \eqref{2809192054})
\begin{equation}\label{2809192200}
-\mathrm{div}(\varrho(t))=f(t) \text{ a.e.\ on }\Omega\,,\quad\qquad [\varrho(t) \rmn]= g(t) \text{ on } \Gneu\,.
\end{equation}
\begin{equation}
\label{safe-load}
\rho_\dev(t,x) +\xi \in K  \qquad \text{for a.a.}\ x\in\Omega \ \text{and for every}\ \xi\in\Mnn \ \text{s.t.}\ |\xi|\le\alpha\,.
\end{equation} 
 Assumption \eqref{forces-u}
 will be crucial in the derivation of \emph{a priori} uniform estimates with respect to the parameter $\mu$ in Proposition \ref{prop:energy-est-SL},  while with \eqref{1808191033} the estimates 
 would 
depend on $\mu >0$,  
cf.\ also Remark \ref{sta-in-rmk} ahead. 
Combining  \eqref{1808191033} with  \eqref{2909191106}--\eqref{safe-load} gives  $-\mathrm{Div}(\varrho(t))=F(t)$ 
in $H_\Dir^1(\Omega,\R^n)^*$, while if \eqref{forces-u} holds  then    \eqref{2909191106}--\eqref{safe-load}  yield  
$-\Diver(\varrho(t))=F(t)$ for all $t \in [0,T]$ (where the operator  $-\Diver$  will be introduced in \eqref{2307191723}). 
%
\end{subequations}
 For later use,   we notice that, thanks to \eqref{2909191033} it is easy to deduce that for all $t \in [0,T]$
\begin{equation}\label{controllo-SL}
\mathcal{H}( z,p)   
- \int_\Omega \rho_\dev(t) p \dd x 
\ge \alpha \|p\|_{L^1(\Omega)}  \,.
\end{equation}

\par
As for the time-dependent Dirichlet loading $w$, we shall require that
\begin{equation}
\label{dir-load} 
 w \in H^1(0,T;H^1(\R^n;\R^n)).  
\end{equation}
%
\begin{remark}\upshape
\label{rmk:on-w}
In fact, the analysis of the \emph{rate-independent} system for damage and plasticity, with or without hardening, would just require 
 $w\in \AC ([0,T];H^1(\R^n;\R^n))$  so that, upon taking the vanishing-viscosity limit as $\eps \down 0$ of system \eqref{RD-intro}, we could approximate a loading   $w\in \AC ([0,T];H^1(\R^n;\R^n))$  with a sequence $(w_\eps)_\eps \subset H^1(0,T;H^1(\R^n;\R^n))$.  
 The same applies to the time regularity of the forces.  However, 
to avoid overburdening the exposition
we have preferred not to pursue this path. 
\end{remark} 
\begin{remark}[Rewriting the driving energy functional]
\label{rmk:rewriting}
\upshape
By the safe-load condition  \eqref{2809192200} and the integration by parts formula in \eqref{div-Gdir} applied to $u \in H^1_\Dir(\Omega;\R^n)$, 
$\enen{\mu}$  rewrites as
\[
\begin{aligned}
\enen{\mu} (t,u,z,p)   =  \calQ(z,  e(t)) + \int_\Omega \left(  W(z) {+} \GGG \frac{\mu}{2}  |p|^2 \right) \,\mathrm{d}x + \frac12 \ass(z,z)  -\int_\Omega \rho(t) \rmE(u) \dd x  - \langle F(t), w(t) \rangle_{H^1(\Omega; \R^n)} 
\end{aligned}
\]
where we have highlighted the \emph{elastic part} of the strain tensor $\sig{u{+}w(t)}$,
\begin{equation}
\label{elastic-part}
 e(t)  : =\sig{u{+}w(t)} -p\,.
\end{equation}
We now introduce  the functional
\begin{equation}
\label{def-Fnu}
 \calF_\mu (t,u,z,p)  : =   \calQ(z,e(t)) + \int_\Omega \left(  W(z) {+} \GGG \frac{\mu}{2}  |p|^2 \right) \,\mathrm{d}x + \frac12 \ass(z,z) -\int_\Omega \rho(t) (e(t)  - \rmE(w(t))  \dd x   - \langle F(t), w(t) \rangle_{H^1(\Omega; \R^n)}  \,.
\end{equation}
Then,
taking into account  that $\int_\Omega (\rho{-}\rho_\dev) p \dd x =0$, we have
\begin{equation}
\label{serve-for-later}
\enen{\mu} (t,u,z,p)  
 = \calF_\mu (t,u,z,p)   - \int_\Omega \rho_\dev(t) p \dd x\,.
\end{equation}
\end{remark} 
\par
In the following result we clarify 
the expression of the subdifferentials $\partial \calR$ and $\partial \calRt$;
 these basic facts will be useful, for instance,  in the proof of Lemma \ref{lemma:sup}. 
\begin{lemma}
\label{l:expl-subd}
We have the following representation formula for
 the subdifferential 
 $ \partial\calI\colon \Hs(\Omega) \rightrightarrows \Hs(\Omega)^*$ of the functional 
 $\calI$ from \eqref{dam-diss-pot}:   for all $\zeta \in \Hsm: = \{ v\in \Hs(\Omega)\, :  \ v \leq 0 \text{ in }\Omega\}$
\begin{equation}
\label{repre-sub-I}
 \chi  \in  \partial\calI (\zeta) \qquad \text{if and only if} \qquad \langle \chi, w-\zeta \rangle_{\Hs(\Omega)}\leq 0 \qquad \text{for all } w \in \Hsm\,.
\end{equation}
Moreover, for all $\zeta \in \Hsm$ there holds
\begin{align}
& 
\label{sum-rule-1}
\partial \calR(\zeta) =\partial\calP(\zeta) + \partial\calI(\zeta) =  -\kappa+\partial\calI(\zeta)  
\\
&
\label{sum-rule-2}
 \partial \calRt(\zeta) =  \partial \calR(\zeta)  + \{  \zeta\}
\end{align}
where 
$-\kappa$ stands for the functional $\Hs(\Omega) \ni \zeta \mapsto \int_\Omega (-\kappa) \zeta(x) \dd x$, and 
we simply write $\zeta$, in place of $J(\zeta)$ ($J: L^1(\Omega) \to \Hs(\Omega)^*$ denoting the Riesz mapping). 
\end{lemma}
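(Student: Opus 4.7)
The plan is to treat the three assertions separately, reducing each to a standard fact of convex analysis once we have identified the functionals involved in $\Hs(\Omega)$.

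For the representation of $\partial\calI$, the starting observation is that, thanks to the assumption $\mathrm{m}>n/2$ and the resulting compact embedding $\Hs(\Omega)\Subset \rmC(\overline\Omega)$, a function $\zeta\in\Hs(\Omega)$ satisfies $\zeta(x)\le 0$ for a.e.\ $x\in\Omega$ if and only if $\zeta\le0$ pointwise on $\overline\Omega$. Hence $\calI$ coincides with the indicator function of the closed convex cone $\Hsm$ in the sense of convex analysis: $\calI(\zeta)=0$ if $\zeta\in\Hsm$ and $\calI(\zeta)=+\infty$ otherwise. By the very definition of the convex subdifferential of an indicator function, $\partial\calI(\zeta)$ coincides with the normal cone to $\Hsm$ at $\zeta$, and this gives \eqref{repre-sub-I} directly; in particular $\mathrm{dom}(\partial\calI)=\Hsm$.

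For the sum rule \eqref{sum-rule-1}, I would note that $\calP\colon\Hs(\Omega)\to\R$, $\calP(\zeta)=-\kappa\int_\Omega\zeta\dd x$, is linear and continuous (it extends to $L^1(\Omega)$ and $\Omega$ has finite measure), hence Fr\'echet differentiable on the whole space $\Hs(\Omega)$ with $\partial\calP(\zeta)=\{-\kappa\}$, where $-\kappa$ is identified with the element of $\Hs(\Omega)^*$ acting as $w\mapsto-\kappa\int_\Omega w\dd x$. Since $\calI$ is convex, proper, and lower semicontinuous, and $\calP$ is continuous with full domain, the Moreau--Rockafellar sum rule (see e.g.\ \cite[Corollary~IV.6]{Aubin-Ekeland}, already cited in the paper for \eqref{sum-rule-calH}) yields $\partial(\calP+\calI)=\partial\calP+\partial\calI$, which is exactly \eqref{sum-rule-1}.

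For \eqref{sum-rule-2}, I would apply the very same sum rule to the decomposition $\calRt=\calR+\calR_2$. Here $\calR_2(\zeta)=\tfrac12\|\zeta\|_{L^2(\Omega)}^2$ is well-defined, continuous, and in fact Fr\'echet differentiable on $\Hs(\Omega)$ (as a composition of the continuous embedding $\Hs(\Omega)\hookrightarrow L^2(\Omega)$ with the squared $L^2$-norm), with derivative at $\zeta$ given by the linear form $w\mapsto\int_\Omega\zeta w\dd x$, i.e.\ by $J(\zeta)\in\Hs(\Omega)^*$ with $J\colon L^1(\Omega)\to\Hs(\Omega)^*$ the Riesz mapping. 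Since $\calR$ is convex, proper, lower semicontinuous and $\calR_2$ is continuous on $\Hs(\Omega)$ with full domain, the sum rule again applies and gives $\partial\calRt(\zeta)=\partial\calR(\zeta)+\{J(\zeta)\}$, which under the notational convention of the statement is precisely \eqref{sum-rule-2}.

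The only genuinely delicate point is checking that $\calI$ may indeed be regarded as a convex analysis indicator in $\Hs(\Omega)$: this relies on the a.e./pointwise equivalence granted by the embedding $\Hs(\Omega)\Subset\rmC(\overline\Omega)$, which also ensures that $\Hsm$ is a closed convex cone in $\Hs(\Omega)$ so that the normal cone description of $\partial\calI$ is legitimate. Once this is settled, the two sum-rule applications are routine.
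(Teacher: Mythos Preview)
Your proof is correct and follows essentially the same approach as the paper: the paper's proof simply states that \eqref{repre-sub-I} is the definition of $\partial\calI(\zeta)$ (i.e., the normal cone to $\Hsm$) and that \eqref{sum-rule-1}--\eqref{sum-rule-2} follow from the sum rule for convex subdifferentials \cite[Cor.~IV.6]{Aubin-Ekeland}. You have spelled out the verifications (that $\calI$ is the indicator of the closed convex cone $\Hsm$ via the embedding $\Hs(\Omega)\Subset\rmC(\overline\Omega)$, and that $\calP$ and $\calR_2$ are continuous on all of $\Hs(\Omega)$ so the sum rule applies) that the paper leaves implicit, but the argument is the same.
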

\begin{proof}
Formula \eqref{repre-sub-I} is in fact the definition of $\partial\calI (\zeta)$, whereas \eqref{sum-rule-1}
and \eqref{sum-rule-2} follow
 the sum rule for convex subdifferentials, cf.\ e.g.\ \cite[Cor.\ IV.6]{Aubin-Ekeland}.
\end{proof} 
\section{The gradient structure of the viscous system}
 \label{s:grsyst}
%
In this section we are going to  establish the functional setup
  in which the  (Cauchy problem for the) \emph{rate-dependent} system 
   with hardening (i.e., with $\mu>0$)  
  \eqref{RD-intro} is formulated and, accordingly, specify the notion of solution we are interested in.
  This will enable us to unveil the 
gradient structure underlying system \eqref{RD-intro}, which will have a twofold outcome:
\begin{compactenum}
\item Exploiting this structure we  shall  show that \eqref{RD-intro} can be equivalently reformulated in terms of an Energy-Dissipation inequality, which is
in turn equivalent to an Energy-Dissipation \emph{balance}.
This observation   will simplify  the proof of existence of viscous solutions, carried out in Section~\ref{s:exist-viscous}.
\item 
The Energy-Dissipation \emph{balance} will be at the core of the vanishing-viscosity analysis (with $\nu>0$ fixed) performed in Section~\ref{s:van-visc},
 as well as of the vanishing-hardening analysis carried out in Section~\ref{s:van-hard}. 
\end{compactenum} 
\noindent
 Throughout this section, we  shall  tacitly assume that the  constitutive functions of the model and the problem data comply with the conditions 
 listed in Section~\ref{s:2}, and therefore we  shall  never explicitly invoke them in the statement of the various results. 
System 
 \eqref{RD-intro} involves
 the rescaled dissipation potentials $
 \disv \eps\nu: H^1(\Omega;\R^n) \to [0,+\infty)$,  $\dish\eps\nu:  \rmC^0(\overline{\Omega}; [0,+\infty))  \times L^2(\Omega;\MD)\to [0,+\infty)$, and $\calR_\eps: \Hs(\Omega) \to [0,+\infty]$  defined by
  \begin{equation} \label{dissipazioni-riscalate}
  \begin{gathered}
   \disv\eps\nu(v): = \frac1{\eps} \disv 2\nu(\eps v), \quad 
   \dish\eps\nu(z,\pi) : = \frac1\eps \calHt\nu(z,\eps\pi) = \calH(z,\pi) + \frac1\eps \dish2\nu(z,\eps\pi) , 
   \\
   \calR_\eps(\zeta): =  \frac1\eps \calRt(\eps\zeta) = \calR(\zeta)+  \frac1\eps \calR_2(\eps\zeta) 
   \end{gathered}
   \end{equation}
   with   $\disv 2\nu$, $\dish 2\nu$, and   $\calR_2$,  from \eqref{visc-dissip-u}, 
\eqref{visc-diss-pot-plast}, and \eqref{visc-diss-pot-dam}, respectively. 
 With   $\densh\eps\nu$  we shall denote the density of the integral functional $\dish\eps\nu$. 
 We are now in a position to provide 
the variational formulation of (the Cauchy problem for) system  \eqref{RD-intro}.  Since we  shall  treat
$u$, $z$, and $p$ as  (Bochner) functions from $(0,T)$ with values in their respective  Lebesgue/Sobolev spaces,
as specified in Notation \ref{not:1.1} we shall denote by $u'$, $z'$, and $p'$ their 
(almost everywhere defined)  time derivatives. 
 \begin{problem}
 \label{prob:visc}
 Find  a triple $(u,z,p)$ with
 \begin{equation}
 \label{reg-of-sols}
  u \in  H^1(0,T;H_\Dir^1(\Omega;\R^n)),    \quad z \in H^1(0,T;\Hs(\Omega)) \text{ with } W(z) \in L^\infty (0,T;L^1(\Omega)), \quad p \in  H^1(0,T;L^2(\Omega;\MD)),
 \end{equation}
 such that,    with  
$ e(t):=\sig{u(t)+w(t)} -p(t) $ and $ \sigma(t) : = \mathbb{C}(z(t)) e(t)$,
  there holds
  \begin{subequations}
 \label{visc-prob}
  \begin{align}
&
\label{1509172145-cont}
-\mathrm{Div}\big(\varepsilon \nu  \mathbb{D}\sig{u'(t)} {+}\sigma(t)\big) =  F(t)
&& \text{in } H_\Dir^1(\Omega;\R^n)^*  && \foraa\, t \in (0,T),
\\
&
\label{1509172300}
\partial \mathcal{R}_\eps(z'(t)) + \As(z(t)) + W'(z(t)) 
\ni -\frac12 \mathbb{C}'(z)  e(t) : e (t) 
 && \text{in } 
\Hs(\Omega)^* && \foraa\, t \in (0,T),
\\
&
\label{1509172259}
\partial_{\pi} \densh{\eps}\nu(z(t), p'(t)) 
+\newmu p(t) \ni  \big(\sigma(t)\big)_\dev  && \text{a.e.\ in  } \Omega && \foraa\, t \in (0,T).
\end{align}
\end{subequations}
joint with the initial conditions 
\begin{equation}
\label{initial-conditions-visc}
u(0)=u_0 \text{ in }  H_\Dir^1(\Omega;\R^n),   \qquad z(0) = z_0 \text{ in } \Hs(\Omega), \qquad p(0) =p_0 \text{ in } L^2(\Omega).
\end{equation}
 \end{problem}
 \begin{remark}
 \upshape
 \label{rmk3.2}
 A few observations on formulation \eqref{visc-prob} are in order:
 \begin{enumerate}
 \item    As shown in  \cite[Lemma 3.3]{Crismale-Lazzaroni},  from the requirement $W(z) \in L^\infty(0,T; L^1(\Omega))$
  we deduce the strict positivity property
 \begin{equation}
 \label{strict-posivity}
 \exists\, m_0>0 \quad \forall\, (x,t) \in \overline{\Omega}\times [0,T]\, : \qquad z(x,t) \geq m_0.
 \end{equation}
 \item In view of \eqref{strict-posivity} and of \eqref{D1}, we have that 
$W'(z) \in \mathrm{C}^0(\ol\Omega \times [0,T])$.  The term  featuring in  \eqref{1509172300} has to be understood as 
 the image of $W'(z(t)) \in \mathrm{C}^0(\ol\Omega)$  under the Riesz mapping with values in $\Hs(\Omega)^*$.
 \item By the monotonicity of  $t\mapsto z(x,t)$ and the requirement that $z_0 \leq 1$ in $\ol\Omega$, we immediately infer that
 $z(x,t)\leq 1$ for all $(x,t)\in\ol\Omega \times [0,T]$ which, combined with \eqref{strict-posivity}, is consistent with the physical meaning of the
  damage variable.
  \end{enumerate}
 \end{remark}
The requirement $W(z) \in L^\infty(0,T; L^1(\Omega))$ which, as shown by Remark \ref{rmk3.2}, has  an important impact 
 on the properties of the solution component $z$, is  in turn consistent with the gradient structure of system \eqref{visc-prob}
 with respect to the driving energy $\enen{\mu}$ from \eqref{RIS-ene}.
To reveal this structure, it will be convenient to introduce the following notation for the triple $(u,z,p)$ of
  state variables and the associated state space
\begin{equation}
\label{q-notation}
q: =(u,z,p) \in \bfQ: = H^1_{\Dir} (\Omega;\R^n)\times \Hs(\Omega) \times L^2(\Omega;\MD)\,.
\end{equation}
With slight abuse of notation, we  shall  write both $\enen{\mu} (t,u,z,p)$ and $\enen{\mu} (t,q)$.
The result below fixes some crucial properties of $\enen{\mu}$,
 that will be at the core of the interpretation of 
\eqref{RD-intro} as  a gradient system.
We shall explore further properties of $\enen{\mu}$ in Lemma \ref{l:coercivity-Enu} ahead. 
\begin{lemma}
\label{l:Fr-diff-Enu}
 For every  $\mu>0$
  the proper domain of 
$\enen{\mu} : [0,T] \times \bfQ   \to  \R  \cup \{+\infty\} $ is 
\[
\bfD_T := [0,T] \times \bfD \qquad \text{ with } \bfD=\{(u,z,p) \in \bfQ\, : \ z>0 \text{ in } \ol\Omega\}. 
\]
For all $t\in [0,T]$, the functional $q\mapsto \enen{\mu} (t,q)$ is Fr\'echet differentiable on $\bfD$, with Fr\'echet differential 
\begin{equation}
\label{Fr-diff-q}
\begin{aligned}
\rmD_q  \enen{\mu} (t,q) &  = (\rmD_u \enen{\mu}(t,u,z,p), \rmD_z \enen{\mu}(t,u,z,p), \rmD_p \enen{\mu}(t,u,z,p)) \\ &  = \left(-\mathrm{Div} (\sigma(t)) -F(t), \As(z)+W'(z)+\tfrac12 \mathbb{C}'(z)  e(t) :  e(t) , 
\newmu p - \sigma_\dev(t))\right) \in \bfQ^*.
\end{aligned}
\end{equation}
Furthermore, for all $q\in Q$ the function $t\mapsto \enen{\mu} (t,q)$ is in $\AC([0,T])$, with 
\begin{equation}
\label{part-t-q} 
\partial_t \enen{\mu} (t,q) = \int_\Omega  \sigma(t) : \sig{w'(t)} \dd x -\langle F'(t), u+w(t) \rangle_{H^1(\Omega;\R^n)} -\langle F(t), w'(t) \rangle_{H^1(\Omega)} \quad \foraa\, t \in (0,T).
\end{equation}
Finally,   the following chain-rule property holds:   for all  $q \in H^1(0,T;\bfQ)$ with $\sup_{t\in [0,T]} |\enen{\mu} (t,q(t))|<+\infty$, 
\begin{equation}
\label{chain-rule}
\begin{gathered}
 \quad \text{the mapping } t \mapsto  
 \enen{\mu} (t,q(t)) \text{ is in } \AC ([0,T]), \text{ and}
 \\
 \frac{\dd}{\dd t}  \enen{\mu} (t,q(t)) = \pairing{}{\bfQ}{\rmD_q \enen{\mu} (t,q(t))}{q'(t)} + \partial_t  \enen{\mu} (t,q(t)) \qquad \foraa\, t \in (0,T).
\end{gathered}
\end{equation}
\end{lemma}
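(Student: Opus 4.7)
My plan is to decompose $\enen{\mu}$ into its five constituent summands and verify the claimed properties termwise, exploiting the compact embedding $\Hs(\Omega) \Subset \mathrm{C}(\overline\Omega)$ (from $\mathrm{m}>n/2$) throughout. For the proper domain characterization, all terms except $\int_\Omega W(z)\dd x$ are manifestly finite on $\bfQ$ for every $t$, so it suffices to show that $\int_\Omega W(z)\dd x<+\infty$ is equivalent to $z>0$ on $\overline\Omega$. If $z>0$ on the compact $\overline\Omega$, continuity yields $z\ge m_0>0$, whence $W\circ z$ is continuous on $\overline\Omega$ and integrable. Conversely, if $z(x_0)=0$ for some $x_0\in \overline\Omega$, combining the Hölder embedding $\Hs(\Omega)\hookrightarrow \mathrm{C}^{0,\alpha}(\overline\Omega)$ with the blow-up $s^{2n}W(s)\to+\infty$ in \eqref{D2} will produce a non-integrable singularity of $W(z)$ at $x_0$, ruling out finiteness.

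Next, for the Fréchet differentiability of $q\mapsto \enen{\mu}(t,q)$ at a point $q=(u,z,p)\in \bfD$, set $m_0:=\min_{\overline\Omega} z>0$ and restrict to the open neighborhood where the perturbed $z+\zeta$ stays $\ge m_0/2$ in $\mathrm{C}(\overline\Omega)$. On this neighborhood $W$ is $\mathrm{C}^2$ on $[m_0/2,+\infty)$, so a standard Taylor expansion delivers Fréchet differentiability of $z\mapsto \int_\Omega W(z)\dd x$ with derivative $\zeta\mapsto \int_\Omega W'(z)\zeta\dd x$ (well-defined in $\Hs(\Omega)^*$ by the Riesz map via the $\Hs\hookrightarrow \mathrm{C}^0$ embedding). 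The quadratic elastic piece $\calQ(z,\sig{u{+}w(t)}-p)$ is Fréchet differentiable as a polynomial map; its partial differentials in $u$, $p$, $z$ yield the three pieces $-\mathrm{Div}(\sigma(t))$ (using \eqref{div-Gdir} to identify $\int_\Omega \sigma(t):\sig{v}\dd x$ with a duality in $H^1_\Dir(\Omega;\R^n)^\ast$), $-\sigma_\dev(t)$ (since test functions in $p$ are deviatoric), and $\tfrac12\mathbb{C}'(z)e(t):e(t)$ (bounded in $L^\infty(\Omega)\subset \Hs(\Omega)^\ast$ because $e(t)\in L^2$ and $\mathbb{C}'$ is bounded). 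The remaining terms $\tfrac{\mu}{2}\int|p|^2$, $\tfrac12 \ass(z,z)$ and $-\langle F(t),u{+}w(t)\rangle$ are quadratic/linear, yielding $\mu p$, $\As(z)$, and $-F(t)$. Assembling these gives exactly formula \eqref{Fr-diff-q}.

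For the time derivative statement, only $w(t)$ and $F(t)$ depend on $t$; differentiating under the integral and using \eqref{1808191033}--\eqref{dir-load} gives \eqref{part-t-q}. The map $t\mapsto \enen{\mu}(t,q)$ is a sum of compositions of $H^1(0,T;\cdot)$ functions with polynomial/linear structure, so it is $\AC([0,T])$ and the computed $\partial_t\enen{\mu}$ belongs to $L^1(0,T)$.

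Finally for the chain rule, the hypothesis $\sup_{t\in[0,T]}|\enen{\mu}(t,q(t))|<+\infty$ implies $\sup_t\int_\Omega W(z(t))\dd x<+\infty$, and applying the argument of \cite[Lemma 3.3]{Crismale-Lazzaroni} (essentially, the Hölder bound $z(t)\in \mathrm{C}^{0,\alpha}(\overline\Omega)$ coupled with \eqref{D2}) produces a \emph{uniform} lower bound $z(t,x)\ge m_0>0$ on $[0,T]\times\overline\Omega$, which is property \eqref{strict-posivity}. On this uniformly positive region $W'$ is locally Lipschitz, so all partial Fréchet derivatives in \eqref{Fr-diff-q} are continuous maps of $q$; combined with $q\in H^1(0,T;\bfQ)$ and the $\AC$-in-$t$ property established above, the abstract chain rule for Fréchet-differentiable functionals along Sobolev curves (e.g.\ \cite[Prop.\ 2.1]{MRS13}-type arguments) yields \eqref{chain-rule}. \emph{The main obstacle} I anticipate is the interplay between the growth of $W$ and the Hölder regularity of $z$ in the two places where a uniform positivity statement $z\ge m_0$ must be extracted: the proper domain characterization and the chain rule. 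Once that uniform lower bound is secured, the remaining differentiability and $\AC$ issues reduce to routine polynomial/quadratic calculus on the appropriate function spaces.
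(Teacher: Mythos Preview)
Your proposal is correct and follows essentially the same route as the paper. The only notable difference in packaging is that the paper establishes Fr\'echet differentiability by first computing the G\^ateaux differential \eqref{Fr-diff-q} and then proving that $q\mapsto \rmD_q\enen{\mu}(t,q)$ is continuous on $\bfQ$ (via $z_n\to z$ in $\mathrm{C}^0(\overline\Omega)$, hence $\mathbb{C}(z_n)\to\mathbb{C}(z)$ and $\mathbb{C}'(z_n)\to\mathbb{C}'(z)$ in $L^\infty$), rather than through the direct Taylor expansions you outline; the paper then dismisses \eqref{part-t-q} and \eqref{chain-rule} as ``standard arguments'' without further detail, so your explicit invocation of the uniform lower bound $z\ge m_0$ from \cite[Lemma~3.3]{Crismale-Lazzaroni} and the abstract chain rule is more than the paper provides. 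One small caution on your proper-domain argument: the non-integrability of $|x-x_0|^{-2n\alpha}$ in $\R^n$ requires $\alpha\ge 1/2$, whereas the Sobolev--Morrey exponent $\alpha=\mathrm{m}-n/2$ need only be positive under the standing hypothesis $\mathrm{m}>n/2$, so when you write this up you should simply invoke \cite[Lemma~3.3]{Crismale-Lazzaroni} directly (as you already do for the chain rule) rather than sketching the H\"older/blow-up computation.
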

\begin{proof}
First of all, \eqref{Fr-diff-q} gives the G\^ateaux differential of $\enen{\mu}(t,\cdot)$:  we  shall  just check the formula for $\rmD_u \enen{\mu}(t,u,z,p)$
by observing that, since $\enen{\mu}(t,\cdot,z,p)$ is convex, we have that $\eta = \rmD_u \enen{\mu}(t,u,z,p)$ if and only if it holds that  $\enen{\mu}(t,v,z,p) - \enen{\mu}(t,u,z,p)\geq \langle \eta, v-u \rangle_{H_\Dir^1(\Omega;\R^n)} $, or, equivalently, that 
\begin{equation}
\label{ultimately}
  \frac12 \int_\Omega \bbC(z) (\sig{v{+}w(t)}{-}p): (\sig{v{+}w(t)}{-}p) \dd x - 
 \frac12 \int_\Omega  \sigma(t) \colon e(t)\dd x  - \langle F(t), v{-}u \rangle_{H_\Dir^1}
 \geq  \langle \eta, v{-}u \rangle_{H_\Dir^1} 
\end{equation}
for all $v \in H_\Dir^1(\Omega;\R^n)$  (using the short-hand notation  $\langle \cdot, \cdot \rangle_{H_\Dir^1}
$).  Ultimately, \eqref{ultimately}  holds true if  and only if  
\[
\langle \eta, \tilde v \rangle_{H_\Dir^1(\Omega;\R^n)} = \int_\Omega \sigma(t) :\sig{\tilde v} \dd x  - \langle F(t), \tilde v \rangle_{H_\Dir^1(\Omega;\R^n)}
\]
for all $\tilde v \in H_\Dir^1(\Omega;\R^n)$.
 In order to check the Fr\'echet differentiability, it is enough to prove the continuity property
\begin{equation}
\label{cont-Gat-diff}
\left( q_n=(u_n,z_n,p_n)\to q = (u,z,p) \text{ in } \bfQ \right) \ \Longrightarrow \ \left(\rmD_q  \enen{\mu} (t,q_n) \to \rmD_q  \enen{\mu} (t,q) \text{ in } \bfQ^*\right)\,.
\end{equation}
For this, we observe that $z_n\to z$ in $\Hs(\Omega)$ implies $z_n\to z$ in $\mathrm{C}^0(\ol\Omega)$ and, thus, $\mathbb{C}(z_n) \to \mathbb{C}(z)$
and $\mathbb{C}'(z_n) \to \mathbb{C}'(z)$
 in $L^\infty(\Omega;\Lin(\Mnn;\Mnn))$. Therefore, we have  $\mathbb{C}(z_n) e_n(t) \to \mathbb{C}(z) e(t)$ in $L^2(\Omega;\Mnn)$, which gives $\rmD_u \enen{\mu} (t,u_n,z,p) \to \rmD_u \enen{\mu} (t,u,z,p)$. 
We also find that $\mathbb{C}'(z_n) e_n(t){:} e_n(t) \to \mathbb{C}'(z) e(t){:} e(t)$ in $L^1(\Omega)$, hence  we have the convergence $ \rmD_z \enen{\mu}(t,u_n,z_n,p_n) \to  \rmD_z \enen{\mu}(t,u,z,p)$ in $\Hs(\Omega)^*$. 
We easily have  $ \rmD_p \enen{\mu}(t,u_n,z_n,p_n) \to \rmD_p \enen{\mu}(t,u,z,p)$ in $L^2(\Omega;\MD)$, which concludes the proof of \eqref{cont-Gat-diff}. 
\par
 By standard arguments we  conclude  \eqref{part-t-q} and  \eqref{chain-rule}. This finishes the proof.
 \end{proof}
Let us now introduce the overall dissipation potential $  \psin\nu: \bfQ \times \bfQ \to [0,+\infty]$
\begin{equation}
\label{overall-dissip}
\begin{gathered}
  \psin\nu(q, q') 
  : = \disv 2\nu(u') + \calRt(z')+ \calHt\nu(z,p')  \\  \text{ and its rescaled version } \psie\eps\nu( q, q') : = \frac1{\eps} \psin\nu(q,\eps q') = \disv \eps\nu(u')+\calR_\eps(z')+ \dish\eps\nu(z,p') \,. 
\end{gathered}
\end{equation}
Taking into account \eqref{Fr-diff-q}, it is then a standard matter to reformulate
Problem~\ref{prob:visc} in these terms: \emph{find $q\in H^1(0,T;\bfQ)$ with $\sup_{t\in (0,T)}|\enen{\mu}(t,q(t))|<+\infty$ solving the generalized gradient system}
\begin{equation}
\label{abs-gen-gradsyst}
\partial_{q'} \psie\eps\nu(q(t), q'(t)) + \rmD_q\enen{\mu} (t,q(t)) \ni 0 \qquad \text{in } \bfQ^*, \quad \foraa\, t \in (0,T).
\end{equation}
\par
This reformulation allows us to easily obtain the Energy-Dissipation balance underlying system \eqref{visc-prob},  which is in fact \emph{equivalent} to \eqref{abs-gen-gradsyst}. 
   Indeed, 
arguing as in
\cite{MRS2013} (this observation is however at the core of the variational approach to gradient flows, cf.\ \cite{AGS08}), 
we observe that \eqref{abs-gen-gradsyst}, namely $- \rmD_q\enen{\mu} (t,q) \in \partial_{q'} \psie\eps\nu(q,q') $,  
is  equivalent, by standard convex analysis results, to the identity
\begin{equation}
\label{FM-bal}
\psie\eps\nu(q(t), q'(t)) + \psie\eps\nu^*(q(t),{-} \rmD_q\enen{\mu} (t,q(t))) = \pairing{}{\bfQ}{{-} \rmD_q\enen{\mu} (t,q(t))}{q'(t)} \quad \foraa\, t \in (0,T),
\end{equation}
with $\psie\eps\nu^*: \bfQ \times  \bfQ^* \to [0,+\infty]$, $\psie\eps\nu^*(q,\xi): = \sup_{v\in \bfQ}(\pairing{}{\bfQ}{\xi}v{-}\psie\eps\nu(q, v))$ the Fenchel-Moreau conjugate of $\psie\eps\nu(q,\cdot)$. By the definition of $\psie\eps\nu^*$, the $\geq$ estimate in \eqref{FM-bal} is automatically verified. Therefore, \eqref{FM-bal} is in fact equivalent to the $\leq$ estimate
\begin{equation}
\label{FM-ineq}
\psie\eps\nu(q(t),q'(t)) {+} \psie\eps\nu^*(q(t),{-} \rmD_q\enen{\mu} (t,q(t)))   {\leq} \pairing{}{\bfQ}{{-} \rmD_q\enen{\mu} (t,q(t))}{q'(t)} 
 {=} {-} \frac{\dd}{\dd t}  \enen{\mu} (t,q(t))  + \partial_t  \enen{\mu} (t,q(t))
\end{equation}
 for a.a.\ $t \in (0,T)$,  where the latter identity follows from the chain rule \eqref{chain-rule}. In fact, it is immediate to check that \eqref{abs-gen-gradsyst} is equivalent to  the
integrated versions of \eqref{FM-bal} and  of \eqref{FM-ineq}. The latter reads  
\begin{equation}
\label{integr-FM-ineq}
\int_0^t \left( \psie\eps\nu(q(r),q'(r)) + \psie\eps\nu^*(q(r),{-} \rmD_q\enen{\mu} (r,q(r))) \right) \dd r +   \enen{\mu} (t,q(t)) \leq \enen{\mu} (0,q(0))+\int_0^t \partial_t  \enen{\mu} (r,q(r)) \dd r 
\end{equation}
for all $ t \in [0,T]$. 
Observe that for $\xi = (\eta, { \chi},\omega) \in \bfQ^*$  we have
%
\begin{equation}
\label{Psi-eps-star}
\begin{aligned}
&
\psie\eps\nu^*(q,\xi)   = \disv\eps\nu^*(\eta) + \calR_\eps^*( { \chi}) + \dish\eps\nu^*(z,\omega)  
\\
& \text{with } \quad \begin{cases}
\disp \disv\eps\nu^*(\eta) =   \frac1{2\eps\nu}  \int_\Omega \bbD^{-1} \tau \colon  \tau \dd x \quad \text{for }  \eta = {-}\mathrm{Div}  (\tau)
\quad  \text{and } \tau \in  \widetilde{\Sigma}(\Omega) 
 \\[1em]
\disp  \calR_\eps^*( { \chi})  = 
  \frac1{2\eps}\tilded_{L^2(\Omega)}^2( { \chi}, \partial \calR(0))
  := \frac1{2\eps} \min_{\gamma \in \partial \calR(0)}   \mathfrak{f}_2 ( { \chi}{-}\gamma) \,, 
    \\[1em]
\disp  \dish\eps\nu^*(z,\omega)  = 
    \frac1{2\eps\nu} \dLtwo^2(\omega,\partial_\pi \calH(z,0)) := \frac1{2\eps\nu} \min_{\rho \in \partial_\pi \calH(z,0)} \| \omega -\rho\|_{L^2(\Omega)}^2 \,,
    \end{cases}
    \end{aligned}
\end{equation}
 where   $\widetilde{\Sigma}(\Omega)$  is from    \eqref{2809191333},  
\[
\mathfrak{f}_2: \Hs(\Omega)^* \to [0,+\infty] \text{ is defined by } \quad \mathfrak{f}_2(\beta): = \begin{cases}
\|\beta\|_{L^2(\Omega)}^2 & \text{if } \beta \in L^2(\Omega) \,,
\\
+\infty & \text{if } \beta \in \Hs(\Omega)^* \setminus L^2(\Omega) \,,
\end{cases}
\]
 and observe that the $\min$ in the definition of $\tilded_{L^2(\Omega)}( { \chi}, \partial \calR(0))$ is attained as soon as $\tilded_{L^2(\Omega)}$ is finite. 
 Indeed, we have calculated
\[
\begin{aligned}
\disv\eps\nu^*({-}\mathrm{Div}(\tau))  & = \sup_{v \in H_{\Dir}^1(\Omega;\R^n)} 
\left( \langle {-}\mathrm{Div}(\tau), v \rangle_{H^1(\Omega;\R^n)}  - \disv\eps\nu(v)\right)
\\
 & =
 \sup_{v \in H_{\Dir}^1(\Omega;\R^n)}  \left( \int_\Omega  \tau  \colon \sig{v} \dd x -  \frac{\eps\nu}2\int_\Omega \bbD \sig{v} \colon \sig{v} \dd x\right)  
= 
\frac1{2\eps\nu} \int_\Omega \bbD^{-1}  \tau  \colon  \tau  \dd x
\end{aligned}
\]
whereas the formulae for $\calR_\eps^*$ and $\dish\eps\nu^*$ follow from the $\inf$-$\sup$ convolution formula, cf.\ e.g.\ \cite[Thm.\ 3.3.4.1]{IofTih79TEPe}.
Therefore, we may calculate explictly the second contribution to the left-hand side of \eqref{integr-FM-ineq}. Indeed,  recalling that, by  \eqref{2809192200}, 
we have $F(t) = -\mathrm{Div}(\rho(t))$, we find that 
\[
\begin{aligned}
\disv\eps\nu^*({-} \rmD_u\enen{\mu} (r,u(r), z(r), p(r)) ) & = \disv\eps\nu^* (\mathrm{Div}(\sigma(r)) {+}F(r))   = \disv\eps\nu^*(\mathrm{Div}(\sigma(r){-}\rho(r)) )\\ &  = \frac1{2\eps\nu} \int_\Omega \bbD^{-1}(\sigma(r){-}\rho(r) ) \colon (\sigma(r){-}\rho(r) ) \dd x\,,
\end{aligned}
\]
  (where  $\sigma(r) = \bbC(z(r)) e(r)$). 
All in all, 
we arrive at the following result, which will play a key role
 for the analysis of 
the rate-dependent 
system \eqref{RD-intro}, 
 since it provides a characterization of solutions to
the viscous Problem \ref{prob:visc}. 
\begin{proposition}
\label{prop:charact}
The following properties are equivalent for a triple  $q=(u,z,p)\in H^1(0,T;\bfQ)$ fulfilling the initial conditions \eqref{initial-conditions-visc}: 
\begin{enumerate}
\item \GGG $q$  is a solution of \GGG Problem~\ref{prob:visc}; 
\item \GGG $q$  fulfills
 the 
  \emph{Energy-Dissipation upper estimate}  
  \begin{equation}
 \label{enineq-plast}
 \begin{aligned}
&  \enen{\mu} (t,\GGG q(t) ) + \int_0^t \left( \disv\eps\nu(u'(r)) {+} \calR_\eps(z'(r)) {+} \dish\eps\nu(z(r), p'(r))\right) \dd r 
\\
  &\quad 
  +  \int_0^t \Big[ \disv\eps\nu^*\Big(\mathrm{Div}(\sigma(r)){+} F(r)\Big)  {+}   \calR_\eps^*\Big({-}\As(z(r)) {-}W'(z(r)){-}\tfrac12 \bbC'(z(r)) e(r) : e(r)\Big) \\  
  &\qquad \qquad \qquad 
  {+}  \dish\eps\nu^*\Big(z(r), -\newmu p(r) + \sigma_\dev(r)\Big)
   \Big] \dd r 
   \leq
  \enen{\mu} (0,\GGG q_0 ) + \int_0^t 
\partial_t \enen{\mu} (r,q(r))  \dd r; 
  \end{aligned}
 \end{equation}
 \item \GGG $q$  fulfills \eqref{enineq-plast} as an
  \emph{Energy-Dissipation balance}, integrated on any interval $[s,t]\subset [0,T]$.  
 \end{enumerate}
 \end{proposition}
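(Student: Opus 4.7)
My plan is to exploit the abstract variational structure already laid out in the discussion preceding the proposition. The starting observation is that, by the Fr\'echet differentiability statement \eqref{Fr-diff-q} of Lemma~\ref{l:Fr-diff-Enu}, Problem~\ref{prob:visc} is equivalent to the generalized gradient inclusion \eqref{abs-gen-gradsyst}, i.e.
\[
-\rmD_q \enen{\mu}(t,q(t)) \in \partial_{q'} \psie\eps\nu(q(t),q'(t)) \qquad \foraa\, t \in (0,T).
\]
I would organize the proof as the chain of implications $(1) \Rightarrow (3) \Rightarrow (2) \Rightarrow (1)$.

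For $(1) \Rightarrow (3)$: by the Fenchel–Moreau reciprocity, $-\rmD_q \enen{\mu}(t,q(t)) \in \partial_{q'} \psie\eps\nu(q(t),q'(t))$ is equivalent to the pointwise identity \eqref{FM-bal}, which, integrated on $[s,t]\subset[0,T]$ and combined with the chain rule \eqref{chain-rule} of Lemma~\ref{l:Fr-diff-Enu}, gives the Energy-Dissipation balance on $[s,t]$ (with the explicit form of $\psie\eps\nu^*$ provided by \eqref{Psi-eps-star} and the safe-load identity $F(t)=-\mathrm{Div}(\rho(t))$). The implication $(3) \Rightarrow (2)$ is trivial: the balance on $[0,t]$ gives \eqref{enineq-plast} (as equality).

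The only nontrivial step is $(2) \Rightarrow (1)$. Here I would start from the universal Fenchel–Moreau inequality
\[
\psie\eps\nu(q(r),q'(r)) + \psie\eps\nu^*(q(r),-\rmD_q\enen{\mu}(r,q(r))) \geq \langle -\rmD_q\enen{\mu}(r,q(r)), q'(r)\rangle_{\bfQ},
\]
valid for a.e.\ $r\in(0,T)$. Integrating on $[0,t]$ and using the chain rule \eqref{chain-rule} to rewrite the right-hand side as $\enen{\mu}(0,q_0)-\enen{\mu}(t,q(t))+\int_0^t \partial_t \enen{\mu}(r,q(r))\dd r$, one obtains exactly the \emph{converse} inequality to \eqref{enineq-plast}. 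Combined with the assumed upper estimate (2), this forces equality in the integrated Fenchel–Moreau inequality on every subinterval $[0,t]$ and hence, by a standard Lebesgue-point argument, for a.e.\ $r\in(0,T)$ pointwise. The equality case of the Fenchel–Moreau inequality then yields $-\rmD_q\enen{\mu}(r,q(r)) \in \partial_{q'}\psie\eps\nu(q(r),q'(r))$ for a.e.\ $r$, which is \eqref{abs-gen-gradsyst} and thus \eqref{visc-prob}.

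No real obstacle arises: all the machinery (Fr\'echet differentiability of $\enen{\mu}$, $\AC$-regularity in time, and the chain rule) is already packaged in Lemma~\ref{l:Fr-diff-Enu}, and the conjugate potential $\psie\eps\nu^*$ has been computed explicitly in \eqref{Psi-eps-star}. The only point requiring a minimum of care is the verification that $r\mapsto \partial_t \enen{\mu}(r,q(r))$ and the two dissipation terms are genuinely integrable so that the chain rule can be used freely; this is immediate from \eqref{reg-of-sols} together with the bound $\sup_{r\in[0,t]}|\enen{\mu}(r,q(r))|<+\infty$, which is a consequence of the finiteness of the left-hand side of \eqref{enineq-plast}.
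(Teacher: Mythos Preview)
Your proposal is correct and follows essentially the same approach as the paper: the equivalence rests on the Fenchel--Moreau characterization of the subdifferential inclusion \eqref{abs-gen-gradsyst} combined with the chain rule \eqref{chain-rule} from Lemma~\ref{l:Fr-diff-Enu}. The paper in fact presents this argument as the discussion immediately preceding the proposition (equations \eqref{FM-bal}--\eqref{integr-FM-ineq}) rather than as a separate proof, so your explicit chain $(1)\Rightarrow(3)\Rightarrow(2)\Rightarrow(1)$ is simply a tidy unpacking of what the paper states in compressed form.
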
 
 With the upcoming result we exhibit a further characterization of solutions to the viscous system
 that will be useful 
 for the vanishing-viscosity analyses carried out in Sections 
 \ref{s:van-visc} and \ref{s:van-hard},
  borrowing an idea from \cite{Crismale-Lazzaroni}. 
  Proposition \ref{Vito-Bs}
 indeed
  shows that the 
  \emph{Energy-Dissipation balance}   \eqref{enineq-plast} can be rewritten
    in terms of
  the
 functionals
  \begin{equation}\label{0307191952}
  \begin{aligned}
 & \MVito({t}, {q}, {q}'):= \calR({z}')+\calH({z},{p}') + \MredVito({t}, {q}, {q}')\,, \quad \text{ where }  \quad \MredVito({t}, {q}, {q}'):=\DVito_\nu({q}')\, \DVitos({t},{q}) \quad \text{ with }
   \\
   & 
   \DVito_\nu({q}'): = 
    \sqrt{ \nu \| {u}'(t)\|^2_{ H^1, \bbD }   {+} 
\|{z}'(t)\|_{L^2}^2
{+} \nu\|{p}'(t)\|_{L^2}^2} 
\\
&
 \DVitos({t},{q}): =   \sqrt{
\frac1{\nu}\, \|{-}\mathrm{D}_u \calE_\mu ({t},{q} )\|^2_{( H^1, \bbD )^*}
+ \tilded_{L^2} ({-}\mathrm{D}_z \calE_\mu  ({t},{q}) ,\partial\calR(0))^2
+ \frac1\nu \, \dLtwo ({-}\mathrm{D}_p \calE_\mu  ({t},{q}) ,\partial_\pi \calH( {z} ,0))^2
}
\end{aligned}
  \end{equation}
 \begin{proposition}
 \label{Vito-Bs}
 Along a solution $q\in H^1(0,T;\bfQ)$ there holds $\foraa\, r\in (0,T)$
 \begin{equation}
 \label{0307192010}
 \begin{aligned}
 &
  \disv\eps\nu(u'(r))  {+} \calR_\eps(z'(r)) {+} \dish\eps\nu(z(r), p'(r)){+}
 \disv\eps\nu^*\Big(\mathrm{Div}(\sigma(r)){+} F(r)\Big)  
 \\ &\qquad
 {+}   \calR_\eps^*\Big({-}\As(z(r)) {-}W'(z(r)){-}\tfrac12 \bbC'(z(r)) e(r) : e(r)\Big)  
  {+}  \dish\eps\nu^*\Big(z(r), -\newmu p(r) + \sigma_\dev(r)\Big)  
  \\
  & = \MVito(r, q(r), q'(r)) 
  =  \calR({z}'(r))+\calH({z(r)},{p}'(r)) +   \varepsilon \Big( \nu \| u'(r)\|^2_{ H^1, \bbD }   {+} 
\|z'(r)\|_{L^2}^2
{+} \nu\|p'(r)\|_{L^2}^2\Big) \,.
\end{aligned}
 \end{equation}
  In particular,  a curve  $q\in H^1(0,T;\bfQ)$ is a solution to the Cauchy problem 
 \ref{prob:visc} if and only if it satisfies  for every $t\in [0,T]$ 
 the Energy-Dissipation balance 
  \begin{equation}\label{0307191911}
   \begin{aligned}
  \enen{\mu} (t, q(t)) + \int_0^t \MVito(r, q(r), q'(r)) \dd r 
  = \enen{\mu} (0,q_0 ) + \int_0^t 
\partial_t \enen{\mu} (r,q(r))  \dd r\,.
  \end{aligned}
  \end{equation} 
 \end{proposition}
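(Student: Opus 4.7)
\medskip

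\noindent\textbf{Proof strategy for Proposition \ref{Vito-Bs}.}
My plan is to first establish the pointwise identity \eqref{0307192010} for any solution, and then derive the characterization via the Energy-Dissipation balance \eqref{0307191911} by combining it with the chain rule \eqref{chain-rule} and a suitable one-sided Fenchel--Young-type inequality.

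For the pointwise identity, the key observation is that the three components of the generalized gradient inclusion \eqref{abs-gen-gradsyst} decouple, so that a solution automatically fulfils pointwise a.e.\ in $(0,T)$ the three separate Fenchel identities
$\disv\eps\nu(u') + \disv\eps\nu^*(-\rmD_u\enen\mu) = \langle -\rmD_u\enen\mu, u'\rangle$, the analogous identity for $\calR_\eps$, and the one for $\dish\eps\nu(z,\cdot)$. Since each of the three potentials splits as a $1$-homogeneous part plus a purely quadratic term (tuned respectively by $\eps\nu$, $\eps$, and $\eps\nu$), a direct computation using the explicit formulae in \eqref{Psi-eps-star} shows that along the solution the quadratic parts on the primal and dual side coincide, giving $\disv\eps\nu^*(-\rmD_u\enen\mu) = \tfrac{\eps\nu}{2}\|u'\|^2_{H^1,\bbD}$, $\calR_\eps^*(-\rmD_z\enen\mu) = \tfrac{\eps}{2}\|z'\|_{L^2}^2$, and $\dish\eps\nu^*(z,-\mu p + \sigma_\dev) = \tfrac{\eps\nu}{2}\|p'\|_{L^2}^2$, while the $1$-homogeneous parts yield the ``dissipation part'' $\calR(z')+\calH(z,p')$. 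Summing these six contributions produces exactly the right-hand side of \eqref{0307192010}, and a simple squaring check (essentially Cauchy--Schwarz saturated thanks to the three individual Fenchel identities) confirms that $\DVito_\nu(q')\,\DVitos(t,q) = \eps\bigl(\nu\|u'\|^2_{H^1,\bbD} + \|z'\|_{L^2}^2 + \nu\|p'\|_{L^2}^2\bigr)$, matching $\MVito$.

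For the characterization via \eqref{0307191911}: the forward direction (solution implies Energy-Dissipation balance) is obtained by integrating the pointwise identity \eqref{0307192010} over $[0,t]$ and applying Proposition~\ref{prop:charact}(3) with the chain rule \eqref{chain-rule}. For the converse, I would establish the pointwise one-sided estimate
\begin{equation*}
\MVito(r,q(r),q'(r)) \;\geq\; -\langle \rmD_q \enen\mu(r,q(r)), q'(r)\rangle_\bfQ \qquad \text{for a.a.\ } r \in (0,T),
\end{equation*}
valid for every $q \in H^1(0,T;\bfQ)$. This will follow by applying the Cauchy--Schwarz inequality to the scalar product
$\DVito_\nu(q')\,\DVitos(t,q)$ to bound it from below by
$\|u'\|_{H^1,\bbD}\|{-}\rmD_u\enen\mu\|_{(H^1,\bbD)^*} + \|z'\|_{L^2}\tilded_{L^2}({-}\rmD_z\enen\mu,\partial\calR(0)) + \|p'\|_{L^2}\dLtwo({-}\rmD_p\enen\mu,\partial_\pi\calH(z,0))$, and then bounding each of these three products from below via a Young inequality combined with the decomposition $\xi = \xi_0 + \xi_1$, $\xi_0 \in \partial\calR(0)$ (respectively $\partial_\pi\calH(z,0)$), to recover $\langle -\rmD_z \enen\mu, z'\rangle \leq \calR(z') + \|z'\|_{L^2}\tilded_{L^2}(-\rmD_z\enen\mu,\partial\calR(0))$, and similarly for the plastic component (using $\langle \xi_0, z'\rangle \leq \calR(z')$ since $\xi_0 \in \partial\calR(0)$ and $\calR$ is $1$-homogeneous).

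Combining the chain rule with the assumed balance \eqref{0307191911} then yields $\int_0^t\bigl[\MVito(r,q,q')+\langle\rmD_q\enen\mu,q'\rangle\bigr]\dd r = 0$. Since the integrand is nonnegative a.e., it vanishes a.e.; tracing equality back through all the intermediate Cauchy--Schwarz and Young steps forces the three individual Fenchel identities, hence the subdifferential inclusion \eqref{abs-gen-gradsyst}, i.e., $q$ solves Problem~\ref{prob:visc}. The main delicate point I anticipate is the careful handling of the damage component: the distance $\tilded_{L^2}(\cdot,\partial\calR(0))$ involves the $\Hs(\Omega)^*$--$\Hs(\Omega)$ duality and requires exploiting the strict positivity \eqref{strict-posivity} together with the explicit description of $\partial\calR(0)$ coming from Lemma \ref{l:expl-subd}, so that both the Fenchel decomposition and the equality cases remain consistent with the constraint $z' \leq 0$ a.e.
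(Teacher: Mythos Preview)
Your route to the pointwise identity \eqref{0307192010} is sound and somewhat more direct than the paper's. The paper squeezes $\MVito$ between two quantities that coincide along solutions: on one side Young's inequality $\DVito_\nu\DVitos\le\tfrac{\eps}{2}\DVito_\nu^2+\tfrac{1}{2\eps}(\DVitos)^2$ combined with the Fenchel--Moreau identity \eqref{FM-bal} gives $\MVito\le\psie\eps\nu+\psie\eps\nu^*=\langle-\rmD_q\enen\mu,q'\rangle_{\bfQ}$; on the other, the splitting through $\partial\calR(0)$ and $\partial_\pi\calH(z,0)$ followed by Cauchy--Schwarz in $\R^3$ gives $\langle-\rmD_q\enen\mu,q'\rangle_{\bfQ}\le\MVito$ (this is exactly your one-sided bound). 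You instead evaluate each of the three conjugate terms directly from the componentwise subdifferential inclusions. Both arguments yield $\DVitos=\eps\DVito_\nu$ along solutions, and the forward implication of the characterization follows at once.

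Your converse argument, however, has a genuine gap --- and it is not the damage-component technicality you flag. From \eqref{0307191911} and your one-sided estimate you correctly conclude $\MVito=\langle-\rmD_q\enen\mu,q'\rangle_{\bfQ}$ a.e. But tracing equality back through the Cauchy--Schwarz steps only yields a measurable $\lambda(r)\ge0$, \emph{common} to the three components, such that the triple $(u',z',p')$ satisfies system \eqref{visc-prob} with $\eps$ \emph{replaced by} $1/\lambda(r)$: equality in the $\R^3$-Cauchy--Schwarz forces the two vectors to be parallel, and equality in the individual duality estimates then fixes the directions, but nothing determines the proportionality constant. Note that $\MVito=\calR(z')+\calH(z,p')+\DVito_\nu\DVitos$ does not actually depend on $\eps$ (despite the notation --- only $\nu$ and $\mu$ enter), so the contact condition cannot single out one value of $\eps$. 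A one-variable caricature already displays the failure: with a purely quadratic energy and quadratic dissipation, every curve with nonincreasing energy satisfies the $\MVito$-balance, not only the viscous flow with the prescribed rate. In fact the paper's own proof only establishes \eqref{0307192010} along solutions (whence the forward direction) and does not argue the converse separately.
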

%
 \begin{proof}
First, we have that for a.e.\   $r \in (0,T)$
\begin{equation}
\label{Vito-10Lug2019}
\begin{aligned}
&
 \MVito(r, q(r), q'(r))  =  \calR(z'(r)) + \calH(z(r), p'(r)) +    \DVito_\nu(q'(r))  \,  \DVitos(r,q(r))
\\
&\leq\calR(z'(r)) + \calH(z(r), p'(r)) + \frac\eps2  \DVito_\nu^2(q'(r)) 
 + \frac{1}{2\eps}( \DVitos(r,q(r)))^2
\leq \pairing{}{\bfQ}{{-} \rmD_q\enen{\mu} (r,q(r))}{q'(r)} \,,
\end{aligned}
 \end{equation}
 by the Cauchy inequality and \eqref{FM-bal}, which holds along the solutions.
\par
Let us now prove the converse inequality. Consider a measurable selection $r\mapsto \gamma(r) \in   \partial \calR(0) $ fulfilling 
\[
 \tilded_{L^2(\Omega)}( {-}\rmD_z \calE_\mu(r, q(r)), \partial \calR(0)) = \| {-}\rmD_z \calE_\mu(r, q(r)){-}\gamma(r)\|_{L^2(\Omega)}
\]  
 (observe that the existence of $\gamma$ is guaranteed by the fact that $ \tilded_{L^2(\Omega)}( {-}\rmD_z \calE_\mu(r, q(r)), \partial \calR(0))<+\infty$ for almost all $r\in (0,T)$,  and 
  that 
  $r \mapsto \rmD_z \calE_\mu(r, q(r))$ is measurable).   
Analogously, let 
$r\mapsto \rho(r) \in \partial_\pi \calH(z(r),0)$ fulfill
\[
\dLtwo({-}\rmD_p \calE_\mu(r, q(r)),\partial_\pi \calH(z(r),0))  =  \| {-}\rmD_z \calE_\mu(r, q(r)) -\rho(r)\|_{L^2(\Omega)}\,.
\]
Then, we have (using shorter notation for the duality pairings between $H_\Dir^1(\Omega;\R^n)$ and $H_\Dir^1(\Omega;\R^n)^*$, 
$\Hs(\Omega)$ and $\Hs(\Omega)^*$, and for the scalar product in $L^2(\Omega;\MD)$) 
\begin{equation}
\label{DVito-added}
\begin{aligned}
  & \pairing{}{\bfQ}{{-}\rmD_q \calE_\mu(r, q(r))}{q'(r)}  =  \pairing{}{H^1}{{-}\rmD_u \calE_\mu(r, q(r))}{u'(r)} +
  \pairing{}{\Hs}{{-}\rmD_z \calE_\mu(r, q(r))}{z'(r)} + \pairing{}{L^2}{{-}\rmD_p \calE_\mu(r, q(r))}{p'(r)} 
  \\
  & \leq \| u'(r)\|_{ H^1, \bbD }  \|{-}\mathrm{D}_u  \calE_\mu (r, q(r))\|_{( H^1, \bbD )^*}  +  \pairing{}{\Hs}{{-}\rmD_z \calE_\mu(r, q(r)){-}\gamma(r)}{z'(r)} 
  +\pairing{}{\Hs}{\gamma(r)}{z'(r)}
  \\
  & \qquad   +  \pairing{}{L^2}{{-}\rmD_p \calE_\mu(r, q(r)){-}\rho(r)}{p'(r)} 
  +\pairing{}{L^2}{\rho(r)}{p'(r)}  
 \\
 &  \stackrel{(1)}{\leq}   \| u'(r)\|_{ H^1, \bbD }  \|{-}\mathrm{D}_u  \calE_\mu(r, q(r))  \|_{( H^1, \bbD )^*}  +  \|z'(r)\|_{L^2} \tilded_{L^2} ({-}\mathrm{D}_z \calE_\mu(r, q(r)),\partial\calR(0)) + \calR(z'(r))
 \\
 &  \qquad  + \|p'(r)\|_{L^2}  d_{L^2}({-}\rmD_p \calE_\mu(r, q(r)),\partial_\pi \calH(z(r),0))+\calH(z(r), p'(r))
 \\
 & 
  \stackrel{(2)}{\leq}   \calR(z'(r)) + \calH(z(r), p'(r)) +    \DVito_\nu(q'(r))  \,  \DVitos(r,q(r))  =  \MVito(r, q(r), q'(r)), 
  \end{aligned}
 \end{equation}
 where (1) follows from the very definition of $\partial\calR(0)$ and $\partial \calH(z(r),0)$ \GGG combined with the fact that  $\calR(0)=\calH(z(r),0)=0$,
 and (2) from the Cauchy-Schwarz inequality. 
 \par
 Then, all inequalities in \eqref{Vito-10Lug2019} are equalities whence, in particular, 
 we conclude that for a.a.\ $r \in (0,t)$
 \begin{equation}
 \label{DVITOequalsSTAR}
\DVitos(r, q(r)) = \varepsilon   \DVito_\nu(q'(r)) \quad  \text{and} \quad   \MVito(r,q(r), q'(r))=\pairing{}{\bfQ}{{-} \rmD_q\enen{\mu} (r,q(r))}{q'(r)} \,. 
 \end{equation}
%
 This shows \eqref{0307192010} and concludes the proof.
 \end{proof} 
 
We now study the semicontinuity properties of the  distance-type functionals  introduced in \eqref{Psi-eps-star}, that also enter the definition of 
$\DVitos$. 
We  shall make use of the norms     $\|\cdot\|_{(H^1, \bbD)}$, $\| \cdot \|_{(H^1, \bbD)^*}$ from \eqref{norma-equivalente}, \eqref{norma-duale},   and refer to the space  $\Hsm$  and  the functional $\kappa:\Hs(\Omega) \to \R $   introduced in Lemma \ref{l:expl-subd}. 
\begin{lemma}\label{lemma:sup}
 Let $\mu>0 $ be fixed. 
For any $({t}, {q})=({t},({u}, {z}, {p})) \in [0,T]\times \bfQ$
there holds
\begin{subequations}\label{eqs:1106192255}
\begin{align}
\|\rmD_u \calE_\mu({t}, {q}) \|_{(H^1, \bbD)^*}
&=  \sup_{\substack{\eta_u \in H_\Dir^1(\Omega;\R^n) \\  \|\eta_u\|_{(H^1, \bbD)}\le 1}}   \langle -\mathrm{Div}(  \sigma(t)  ) -F({t}), \eta_u \rangle_{ H_\Dir^1(\Omega;\R^n)}
\,, \label{1106192256} \\
\tilded_{L^2} ({-}\rmD_z \calE_\mu ({t},{q}),\partial\calR(0))^2 &= \sup_{\substack{ \eta_z \in \Hsm \\ \|\eta_z\|_{L^2} \le 1}}  \langle \As({z})+W'({z})+\tfrac12 \mathbb{C}'({z}) {e(t)}: {e(t)} + \kappa, -\eta_z \rangle_{\Hs(\Omega)}\,,\label{1106192257}\\
\dLtwo ({-}\rmD_p \calE_\mu({t}, {q}),\partial_\pi \calH( {z} ,0)) &=  \sup_{\substack{\eta_p \in L^2(\Omega;\MD) \\  \|\eta_p\|_{L^2}\le1}} \left(  \langle  {\sigma}_\dev (t)  - \mu {p}, \eta_p \rangle_{L^2(\Omega;\MD)} -\calH({z},\eta_p) \right) \,.\label{1106192258}
\end{align}
\end{subequations}
 Hence,  for all  $(t_k,q_k)_k, (t,q) \in [0,T]\times \bfQ$ with $t_k \to t$ and $q_k\weakto q$ in $\bfQ$ 
we have that
\begin{subequations}\label{eqs:1106191955}
\begin{align}
\| \mathrm{D}_u \calE_\mu (t,q )\|_{(H^1, \bbD)^*} &\leq \liminf_{k \to 0} \| \mathrm{D}_u \calE_{\mu} (t_k, q_k )\|_{(H^1, \bbD)^*} \,,\label{1106191956} \\
\tilded_{L^2} ({-}\mathrm{D}_z \calE_\mu (t,q ),\partial\calR(0)) & \leq \liminf_{k \to 0} \tilded_{L^2} ({-}\mathrm{D}_z \calE_{\mu} (t_k,q_k ),\partial\calR(0))\,,\label{1106191957}\\
\dLtwo ({-}\mathrm{D}_p \calE_\mu ( t,q ),\partial_\pi \calH( z ,0)) & \leq \liminf_{k \to 0} \dLtwo ({-}\mathrm{D}_p \calE_{\mu} ( t_k,q_k ),\partial_\pi \calH( z_k,0))\,. \label{1106191958}
\end{align}
\end{subequations}

\end{lemma}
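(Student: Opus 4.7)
\emph{Proof sketch.} The plan is to establish the three representation formulas \eqref{eqs:1106192255} by Fenchel--Moreau duality, and then to deduce the lower-semicontinuity statements \eqref{eqs:1106191955} from those supremum representations combined with the compact embedding $\Hs(\Omega)\Subset C^0(\ol\Omega)$ (which holds since $\mathrm{m}>n/2$).

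For \eqref{1106192256}, I would substitute $\mathrm{D}_u\calE_\mu(t,q)=-\mathrm{Div}(\sigma(t))-F(t)$ from \eqref{Fr-diff-q} into the definition of the dual norm $\|\cdot\|_{(H^1,\bbD)^*}$ in \eqref{norma-duale}. For \eqref{1106192258}, the key point is that $\partial_\pi\calH(z,0)=\{\omega\in L^2(\Omega;\MD):\omega(x)\in K(z(x))\text{ a.e.}\}$ (via \eqref{subdiff-calH} and $\partial_\pi H(z,0)=K(z)$, since $H(z,\cdot)$ is the support function of $K(z)$), whose support function on $L^2(\Omega;\MD)$ equals $\calH(z,\cdot)$; the classical Hilbert-space duality
\[
d(y,C)=\sup_{\|\eta\|\le 1}\bigl\{\langle y,\eta\rangle-\sigma_C(\eta)\bigr\}
\]
with $y=-\mathrm{D}_p\calE_\mu(t,q)=\sigma_\dev(t)-\mu p$ then yields \eqref{1106192258}. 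For \eqref{1106192257}, I would invoke Lemma~\ref{l:expl-subd} to write $\partial\calR(0)=-\kappa+\partial\calI(0)$ with $\partial\calI(0)$ being the non-negative cone in $L^2(\Omega)$ (the polar cone of $\Hsm$); Moreau's cone decomposition expresses the distance from the translated point to this cone as a supremum over its polar, yielding the claimed formula after the change of variable $\eta\mapsto-\eta_z$.

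For \eqref{eqs:1106191955}, the compact embedding forces $z_k\to z$ uniformly on $\ol\Omega$ whenever $q_k\weakto q$ in $\bfQ$. Consequently $\bbC(z_k)\to\bbC(z)$ and $\bbC'(z_k)\to\bbC'(z)$ strongly in $L^\infty(\Omega;\Lin(\Mnn;\Mnn))$, $W'(z_k)\to W'(z)$ in $C^0(\ol\Omega)$ (since $z$-values stay bounded away from $0$ along sequences with bounded energy), and $H(z_k(\cdot),\cdot)\to H(z(\cdot),\cdot)$ uniformly on bounded sets via \eqref{propsH-2+1/2}. Together with $u_k\weakto u$ in $H_\Dir^1$, $p_k\weakto p$ in $L^2$, and $F(t_k)\to F(t)$ strongly (by \eqref{1808191033} and $t_k\to t$), this implies $e_k\weakto e$ and $\sigma_k=\bbC(z_k)e_k\weakto\sigma$ in $L^2(\Omega;\Mnn)$. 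For each admissible test $\eta_u$ or $\eta_p$, the inner supremand in \eqref{1106192256} or \eqref{1106192258} is then continuous along the sequence; the elementary inequality $\liminf_k\sup_\eta F_k\ge\sup_\eta\liminf_k F_k$ transfers this continuity into lower semicontinuity of the sup, giving \eqref{1106191956} and \eqref{1106191958}.

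The main obstacle is the quadratic-in-$e$ term $\tfrac12\bbC'(z)e{:}e$ in \eqref{1106191957}, since weak $L^2$-convergence of $e_k$ does not preserve $e_k{:}e_k\to e{:}e$. The decisive sign check is that $\eta_z\in\Hsm$ forces $-\eta_z\ge 0$, while monotonicity \eqref{C3} ensures $\bbC'\ge 0$; hence $\tfrac12\bbC'(z)(-\eta_z)$ is a non-negative $L^\infty(\Omega)$-coefficient multiplying the non-negative quadratic form $e{:}e$. The classical weak-$L^2$ lower semicontinuity of $e\mapsto\int_\Omega f|e|^2\,\dd x$ for fixed $f\ge 0$, together with the uniform convergence $\bbC'(z_k)\to\bbC'(z)$ (so the coefficient error is negligible as $(e_k)_k$ is bounded in $L^2$), yields
\[
\liminf_{k\to\infty}\int_\Omega\tfrac12\bbC'(z_k)e_k{:}e_k\,(-\eta_z)\,\dd x\ge\int_\Omega\tfrac12\bbC'(z)e{:}e\,(-\eta_z)\,\dd x.
\]
The remaining contributions are unproblematic: $\langle\As(z_k),\eta_z\rangle=\ass(z_k,\eta_z)\to\ass(z,\eta_z)$ by weak $\Hs$-convergence, $\langle W'(z_k),\eta_z\rangle$ converges by uniform convergence, and the $\kappa$-contribution is linear in $\eta_z$. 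Passing to the sup over $\eta_z$ via the same liminf--sup inequality concludes \eqref{1106191957}.
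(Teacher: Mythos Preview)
Your proposal is correct and follows essentially the same strategy as the paper: derive the three supremum representations by convex duality, then deduce lower semicontinuity by checking that each supremand is weakly (lower semi)continuous in $(t,q)$ and invoking $\liminf_k \sup_\eta \geq \sup_\eta \liminf_k$. In particular, your treatment of the troublesome quadratic term via the sign observation $-\eta_z\ge 0$ and $\bbC'(z)\ge 0$ matches the paper exactly.

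Two minor points of comparison. For \eqref{1106192258} the paper computes $(\calH(z,\cdot)+I_{B_{L^2}})^*$ via inf-convolution and uses $\calH(z,\cdot)^*=I_{\partial_\pi\calH(z,0)}$, which is the Fenchel-dual version of your distance--support-function identity; the arguments are equivalent. For \eqref{1106192257} the paper does not argue from scratch but invokes \cite[Remark~4.3, Lemma~4.4]{Crismale-Lazzaroni}, whereas you sketch a Moreau cone-decomposition argument. Your idea is the right one, but be aware of a subtlety you gloss over: $\partial\calI(0)$ lives in $\Hs(\Omega)^*$ (not $L^2$), the distance $\tilded_{L^2}$ is an extended $L^2$-distance on that dual, and the supremum in \eqref{1106192257} is taken over $\eta_z\in\Hsm$ (not all of $L^2_-$) because the pairing with $\As(z)$ requires $\eta_z\in\Hs(\Omega)$. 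Making your Moreau argument rigorous therefore requires identifying $\partial\calI(0)\cap L^2(\Omega)$ with $L^2_+(\Omega)$ and a density step from $\Hsm$ to $L^2_-$; this is exactly what the cited reference supplies.
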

%
%
\begin{proof}
$\vartriangleright $ \eqref{eqs:1106192255}: 
  The well-known fact  that
$\|{-}\phi_u\|_{(H^1, \bbD)^*}
= \sup \Big\{ \langle -\phi_u, \eta_u \rangle_{ (H^1, \bbD) } 
\colon 
\|\eta_u\|_{(H^1, \bbD)}\le 1  \Big\}$  yields \eqref{1106192256}.  
 As for \eqref{1106192257},  one has
\[
 \tilded_{L^2(\Omega)} ({-}\phi_z,\partial\calR(0)) 
= \sup \Big\{ \langle -\phi_z + \kappa, \eta_z \rangle_{\Hs(\Omega)}
\colon
\eta_z \in \Hsm \,, \ \|\eta_z\|_{L^2(\Om)} \le 1  \Big\} \,.
\]
This follows from \cite[Remark 4.3 and Lemma 4.4]{Crismale-Lazzaroni}; in fact, the set $G$ from
 \cite{Crismale-Lazzaroni} equals the set $-\partial\calI(\zeta)=-\partial \calR(\zeta)-\kappa$ in the notation of the present paper, see Lemma \ref{l:expl-subd}.
Finally,
we have 
\[
\begin{aligned}
& \sup \Big\{ \langle -\phi_p, \eta_p \rangle_{L^2(\Omega;\MD)}-\calH(\sfz,\eta_p)
\colon
\|\eta_p\|_{L^2(\Omega;\MD)}\le1 \Big\} \\
& = \sup_{\eta_p \in L^2(\Omega;\MD)} \Big\{ \langle -\phi_p, \eta_p \rangle_{L^2(\Omega;\MD)}-\calH(\sfz,\eta_p) -  I_{B_{L^2}}(\eta_p)  \Big\}
\\
& = \left( \calH(\sfz,\cdot) +  I_{B_{L^2}}  \right)^*( {-}\phi_p) 
 = \min_{\eta_p \in L^2(\Omega;\MD)}
\left\{
\calH(\sfz,\cdot)^*(\eta_p) + \| {-}\phi_p-\eta_p\|_{L^2} 
\right\}
\\
& =  \min_{\eta_p \in L^2(\Omega;\MD)}
\left\{
 \|{-}\phi_p-\eta_p\|_{L^2} +I_{\partial_\pi \calH( \sfz ,0)}(\eta_p) 
\right\}
 = 
 \dLtwo ({-}\phi_p,\partial_\pi \calH( \sfz ,0)) \,,
\end{aligned}
\]
  where $I_{B_{L^2}}$ is the indicator function of the closed unit ball in 
$L^2(\Omega;\MD)$ (namely,
 $I_{B_{L^2}}(\eta_p)=0$ if $\|\eta_p\|_{L^2}\le 1$ and $I_{B_{L^2}}(\eta_p)=+\infty$ otherwise). 
 Hence 
 \eqref{1106191958}
 follows,  recalling \eqref{Fr-diff-q}.
\\
$\vartriangleright $ \eqref{eqs:1106191955}:
In order to show  the lower semicontinuity properties \eqref{eqs:1106191955}, we notice that,
 for fixed $\eta_u \in H_\Dir^1(\Omega;\R^n)$ and $\eta_p \in L^2(\Omega;\MD)$, 
  the functions $({t}, {q}) \mapsto \langle-\mathrm{Div}(\sigma(t)) -F({t}),\eta_u\rangle$ and   $({t}, {q}) \mapsto \langle {\sigma}_\dev(t) - \mu {p},\eta_p \rangle-\calH({z},\eta_p)$ in \eqref{1106192256} and \eqref{1106192258} (here we abbreviate the notation for the duality products) are continuous  with respect to the convergence of ${t}$ and  the weak convergence in $\bfQ$. For this, we rely on assumptions ($2.\mathbb{C}$), on \eqref{propsH-2+1/2},  and also on the fact that 
if ${q}_k \weakto {q}$ in $\mathrm{Q}$ then $z_k\to z $ in $\rmC^0(\overline\Omega)$. 
 
Moreover, for fixed  $\eta_z \in \Hsm$  the function $({t}, {q}) \mapsto \langle\As({z})+W'({z})+\tfrac12 \mathbb{C}'({z}) {e}: {e} + \kappa, -\eta_z \rangle$ is semicontinuous with respect to the convergence of ${t}$ and the weak convergence in $\bfQ$: the  contribution  $({t}, {q}) \mapsto \langle\As({z})+W'({z}) + \kappa, -\eta_z \rangle$ is continuous, recalling \eqref{bilinear-s_a}, ($2.\mathbb{C}$), ($2.W$), while $({t}, {q}) \mapsto\langle \mathbb{C}'({z}) {e}: {e}, -\eta_z \rangle$ is lower semicontinuous, since $-\eta_z \geq 0$ (cf.\ also \cite[(4.48) and (4.52)]{Crismale-Lazzaroni}).

Therefore we get \eqref{eqs:1106191955} since, by \eqref{eqs:1106192255}, we are  taking   supremums of lower semicontinuous functions. 
\end{proof}


 \section{Time discretization}
 \label{s:4}
\RRRN  In this section we discretize the rate-dependent system \eqref{RD-intro} and, again exploiting its underlying 
 gradient structure, we derive a series of estimates on the discrete solutions that are uniform w.r.t.\ the discretization parameter $\tau$, as well as  the parameters  $\eps$, $\nu$, and $\newmu$.  Therefore, 
 \begin{compactitem}
 \item[-]  we shall use these estimates to pass to the limit in the discretization scheme, for  $\eps$, $\nu$, and $\newmu$  fixed, and construct a solution to 
 Problem \ref{prob:visc} in Section \ref{s:exist-viscous};
 \item[-] since the viscous solutions to  system \eqref{RD-intro} thus obtained will enjoy estimates uniform w.r.t.\ $\eps$  and $\nu$,  we shall resort to them in the vanishing-viscosity   analyses as $\eps\down 0$ and $\eps,\nu \down 0$,
  for $\newmu>0$ fixed, carried out in Section~\ref{s:van-visc};
 \item[-] \GGG the estimates  that are  also uniform w.r.t.\ $\newmu>0$, will be inherited by the
  viscous solutions. Therefore, we  shall  exploit them to perform  the \emph{joint} vanishing-viscosity and  vanishing-hardening analysis in Section~\ref{s:van-hard},  as well. 
 \end{compactitem}
 %
    Throughout this section as well, we shall omit
     to explicitly invoke the conditions listed in Section 
 \ref{s:2} in the various statements,
  with the exception of Proposition 
 \ref{prop:enh-energy-est}, which 
 needs further conditions, in addition to those of  Section~\ref{s:2}. 
  Recall that in what follows we  shall denote by the symbols $c,\,c',\, C,\,C'$ various positive constants depending only on
known quantities.
 \par
  We construct time-discrete solutions to the Cauchy problem for the  rate-dependent  system for damage and plasticity   \eqref{RD-intro} by solving  the following
 time incremental minimization problems:
  for fixed  $\eps, \, \nu,\, \mu>0$,  we consider a  uniform  partition $\{0=t^0_\tau<\ldots<t^N_\tau=T\}$ of the time
interval $[0,T]$ with fineness $\tau = 
t_\tau^{k+1} - t_\tau^k =T/N$.  
We  shall   use the notation 
$
\eta^k_\tau:=\eta(t^k_\tau)$  for  $\eta\in\{w,F\}$.
 The elements  $(\ds q\tau k)_{0\leq k\leq N}= (\ds u\tau k, \ds z\tau{k}, \ds p \tau k)_{0\leq k\leq N}$ are determined by
\[
\ds u\tau 0 := u_0, \qquad 
 \ds z\tau 0: =z_0, \qquad  \ds p\tau 0: =p_0, 
\]
and, for $ k\in \{1, \ldots, N\},$
by solving the time-incremental problems 
\begin{align}
\label{def_time_incr_min_problem_eps} 
\begin{aligned}
\ds q\tau k  \in &\   \mathrm{Argmin} \Big\{   \tau \psie\eps\nu \left( q, \frac{q-\ds q\tau{k-1}}{\tau} \right) + \enen{\mu}(\ds t\tau{k},q)\, : \ q \in \bfQ\Big\}
\\   = &\  \mathrm{Argmin} \Big\{ 
 \frac\eps{2\tau} \left( \int_{\Omega}\nu \bbD (\sig u  - \sig {u_\tau^{k-1}}) : (\sig u  - \sig {u_\tau^{k-1}})   \dd x
  + \| z- \ds z\tau {k-1}\|_{L^2}^2  +  \GGG \nu   \| p- \ds p\tau {k-1}\|_{L^2}^2  \right)   \\
  &\qquad \qquad  \qquad \qquad
+\mathcal{R}(z-\ds z\tau{k-1})  + \mathcal{H}( z  , p-\ds p\tau{k-1})  
\\
&\qquad \qquad \qquad \qquad 
+\enen{\mu} (\ds t\tau k, u, p,  z) \colon 
u \in H_\Dir^1(\Omega;\R^n) \,, \ z \in \Hs(\Omega) \,, \ p \in 
L^2(\Omega;\MD)\Big \}\,.
\end{aligned}
\end{align}
Notice that, to shorten  notation,  we omit to write the dependence of the minimizers $(\ds q\tau k )_{k=1}^N$ on the positive parameters  $\eps$, $\nu$ and  $\mu$. 
\par 
\begin{remark}
\upshape
Taking into account that  $\calR(z{-}\ds z\tau{k-1}) = \calP(z{-}\ds z\tau{k-1}) + \calI(z{-}\ds z\tau{k-1})$ with $\calP$ and $\calI$ from 
\eqref{dam-diss-pot}, it is immediate to check that the minimum problem 
 \eqref{def_time_incr_min_problem_eps} reformulates as
 \[
 \begin{aligned}
\ds q\tau k  \in   & \mathrm{Argmin} \Big\{ 
 \frac\eps{2\tau} \left( \int_{\Omega}\nu \bbD (\sig u  - \sig {u_\tau^{k-1}}) : (\sig u  - \sig {u_\tau^{k-1}})   \dd x
  + \| z- \ds z\tau {k-1}\|_{L^2}^2  +  \GGG \nu   \| p- \ds p\tau {k-1}\|_{L^2}^2  \right)   \\
  & \quad\qquad \qquad
  -\int_\Omega \kappa   z \dd x 
 + \mathcal{H}( z  , p-\ds p\tau{k-1})   
+\enen{\mu} (\ds t\tau k, u, p,  z) \colon 
 (u, z, p) \in \bfQ,  \ z \leq \ds z\tau {k-1} \text{ in } \Omega \Big \}\,.
\end{aligned}
\]
Observe that, upon setting  $\nu=\mu=0$  the above  problem does coincide with the time-incremental minimization scheme  used to construct solutions to the viscous system in \cite{Crismale-Lazzaroni}. 
\end{remark}
The existence of a minimizing triple  for
\eqref{def_time_incr_min_problem_eps} 
 relies on the coercivity properties of the functional $\enen{\mu}$, specified in  Lemma \ref{l:coercivity-Enu} below.  Let us highlight that the coercivity estimates below are  \emph{uniform} w.r.t.\ the  hardening parameter  $\mu\in [0,1] $, and  in particular they are valid also for $\newmu=0$. This  will have a key role in the derivation of a priori estimates on the viscous solutions uniform w.r.t. $\newmu $   as well. 
\begin{lemma}
\label{l:coercivity-Enu}
There exist constants $c_E,\, C_E>0$ such that  for all $\mu \in [0,1]$ and  $(t,u,z,p) \in [0,T] \times \bfQ$ 
\begin{equation}
\label{coercivity}
\begin{aligned}
& \enen{\mu}(t,u,z,p)+\calH (z,p) + \|z\|_{L^2(\Omega)} 
\\
&\geq c_E \left( 
\| e(t)  \|_{\Lnn}
{+}\|z\|_{H^{\mathrm{m}}(\Omega)} {+} \mu^{1/2} \|p\|_{L^2(\Omega;\MD)}  {+} \mu^{1/2}\|u\|_{H^1(\Omega;\R^n)} +\|p\|_{L^1(\Omega;\MD)}\right)-C_E\,.
\end{aligned}
\end{equation}
\end{lemma}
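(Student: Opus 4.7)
The strategy is to combine the rewriting of $\enen{\mu}$ furnished by \eqref{serve-for-later} in Remark~\ref{rmk:rewriting} with the safe-load control \eqref{controllo-SL}. Setting
$$\Phi := \enen{\mu}(t,u,z,p) + \calH(z,p) + \|z\|_{L^2(\Omega)}$$
and invoking \eqref{serve-for-later}, I first rewrite
$$\Phi = \calF_\mu(t,u,z,p) + \Big(\calH(z,p) - \int_\Omega \rho_\dev(t)\,p\,\dd x\Big) + \|z\|_{L^2(\Omega)},$$
and apply \eqref{controllo-SL} to bound the middle term below by $\alpha\|p\|_{L^1(\Omega;\MD)}$. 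This reduces the task to a lower bound on $\calF_\mu$.

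For $\calF_\mu$, I use the explicit expression \eqref{def-Fnu}. The lower bound \eqref{C2} gives $\calQ(z,e(t)) \geq \tfrac{\gamma_1}{2}\|e(t)\|_{L^2}^2$; the nonnegative contributions $W(z)\geq 0$ (by \eqref{D1}) and $\tfrac12 \ass(z,z)\geq 0$ are kept. The cross term $-\int_\Omega \rho(t)\bigl(e(t){-}\rmE(w(t))\bigr)\,\dd x$ is absorbed into one quarter of the elastic energy via Young's inequality, with the resulting additive constant depending only on $\|\rho\|_{L^\infty(0,T;L^2)}$ and $\|w\|_{L^\infty(0,T;H^1)}$. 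The remaining term $-\langle F(t),w(t)\rangle$ is uniformly bounded in $t$ under \eqref{1808191033} and \eqref{dir-load}. This yields
$$\Phi \geq \tfrac{\gamma_1}{4}\|e(t)\|_{L^2}^2 + \tfrac{\mu}{2}\|p\|_{L^2}^2 + \tfrac12 \ass(z,z) + \alpha\|p\|_{L^1(\Omega;\MD)} + \|z\|_{L^2(\Omega)} - C.$$

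It remains to convert these quadratic bounds into the required linear ones. The elementary inequality $s^2\geq s-\tfrac14$, applied respectively (after suitable rescaling) to $s=\|e(t)\|_{L^2}$ and to $s=\mu^{1/2}\|p\|_{L^2}$, provides linear lower bounds on $\|e(t)\|_{L^2}$ and on $\mu^{1/2}\|p\|_{L^2}$ with constants independent of $\mu\in[0,1]$. For the $\Hs$-control of $z$, the estimate $\ass(z,z)^{1/2}\leq \ass(z,z)+1$ together with $\|z\|_{\Hs}\leq \|z\|_{L^2}+\ass(z,z)^{1/2}$ (see the definition of the inner product on $\Hs$ in Notation~\ref{not:1.1}) ensures that $\tfrac12\ass(z,z)+\tfrac12\|z\|_{L^2}$ dominates $\tfrac14(\|z\|_{\Hs}-1)$, so half of the $\|z\|_{L^2}$ contribution is spent on this bound and half remains available.

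Finally, the $\mu^{1/2}\|u\|_{H^1}$ term is recovered from the kinematic admissibility $\rmE(u)=e(t)+p-\rmE(w(t))$ and Korn's inequality on $H^1_\Dir(\Omega;\R^n)$, which give $\|u\|_{H^1}\leq K\bigl(\|e(t)\|_{L^2}+\|p\|_{L^2}+\|\rmE(w(t))\|_{L^2}\bigr)$; multiplying by $\mu^{1/2}\leq 1$ yields
$$\mu^{1/2}\|u\|_{H^1}\leq K\bigl(\|e(t)\|_{L^2}+\mu^{1/2}\|p\|_{L^2}+\|\rmE(w(t))\|_{L^2}\bigr).$$
A small fraction of the already-established linear lower bounds on $\|e(t)\|_{L^2}$ and $\mu^{1/2}\|p\|_{L^2}$ therefore reproduces the $\mu^{1/2}\|u\|_{H^1}$ term up to an additive constant, and collecting everything gives \eqref{coercivity}. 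The only delicate point is preserving uniformity in $\mu\in[0,1]$: this is achieved by systematically treating $\mu^{1/2}\|p\|_{L^2}$ as a single entity inside $s^2\geq s-\tfrac14$, and by using $\mu\leq 1$ rather than any inverse power of $\mu$ to pass between $\|\cdot\|_{H^1}$ and its $\mu^{1/2}$-weighted counterpart.
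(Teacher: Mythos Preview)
Your proof is correct and follows essentially the same route as the paper's: rewrite $\enen{\mu}$ via \eqref{serve-for-later}, use the safe-load estimate \eqref{controllo-SL} to extract $\alpha\|p\|_{L^1}$, bound $\calF_\mu$ from below by a sum of squares using \eqref{C2} and Young's inequality, and finish with Korn--Poincar\'e on $H^1_\Dir(\Omega;\R^n)$. The paper compresses the last two steps into the sentence ``\eqref{coercivity} easily follows by a Korn--Poincar\'e inequality''; your version spells out both the quadratic-to-linear conversion (via $s^2\geq s-\tfrac14$) and the explicit use of $\|z\|_{L^2}$ to upgrade $\ass(z,z)$ to control of $\|z\|_{\Hs}$, which is precisely why that term appears on the left-hand side of the statement.
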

\begin{proof}
 In the following lines, we  shall   use that $\calE_\mu$ rewrites as 
$\calE_\mu(t,u,z,p) = \calF_\mu (t,u,z,p)   - \int_\Omega \rho_\dev(t) p \dd x$, cf.\
 \eqref{serve-for-later}. 
Now, taking into account \eqref{C2}, the positivity of $W$,  and we easily have that 
\begin{equation}
\label{est-Fnu}
\calF_\mu (t,u,z,p) \geq \frac{\gamma_1}2 \| e(t) \|_{L^2}^2  +\GGG \frac{\mu}{2} \|p\|_{L^2}^2 +  \frac12 \ass(z,z)  - \frac1{2\gamma_1} \|\rho(t)\|_{L^2}^2 
\geq  \frac{\gamma_1}2 \| e(t) \|_{L^2}^2  + \GGG \frac{\mu}{2}  \|p\|_{L^2}^2  + \frac12 \ass(z,z)  -C_\rho,
\end{equation}
 By \eqref{controllo-SL}, we deduce that 
\[
\enen{\mu} (t,u,z,p) + \mathcal{H}( z,p)  \geq c\left(  \| e(t) \|_{L^2}^2  {+}\mu\|p\|_{L^2}^2 {+}   \ass(z,z) 
{+} \|p\|_{L^1} \right) -C,
\]
and \eqref{coercivity} easily follows  by a Korn-Poincaré  inequality for $u \in H_\Dir^1(\Omega;\R^n)$. 
\end{proof}
By virtue of Lemma \ref{l:coercivity-Enu} and 
the Direct Method of Calculus of Variations, 
problem \eqref{def_time_incr_min_problem_eps}
does admit a solution $(\ds q\tau k)_{0\leq k\leq N}= (\ds u\tau k, \ds z\tau{k}, \ds p \tau k)_{0\leq k\leq N}$.
Moreover, we set 
\begin{equation}
\label{discrete-place-holders}
e^k_\tau:=\sig{u^k_\tau+w^k_\tau}-p^k_\tau \qquad \text{ and } \qquad \sigma^k_\tau:=\bbC(z^k_\tau)e^k_\tau.
\end{equation}  
\par
For  $\eta \in \{q, u, e,z,p, \sigma,  w, F \}$, we  shall  use the short-hand notation 
\begin{equation}\label{1808191307}
\dsd \eta\tau k : = \frac{\ds \eta\tau k -\ds\eta\tau{k-1}}{\tau} \qquad \text{for } k \in \{0,\ldots,N\}\,.
\end{equation}
 In addition, 
 the following piecewise constant and piecewise
linear interpolation functions will be used
\[
\pwc \eta\tau (t) := \ds \eta \tau k   \,\,\text{for }t\in
(\ds t\tau{k-1}, \ds t \tau k],\,\,\, 
\upwc \eta\tau(t): =  \ds \eta \tau {k-1} \,\,\text{for }t\in
[\ds t\tau{k-1}, \ds t \tau k),\,\,\,
\pwl \eta\tau(t): =\ds \eta\tau{k-1} + \frac{t - \ds t\tau{k-1}}{\tau} (\ds \eta\tau{k}{-} \ds \eta\tau{k-1} )\,\,\text{for }t\in [\ds t\tau{k-1}, \ds t \tau k]
\]
with  \GGG  $\pwc \eta\tau(0):=\eta_0$, $\pwl \eta\tau(T):=\eta_\tau^k$.    
%
 Furthermore, we shall use the
notation
\[
\begin{array}{llll}
\pwc t\tau(r)&= \ds t\tau k  &\quad&\text{for } r\in (\ds t\tau{k-1}, \ds t \tau k],
\\
\upwc {t}\tau(r)&=  \ds t\tau {k-1} &\quad&\text{for } r\in [\ds t\tau{k-1}, \ds t \tau k).
\end{array}
\]
\par
Relying on the sum rule  from 
\cite[Prop.\ 1.107]{Mord-book-I}, we see 
 that
the minimizers $(\ds q\tau k )_{k=1}^N$ for \eqref{def_time_incr_min_problem_eps} 
satisfy the Euler-Lagrange equation
\begin{equation}
\label{abstract-subdiff}
\partial_{q'} \psie\eps\nu\left(\ds q{\tau}{k}, \frac{\ds q{\tau}{k}-\ds q{\tau}{k-1}}\tau \right)
+\tau \, \partial_{q} \psie\eps\nu\left(\ds q{\tau}{k}, \frac{\ds q{\tau}{k}-\ds q{\tau}{k-1}}\tau \right)
\ni
- \rmD_q \enen{\mu}(\ds t\tau k, \ds q\tau k)  \qquad \text{in } \bfQ^*, \quad \text{for } k=1,\ldots, N,
\end{equation}
where, with a slight abuse of notation, we have denoted by $\partial_q \psie\eps\nu$ the \emph{Fr\'echet subdifferential} of $q\mapsto \psie\eps\nu(q,q')$, i.e.\ the multivalued operator 
$\partial_q \psie\eps\nu: \bfQ \times \bfQ \rightrightarrows \bfQ^*$
 defined  by 
\[
{ \xi} \in \partial_q \psie\eps\nu(q,q') \quad  \text{ if and only if } \quad  \lim_{w\to q} \frac{\psie\eps\nu(w,q') - \psie\eps\nu(q,q') -
 \pairing{}{\bfQ}{{ \xi}}{w-q}}{\|w-q\|_{\bfQ}} \geq 0\,.
\]
Now, $\partial_q  \psie\eps\nu$ in fact reduces to the Fr\'echet subdifferential  
$\partial_z \calH: \mathrm{C}^0(\overline\Omega) \times L^1(\Omega;\MD) \rightrightarrows \mathrm{M}(\Omega)$. Hence,   the term 
  $\tau \partial_q \psie\eps\nu(\ds q\tau k,\dsd q\tau k)$ in  \eqref{abstract-subdiff} leads to the contribution
 $\tau\partial_z \mathcal{H}(\ds z\tau k, \dsd \pi\tau k)\in \mathrm{M}(\Omega) \subset \Hs(\Omega)^*$ that features in the discrete flow rule for the damage variable, cf.\ \eqref{DEL-2} below. 
Taking into account Lemma \ref{l:Fr-diff-Enu}, \eqref{abstract-subdiff} in fact   translates into 
the system, for all $k\in \{1,\ldots, N\}$,
\begin{subequations}
\label{discrete-Euler-Lagrange}
\begin{align}
\label{DEL-1}
&
- \mathrm{Div} \big(\varepsilon\nu \mathbb{D} \sig{\dsd u\tau k} {+}\ds\sigma\tau k\big) = \ds F\tau k
&& \text{in } H_\Dir^1(\Omega;\R^n)^*\,, 
\\
&
\label{DEL-2}
\partial \mathcal{R}_\eps(\dsd z\tau k)+ 
\As(\ds z \tau k) + W'(\ds z\tau k)   + \tau\, \partial_z \densh\eps\nu (\ds z \tau k, \dot p_\tau^k) 
\ni -\frac12 \bbC'(\ds z\tau k) \ds e\tau k : \ds e\tau k 
&& \text{in } 
\Hs(\Omega)^*\,,
\\
&
\label{DEL-3}
\partial_\pi \densh \eps\nu(\ds z\tau{k}, \dsd p\tau k)+\mu \ds p\tau k \ni  \big(\ds \sigma \tau k)_\dev && \text{a.e.\ in  } \Omega.
\end{align}
\end{subequations} 
For later use, let us rewrite system \eqref{discrete-Euler-Lagrange} in terms  of the piecewise constant and linear interpolants of the discrete solutions, also taking into account the structure formulae 
\eqref{sum-rule-calH} and \eqref{sum-rule-2} :  we have 
\begin{subequations}
\label{EL-interpolants}
\begin{align}
\label{EL-interp-1}
&
- \mathrm{Div} \big(\varepsilon\nu \mathbb{D} \sig{\pwl u\tau'} {+}\pwc \sigma \tau \big) = \pwc F\tau
&& \text{in } H_\Dir^1(\Omega;\R^n)^*\,, \   
\\
&
\label{EL-interp-2}
 \pwc\chi\tau  {+}\eps \pwl z\tau'{+}
\As(\pwc z\tau) {+} W'(\pwc z\tau)  {+} \tau \pwc\lambda\tau 
= {-}\frac12 \bbC'(\pwc z\tau) \pwc e\tau : \pwc e\tau
&& \text{in } 
\Hs(\Omega)^*  \ \text{with } 
 \pwc\chi\tau   \in \partial \mathcal{R}(\pwl z\tau'), \, 
\pwc\lambda\tau \in  \partial_z \densh\eps\nu (\pwc z\tau, \pwl p\tau')\,,
\\
&					
\label{EL-interp-3}
\pwc\omega\tau   +\eps\nu \pwl p\tau' + \mu \pwc p\tau = (\pwc\sigma\tau)_\dev
 && \text{a.e.\ in  } \Omega  \quad \text{with } \pwc\omega\tau  \in  \partial_\pi H(\pwc z\tau, \pwl p\tau')\,
\end{align}
\end{subequations} 
 almost everywhere in $(0,T)$. 
%
\par
Proposition \ref{prop:energy-est-SL} below collects the first set of a priori estimates for the discrete solutions.   Essentially, these estimates are obtained 
from the basic energy estimate following from choosing the competitor $q = \ds q\tau{k-1}$ in the minimum problem \eqref{def_time_incr_min_problem_eps}, which leads to 
\begin{equation}
\label{disc-en-est}
\enen{\mu} (\ds t{\tau}k, \ds q\tau k)+
\tau\psie\eps\nu \left( \ds q{\tau}{k}, \frac{\ds q\tau k -\ds q\tau{k-1}}{\tau}\right)
\leq \enen{\mu} (\ds t{\tau}{k-1}, \ds q\tau {k-1}) + \int_{\ds t\tau{k-1}}^{\ds t \tau k}
\partial_t \enen{\mu} (s,\ds q{\tau}{k-1}) \dd s\,.
\end{equation}
Let us mention in advance that, \GGG in  Proposition \ref{prop:discr-UEDE} ahead,  we shall derive a finer discrete Energy-Dissipation inequality, 
which will be the starting point for the limit passage as $\tau \downarrow 0$.
\begin{proposition}[Basic energy estimates]
\label{prop:energy-est-SL}
There exists a constant $C_1>0$, \GGG independent of $\eps,\,\mu,\,\nu, \, \tau>0$,  such that  the following estimates hold:
\begin{subequations}
\label{basic-en-est-SL}
\begin{align}
&
\begin{aligned}
&
\sup_{t\in [0,T]} \Big( \|\ole(t)\|_{\Lnn} + \|\olp(t)\|_{L^1(\Omega;\MD)}+
 \| \olu(t)\|_{\mathrm{BD}(\Omega)} + \| \olz(t)\|_{\Hs(\Omega)} 
\\
&
\qquad \qquad  + \int_\Omega W(\olz(t))\,\mathrm{d}x  + \sqrt{\newmu}\|\olp(t)\|_{L^2(\Omega;\MD)} + \sqrt{\newmu}\|\olu(t)\|_{H^1(\Omega;\R^n)} \Big)
  \leq C_1\,,
  \end{aligned}
   \label{1709172242-SL}
\\
&
\int_0^T   \left(  \|\dpt(s)\|_{L^1(\Omega;\MD)}+\|\dzt(s)\|_{L^1(\Omega)}\right) \dd s \leq C_1\,, 
 \label{ulteriore-stima-SL}
 \\
&
\varepsilon\int_0^T \bigg(\nu \|\dutau(s)\|_{H^1(\Omega;\R^n)}^2+\nu  \|\dpt(s)\|_{L^2(\Omega;\MD)}^2+\|\dzt(s)\|_{L^2(\Omega)}^2\bigg) \dd s \leq C_1\,. \label{terza-stima-SL}
\end{align}
\end{subequations}
Therefore,
 there exists $m_0>0$, independent of  $\eps,\,\nu, \, \newmu, \, \tau>0$ such that
\begin{equation}\label{2009171038-SL}
\olz(x,t)\geq m_0\,,\qquad z_\tau(x,t) \geq m_0 \quad \text{for all } (x,t) \in [0,T]\times \ol\Omega\,.
\end{equation}
\end{proposition}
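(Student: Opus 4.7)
My plan is to start from the basic discrete energy estimate \eqref{disc-en-est}, obtained by testing \eqref{def_time_incr_min_problem_eps} with the competitor $q = \ds q\tau{k-1}$ and applying the chain rule \eqref{chain-rule}. Summing over $k=1,\ldots,j$, the $1$-homogeneity of $\calH(\ds z\tau k,\cdot)$ rewrites the plastic dissipation as $\sum_k \calH(\ds z\tau k, \ds p\tau k {-} \ds p\tau{k-1})$, while the monotonicity $\dsd z\tau k \leq 0$ forced by $\partial\calI\subset\partial\calR$ keeps the damage dissipation tractable. The viscous portion $\sum_k \tau\bigl[\disv\eps\nu(\dsd u\tau k) + \tfrac\eps2\|\dsd z\tau k\|_{L^2}^2 + \tfrac{\eps\nu}2\|\dsd p\tau k\|_{L^2}^2\bigr]$ stays on the left-hand side.

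The crucial step is to handle the plastic term in a $\mu$-uniform way via the safe-load condition. Using the decomposition $\enen{\mu} = \calF_\mu - \int_\Omega \rho_\dev p\,\mathrm{d}x$ from \eqref{serve-for-later} together with the pointwise inequality \eqref{controllo-SL} applied to each increment,
\begin{equation*}
\calH(\ds z\tau k, \ds p\tau k {-} \ds p\tau{k-1}) \geq \int_\Omega \rho_\dev(\ds t\tau k)(\ds p\tau k {-} \ds p\tau{k-1})\,\mathrm{d}x + \alpha\|\ds p\tau k {-} \ds p\tau{k-1}\|_{L^1(\Omega;\MD)},
\end{equation*}
I perform a discrete integration by parts (Abel summation) on $\sum_k \int_\Omega \rho_\dev(\ds t\tau k)(\ds p\tau k {-} \ds p\tau{k-1})$: the boundary contribution cancels exactly the term $-\int_\Omega\rho_\dev(\ds t\tau j)\ds p\tau j$ sitting inside $\enen{\mu}(\ds t\tau j, \ds q\tau j)$, while the discrete differences of $\rho_\dev$ yield a remainder bounded by $\int_0^{\ds t\tau j}\|\rho_\dev'(s)\|_{L^\infty}\|\upwc p\tau(s)\|_{L^1}\,\mathrm{d}s$ by virtue of $\rho_\dev\in H^1(0,T;\Linftyn)$. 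Coupling with the $\mu$-uniform coercivity bound $\calF_\mu \geq \tfrac{\gamma_1}2\|e\|_{L^2}^2 + \tfrac\mu2\|p\|_{L^2}^2 + \tfrac12\ass(z,z) - C$ from the proof of Lemma~\ref{l:coercivity-Enu} isolates $\calF_\mu(\ds t\tau j,\ds q\tau j) + \alpha\sum_k\|\ds p\tau k{-}\ds p\tau{k-1}\|_{L^1} + \text{viscous sum}$ on the left.

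To close, I will bound the residual $\int_0^{\ds t\tau j}\partial_t\enen{\mu}(s,\upwc q\tau(s))\,\mathrm{d}s$ from \eqref{part-t-q}: the $\bar\sigma_\tau{:}\rmE(w')$ term is controlled by Young as $\delta\|\upwc e\tau\|_{L^2}^2 + C_\delta\|w'\|_{H^1}^2$ (absorbing $\|\upwc e\tau\|_{L^2}^2$ on the left), while $\langle F'(s), \upwc u\tau(s){+}w(s)\rangle$ is estimated thanks to \eqref{forces-u} by $\|F'(s)\|_{\BD^*}\|\upwc u\tau\|_{\BD}$; the $\BD$-norm is in turn dominated through the Poincaré inequality \eqref{PoincareBD} and the kinematic identity $\rmE(u)=e+p-\rmE(w)$ by $\|\upwc e\tau\|_{L^2} + \|\upwc p\tau\|_{L^1} + \|\rmE(w)\|_{L^2}$. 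Discrete Gronwall then delivers all sup-bounds in \eqref{1709172242-SL} (the $\sqrt{\newmu}$-weighted ones and $\int W(\olz)$ falling directly out of $\calF_\mu$, the $\BD$-bound via the Poincaré step), as well as $\alpha\int_0^T\|\dpt(s)\|_{L^1(\Omega;\MD)}\,\mathrm{d}s \leq C$ from \eqref{ulteriore-stima-SL}. The companion bound on $\|\dzt\|_{L^1}$ comes for free from monotonicity, namely $\int_0^T\|\dzt(s)\|_{L^1(\Omega)}\,\mathrm{d}s = \int_\Omega(z_0 - z_\tau^N)\,\mathrm{d}x \leq \|z_0\|_{L^1}$. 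The $\varepsilon$-weighted estimate \eqref{terza-stima-SL} is precisely the bound on the viscous sum, and the strict positivity \eqref{2009171038-SL} is inherited from the $L^\infty(0,T;L^1(\Omega))$ control of $W(\olz)$, the embedding $\Hs(\Omega)\Subset\rmC^0(\overline\Omega)$, and \eqref{D2}, via the quantitative argument of \cite[Lemma~3.3]{Crismale-Lazzaroni}.

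The principal obstacle throughout is the requirement of \emph{$\newmu$-uniformity}: since the hardening contribution $\tfrac\newmu2\|p\|_{L^2}^2$ degenerates as $\newmu\downarrow 0$, the $L^1$-in-space control of the plastic strain cannot be extracted from $\calF_\mu$ itself and must instead be produced from the interplay between the plastic dissipation $\calH$ and the safe-load condition through \eqref{controllo-SL}. This is exactly why the stronger force assumption \eqref{forces-u} (rather than the weaker \eqref{1808191033}) and the Abel-summation device are indispensable, and why the Gronwall remainder $\int\|\rho_\dev'\|_{L^\infty}\|\upwc p\tau\|_{L^1}\,\mathrm{d}s$ must be kept in $L^1$-in-space only.
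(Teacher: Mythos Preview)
Your proposal is correct and uses essentially the same ingredients as the paper (the decomposition $\enen{\mu}=\calF_\mu-\int_\Omega\rho_\dev p$, the safe-load lower bound \eqref{controllo-SL}, and a discrete Gronwall), but the organization is genuinely different. The paper does \emph{not} start from \eqref{disc-en-est} for $\enen{\mu}$ and then Abel-sum; instead it rewrites the incremental minimization problem \eqref{def_time_incr_min_problem_eps} directly in terms of $\calF_\mu$ (so that the competitor test already produces $\calH(\ds z\tau k,\ds p\tau k{-}\ds p\tau{k-1})-\int_\Omega(\rho_\tau^k)_\dev(\ds p\tau k{-}\ds p\tau{k-1})\,\mathrm{d}x\geq\alpha\|\ds p\tau k{-}\ds p\tau{k-1}\|_{L^1}$ on the left) and then observes that the resulting power term $\partial_t\calF_\mu$ depends only on $\|e\|_{L^2}$, not on $u$ or $p$. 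Hence the Gronwall runs on $\calF_\mu$ alone and closes with a single quantity. Your route keeps $\enen{\mu}$ throughout, so both the Abel remainder $\int\|\rho_\dev'\|_{L^\infty}\|\upwc p\tau\|_{L^1}$ and the $\langle F',u\rangle_{\BD}$ term (estimated via $\|u\|_{\BD}\lesssim\|e\|_{L^2}+\|p\|_{L^1}$) inject $\|p\|_{L^1}$ into the right-hand side; the Gronwall must then be run on the augmented quantity $\calF_\mu(\ds t\tau j,\ds q\tau j)+\alpha\sum_{k\leq j}\|\ds p\tau k{-}\ds p\tau{k-1}\|_{L^1}$. This works, but is slightly less economical. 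What your approach buys is that it stays closer to the ``natural'' energy $\enen{\mu}$ and makes the role of the Abel device explicit; what the paper's buys is the elimination of $p$ from the power term at the outset, yielding a shorter Gronwall.
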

\begin{proof}
It is immediate to check that  the time-incremental minimization problem 
 \eqref{def_time_incr_min_problem_eps} is equivalent to 
\begin{align*}
\begin{aligned}
\ds q\tau k  \in    \mathrm{Argmin} \Big\{   & 
 \frac\eps{2\tau} \left( \int_{\Omega}  \nu \, \bbD (\sig u  - \sig {u_\tau^{k-1}}) : (\sig u  - \sig {u_\tau^{k-1}})   \dd x
  + \| z- \ds z\tau {k-1}\|_{L^2}^2  +  \GGG \nu   \| p- \ds p\tau {k-1}\|_{L^2}^2  \right)  \\
  &  \
+\mathcal{R}(z-\ds z\tau{k-1})  + \mathcal{H}( z_\tau^{k}  , p-\ds p\tau{k-1})   
- \int_\Omega (\rho_\tau^{k}(t))_\dev ( p{-}\ds p\tau{k-1}) \dd x 
+\calF_\newmu (\ds t\tau k, u, p,  z) \colon  (u,z,p) \in \bfQ 
\Big \}\,,
\end{aligned}
\end{align*}
with $\calF_\mu$ from \eqref{def-Fnu}. 
%
Then, considering the analogue of  estimate 
%
%
%
\eqref{disc-en-est} and summing it  up with respect to the index $k=1,\dots,j$, with $j$ arbitrary in $ \{1,\ldots, N\}$, we find 
\begin{equation}
\label{Giuliano}
\calF_\newmu (\ds t{\tau}j, \ds q\tau j)+
\sum_{k=1}^{j} \left[ \tau\psie\eps\nu \left( \ds q{\tau}{k}, \frac{\ds q\tau k -\ds q\tau{k-1}}{\tau}\right)
- \int_\Omega (\rho_\tau^{k}(t))_\dev (\ds p\tau{k}-\ds p\tau{k-1}) \dd x  \right]
\leq \calF_\newmu (0, \ds q\tau 0) + \sum_{k=1}^{j} \int_{\ds t\tau{k-1}}^{ \ds t \tau k }
\partial_t \calF_\newmu (s,\ds q{\tau}{k-1}) \dd s.
\end{equation}
On the one hand, again thanks to \eqref{controllo-SL} we have that 
\[
\begin{aligned}
\tau\psie\eps\nu \left( \ds q{\tau}{k}, \frac{\ds q\tau k -\ds q\tau{k-1}}{\tau}\right)
- \int_\Omega (\rho_\tau^{k}(t))_\dev (\ds p\tau{k}-\ds p\tau{k-1}) \dd x &  \geq \tau\widetilde{\Psi}_{\eps,\nu} \left(  \frac{\ds q\tau k -\ds q\tau{k-1}}{\tau}\right) 
 \end{aligned}
\]
  with
 $\widetilde\Psi_{\eps,\nu}(  q') : =  \disv\eps\nu(u')+\calR_\eps(z')+ \alpha \|p'\|_{L^1}$. 
 
On the other hand,
 since 
  $\partial_t \calF_\newmu(t,q) = \int_\Omega \sigma(t) : \sig{w'(t)} \dd x - \int_\Omega \rho'(t) (e(t)-\rmE(w(t)) \dd x - \partial_t (\langle F(t), w(t) \rangle_{H^1})$,     we easily find also in view of \eqref{C2},  of \eqref{2909191106}-\eqref{2809192200}, and  of \eqref{dir-load}, 
 that
\[
|\partial_t \calF_\newmu(t,q)| \leq \mathcal{L}(t) \|e\|_{\Lnn}  + \tilde{\mathcal{L}}  (t) 
  \qquad \text{with } 
  \begin{cases}
  \mathcal{L}(t): = 
C \left(  \|w'(t)\|_{H^1} + \|\varrho'(t)\|_{L^2}\right) \in L^1(0,T)\,,
\\
  \tilde{\mathcal{L}} (t): =C'   \|F'(t)\|_{(H^1)^*}  \in L^1(0,T).
  \end{cases}
\]
From \eqref{Giuliano} we then gather  that  
\begin{equation}
\label{discrete-G-Lemma}
\begin{aligned}
\calF_\newmu & (\ds t{\tau}j, \ds q\tau j) + 
\sum_{k=1}^{j}  \tau  \widetilde\Psi_{\eps,\nu}(  \ds q\tau{k},\dsd q\tau k)
 \leq 
\calF_\newmu (0, \ds q\tau 0) +  \sum_{k=1}^{j}  \| \ds e\tau{k-1}\|_{\Lnn}  \int_{\ds t\tau{k-1}}^{ \ds t \tau k } 
\calL(s) \dd s  + \int_0^T  \tilde{\mathcal{L}} (t) \dd t  
\\
&\stackrel{(1)}{\leq} 
 \calF_1 (0, \ds q\tau 0)  + 
 \| \ds e\tau 0\| \|\mathcal{L}\|_{L^1(0,T)} + 
  \sum_{k=1}^{j}  \left( \calF_\newmu (\ds t{\tau}{k-1}, \ds q\tau {k-1}) +C_\rho +\frac1{2\gamma_1} \right)   \int_{\ds t\tau{k-1}}^{ \ds t \tau k } 
\calL(s) \dd s   + \int_0^T  \tilde{\mathcal{L}} (t) \dd t   
\\
&\stackrel{(2)}{\leq}  C +  \frac2{\gamma_1}  \sum_{k=0}^{j-1} \left( \calF_\newmu (\ds t{\tau}{k}, \ds q\tau k) {+}C_\rho\right)  \int_{\ds t\tau{k}}^{ \ds t \tau {k+1} }  
\calL(s) \dd s   + \int_0^T  \tilde{\mathcal{L}} (t) \dd t,   
\end{aligned}
\end{equation}
where {\footnotesize (1)} \&  {\footnotesize (2)} follow from the fact that, by
 \eqref{init-data}   and $\newmu \in [0,1]$,   it holds
$
\calF_\newmu (0, \ds q\tau 0) \le \calF_1 (0, \ds q\tau 0) \le C$  uniformly in  $\mu$ and $\tau >0$,
as well as from estimate \eqref{est-Fnu}.
We are now in a position to apply  a version of the discrete Gronwall Lemma  (cf., e.g., Lemma \ref{l:discrG1} ahead),
 to conclude that 
\[
\calF_\newmu (\ds t{\tau}j, \ds q\tau j) +C_\rho  \leq C' \exp \left( \frac2{\gamma_1}   \sum_{k=0}^{j-1}  \int_{\ds t\tau{k}}^{ \ds t \tau {k+1} } 
\calL(s) \dd s\right) \leq C,
\]
where the latter estimate follows from  \eqref{2909191106}-\eqref{2809192200} 
and \eqref{dir-load}. 
All in all, from \eqref{discrete-G-Lemma} we conclude that
\[
\exists\, C>0 \quad  \forall\,  \eps, \, \nu, \, \newmu,  \, \tau>0 \quad  \forall\, j \in \{1,\ldots, N\}, \qquad 
|\calF_\newmu (\ds t{\tau}j, \ds q\tau j)|+
\sum_{k=1}^{j}  \tau  \widetilde\Psi{\eps,\nu}(  \ds q\tau{k},\dsd q\tau k) \leq C.
\]
 In particular, we find that $\| \ds p\tau j\|_{L^1(\Omega)} \leq C$.
Then,
  recalling that $\enen{\mu}(t,q) = \calF_\nu(t,q)-\int_\Omega \rho_\dev(t) p \dd x $, that $\rho_\dev \in L^\infty(0,T;\Linftyn)$, and using \eqref{props-calH-2}
   it is immediate to check that 
 \[
 \exists\, C>0 \quad \forall\,  \eps ,\,\nu,\, \newmu, \, \tau >0\, : \qquad 
\sup_{t\in [0,T]}\left| 
\enen{\mu} (\pwc t\tau(t), \pwc q\tau (t))\right| + \int_0^{T}  \psie\eps\nu ( \pwc q{\tau}(s),   q_\tau' (s) )  \dd s \leq C\,.
\]
  Then, estimates \eqref{ulteriore-stima-SL} and \eqref{terza-stima-SL} immediately follow, while \eqref{1709172242-SL} ensues on account of the coercivity property \eqref{coercivity}.
  Let us additionally mention that the estimates for $\ole$ and $\olp$ entail a bound for $\sig{\olu}$ in $L^\infty(0,T;L^1(\Omega;\Mnn))$, which then yields the bound for $\olu$ in $L^\infty (0,T;  \mathrm{BD}(\Omega)) $  via the Poincar\'e type inequality \eqref{PoincareBD}.
Property \eqref{2009171038-SL}  can be deduced from  the fact that $\sup_{t\in [0,T]} \int_\Omega W(\olz(t))\,\mathrm{d}x \leq C_1$ (cf.\ \eqref{1709172242-SL})  
arguing as in \cite[Lemma 3.3]{Crismale-Lazzaroni}, cf.\  also  Remark \ref{rmk3.2}. 
\end{proof}

The following step is the derivation of \emph{enhanced} a priori estimates for the discrete
solutions
$(q_\tau)_\tau = (u_\tau, z_\tau, p_\tau)_\tau$,  which  are uniform with respect to the  discretization parameter $\tau>0$. 
Recall that, with Proposition~\ref{prop:energy-est-SL} we have obtained  for $(q_\tau)_\tau$  an a priori estimate in $H^1(0,T;\bfQ)$ that blows up as  $\varepsilon$, $\nu \down 0$; 
such estimate will be used to conclude the existence of viscous solutions to system \eqref{RD-intro} for $\eps$, $\nu$, and $\newmu>0$ fixed.  

Now, with   Proposition \ref{prop:enh-energy-est} below we prove a set of \emph{enhanced}  a priori estimates, \emph{uniform}  in 
$\tau$, $\nu$, $\newmu$,  and blowing up as $\eps\down 0$, for $\dot{e}_\tau$, $\dot{z}_\tau$ and $\sqrt{\newmu} \dot{p}_\tau$, $\sqrt{\newmu} \dot{u}_\tau$ (cf.\ \eqref{2009170048} below): such estimates will ensure the existence of solutions to the viscous system with higher temporal regularity than that  guaranteed  by  Proposition~\ref{prop:energy-est-SL}.  What is more, 
 we obtain a set of a priori estimates, also \emph{uniform in $\eps$},
 for the triple $(\sqrt{\newmu} \dot{u}_\tau, \dot{z}_\tau, \sqrt{\newmu} \dot{p}_\tau)$ in $W^{1,1}(0,T;\bfQ)$ and for $e_\tau$ in $W^{1,1}(0,T;L^2(\Omega;\Mnn))$). 
 Such bounds   will be at the basis of the vanishing-viscosity analyses carried out in Section~\ref{s:van-visc},  as well as
 of the vanishing-hardening  limit passage   in Section~\ref{s:van-hard}. 
 Let us mention in advance that all  of  these estimates  shall hold
 under the further condition that 
 $\nu\leq \mu$, which is consistent both with  
  \begin{compactitem}
 \item[-] the situation   in which
 the hardening parameter $\mu$ is kept fixed, the viscosity parameter $\eps $ vanishes,
 and either $\nu$ is kept fixed (cf.\ Section~\ref{s:6-nu-fixed}), or $\nu$ vanishes along with $\eps$ (cf.\ Section~\ref{s:6-nu-vanishes});
 \item[-] and with the case where we perform  joint  vanishing-viscosity and  vanishing-hardening analysis for the 
 viscous solutions, 
 cf.\ 
 Section~\ref{s:van-hard}.  
 \end{compactitem}
\par
We 
prove  Proposition~\ref{prop:enh-energy-est}   under the following additional conditions on the initial data  
$q_0=(u_0,z_0,p_0)$:
\begin{equation}
\label{EL-initial}
\begin{aligned}
&\rmD_q  \enen{\mu} (0,q_0)   = (\rmD_u \enen{\mu}(0,u_0, z_0, p_0), \rmD_z \enen{\mu}(0,u_0, z_0, p_0), \rmD_p \enen{\mu}(0,u_0, z_0, p_0)) 
\\ &  = \left(-\mathrm{Div}( \sigma_0 ) -F(0), \As(z_0)+W'(z_0)+\tfrac12 \mathbb{C}'(z_0) e_0: e_0, 
\newmu p_0 - (\sigma_0)_\dev \right) \in L^2(\Omega;\R^n{\times} \R {\times} \MD).
\end{aligned}
\end{equation}  
\begin{proposition}[Enhanced a priori estimates]
\label{prop:enh-energy-est}
 Under the assumptions of Section \ref{s:2},
suppose in addition that the initial data 
$(u_0,z_0,p_0)$ fulfill conditions \eqref{EL-initial}. 
Then,  for $\frac{\tau}{\varepsilon}$ small enough, we have that 
\begin{subequations}
\label{enhanced-discr-est}
\begin{enumerate}
\item there exists a constant $C_2^\eps>0$,  independent of $\tau$, $\nu$, $\mu>0$,  with $C_2^\eps \uparrow +\infty $
 as $\eps \downarrow 0$, such that for all $\tau,\, \nu,\, \newmu>0$    with \underline{$\nu \leq \newmu$} 
there holds  
\begin{equation}
\label{2009170048}
\begin{aligned}
&
 \sqrt{\newmu} \| \dot{u}_\tau  \|_{L^\infty(0,T;H^1(\Omega;\R^n))} + \| \dot{z}_\tau \|_{L^\infty(0,T;L^2(\Omega))} +
 \sqrt{\newmu} \| \dot{p}_\tau  \|_{L^\infty(0,T;L^2(\Omega;\MD))}  \leq C_2^\eps,
 \\
 & 
   \| \dot{e}_\tau  \|_{L^2(0,T;L^2(\Omega;\Mnn))} + \| \dot{z}_\tau \|_{L^2(0,T;\Hs(\Omega))} 
\leq C_2^\eps
\end{aligned}
\end{equation}
\item there exists a constant $C_2>0$,  independent of $\varepsilon$, $\tau$, $\nu$, $\mu>0$,  such that for all   $\tau,\,  \eps,\, \nu,\, \newmu>0$ with \underline{$\nu \leq \newmu$} 
there holds 
\begin{equation}
 \label{2009170054}
   \| \dot{e}_\tau  \|_{L^1(0,T;L^2(\Omega;\Mnn))} + \| \dot{z}_\tau \|_{L^1(0,T;\Hs(\Omega))} + \sqrt{\newmu}  \|\dot{p}_\tau  
 \|_{L^1(0,T;L^2(\Omega;\MD))} + \sqrt{\newmu}  \|\dot{u}_\tau  
 \|_{L^1(0,T;H^1(\Omega;\R^n))}  \leq C_2\,.
 \end{equation}
\end{enumerate} 
\end{subequations}

\end{proposition}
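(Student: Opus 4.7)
My plan is to obtain both sets of estimates by a discrete time-differentiation of the Euler-Lagrange system \eqref{discrete-Euler-Lagrange}. Writing that system at two consecutive steps $k$ and $k-1$ and subtracting, I obtain equations governing the second differences $\ddot{q}_\tau^k := (\dot q_\tau^k - \dot q_\tau^{k-1})/\tau$. I then test the differenced momentum balance with $\dot u_\tau^k$, the differenced damage flow rule with $\dot z_\tau^k$, and the differenced plastic flow rule with $\dot p_\tau^k$, and use the discrete chain-rule identity $\frac{a-b}{\tau}\cdot a = \frac{|a|^2-|b|^2}{2\tau} + \frac{\tau}{2}|\frac{a-b}{\tau}|^2$. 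Monotonicity of $\partial\mathcal{R}$ and of the plastic subdifferential $\partial_\pi H(z,\cdot)$ yields positive contributions that can be dropped, while the viscous portions $\varepsilon\dot z$ in \eqref{DEL-2} and $\varepsilon\nu\dot p$ in \eqref{DEL-3}, together with the Kelvin-Voigt term $\varepsilon\nu \mathbb{D}\rmE(\dot u)$ in \eqref{DEL-1}, produce on the left-hand side the key strongly-positive quantities $\varepsilon\|\ddot u_\tau^k\|_{H^1,\bbD}^2$, $\varepsilon\|\ddot z_\tau^k\|_{L^2}^2$, $\varepsilon\nu\|\ddot p_\tau^k\|_{L^2}^2$. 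The hardening term $\mu p$ produces $\frac{\mu}{2\tau}(\|\dot p_\tau^k\|_{L^2}^2-\|\dot p_\tau^{k-1}\|_{L^2}^2)$, while the elastic principal part $\sigma_\tau^k-\sigma_\tau^{k-1}$ yields, after the splitting $\sigma_\tau^k-\sigma_\tau^{k-1} = \mathbb{C}(z_\tau^k)(e_\tau^k-e_\tau^{k-1}) + (\mathbb{C}(z_\tau^k){-}\mathbb{C}(z_\tau^{k-1}))e_\tau^{k-1}$, the positive bulk term $\int \mathbb{C}(z_\tau^k)\dot e_\tau^k:\dot e_\tau^k\,dx$ plus a remainder controlled via \eqref{spd} and the uniform $L^\infty$-bounds on $e_\tau$, $z_\tau$ from \eqref{basic-en-est-SL}--\eqref{2009171038-SL}.

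The coupling cross-terms generated by differencing the $z$-dependent coefficients are handled by Young's inequality: from \eqref{spd}, $W\in C^2$ away from $0$ combined with \eqref{2009171038-SL}, and \eqref{propsH-2+1/2}, they contribute bounds of the form $C(\|\dot z_\tau^k\|_{L^2}^2+\|\dot z_\tau^k\|_{L^2}\|\dot p_\tau^k\|_{L^2}+\|\dot z_\tau^k\|_{L^2}\|\dot u_\tau^k\|_{H^1})$. \emph{Here is where the assumption $\nu\le\mu$ is crucial}: the mixed term $\|\dot z_\tau^k\|_{L^2}\|\dot p_\tau^k\|_{L^2}$ is absorbed by the hardening contribution $\mu\|\dot p_\tau^k\|_{L^2}^2$ and the viscous one $\varepsilon\|\ddot z_\tau^k\|_{L^2}^2$ without introducing factors that blow up with $\nu$; analogously $\|\dot z_\tau^k\|_{L^2}\|\dot u_\tau^k\|_{H^1}$ is absorbed using the hardening-type bound $\mu\|\dot u_\tau^k\|_{H^1}^2$ derivable from the kinematic admissibility $\rmE(\dot u)=\dot e+\dot p-\rmE(\dot w)$ together with $\nu\le\mu$. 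The first-step $k=1$ quantities $\|\dot q_\tau^1\|$ are bounded uniformly in all parameters by testing \eqref{abstract-subdiff} at $k=1$ with $\dot q_\tau^1$, whence the $L^2$-regularity of $\rmD_q\calE_\mu(0,q_0)$ in \eqref{EL-initial} together with the Fenchel-Young identity \eqref{FM-bal} yields $\varepsilon\nu\|\rmE(\dot u_\tau^1)\|_{L^2}^2 + \varepsilon\|\dot z_\tau^1\|_{L^2}^2 + \varepsilon\nu\|\dot p_\tau^1\|_{L^2}^2 \le C$.

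Summing the resulting telescopic inequality over $k=1,\dots,j$ and applying a discrete Gronwall argument (the smallness $\tau/\varepsilon\ll1$ being needed to absorb a ``$\tau$'' on the right into the positive $\varepsilon\|\ddot\cdot\|^2$ on the left) yields the $L^\infty$-in-time bounds in \eqref{2009170048}, with the constant $C_2^\varepsilon$ blowing up as $\varepsilon\downarrow0$ through the viscous coefficients on the left. The $L^2$-bounds on $\dot e_\tau$ and $\dot z_\tau$ follow from the accumulation of the positive quadratic terms $\int\mathbb{C}(z_\tau^k)\dot e_\tau^k:\dot e_\tau^k$ and of the $\mathrm{H}^{\mathrm m}$-part $\ass(\dot z_\tau^k,\dot z_\tau^k)$ produced by subtracting the $\As(z)$ terms across time steps.

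For part (ii), the uniform-in-$\varepsilon$ $L^1$-bound \eqref{2009170054} is obtained by a variant of the above strategy in which one exploits the one-homogeneity of $\calR$ and $\calH(z,\cdot)$: after bounding the viscous contributions from above by the rate-independent dissipation via $\nu\le\mu$ and Young's inequality with weights that do not introduce $\varepsilon^{-1}$-factors, the key observation is that testing the differenced system with $\mathrm{sign}$-type quantities (or, equivalently, estimating the total variation directly from the monotone structure) yields bounds on $\sum_k\|\dot q_\tau^k-\dot q_\tau^{k-1}\|$ where all $\varepsilon$-dependence cancels. The main obstacle I expect is precisely this uniform-in-$\varepsilon$ cancellation: the estimates for $\sqrt\mu\dot u_\tau$ and $\sqrt\mu\dot p_\tau$ in $L^1$ require that every cross-term involving $\varepsilon$ or $\varepsilon\nu$ be either of one sign or absorbed by the hardening dissipation $\mu\|\dot p\|^2$, which is possible only thanks to $\nu\le\mu$.
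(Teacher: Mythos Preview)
Your broad strategy — differencing the discrete Euler--Lagrange system in time and testing with the first differences $\dot q_\tau^k$ — matches the paper's Step~1, and the splitting of $\sigma_\tau^k-\sigma_\tau^{k-1}$ is correct. But several of the key mechanisms you describe are off, and one essential ingredient is entirely missing.

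\textbf{The missing safe-load estimate.} The cross-terms you will actually face after differencing are of the form $\|\dot z_\tau^k\|_{L^\infty}\|\dot p_\tau^k\|_{L^1}$ (coming from $\calH(z_\tau^k,\dot p_\tau^k)-\calH(z_\tau^{k-1},\dot p_\tau^k)$ via \eqref{propsH-2+1/2} and from the $z$-dependence of $\bbC$), \emph{not} $\|\dot z\|_{L^2}\|\dot p\|_{L^2}$. To absorb $\|\dot p_\tau^k\|_{L^1}$ without introducing a $\mu^{-1}$ factor you need the estimate
\[
\alpha\|\dot p_\tau^k\|_{L^1} \le C\bigl(\|\dot e_\tau^k\|_{L^2} + \|\sig{\dot w_\tau^k}\|_{L^2} + \sqrt{\mu}\,\|\dot p_\tau^k\|_{L^2}\bigr),
\]
which the paper derives (its Step~2) by testing the plastic flow rule with $\dot p_\tau^k$ and using the uniform safe-load condition \eqref{controllo-SL} together with the momentum balance. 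This is the content of \eqref{crucial-est-Vito} and Remark~\ref{2109170023}: without it the $\|\dot p\|_{L^1}$ term forces constants that blow up as $\mu\downarrow0$. Your proposal never invokes the safe load and thus cannot yield $\mu$-uniform bounds.

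\textbf{Where $\nu\le\mu$ actually enters.} It is \emph{not} used to absorb a mixed term $\|\dot z\|_{L^2}\|\dot p\|_{L^2}$ into $\mu\|\dot p\|^2$ (that Young inequality would cost a factor $\mu^{-1}$). In the paper the condition $\nu\le\mu$ is used only once, in its Step~3, to guarantee that the quantity $B_k^2 := \|\dot e_\tau^k\|_{L^2}^2 + \|\dot z_\tau^k\|_{\Hs}^2 + \mu\|\dot p_\tau^k\|_{L^2}^2 + \mu\|\dot u_\tau^k\|_{H^1,\bbD}^2$ (arising from the elastic, gradient-regularising, and hardening parts) dominates $A_k^2 := \nu\|\dot u_\tau^k\|_{H^1,\bbD}^2 + \|\dot z_\tau^k\|_{L^2}^2 + \nu\|\dot p_\tau^k\|_{L^2}^2$ (arising from the viscous parts). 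The resulting key inequality is
\[
\eps A_k(A_k-A_{k-1}) + c\,\tau B_k^2 \le C\tau(1+C_k^2) + C\tau A_k\|\dot z_\tau^k\|_{L^1} + \delta_{1,k}F_1,
\]
where the $L^\infty$-norm of $\dot z_\tau^k$ has been traded for $\|\dot z_\tau^k\|_{\Hs}$ (absorbed into $B_k^2$) plus $\|\dot z_\tau^k\|_{L^1}$ via the interpolation \eqref{1809170146}. You do not mention this interpolation, but it is what makes the $L^\infty$ cross-terms tractable.

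\textbf{The $\eps$-independent $L^1$ bound.} Your plan for part~(ii) — ``testing with sign-type quantities'' so that ``all $\eps$-dependence cancels'' — does not correspond to anything that works here. The paper instead observes that, since $B_k\ge A_k$, the inequality above has exactly the structure required by the specialised discrete Gronwall lemma~\ref{l:discrG2}: $a_k(a_k-a_{k-1}) + \gamma a_k^2 + \gamma M_k^2 \le \eta^2\gamma(\cdots) + \gamma a_k r_k$ with $\gamma=c\tau/\eps$, $a_k=A_k$, $M_k=B_k$, $r_k=C\|\dot z_\tau^k\|_{L^1}$. That lemma (whose proof mimics \cite[Proposition~3.8]{Crismale-Lazzaroni}) delivers $\sum_k\tau B_k\le C$ with $C$ independent of $\eps$, provided $\tau/\eps$ is small and $\sum_k\tau r_k\le C$ (which is \eqref{ulteriore-stima-SL}). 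This is the genuine mechanism behind the $\eps$-uniform bound; it is not a cancellation but a careful balance encoded in the Gronwall lemma.

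Finally, two smaller points: the viscous terms after differencing and testing give $\eps\nu\|\dot u_\tau^k\|_{H^1,\bbD}(\|\dot u_\tau^k\|_{H^1,\bbD}-\|\dot u_\tau^{k-1}\|_{H^1,\bbD})$, not a second-difference $\|\ddot u_\tau^k\|^2$; and the hardening term gives $\mu\tau\|\dot p_\tau^k\|_{L^2}^2$ on the left, not a telescopic difference.
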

\noindent
 As we  shall  see in  Remark \ref{sta-in-rmk} later on,
 assuming only \eqref{1808191033} in place of  \eqref{forces-u}, 
 estimates \eqref{enhanced-discr-est} hold for two constants $C_2^{\varepsilon,\mu}$ and $C_2^\mu$ depending also on $\mu>0$.  
%
\paragraph{\bf Outline of the proof.} 
Our argument will be split in the following steps:  
\begin{enumerate}
\item
The first step basically corresponds to  ``differentiating w.r.t.\ time''   each of the  discrete Euler-Lagrange equations/subdifferential inclusions 
satisfied by the discrete solutions, 
and testing them by 
$\dsd u\tau k$, $\dsd z\tau k$, $\dsd p\tau k$, respectively. 
In practice, we shall do so with the discrete equations for $\ds u\tau k$ and $\ds p\tau k$ (i.e., \eqref{DEL-1} and \eqref{DEL-3}), while, instead of working with the discrete flow rule \eqref{DEL-2} for $z$ (and dealing with the Fr\'echet subdifferential term therein), we  shall  resort to 
\eqref{GV-1} \& \eqref{GV-2} below, which are a key consequence of the minimum problem \eqref{def_time_incr_min_problem_eps}. 
We  will add up the resulting relations and perform suitable calculations.
\item Next, we perform a suitable estimate of $\|\dsd p\tau k  \|_{L^1(\Omega;\MD)}$. The key role of this calculation is commented upon in Remark \ref{2109170023} ahead. 
\item We  shall  rearrange the estimate  obtained  in Steps  $1$--$2$. 
\item
The tasks in Steps $1$--$3$   are addressed   by working with the discrete Euler-Lagrange system \eqref{discrete-Euler-Lagrange} for $k \in \{2,\ldots,N_\tau\}$. In this step, we  shall  separately treat the case $k=1$. 
\item We  shall  apply the Gronwall Lemma  \ref{l:discrG1/2}
to get estimates \eqref{2009170048}, blowing up as $\eps\down 0$;
\item We  shall  apply the Gronwall-type  Lemma  \ref{l:discrG2}
to get estimates \eqref{2009170054},  uniform w.r.t.\ $\eps, \, \nu, \,  \newmu>0$.
\end{enumerate}
\par
 We  shall  also use the following result.
\begin{lemma} \cite[Lemma 3.4]{Crismale-Lazzaroni}
The minimizers $(\ds q\tau k)_{k=1}^{N_\tau}$ of \eqref{def_time_incr_min_problem_eps} satisfy for all $k\in \{1,\ldots, N_\tau \}$
\begin{align}
&
\label{GV-1}
\calR(\zeta) + 
\eps \int_\Omega \dsd z\tau k \zeta \dd x + \ass(\ds z\tau k,\zeta) +\int_\Omega \left( W'(\ds z\tau k) {+} \frac12 \bbC'(\ds z\tau k) \ds e\tau k \ds e\tau k \right)\zeta \geq 0 \qquad \text{for all } \zeta \in \Hs(\Omega),
\\
& 
\label{GV-2}
\calR(\dsd z\tau k) + 
\eps \|\dsd z\tau k\|_{L^2}^2  + \ass(\ds z\tau k,\dsd z\tau k) +\int_\Omega \left( W'(\ds z\tau k) {+} \frac12 \bbC'(\ds z\tau k) \ds e\tau k \ds e\tau k \right)\dsd z\tau k \leq C_K \tau \| \dsd z\tau k\|_{L^\infty} 
 \| \dsd p\tau k\|_{L^1} 
\end{align}
with $C_K$ from \eqref{propsH-2+1/2}. 
\end{lemma}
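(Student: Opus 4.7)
The strategy is to exploit the minimum property \eqref{def_time_incr_min_problem_eps} by performing admissible variations in the $z$-component only, with $\ds u\tau k$ and $\ds p\tau k$ held fixed. Using the $1$-homogeneity of $\calR$ and of $\calH(z,\cdot)$ together with the decompositions $\calR_\eps = \calR + \tfrac\eps2\|\cdot\|_{L^2}^2$ and $\dish\eps\nu(z,\cdot) = \calH(z,\cdot) + \tfrac{\eps\nu}2\|\cdot\|_{L^2}^2$, the quantity to be minimized in $z$ becomes (modulo terms independent of $z$)
\begin{equation*}
J(z) := \calR(z-\ds z\tau{k-1}) + \tfrac{\eps}{2\tau}\|z-\ds z\tau{k-1}\|_{L^2}^2 + \calH(z,\ds p\tau k - \ds p\tau{k-1}) + \enen{\mu}(\ds t\tau k, \ds u\tau k, z, \ds p\tau k)\,.
\end{equation*}
Finiteness of $J$ forces $\ds z\tau k\le \ds z\tau{k-1}$ a.e., and the strict positivity $\ds z\tau k\ge m_0>0$ granted by \eqref{2009171038-SL} ensures that $\enen{\mu}$ is Fr\'echet differentiable at $\ds z\tau k$ with derivative given by \eqref{Fr-diff-q}.

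To prove \eqref{GV-1}, fix $\zeta\in\Hs(\Omega)$; since $\calR(\zeta)=+\infty$ whenever $\zeta\not\le 0$ a.e., we may assume $\zeta\le 0$ a.e. Then $z_\lambda := \ds z\tau k + \lambda\zeta$ is admissible (i.e.\ $\le \ds z\tau{k-1}$) for every $\lambda\in(0,1]$, and the increment $z_\lambda - \ds z\tau{k-1}=\tau\dsd z\tau k+\lambda\zeta$ is nonpositive, so $\calR$ acts linearly on it and $\calR(z_\lambda-\ds z\tau{k-1})-\calR(\tau\dsd z\tau k)=\lambda\calR(\zeta)$. Expanding the smooth terms of $J$ to first order in $\lambda$ and invoking $J(z_\lambda)\ge J(\ds z\tau k)$ yields
\begin{equation*}
0\le \lambda\Bigl(\calR(\zeta) + \eps\!\int_\Omega\!\dsd z\tau k\,\zeta\,\dd x + \langle\rmD_z\enen{\mu}(\ds t\tau k,\ds q\tau k),\zeta\rangle\Bigr) + \bigl[\calH(z_\lambda,\ds p\tau k-\ds p\tau{k-1}) - \calH(\ds z\tau k,\ds p\tau k-\ds p\tau{k-1})\bigr] + o(\lambda).
\end{equation*}
The bracketed $\calH$-difference is $\le 0$ by \eqref{props-calH-3} (since $z_\lambda\le \ds z\tau k$ and $\calH(\cdot,\pi)$ is monotone nondecreasing), so the inequality implies $\lambda A + o(\lambda)\ge 0$, where $A$ denotes the large parenthesized expression. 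Dividing by $\lambda>0$ and letting $\lambda\to 0^+$ gives $A\ge 0$, which upon expanding $\rmD_z\enen{\mu}$ via \eqref{Fr-diff-q} is precisely \eqref{GV-1}.

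For \eqref{GV-2} the natural test is the convex combination $z_\lambda := (1-\lambda)\ds z\tau k + \lambda\ds z\tau{k-1} = \ds z\tau k - \lambda\tau\dsd z\tau k$, $\lambda\in(0,1)$, which is admissible by convexity of the constraint $\{z\le \ds z\tau{k-1}\}$. Since $z_\lambda - \ds z\tau{k-1}=(1-\lambda)\tau\dsd z\tau k\le 0$, one has $\calR(z_\lambda-\ds z\tau{k-1})=(1-\lambda)\tau\calR(\dsd z\tau k)$ and $\tfrac{\eps}{2\tau}\|z_\lambda-\ds z\tau{k-1}\|_{L^2}^2=\tfrac{\eps\tau}2(1-\lambda)^2\|\dsd z\tau k\|_{L^2}^2$. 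The Lipschitz bound \eqref{props-calH-3+1/2} controls the non-smooth contribution:
\begin{equation*}
\calH(z_\lambda,\ds p\tau k-\ds p\tau{k-1}) - \calH(\ds z\tau k,\ds p\tau k-\ds p\tau{k-1}) \le C_K\|\lambda\tau\dsd z\tau k\|_{L^\infty}\|\tau\dsd p\tau k\|_{L^1} = C_K\lambda\tau^2\|\dsd z\tau k\|_{L^\infty}\|\dsd p\tau k\|_{L^1}.
\end{equation*}
Inserting these expansions and the first-order expansion $\enen{\mu}(\ldots,z_\lambda,\ldots)-\enen{\mu}(\ldots,\ds z\tau k,\ldots) = -\lambda\tau\langle\rmD_z\enen{\mu}(\ds t\tau k,\ds q\tau k),\dsd z\tau k\rangle + o(\lambda)$ into $J(\ds z\tau k)\le J(z_\lambda)$, dividing by $\lambda\tau>0$, and letting $\lambda\to 0^+$ produces \eqref{GV-2} after expanding $\rmD_z\enen{\mu}$ via \eqref{Fr-diff-q}.

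The key technical point in both computations is that $z\mapsto\calH(z,\pi)$ is only monotone nondecreasing and Lipschitz in $z$, never Fr\'echet differentiable, so classical first-order variational calculus is unavailable for the $\calH$-term. The remedy is the asymmetric interplay between monotonicity — which supplies a free nonpositive $\calH$-contribution along the admissible direction $\zeta\le 0$ in \eqref{GV-1} — and Lipschitz continuity — which generates precisely the right-hand side $C_K\tau\|\dsd z\tau k\|_{L^\infty}\|\dsd p\tau k\|_{L^1}$ in \eqref{GV-2}.
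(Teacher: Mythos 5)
Your proof is correct and follows essentially the same route as the cited source (\cite[Lemma 3.4]{Crismale-Lazzaroni}, to which the paper defers): unilateral variations $z_\lambda=\ds z\tau k+\lambda\zeta$, $\zeta\le 0$, exploiting the monotonicity \eqref{props-calH-3} for \eqref{GV-1}, and the comparison with the convex combination toward $\ds z\tau{k-1}$ together with the Lipschitz bound \eqref{props-calH-3+1/2} for \eqref{GV-2}. The only cosmetic remark is that the expansion of the $W$-term uses $\ds z\tau k\ge m_0$ and the boundedness of $\zeta$ to keep $z_\lambda$ in the region where $\enen{\mu}$ is differentiable, which only requires (and you only use) $\lambda$ small.
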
 
 \noindent \textbf{Proof of Proposition \ref{prop:enh-energy-est}:} 
\par
\noindent
{\bf Step $1$:} 
For $k \in \{2,\ldots, N_\tau\}$,  let us subtract \eqref{DEL-1}  at  step $k-1$ from  \eqref{DEL-1}  at  step $k$. Testing the resulting relation by $\dsd u\tau k$, we obtain
\begin{equation}
\label{step1-u}
\dddn{\int_\Omega \eps \nu \bbD \sig{\dsd u\tau k{-} \dsd u\tau{k-1}} :\sig{\dsd u\tau k} \dd x}{$\doteq I_1$} + \dddn{\int_\Omega (\ds \sigma\tau k {-} \ds \sigma \tau{k-1}) : \sig{\dsd u\tau k} \dd x}{$\doteq I_2$}  = 
\dddn{\langle \ds F\tau{k} {-} \ds F\tau{k-1}, \dsd u\tau k \rangle_{H^1(\Omega;\R^n)}}{$\doteq I_3$}
\end{equation}
 Since $\int_{\Omega} \bbD 
\sig{u_1}:\left(\sig{u_1}{-}\sig{u_2}\right) \dd x \geq \|u_1\|_{ H^1, \bbD } (\|u_1\|_{ H^1, \bbD }- \|u_2\|_{ H^1, \bbD }) \geq  \frac12\| u_1 \|^2_{ H^1, \bbD }  - \frac12 \| u_2 \|^2_{ H^1, \bbD }$,  we have 
\[
I_1 \geq \eps\nu \|\dsd u\tau k \|_{ H^1, \bbD } (\| \dsd u\tau k \|_{ H^1, \bbD }{-} \| \dsd u\tau {k-1} \|_{ H^1, \bbD }).
\]
As for $I_2$, we use that 
$\sig{\dsd u \tau k} = \dsd e\tau k + \dsd p\tau k - \sig{\dsd w\tau k}$
and that $\ds\sigma\tau k = \bbC(\ds z\tau k) \ds e\tau k$ (cf.\ \eqref{discrete-place-holders}), so that 
\begin{equation}
\label{later-used-for-u}
\ds\sigma\tau k - \ds\sigma\tau {k-1}= \bbC(\ds z\tau k) (\ds e\tau k {-} \ds e\tau{k-1}) + \left(  \bbC(\ds z\tau k){-}  \bbC(\ds z\tau {k-1}) \right)  \ds e\tau{k-1}.
\end{equation}
Therefore, 
\[
\begin{aligned}
I_2  & = \dddn{\int_\Omega \bbC(\ds z\tau k) (\ds e\tau k{-} \ds e\tau {k-1}): \dsd e\tau k \dd x}{$\doteq I_{2,1}$} +
   \dddn{\int_\Omega \left( \bbC(\ds z\tau k){-} \bbC(\ds z\tau {k-1}) \right) \ds e\tau {k-1}: \dsd e\tau k \dd x}{$\doteq I_{2,2}$}
\\ & \quad    + \dddn{\int_\Omega (\ds \sigma \tau k {-} \ds \sigma \tau {k-1} ) \dsd p\tau k \dd x }{$\doteq I_{2,3}$} -  
    \dddn{\int_\Omega (\ds \sigma \tau k {-} \ds \sigma \tau {k-1} ) \sig{ \dsd w\tau k} \dd x }{$\doteq I_{2,4}$}. 
    \end{aligned}
\]
Now, we have that 
\[
I_{2,1} = \tau \int_\Omega \bbC(\ds z\tau k)  \dsd e\tau k :  \dsd e\tau k \dd x \geq \gamma_1 \tau \| \dsd e\tau k \|_{L^2}^2
\]
by \eqref{C2},
whereas, since the mapping $z \mapsto \bbC(z)$ is Lipschitz continuous, 
\[
\left| I_{2,2} \right| \leq  C\tau \| \dsd z\tau k\|_{L^\infty} \| \ds e\tau{k-1}\|_{L^2}  \| \dsd e\tau k\|_{L^2}\leq C  
\tau \| \dsd z\tau k\|_{L^\infty}  \| \dsd e\tau k\|_{L^2},
\]
where the last estimate follows from the previously obtained \eqref{1709172242-SL}.
While the term $I_{2,3}$ will be canceled in the next lines, again relying on \eqref{later-used-for-u} and the Lipschitz continuity of $\bbC$, 
 we estimate 
\[
\begin{aligned}
\left| I_{2,4} \right|  & \leq \tau \| \bbC(\ds z\tau k)\|_{L^\infty} \|\dsd e\tau k\|_{L^2} \| \sig{\dsd w\tau k}\|_{L^2} + C \tau \| \dsd z \tau k\|_{L^\infty}  \| \ds e\tau{k-1}\|_{L^2}\| \sig{\dsd w\tau k}\|_{L^2} 
\\ & 
\leq 
C\tau \left(  \|\dsd e\tau k\|_{L^2} \| \sig{\dsd w\tau k}\|_{L^2}{+}  \| \dsd z \tau k\|_{L^\infty} \| \sig{\dsd w\tau k}\|_{L^2}  \right),
\end{aligned}
\]
where the latter estimate again follows from  \eqref{1709172242-SL}.
Finally, recalling that $F\in  H^1(0,T;  \BD(\Omega)^*)$, we may estimate 
\begin{equation}\label{1808191206}
\begin{aligned}
\left| I_3 \right|  & \leq \tau \| \dsd F\tau k\|_{ \BD(\Omega)^*}  \| \dsd u\tau k \|_{ \BD(\Omega)}
\leq  C\tau \| \dsd F\tau k\|_{ \BD(\Omega)^*}  \| \sig{\dsd u\tau k}\|_{L^1}
\\ &  \leq  C \tau \| \dsd F\tau k\|_{ \BD(\Omega)^*} \left( \| \sig{\dsd w\tau k}\|_{L^1}{+}  \| \dsd e\tau k\|_{L^1}{+} 
\| \dsd p\tau k\|_{L^1}\right) 
\end{aligned}
\end{equation}
 where the second estimate follows from Poincar\'e's inequality for $\BD(\Omega)$, cf.\ \eqref{PoincareBD},   and the very last one   follows from 
 the fact that $\sig{\dsd u \tau k} = \dsd e\tau k + \dsd p\tau k - \sig{\dsd w\tau k}$.  
All in all, combining the above calculations with \eqref{step1-u}, we conclude that
\begin{equation}
\label{conclu-est-u}
\begin{aligned}
&
 \eps \nu \|\dsd u\tau k \|_{ H^1, \bbD } (\| \dsd u\tau k \|_{ H^1, \bbD }{-} \| \dsd u\tau {k-1} \|_{ H^1, \bbD })
 + \gamma_1 \tau \| \dsd e\tau k \|_{L^2}^2 +\int_\Omega (\ds \sigma \tau k {-} \ds \sigma \tau {k-1} )_\dev \dsd p\tau k \dd x 
 \\ & \leq
 C\tau \Big(  \| \dsd z\tau k\|_{L^\infty}  \| \dsd e\tau k\|_{L^2} {+}  \|\dsd e\tau k\|_{L^2} \| \sig{\dsd w\tau k}\|_{L^2}{+}  \| \dsd z \tau k\|_{L^\infty} \| \sig{\dsd w\tau k}\|_{L^2} {+}  \| \dsd F\tau k\|_{ \BD^*} \| \sig{\dsd w\tau k}\|_{L^1}
 \\
 & \qquad \qquad 
 {+}  \| \dsd F\tau k\|_{ \BD^*}  \| \dsd e\tau k\|_{L^1}  {+}  \| \dsd F\tau k\|_{ \BD^*}   \| \dsd p\tau k\|_{L^1} \Big)\,.
 \end{aligned}
\end{equation}
\par
Let us now consider estimate \eqref{GV-2} at step $k$ and subtract from it \eqref{GV-1} at step $k-1$
 (recall that $k \in \{2,\ldots,N_\tau\}$), 
 with the test function $\beta: = \dsd z\tau k$. We thus obtain
\begin{equation}
\label{step1-z}
\begin{aligned}
&
 \dddn{\calR(\dsd z\tau k) - \calR(\dsd z\tau k)}{$=0$} + 
\varepsilon \int_\Omega (\dot z\tk {-} \dot z\tm) \dot z\tk  \dd x + \ass(z\tk -z\tm, \dot z\tk)
\\
& 
\leq   \dddn{\int_\Omega \big[W'(z\tm)-W'(z\tk) \big] \dot z\tk \dd x}{$\doteq I_4$}
+\dddn{\frac{1}{2}\int_\Omega  \big[\mathbb{C}'(z\tm)- \mathbb{C}'(z\tk) \big] e\tk: e\tk \, \dot z\tk  \dd x}{$\doteq I_5$}
 \\
 & \qquad 
- \dddn{\frac{1}{2}\int_\Omega  \left(\bbC'(\ds z\tau {k-1}) \ds e\tau k : \ds e\tau k {-} \bbC'(\ds z\tau {k-1}) \ds e\tau {k-1} : \ds e\tau {k-1}  \right) \dot z\tk  \dd x}{$\doteq I_6$}
  + C_K \tau \|\dsd z\tau k\|_{L^\infty} \| \dsd p \tau k \|_{L^1}\,. 
  \end{aligned}
\end{equation} 
Now, recall that, by \eqref{2009171038-SL}, $0<m_0 \leq \dsd z\tau k \leq 1$ for  all $k \in \{0,\ldots, N_\tau\}$. Since the restriction of $W'$ to $[m_0,1]$ is Lipschitz continuous, we conclude that 
\[
\left| I_4 \right| \leq C \int_\Omega |\ds z\tau k {-} \ds z\tau{k-1}||\dot z\tk | \dd x \leq C\tau \| \dsd z\tau k\|_{L^2}^2\,;
\]
by the Lipschitz continuity of $\bbC'$ we have that
\[
\left| I_5 \right| \leq C \int_\Omega |\ds z\tau k {-} \ds z\tau{k-1}| |\ds e\tau k|^2 |\dot z\tk | \dd x  \leq C\tau \| \dsd z\tau k\|_{L^\infty}^2 \| \ds e\tau k\|_{L^2}^2 \leq  C\tau \| \dsd z\tau k\|_{L^\infty}^2 \,,
\]
the latter estimate due to   \eqref{1709172242-SL}; finally, 
\[
\left| I_6 \right| \leq C\int_\Omega |\ds e\tau k{+} \ds e\tau{k-1}||\ds e\tau k{-} \ds e\tau{k-1}| \dot z\tk|  \dd x  \leq C \tau \| \dsd e\tau k \|_{L^2}  \| \dsd z\tau k\|_{L^\infty} \,,
\]
where we have used that $\|\bbC (\ds z\tau k )\|_{L^\infty} \leq C$, and again the previously proved \eqref{1709172242-SL}.
Inserting the above estimates into \eqref{step1-z} leads to 
\begin{equation}
\label{conclu-est-z}
\begin{aligned}
\eps \| \dsd z\tau k\|_{L^2} \left(  \| \dsd z\tau k\|_{L^2}{-}  \| \dsd z\tau {k-1}\|_{L^2} \right) + \tau \ass (\dsd z\tau k, \dsd z\tau k) \leq C\tau  \| \dsd z\tau k\|_{L^\infty} \left( \| \dsd z\tau k\|_{L^\infty}{+}  \| \dsd e\tau k \|_{L^2} + \| \dsd p\tau k \|_{L^1}   \right). 
\end{aligned}
\end{equation} 
\par
Prior to working with  \eqref{DEL-3}, 
let us specify that it reformulates as 
\begin{equation}
\label{with-omega-k}
\omega\tk + \eps\nu \dot p\tk +\newmu \ds p\tau k =  \big(\ds \sigma \tau k)_\dev \qquad \text{for some } \omega\tk \in \partial_\pi H(\ds z\tau{k}, \dsd p\tau k)  \qquad \aein\, \Omega
\end{equation}
(cf.\ \eqref{EL-interp-3}). 
 We subtract  \eqref{with-omega-k}, written at  step $k-1$,  from  \eqref{with-omega-k}  at  step $k$, and test  the resulting relation by $\dsd p\tau k$.  This leads to 
\begin{equation}
\label{step1-p}
\begin{aligned}
& \dddn{\int_\Omega (\ds \omega\tau k {-} \ds \omega\tau{k-1}) \dsd p\tau k \dd x}{$\doteq I_7$} +\varepsilon \nu \int_\Omega (\dot  p\tk {-} \dot p\tm) \dot p\tk  \dd x + \newmu\int_\Omega  (\ds p\tau k {-} \ds p\tau{k-1})\dsd p\tau k \dd x 
= \int_\Omega (\ds \sigma \tau k {-} \ds \sigma \tau {k-1} )_\dev \dsd p\tau k \dd x \,.
\end{aligned}
\end{equation}
From the $1$-homogeneity of $H$ and the fact that $\omega\tk \in \partial_\pi H(\ds z\tau{k}, \dsd p\tau k)$ and $\ds\omega\tau{k-1} \in \partial_\pi H(\ds z\tau{k-1}, \dsd p\tau {k-1})$  a.e.\ in $\Omega$, it follows that 
\[
\int_\Omega \ds \omega\tau k \dsd p\tau k \dd x = \mathcal{H}(\ds z\tau {k}, \dsd p\tau k), \qquad \int_\Omega \ds \omega\tau {k-1} \dsd p\tau k \dd x \leq  \mathcal{H}(\ds z\tau {k-1}, \dsd p\tau k)\,.
\]
Therefore,  by \eqref{props-calH-3} we conclude that
\[
\left| I_7 \right| \leq \left|  \mathcal{H}(\ds z\tau {k}, \dsd p\tau k) {-}   \mathcal{H}(\ds z\tau {k-1}, \dsd p\tau k) \right| \leq C_K' \tau \|  \dsd z\tau{k}\|_{L^\infty} \| \dsd p \tau {k-1}\|_{L^1} \,. 
\]
All in all, from \eqref{step1-p} we infer that 
\begin{equation}
\label{conclu-est-p}
\varepsilon \nu \| \dsd p\tau k\|_{L^2} \left( \| \dsd p\tau k\|_{L^2} {-}  \| \dsd p\tau {k-1}\|_{L^2} \right)   +  \newmu \tau  \|\dot p\tk\|_{L^2}^2 \leq   \int_\Omega (\ds \sigma \tau k {-} \ds \sigma \tau {k-1} )_\dev \dsd p\tau k \dd x + C  \tau \|  \dsd z\tau{k}\|_{L^\infty} \| \dsd p \tau {k-1}\|_{L^1}\,. 
\end{equation}
\par
Summing up \eqref{conclu-est-u}, \eqref{conclu-est-z},  and \eqref{conclu-est-p}, 
adding $\tau \| \dsd z\tau k  \|_{L^2}^2$  to both sides of the inequality, and observing the cancellation of one term, 
we conclude that 
\begin{equation}
\label{intermediate-conclusion}
\begin{aligned}
&
 \eps \nu \|\dsd u\tau k \|_{ H^1, \bbD } (\| \dsd u\tau k \|_{ H^1, \bbD }{-} \| \dsd u\tau {k-1} \|_{ H^1, \bbD }) +
 \eps \| \dsd z\tau k\|_{L^2} \left(  \| \dsd z\tau k\|_{L^2}{-}  \| \dsd z\tau {k-1}\|_{L^2} \right) 
\\
& 
\hspace{1.5em}+
\varepsilon \nu \| \dsd p\tau k\|_{L^2} \left( \| \dsd p\tau k\|_{L^2} {-}  \| \dsd p\tau {k-1}\|_{L^2} \right)  +   \bar{\zeta}   \tau  \left( \| \dsd e\tau k \|_{L^2}^2  {+} \| \dsd z\tau k \|_{\Hs}^2 {+} \newmu  \|\dot p\tk\|_{L^2}^2 \right)
\\
& \leq  
 C\tau \Big( \| \dsd z\tau k\|_{L^\infty}  \| \dsd e\tau k\|_{L^2} {+}  \|\dsd e\tau k\|_{L^2} \| \sig{\dsd w\tau k}\|_{L^2}{+}  \| \dsd z \tau k\|_{L^\infty} \| \sig{\dsd w\tau k}\|_{L^2} {+}  \| \dsd F\tau k\|_{ \BD^*} \| \sig{\dsd w\tau k}\|_{L^1}
 \\
 & \qquad \qquad 
 {+}  \| \dsd F\tau k\|_{ \BD^*}  \| \dsd e\tau k\|_{L^1}  {+}  \| \dsd F\tau k\|_{ \BD^*}   \| \dsd p\tau k\|_{L^1} {+} \| \dsd z\tau k\|_{L^\infty}^2 + \| \dsd z\tau {k}\|_{L^\infty}  \| \dsd p\tau k\|_{L^1} 
 \Big)\,. 
\end{aligned}
\end{equation}
with  $\bar{\zeta} = \min\{ \gamma_1, 1 \}$. 
\par
\noindent
{\bf Step $2$:}  Let us now estimate 
$\| \dsd p\tau k \|_{L^1}$  for $k \in \{2,\ldots,N_\tau\}$.   We observe that 
\begin{equation}
\label{crucial-est-Vito}
\begin{aligned}
\alpha \| \dsd p\tau k \|_{L^1}  &  \stackrel{(1)}{\leq}
\mathcal{H}( \ds z\tau {k}, \dsd p\tau k)   
- \int_\Omega (\ds \rho\tau k)_\dev  \dsd p\tau k \dd x 
\\ & 
 \stackrel{(2)}{=} \mathcal{H}( \ds z\tau {k}, \dsd p\tau k)    + \int_\Omega \ds \rho\tau k : (\dsd e\tau k{-} \sig{\dsd w\tau k} ) \dd x  -  \int_\Omega \ds \rho\tau k : \sig{\dsd u\tau k} \dd x 
 \\ & 
 \stackrel{(3)}{=} \mathcal{H}( \ds z\tau {k}, \dsd p\tau k)    + \int_\Omega \ds \rho\tau k : (\dsd e\tau k{-} \sig{\dsd w\tau k} ) \dd x  -   \langle \ds F\tau{k}, \dsd u\tau k \rangle_{\BD(\Omega)} 
 \\ & 
 \begin{aligned}
 \stackrel{(4)}{=}   -   \eps\nu \| \dsd p\tau k \|_{L^2}^2 - \newmu \int_\Omega \ds p\tau k \dsd p\tau k \dd x +\int_\Omega (\ds \sigma\tau k)_\dev  \dsd p\tau k \dd x  & + \int_\Omega \ds \rho\tau k : (\dsd e\tau k{-} \sig{\dsd w\tau k} ) \dd x 
 \\ &  - \int_\Omega \ds \sigma\tau k : \sig{\dsd u\tau k} \dd x    - \eps \nu \int_\Omega \bbD \sig{\dsd u\tau k}: \GGG \sig{\dsd u\tau k}  \dd x 
 \end{aligned}
 \\ & 
 \stackrel{(5)}{\leq} - \newmu \int_\Omega \ds p\tau k \dsd p\tau k \dd x 
+ \int_\Omega (\ds \rho\tau k{-} \ds \sigma\tau k) : (\dsd e\tau k{-} \sig{\dsd w\tau k} ) \dd x 
 \\ & 
\leq \sqrt{\newmu} \| \ds p\tau k \|_{L^2}\sqrt{\newmu} \| \dsd p\tau k \|_{L^2}
 + \| \ds \rho\tau k{-} \ds \sigma\tau k\|_{L^2} \|\dsd e\tau k{-} \sig{\dsd w\tau k} \|_{L^2} 
  \stackrel{(6)}{\leq}  C \left( \|\dsd e\tau k\|_{L^2} {+} \| \sig{\dsd w\tau k} \|_{L^2}  +\sqrt{\newmu} \| \dsd p\tau k \|_{L^2}  \right) 
\end{aligned}
\end{equation}
where {\footnotesize (1)} follows from  \eqref{controllo-SL},  {\footnotesize (2)} is due to the fact that $\dsd p\tau k = \sig{\dsd u\tau k +\dsd w\tau k} - \dsd e\tau k$,  {\footnotesize (3)} follows from  the integration by parts formula \eqref{div-Gdir} observing that  $\dsd u\tau k \in H_\Dir^1(\Omega;\R^n)$ and that $ \ds F\tau{k} = -\mathrm{Div}(\ds \varrho\tau{k})$ by \eqref{2809192200}, 
  {\footnotesize (4)} ensues from testing \eqref{DEL-1} by $\dsd u\tau k$ and \eqref{DEL-3} by $\dsd p\tau k$,  {\footnotesize (5)} from  the fact that $-   \eps\nu \| \dsd p\tau k \|_{L^2}^2 \leq 0$ and 
$- \eps \nu \int_\Omega \bbD \sig{\dsd u\tau k}: \GGG \sig{\dsd u\tau k}  \dd x \leq 0$, and again from $\sig{\dsd u\tau k} =   \dsd e\tau k +\dsd p\tau k - \sig{\dsd w\tau k}, $ and {\footnotesize (6)} is due to the fact that $\rho \in L^\infty (0,T;\Mnn)$ and to the previously obtained estimates for $\overline{\sigma}_\tau$  and $\sqrt{\newmu} \olp$ in $L^\infty (0,T;L^2(\Omega;\Mnn))$, cf.\ \eqref{1709172242-SL}.
\par
In view of \eqref{crucial-est-Vito}, estimate \eqref{intermediate-conclusion} rewrites as  
\begin{equation}
\label{intermediate-conclusion-2}
\begin{aligned}
&
 \eps \nu \|\dsd u\tau k \|_{ H^1, \bbD } (\| \dsd u\tau k \|_{ H^1, \bbD }{-} \| \dsd u\tau {k-1} \|_{ H^1, \bbD }) +
 \eps \| \dsd z\tau k\|_{L^2} \left(  \| \dsd z\tau k\|_{L^2}{-}  \| \dsd z\tau {k-1}\|_{L^2} \right) +
\varepsilon \nu \| \dsd p\tau k\|_{L^2} \left( \| \dsd p\tau k\|_{L^2} {-}  \| \dsd p\tau {k-1}\|_{L^2} \right) 
\\
& 
+{  \bar{\zeta} }\tau \left( \| \dsd e\tau k \|_{L^2}^2  {+} \| \dsd z\tau k \|_{\Hs}^2 {+} \newmu  \|\dot p\tk\|_{L^2}^2 \right)
\\
& \leq  
 C\tau   \|\dsd e\tau k\|_{L^2} \| \sig{\dsd w\tau k}\|_{L^2} 
 +C\tau 
  \| \dsd z\tau k\|_{L^\infty} ( \| \dsd e\tau k\|_{L^2}{+}  \| \sig{\dsd w\tau k}\|_{L^2} {+}\| \dsd z\tau k\|_{L^\infty}) 
   \\ & \qquad 
  +C\tau ( \| \dsd F\tau k\|_{ \BD^*}{+}  \| \dsd z\tau {k}\|_{L^\infty})   \left(  \| \sig{\dsd w\tau k}\|_{L^2}
{+}   \| \dsd e\tau k\|_{L^2}  {+} \sqrt{\newmu}  \| \dsd p\tau k\|_{L^2} \right)\,. 
\end{aligned}
\end{equation}
\par
\noindent
{\bf Step $3$:} Let us introduce the vector
\[
v_k : = (\sqrt{\nu} \| \dsd u\tau k \|_{ H^1, \bbD }, \| \dsd z\tau k \|_{L^2},  \sqrt{\nu} \| \dsd p\tau k \|_{L^2}).
\] 
Then, observe that  the first three  terms on the left-hand side of \eqref{intermediate-conclusion-2} rewrite as 
$
 \eps \langle v_k, v_{k}{-} v_{k-1} \rangle$.
For the fourth term  we have the estimate 
\[
\begin{aligned}
{  \bar{\zeta} } \tau \left( \| \dsd e\tau k \|_{L^2}^2  {+} \| \dsd z\tau k \|_{\Hs}^2 {+} \newmu  \|\dot p\tk\|_{L^2}^2 \right)
 & \stackrel{(1)}{\geq} c \tau  \left( \| \dsd e\tau k \|_{L^2}^2  {+} \| \dsd z\tau k \|_{\Hs}^2 {+} \newmu  \|\dot p\tk\|_{L^2}^2 {+} \newmu \| \sig{\dsd u\tau k}\|_{L^2}^2  \right) - C\tau \| \sig{\dsd w\tau k}\|_{L^2}^2 
\\
&  \stackrel{(2)}{\geq}   \tilde{\zeta}   \tau  \left( \| \dsd e\tau k \|_{L^2}^2  {+} \| \dsd z\tau k \|_{\Hs}^2 {+} \newmu  \|\dot p\tk\|_{L^2}^2 {+} \nu \| \dsd u\tau k\|^2_{ H^1, \bbD }  \right) - C\tau \| \sig{\dsd w\tau k}\|_{L^2}^2  
\,,
\end{aligned}
\]
where for {\footnotesize (1)}  we have  used 
that 
$
 \newmu \| \sig{\dsd u\tau k}\|_{L^2}^2 \leq 3\newmu \| \dsd e\tau k \|_{L^2}^2 +  3\newmu \| \dsd p\tau k \|_{L^2}^2 + 3\newmu \|  \sig{\dsd w\tau k} \|_{L^2}^2, 
$
while {\footnotesize (2)} ensues from  \eqref{norma-equivalente} 
 and from the fact that $ \newmu \| \sig{\dsd u\tau k}\|_{L^2}^2 \geq \nu  \| \sig{\dsd u\tau k}\|_{L^2}^2$ (since, by assumption, $\nu\leq \newmu$), 
 with the  constant    $\tilde{\zeta}$   fulfilling 
$\tilde{\zeta}(3K_{\bbD}^2  +1) \leq  \bar{\zeta}$ with $K_{\bbD}$ from   \eqref{norma-equivalente}. 

As for the right-hand side of   \eqref{intermediate-conclusion-2}, we  shall  crucially use the   compact embedding of $\Hs(\Omega)$ into $L^\infty(\Omega)$,  which ensures that 
\begin{equation}\label{1809170146} 
\forall\, \delta >0 \ \exists\, C_\delta >0 \ \forall\, \zeta \in \Hs(\Omega)\, : \qquad 
\|\zeta\|_{L^\infty}^2 \leq \delta \|\zeta \|_{\Hs}^2 + C_\delta  \|\zeta\|_{L^1}^2\,.
\end{equation}
Therefore, also by Young's inequality we have the following estimate
\[
  \| \dsd z\tau k\|_{L^\infty} \left(  \| \dsd e\tau k\|_{L^2} {+}   \| \sig{\dsd w\tau k}\|_{L^2} {+} \| \dsd z\tau k\|_{L^\infty} \right) \leq 
  \delta ( \| \dsd z\tau k\|_{\Hs}^2 +  \| \dsd e\tau k\|_{L^2}^2  ) + C_\delta (\| \sig{\dsd w\tau k}\|_{L^2}^2 {+}  \|\dsd z\tau k \|_{L^1}^2)
\]
for some suitable constant $\delta>0$ to be specified later on. 
\par
All in all, from \eqref{intermediate-conclusion-2} we deduce 
\begin{equation}
\label{VitoHelp}
\begin{aligned}
\eps A_k (A_k {-}A_{k-1}) +  \tilde{\zeta} \tau B_k^2 &  \leq C\tau 
\left( 1 {+}  C_k^2  \right) +C\tau    \|\dsd z\tau k \|_{L^1}^2
\\ & \quad
+C\tau \| \dsd z\tau {k}\|_{L^\infty}   \left( \| \sig{\dsd w\tau k}\|_{L^2} {+}  \| \dsd e\tau k\|_{L^2}  {+} \sqrt{\newmu}  \| \dsd p\tau k\|_{L^2} \right) 
+C \delta \tau B_k^2,
\end{aligned}
\end{equation}
where we have used  the place-holders $A_k$, $B_k$,  and $C_k$  defined by 
\[
\begin{aligned}
A_k^2 : = |v_k|^2=  \nu\| \dsd u\tau k \|^2_{ H^1, \bbD } +  \| \dsd z\tau k \|_{L^2}^2 +  & \nu \| \dsd p\tau k \|_{L^2}^2,
\qquad 
B_k^2 : =  \| \dsd e\tau k \|_{L^2}^2  + \| \dsd z\tau k \|_{\Hs}^2 + \newmu  \|\dot p\tk\|_{L^2}^2  + \newmu \| \dsd u\tau k\|^2_{ H^1, \bbD }\,,
\\
&
 C_k^2:= \| \sig{\dsd w\tau k}\|_{L^2}^2  {+} \| \dsd F\tau k\|_{ \BD^*}^2 
\end{aligned}
\]
and estimated  
\begin{equation}\label{1808191210}
\begin{aligned}
&
C\tau   \|\dsd e\tau k\|_{L^2} \| \sig{\dsd w\tau k}\|_{L^2} \leq \delta \tau B_k^2 + C\tau  \| \sig{\dsd w\tau k}\|_{L^2}^2,
\\
 & C\tau  \| \dsd F\tau k\|_{ \BD^*}   \left( \| \sig{\dsd w\tau k}\|_{L^2} {+}  \| \dsd e\tau k\|_{L^2}  {+} \sqrt{\newmu}  \| \dsd p\tau k\|_{L^2} \right) 
\leq  \delta \tau B_k^2 + C\tau \| \dsd F\tau k\|_{ \BD^*}^2  + C\tau \| \sig{\dsd w\tau k}\|_{L^2}^2   
\end{aligned}
\end{equation}
via Young's inequality. Therefore, choosing $\delta>0$ in \eqref{VitoHelp} small enough in such a way as to absorb the term 
$C \delta \tau B_k^2$ on the left-hand side, 
we arrive at
\begin{equation}
\label{VitoHelpAiuto}
\begin{aligned}
\eps A_k (A_k {-}A_{k-1}) + \frac{ \tilde{\zeta} }2 \tau B_k^2 &  \leq C\tau  (1+ C_k^2) 
+C\tau    \|\dsd z\tau k \|_{L^1}^2 
+C\tau \| \dsd z\tau {k}\|_{L^\infty}   \left(  \| \dsd e\tau k\|_{L^2}  {+} \sqrt{\newmu}  \| \dsd p\tau k\|_{L^2} \right) \,.
\end{aligned}
\end{equation}
 Again relying on \eqref{1809170146}  we estimate the last term on the right-hand side of \eqref{VitoHelpAiuto} by
\[
C\tau \| \dsd z\tau {k}\|_{L^\infty}   \left(  \| \dsd e\tau k\|_{L^2}  {+} \sqrt{\mu}  \| \dsd p\tau k\|_{L^2} \right)\leq C\tau\delta  \| \dsd z\tau {k}\|_{\Hs} B_k + C_\delta \tau   \| \dsd z\tau {k}\|_{L^1}B_k  \leq C\tau\delta   B_k^2 + C\tau \delta' B_k^2 + C \tau   \| \dsd z\tau {k}\|_{L^1}^2.
\]
Choosing the constants $\delta$ and $\delta'$ such that $C\tau (\delta{+}\delta')\leq\frac{\tilde\mu}4 \tau  $ and using that  
$  \| \dsd z\tau {k}\|_{L^1}\leq A_k$, 
we obtain that 
\begin{equation}
\label{VitoHelpAiuto-unifr-prelim}
\begin{aligned}
\eps A_k (A_k {-}A_{k-1}) + \frac{ \tilde{\zeta} }4 \tau B_k^2   \leq C\tau   (1+ C_k^2) 
+C\tau    \|\dsd z\tau k \|_{L^1}^2 \leq C\tau   (1+ C_k^2)  +C\tau A_k  \|\dsd z\tau k \|_{L^1}   \qquad \forall\, k \in \{2,\ldots, N_\tau\}\,.
\end{aligned}
\end{equation}
{\bf Step $4$:}
 Let us now address the case $k=1$.
To start with,
let us set $\ds u\tau{-1}: = u_0$, $\ds z\tau{-1}: = z_0$, and $\ds p\tau{-1}: = p_0$, so that 
\begin{equation}
\label{dati-nulli}
\text{
$\dsd u\tau 0 = \tfrac{\ds u\tau 0 - \ds u\tau{-1}}{\tau} =0$ and, analogously, 
$\dsd z\tau 0 =0$ and $\dsd p\tau 0 =0$.}
\end{equation}
We test  \eqref{DEL-1}, with $k=1$, by $\dsd u\tau 1$. With an easy algrebraic  manipulation we  obtain
\begin{equation}
\label{test-at-k=1}
\begin{aligned}
\int_\Omega \eps \nu \bbD \sig{\dsd u\tau 1}\sig{\dsd u\tau 1} \dd x + \int_\Omega (\ds \sigma\tau 1 {-} \ds \sigma \tau{0}) : \sig{\dsd u\tau 1} \dd x    = 
\pairing{}{H^1}{\ds F\tau{1} {-} \ds F\tau{0}}{\dsd u\tau 1}
- \int_\Omega \ds \sigma \tau 0 : \sig{\dsd u\tau 1} \dd x 
+\pairing{}{H^1}{ \ds F\tau{0}}{\dsd u\tau 1}\,.
\end{aligned}
\end{equation}
Repeating the very same calculations as throughout \ref{later-used-for-u} and the subsequent formulae, we arrive at (cf.\ \eqref{conclu-est-u})
\begin{equation}
\label{conclu-est-u-1}
\begin{aligned}
&
 \eps \nu \|\dsd u\tau 1 \|_{ H^1, \bbD } (\| \dsd u\tau 1 \|_{ H^1, \bbD }{-} \| \dsd u\tau {0} \|_{ H^1, \bbD })
 + \gamma_1 \tau \| \dsd e\tau 1 \|_{L^2}^2 +\int_\Omega (\ds \sigma \tau 1 {-} \ds \sigma \tau {0} )_\dev \dsd p\tau 1 \dd x 
 \\ & \leq
 C\tau \Big(  \| \dsd z\tau 1\|_{L^\infty}  \| \dsd e\tau 1\|_{L^2} {+}  \|\dsd e\tau 1\|_{L^2} \| \sig{\dsd w\tau 1}\|_{L^2}{+}  \| \dsd z \tau 1\|_{L^\infty} \| \sig{\dsd w\tau 1}\|_{L^2} {+}  \| \dsd F\tau 1\|_{ \BD^*} \| \sig{\dsd w\tau 1}\|_{L^1}
 \\
 & \qquad \qquad 
 {+}  \| \dsd F\tau 1\|_{ \BD^*}  \| \dsd e\tau 1\|_{L^1}  {+}  \| \dsd F\tau 1\|_{ \BD^*}   \| \dsd p\tau 1\|_{L^1}  \Big) + \left|\int_\Omega (F(0)+\mathrm{Div}(\sigma_0)) \dsd u\tau 1  \dd x \right|,
 \end{aligned}
\end{equation}
where we have used that, by definition, $ \dsd u\tau {0}$, and exploited
the fact that $F(0)+\mathrm{Div}(\sigma_0) \in L^2(\Omega;\R^n)$ by 
 \eqref{EL-initial} to rewrite the last two terms on the right-hand side of \eqref{test-at-k=1}. 
\par  We now write \eqref{GV-2} for $k=1$. With algebraic manipulations, also taking into account that 
$\dsd z\tau 0 =0 $ and using  \eqref{EL-initial}, we arrive at 
\begin{equation}
\label{step1-z-bis}
\begin{aligned}
&
\calR(\dsd z \tau 1) + 
\varepsilon \int_\Omega (\dsd z\tau 1 {-} \dsd z\tau 0) \dsd z\tau 1  \dd x + \ass(\ds z\tau 1 -\ds z\tau0,\dsd z\tau 1)
\\
& 
\leq  \int_\Omega \big[W'(\ds z\tau 1)-W'(\ds z\tau 0) \big] \dsd z\tau1 \dd x
+\frac{1}{2}\int_\Omega  \big[\mathbb{C}'(\ds z\tau 0)- \mathbb{C}'(\ds z\tau1) \big] \ds e\tau1: \ds e\tau 1 \, \dsd z\tau1  \dd x -  \frac{1}{2}\int_\Omega  \left(\bbC'(\ds z\tau0) \ds e\tau 1 : \ds e\tau 1 {-} \bbC'(\ds z\tau 0) \ds e\tau {0} : \ds e\tau {0}  \right) \dsd z\tau1  \dd x
\\
& \quad 
 + C_K \tau \|\dsd z\tau 1\|_{L^\infty} \| \dsd p \tau 1 \|_{L^1}
+\left| \int_\Omega \left( \As \ds z\tau 0 {+} W'(\ds z\tau 0) {+}\tfrac12 \bbC'(\ds z\tau 0) \ds e\tau 0: \ds e\tau 0 \right) \dsd z\tau 1 \dd x  \right|\,.
  \end{aligned}
\end{equation} 
With the same calculations as throughout \eqref{step1-z}--\eqref{conclu-est-z}
we  conclude that 
\begin{equation}
\label{conclu-est-z-1}
\begin{aligned}
&
\eps \| \dsd z\tau 1\|_{L^2} \left(  \| \dsd z\tau 1\|_{L^2}{-}  \| \dsd z\tau {0}\|_{L^2} \right) + \tau \ass (\dsd z\tau 1, \dsd z\tau 1)
\\
 & 
 \leq C\tau  \| \dsd z\tau1\|_{L^\infty} \left( \| \dsd z\tau 1\|_{L^\infty}{+}  \| \dsd e\tau 1 \|_{L^2} +   \| \dsd p\tau 1 \|_{L^1}  \right) +\left| \int_\Omega \left( \As \ds z\tau 0 {+} W'(\ds z\tau 0) {+}\tfrac12 \bbC'(\ds z\tau 0) \ds e\tau 0: \ds e\tau 0 \right) \dsd z\tau 1 \dd x  \right|\,.
\end{aligned}
\end{equation} 
\par
Finally, we test \eqref{with-omega-k}, written for $k=1$, with $\dsd p\tau 1$. Taking into account that, by construction, $\dsd p\tau 0=0$, this leads to
\[
\int_\Omega  \ds \omega\tau 1 \dsd p\tau 1 \dd x
 +\varepsilon \nu \int_\Omega (\dsd p\tau1 {-} \dsd p\tau 0) \dsd p\tau 1  \dd x + \newmu\int_\Omega  (\ds p\tau 1 {-} \ds p\tau{0})\dsd p\tau 1 \dd x 
= \int_\Omega (\ds \sigma \tau 1 {-} \ds \sigma \tau {0} )_\dev \dsd p\tau1 \dd x +\int_\Omega (\ds \sigma\tau 0 {-} \newmu \ds p\tau 0) \dsd p\tau 1 \dd x \,.
\]
With the same   computations  as for \eqref{conclu-est-p}, we obtain
\begin{equation}
\label{conclu-est-p-1}
\calH(\dsd z\tau 1, \dsd p\tau 1) +
\varepsilon \nu \| \dsd p\tau 1\|_{L^2} \left( \| \dsd p\tau 1\|_{L^2} {-}  \| \dsd p\tau {0}\|_{L^2} \right)   +  \newmu \tau  \|\dsd p\tau 1\|_{L^2}^2 \leq   \int_\Omega (\ds \sigma \tau 1 {-} \ds \sigma \tau {0} )_\dev \dsd p\tau 1 \dd x  +\left| \int_\Omega (\ds \sigma\tau 0 {-} \newmu \ds p\tau 0) \dsd p\tau 1 \dd x\right|\,. 
\end{equation}
\par
We add up \eqref{conclu-est-u-1}, \eqref{conclu-est-z-1}, and \eqref{conclu-est-p-1} . The very same calculations as throughout Steps $2$ and $3$ lead to
\[
\eps A_1 (A_1 {-}A_{0}) +   \frac{ \tilde{\zeta} }4 \tau B_1^2    \leq C\tau  (1+ C_1^2)  +C\tau A_1  \|\dsd z\tau 1 \|_{L^1} + F_1\,.
\]
Here, the term $F_1$ subsumes the very last contributions on the right-hand sides of \eqref{conclu-est-u-1}, \eqref{conclu-est-z-1}, and \eqref{conclu-est-p-1}: in fact, for later use we introduce the   place-holder
\[
F_k: = \left|\int_\Omega \mathrm{D}_u \enen{\mu}(0,u_0,z_0,p_0) \dsd u \tau1 \dd x  \right|+ \left|\int_\Omega \mathrm{D}_z \enen{\mu}(0,u_0,z_0,p_0) \dsd z \tau1 \dd x  \right|+ \left|\int_\Omega \mathrm{D}_p \calE	_\nu(0,u_0,z_0,p_0) \dsd p \tau1 \dd x  \right|\,.
\]
Then, \eqref{VitoHelpAiuto-unifr-prelim}
extends to the index $k=1$, and we ultimately get
the relation
\begin{equation}
\label{VitoHelpAiuto-unifr}
\begin{aligned}
\eps A_k (A_k {-}A_{k-1}) +  \frac{ \tilde{\zeta} }4 \tau B_k^2     \leq C\tau 
+C\tau    \|\dsd z\tau k \|_{L^1}^2 \leq C\tau  (1+ C_k^2)  +C\tau A_k  \|\dsd z\tau k \|_{L^1} + \delta_{1,k} F_1   \qquad \forall\, k \in \{1,\ldots, N_\tau\}\,.
\end{aligned}
\end{equation}
 \par\noindent
{\bf Step $5$:}  From \eqref{VitoHelpAiuto-unifr} we infer
\[
\frac12 A_k^2 - \frac12 A_{k-1}^2 +  \frac{ \tilde{\zeta} }{4\eps} \tau   B_k^2 \leq  \frac{C}\eps \tau \left( 1{+}  A_k^2  {+} C_k^2  \right)  + \frac{\delta_{1,k}}\eps F_1   
   \qquad \forall\, k \in \{1,\ldots, N_\tau\}\,. 
\]
so that, adding up the above  relations  we  obtain  (recall $A_0=0$  by \eqref{dati-nulli}) 
\begin{equation}
\label{2Gr1}
 A_k^2 +  \sum_{j=1}^{k}  \frac{ \tilde{\zeta} }\eps \tau  B_j^2  \leq    \frac1{\eps}F_1 +  \frac{2CT}\eps  +  \frac{2C}\eps  \sum_{j=1}^{k} \tau   C_j^2   +  \frac{2C}\eps  \sum_{j=1}^{k} \tau   A_j^2 \,.
\end{equation}
We are now in a position to apply 
Lemma \ref{l:discrG1/2}  in Appendix A,  with the choices $a_k: = A_k^2$,   $\Lambda:=     \frac{2CT}\eps   +  \frac{2C}\eps  \sum_{j=1}^{k} \tau   C_j^2  +\frac1{\eps}F_1 $, and  $b = \frac{2C \tau}{\varepsilon}$:  hence we need to assume, e.g.,  $\tau/\varepsilon<1/(4C)$, so that $b<1$.  Then,  \eqref{discrG1/2} gives  (notice that  $  \sum_{j=1}^{k} \tau   C_j^2  \leq C'$  in view of \eqref{forces-u} and \eqref{dir-load}) 
\begin{equation}
\label{A-k-infty-eps}
\begin{aligned}
\sup_{k=1,\ldots, N_\tau }A_k^2  & \leq \frac1{1-\tau} \left(A_0^2 + \frac{2CT}\eps  +\frac{F_1}{\eps}   +  \frac{2C}\eps  \sum_{j=1}^{k} \tau   C_j^2  \right)  \exp \left( \frac{b}{1-b} k \right)  \\ &  \stackrel{(*)}{\leq} 2 \left(A_0^2 + \frac{2CT}\eps 
 +  \frac{2C}\eps  \sum_{j=1}^{k} \tau   C_j^2  +\frac1{\eps}F_1 \right) 
 \exp\left(\tfrac {4CT}\eps \right)   \doteq S_\eps^1 \qquad \text{ with } S_\eps^1 \uparrow +\infty \text{ as } \eps \down 0.
\end{aligned}
\end{equation}
where estimate $(*)$ is true for, say, $\tau \in [0,1/2]$.   Plugging the above estimate into \eqref{2Gr1}  we obtain
\begin{equation}
\label{B-k-1-eps} 
 \sum_{k=1}^{N_\tau}  \tau  B_k^2  \leq  S_\eps^2 \qquad \text{with } S_\eps^2 \uparrow +\infty \text{ as } \eps \down 0.
\end{equation}
Clearly, \eqref{A-k-infty-eps} and \eqref{B-k-1-eps} give estimates \eqref{2009170048}.
\par
\noindent 
\RCZ \textbf{Step $6$:}  Using that $B_k \geq A_k$, from \eqref{VitoHelpAiuto-unifr} we deduce
\begin{equation}
\label{VitoHelpAiuto-unifr-BIS}
\begin{aligned}
 A_k (A_k {-}A_{k-1}) + \frac{ \tilde{\zeta} \tau}{4 \eps} A_k^2 +  \frac{ \tilde{\zeta} \tau}{4 \eps}  B_k^2   \leq C\frac{\tau}\eps   (1+ C_k^2)    +\frac{\delta_{1,k}}{\eps}F_1  
+ C\frac\tau\eps A_k  \|\dsd z\tau k \|_{L^1}  \qquad \forall\, k \in \{1,\ldots, N_\tau\}\,.
\end{aligned}
\end{equation}
Hence, we are in   position to apply   the forthcoming  Lemma \ref{l:discrG2} with the choices $a_k:=A_k$, $M_k: = B_k$, $\gamma: =\frac{ \tilde{\zeta} \tau}{4 \eps} $,  $c_k:=C_k$,  and $R_k: = \frac{4C}{ \tilde{\zeta} }  \|\dsd z\tau k \|_{L^1}$
 and suitable choices for the constants $c$ and $\rho$  (notice that $A_0=0$ by construction, $R_k \leq \ol c A_k$, and we now have to take $\tau/\varepsilon< 1/(2 \ol c)$).  From \eqref{app-est-discrete}, along with $\sum_{k=1}^{N_\tau} \tau   C_k^2 \leq C'$
 (by \eqref{forces-u} and \eqref{dir-load})
and  \eqref{ulteriore-stima-SL},  we infer 
\begin{equation}
\label{B-k-1-eps-final} 
\exists\, S_2>0 \ \ \forall\, \tau,  \eps, \nu>0 \,: \qquad 
 \sum_{k=1}^{N_\tau}  \tau  B_k  \leq  S_2\,.
\end{equation}
Then, estimate \eqref{2009170054} ensues.   This concludes the proof. 
\QED
\begin{remark}
\label{sta-in-rmk}
\upshape
In the case  in which  only \eqref{1808191033} holds  in place of \eqref{forces-u},  we have only $F\in H^1(0,T; (H^1(\Omega;\R^n)^*)$, so that in place of \eqref{1808191206} we may infer only (with shorter notation for the norms)
\[
\begin{aligned}
\left| I_3 \right|    \leq  C \tau \| \dsd F\tau k\|_{ (H^1)^*} \left( \| \sig{\dsd w\tau k}\|_{L^2}{+}  \| \dsd e\tau k\|_{L^2}{+} 
\| \dsd p\tau k\|_{L^2}\right) \,.
\end{aligned}
\]
This   affects  the second inequality in \eqref{1808191210}, which is now replaced by
\[
C\tau  \| \dsd F\tau k\|_{ (H^1)^*}   \left( \| \sig{\dsd w\tau k}\|_{L^2} {+}  \| \dsd e\tau k\|_{L^2}  {+}  \| \dsd p\tau k\|_{L^2} \right) 
\leq  \delta \tau \left( \| \sig{\dsd w\tau k}\|_{L^2}^2 {+}  \| \dsd e\tau k\|_{L^2}^2  {+}  \| \dsd p\tau k\|_{L^2}^2 \right)  + C\tau \| \dsd F\tau k\|_{ (H^1)^*}^2 
\]
Now, since $\left( \| \sig{\dsd w\tau k}\|_{L^2}^2 {+}  \| \dsd e\tau k\|_{L^2}^2  {+}  \| \dsd p\tau k\|_{L^2}^2 \right)$ equals  $\sqrt{\mu}^{-1} B_k^2$, we may only control
\[
C\tau  \| \dsd F\tau k\|_{ (H^1)^*}   \left( \| \sig{\dsd w\tau k}\|_{L^2} {+}  \| \dsd e\tau k\|_{L^2}  {+}  \| \dsd p\tau k\|_{L^2} \right) 
\leq  \delta \tau B_k^2  + C_\mu \tau \| \dsd F\tau k\|_{ (H^1)^*}^2 \,,
\]
where $C_\mu$ depends  on $\mu$,  too, and blows up as 
$\mu\down 0$.  We could argue in the very same way for the rest of the proof, but the constant affect also the other estimates such as \eqref{VitoHelpAiuto}, that now have to contain constants depending also on $\mu$  on  the right-hand side. 
Thus, we end up proving  \eqref{enhanced-discr-est} with  a  constant depending also on $\mu$.
\end{remark}
\begin{remark}\label{2109170023}
\upshape
Estimate \eqref{crucial-est-Vito}, giving
\begin{equation}\label{1207191432}
\|\dot{p}\tk\|_{L^1(\Omega;\MD)} \leq C \left( \|\dsd e\tau k\|_{L^2} {+} \| \sig{\dsd w\tau k} \|_{L^2}  +\sqrt{\newmu} \| \dsd p\tau k \|_{L^2}  \right) \,,
\end{equation}
 is fundamental since it allows us
to estimate  $\| \dsd p\tau k\|_{L^1} $ by means of  the term $B_k$ and of $\|\sig{\dsd w\tau k}\|_{L^2}$
In this way, the terms 
containing $\| \dsd p\tau k\|_{L^1} $ can be partly absorbed into the left-hand side. 
If we did not resort to estimate \eqref{crucial-est-Vito}, we would have to deal with the term $C\tau \| \dsd p\tau k\|_{L^1}^2$ on the right-hand side of  \eqref{VitoHelpAiuto-unifr}, which would  be controlled only by considering constants depending on $\mu$, as explained in 
 Remark \ref{sta-in-rmk}
 above. 
\end{remark}
With the last result of this section, we prove a  \emph{discrete  version of the 
Energy-Dissipation upper estimate \eqref{enineq-plast}}. 
Estimate
\eqref{discr-UEDE} below 
shall play a crucial role in existence proof for viscous solutions (i.e.,  with $\eps,\,\nu>0$ fixed). Indeed,  it will be sufficient to pass to the limit in
 the Energy-Dissipation balance 
 \eqref{discr-UEDE}  as $\tau \down 0$ (showing that the 
remainder term
on its right-hand side tends to zero as $\tau \down 0$)
to obtain \eqref{enineq-plast} for a limit triple $(u,z,p)$. In turn,  by 
  Proposition~\ref{prop:charact}  the validity of  \eqref{enineq-plast} is equivalent to the fact that $(u,z,p) $
 solve Problem \ref{prob:visc}.
\begin{proposition}[Discrete Energy-Dissipation upper estimate]
\label{prop:discr-UEDE}
The piecewise constant and linear interpolants of the discrete solutions $(\ds u\tau k,\ds z\tau k,\ds p\tau k)_{k=1}^N$ fulfill
\begin{equation}
\label{discr-UEDE}
\begin{aligned}
& \enen{\mu} (t, \pwl u\tau(t), \pwl z\tau(t), \pwl p\tau(t)) +
\int_{\upwc t\tau(s)}^{\pwc t\tau(t)}  \left( \disv\eps\nu(u_\tau'(r)) {+} \calR_\eps(z_\tau'(r)) {+} \dish\eps\nu(\pwc z\tau(r),  p_\tau'(r))\right) \dd r 
\\
  &\quad 
  + \int_{\upwc t\tau(s)}^{\pwc t\tau(t)}    \Big( \disv\eps\nu^*(\mathrm{Div}( \pwc \sigma\tau(r))  {+}\pwc F\tau(r)) {+}   \calR_\eps^*\left({-}\As(\pwc z\tau(r)) {-}W'(\pwc z\tau(r)){-}\tfrac12 \bbC'(\pwc z\tau(r)) \pwc e\tau(r) : \pwc e\tau (r) {-}\tau\pwc\lambda\tau(r)\right)  \\  
  &\qquad \qquad \qquad 
  {+}  \dish\eps\nu^*(\upwc z\tau(r), -\newmu\pwc p\tau(r) + (\pwc\sigma\tau(r))_\dev)
   \Big) \dd r 
   \\
   &
   \leq
\enen{\mu} (s, \pwl u\tau(s), \pwl z\tau(s), \pwl p\tau(s)) +
\int_{\upwc t\tau(s)}^{\pwc t\tau(t)} \int_\Omega \bbC(\pwl z\tau (r)) (\sig{\pwl u\tau(r){+}w(r)}{-} \pwl p\tau (r)): \sig{w'(r)} \dd x  \dd r
  \\
  & \quad 
-  \int_{\upwc t\tau(s)}^{\pwc t\tau(t)}  \langle F'(r), \pwl u{\tau}(r) {+}w(r)  \rangle_{H^1(\Omega;\R^n)} \dd r 
 -  \int_{\upwc t\tau(s)}^{\pwc t\tau(t)}  \langle F(r), w'(r) \rangle_{H^1(\Omega;\R^n)}  \dd r   
+\mathfrak{R}_\tau(s,t)
\end{aligned}
\end{equation} 
where $\pwc \lambda \tau$ is a selection in $\partial_z \dish\eps\nu(\pwc z\tau, \pwl p\tau')$ fulfiling the Euler-Lagrange equation  \eqref{EL-interp-2},  
  the remainder term  is  given by
\begin{equation}
\label{remainder}
\begin{aligned}
\mathfrak{R}_\tau(s,t):= C_3 \int_{\upwc t\tau(s)}^{\pwc t\tau(t)}   & \left( \|\pwc u\tau{-} \pwl u\tau \|_{H^1(\Omega)} {+} \|  \pwc z\tau{-}  \pwl z\tau\|_{\Hs(\Omega)} {+}  \| \pwc p\tau{-}  \pwl p\tau\|_{L^2(\Omega)}   {+} \| \pwc w\tau{-} w \|_{H^1(\Omega)} \right) 
\\
& \qquad \times 
\left(  \| u_\tau' \|_{H^1(\Omega)}{+}  \| z_\tau' \|_{\Hs(\Omega)} {+}\| p_\tau' \|_{L^2(\Omega)}  \right) \dd r
\end{aligned}
\end{equation}
and the constant $C_3$,  uniform w.r.t.\ $\eps,\,\nu, \, \mu, \, \tau$,   only depends on the constant $C_1$ from \eqref{basic-en-est-SL}.  
\end{proposition}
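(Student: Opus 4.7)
My plan is to exploit the Fenchel--Moreau reformulation of the discrete Euler--Lagrange system \eqref{EL-interpolants}, in the spirit of Proposition \ref{prop:charact}, and combine it with the chain rule for $\calE_\mu$ along the piecewise linear interpolant $\pwl q\tau$.

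\textbf{Step 1 (pointwise Fenchel--Moreau identities).} From Lemma \ref{l:Fr-diff-Enu} one has $-D_u\calE_\mu(\pwc t\tau,\pwc q\tau) = \mathrm{Div}(\pwc\sigma\tau)+\pwc F\tau$, which by \eqref{EL-interp-1} equals $-\mathrm{Div}(\eps\nu\mathbb{D}\sig{u_\tau'})$. The quadratic Fenchel conjugation of $\disv\eps\nu$ in \eqref{Psi-eps-star} then yields the pointwise equality
\[
\disv\eps\nu(u_\tau'(r)) + \disv\eps\nu^*\big(\mathrm{Div}(\pwc\sigma\tau(r)){+}\pwc F\tau(r)\big) = \langle \mathrm{Div}(\pwc\sigma\tau(r)){+}\pwc F\tau(r),\,u_\tau'(r)\rangle\,.
\]
Analogous Fenchel--Moreau equalities follow from \eqref{EL-interp-2} (now with the extra $-\tau\pwc\lambda\tau$ inside the conjugate argument, since that term is part of the Fr\'echet selection) and from \eqref{EL-interp-3} (with $\pwc z\tau$ inside $\dish\eps\nu^*$). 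Because $\pwl z\tau'\leq 0$ a.e., we have $\upwc z\tau\geq\pwc z\tau$; combined with \eqref{Ksets-2} this gives $\partial_\pi\calH(\pwc z\tau,0)\subseteq\partial_\pi\calH(\upwc z\tau,0)$ and hence $\dish\eps\nu^*(\upwc z\tau,\omega)\leq\dish\eps\nu^*(\pwc z\tau,\omega)$, so the $p$-equality weakens to a $\leq$ inequality upon replacing $\pwc z\tau$ by $\upwc z\tau$, matching the form in \eqref{discr-UEDE}.

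\textbf{Step 2 (chain rule along $\pwl q\tau$).} Summing the three (in)equalities and integrating over $[\upwc t\tau(s),\pwc t\tau(t)]$ yields the bound $\int[\psie\eps\nu(\pwc q\tau,q_\tau')+\psie\eps\nu^*(\ldots)] \leq \int\langle -D_q\calE_\mu(\pwc t\tau,\pwc q\tau),q_\tau'\rangle\dd r$. To turn the right-hand side into an energy difference, I apply the chain rule \eqref{chain-rule} to $r\mapsto\calE_\mu(r,\pwl q\tau(r))$, which is absolutely continuous on $[0,T]$ thanks to the a priori bounds of Proposition \ref{prop:energy-est-SL} (ensuring that $\sup_r|\calE_\mu(r,\pwl q\tau(r))|\le C$). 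Exploiting that $\pwl q\tau$ agrees with $\pwc q\tau$ and $\upwc q\tau$ at the grid endpoints of the integration interval, one obtains
\[
\calE_\mu(\pwc t\tau(t),\pwl q\tau(\pwc t\tau(t))){-}\calE_\mu(\upwc t\tau(s),\pwl q\tau(\upwc t\tau(s))) = \int_{\upwc t\tau(s)}^{\pwc t\tau(t)}\!\!\big[\partial_t\calE_\mu(r,\pwl q\tau(r)) + \langle D_q\calE_\mu(r,\pwl q\tau(r)),q_\tau'(r)\rangle\big]\dd r\,.
\]
By \eqref{part-t-q} the $\partial_t\calE_\mu$ contribution produces exactly the stress, loading and $w'$ terms appearing on the right-hand side of \eqref{discr-UEDE}. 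The mismatch between the energy evaluated at $(t,\pwl q\tau(t))$ and at $(\pwc t\tau(t),\pwl q\tau(\pwc t\tau(t)))$ (and similarly at $s$) consists of the same $\partial_t\calE_\mu + \langle D_q\calE_\mu,q_\tau'\rangle$ integrand on the two end-subintervals of length at most $\tau$, and so is absorbed by the remainder.

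\textbf{Step 3 (remainder estimate --- main technical obstacle).} Adding and subtracting $\langle D_q\calE_\mu(\pwc t\tau(r),\pwc q\tau(r)),q_\tau'(r)\rangle$ inside the integrand, the defect $\langle D_q\calE_\mu(r,\pwl q\tau(r)){-}D_q\calE_\mu(\pwc t\tau(r),\pwc q\tau(r)),q_\tau'(r)\rangle$ must be dominated by \eqref{remainder}. Using the explicit formulae \eqref{Fr-diff-q} for the three components, the Lipschitz continuity of $\bbC$, $\bbC'$ and $W'$ on $[m_0,1]$ (granted by the strict positivity \eqref{2009171038-SL} together with \eqref{spd} and \eqref{D1}), and the uniform $L^\infty_t$-bounds on $\|\pwc e\tau\|_{L^2}$, $\|\pwc z\tau\|_{\Hs}$, $\sqrt\mu\|\pwc p\tau\|_{L^2}$ from \eqref{1709172242-SL}, the defect is estimated by $C_3(\|\pwc u\tau{-}\pwl u\tau\|_{H^1}{+}\|\pwc z\tau{-}\pwl z\tau\|_{\Hs}{+}\|\pwc p\tau{-}\pwl p\tau\|_{L^2}{+}\|\pwc w\tau{-}w\|_{H^1})(\|u_\tau'\|_{H^1}{+}\|z_\tau'\|_{\Hs}{+}\|p_\tau'\|_{L^2})$, with $C_3$ depending only on $C_1$. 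The delicate point is the quadratic term $\tfrac12\bbC'(z)e{:}e$ in $D_z\calE_\mu$: to control its difference one needs to convert $\|\pwc z\tau{-}\pwl z\tau\|_{L^\infty}$ via the compact embedding $\Hs(\Omega)\Subset \mathrm{C}^0(\overline\Omega)$ and, crucially, to factor out the \emph{uniform} $L^\infty_tL^2_x$-bound on $\pwc e\tau$, so that no dependence on $\eps$, $\nu$ or $\mu$ enters $C_3$. Collecting the three steps yields \eqref{discr-UEDE}.
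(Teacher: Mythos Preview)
Your strategy---Fenchel--Moreau identities from the discrete Euler--Lagrange system \eqref{EL-interpolants}, the chain rule \eqref{chain-rule} along the piecewise linear interpolant, and a Lipschitz estimate of the defect $\langle D_q\calE_\mu(r,\pwl q_\tau)-D_q\calE_\mu(\pwc t_\tau,\pwc q_\tau),q_\tau'\rangle$---coincides with the paper's proof (which invokes \cite[Lemma~6.1]{KRZ2} for the abstract identity \eqref{discr-UEDE-q} and then carries out exactly the three remainder estimates \eqref{est1-R}--\eqref{est3-R} that you outline in Step~3). Your observation that $\dish\eps\nu^*(\upwc z_\tau,\cdot)\le\dish\eps\nu^*(\pwc z_\tau,\cdot)$ via the monotonicity of $K(\cdot)$ is a correct and useful clarification that the paper leaves implicit.

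There is one slip in Step~2. From the $z$-component Fenchel identity you obtain
\[
\calR_\eps(z_\tau')+\calR_\eps^*\bigl(-D_z\calE_\mu(\pwc t_\tau,\pwc q_\tau)-\tau\pwc\lambda_\tau\bigr)=\langle -D_z\calE_\mu(\pwc t_\tau,\pwc q_\tau),z_\tau'\rangle\;-\;\tau\langle\pwc\lambda_\tau,z_\tau'\rangle,
\]
and the last term is \emph{nonnegative}: since $z\mapsto\calH(z,\pi)$ is nondecreasing (cf.\ \eqref{props-calH-3}), every $\pwc\lambda_\tau\in\partial_z\calH(\pwc z_\tau,p_\tau')$ is a nonnegative measure, while $z_\tau'\le0$. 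Hence summing does \emph{not} yield $\int[\psie\eps\nu+\psie\eps\nu^*]\le\int\langle-D_q\calE_\mu,q_\tau'\rangle$ as you claim; the inequality goes the wrong way. The repair is immediate: carry $-\tau\langle\pwc\lambda_\tau,z_\tau'\rangle$ to the right-hand side and absorb it into $\mathfrak R_\tau$ via $\tau|\langle\pwc\lambda_\tau,z_\tau'\rangle|\le C_K\tau\|z_\tau'\|_{L^\infty}\|p_\tau'\|_{L^1}$ together with the identity $\int_{t_\tau^{k-1}}^{t_\tau^k}\tau\|z_\tau'\|\,\dd r=2\int_{t_\tau^{k-1}}^{t_\tau^k}\|\pwc z_\tau-\pwl z_\tau\|\,\dd r$, so it fits the structure \eqref{remainder} with a constant depending only on $C_K$ and the embedding $\Hs(\Omega)\hookrightarrow C^0(\overline\Omega)$. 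The paper's own handling of this term is equally terse (it is hidden in the shorthand $-\tau\partial_q\psie\eps\nu$ in \eqref{discr-UEDE-q}).
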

\begin{proof}
With the very same calculations as in the proof of \cite[Lemma 6.1]{KRZ2},
also based on the  convex analysis arguments leading
to \eqref{FM-bal},
  from the 
Euler-Lagrange equation \eqref{abstract-subdiff}  we deduce that the interpolants
$\pwc q\tau$, $\upwc q\tau$, and $q_\tau$ fulfill 
\begin{equation}
\label{discr-UEDE-q}
\begin{aligned}
&
 \enen{\mu} (t,\pwl q\tau(t)) +
 \int_{\upwc t\tau(s)}^{\pwc t\tau(t)} \left( \psie\eps\nu (\pwc q\tau(r),q_\tau'(r)) {+}
\psie\eps\nu^* (\pwc q\tau(r),{-}\rmD_q \enen{\mu} (\pwc t\tau(r), \pwc q\tau(r)))  {-}\tau \partial_q \psie\eps\nu  (\pwc q\tau(r),q_\tau'(r))\right)  \dd r = 
\\
& 
+ \enen{\mu} (s, \pwl q\tau(s)) +  \int_{\upwc t\tau(s)}^{\pwc t\tau(t)}
\partial_t \enen{\mu} (r,q_\tau(r)) \dd r - \ddd{ \int_{\upwc t\tau(s)}^{\pwc t\tau(t)} 
\pairing{}{\bfQ}{\rmD_q \enen{\mu} (\pwc t\tau(r), \pwc q\tau(r)) {-}  \rmD_q \enen{\mu} (r, \pwl q\tau(r))}{q_\tau'(r)} \dd r\,}{$\doteq R_\tau(s,t)$}.
\end{aligned}
\end{equation} 
Then,
taking into account \eqref{EL-interpolants}, 
 it is immediate to check that the left-hand side of \eqref{discr-UEDE-q}
translates into the left-hand side of \eqref{discr-UEDE}. Analogously, 
taking into account the explicit calculation
\eqref{part-t-q}  of $\partial_t \calE_\mu$,  we see that the  first two terms on the right-hand side of  \eqref{discr-UEDE-q} correspond to the first four terms on the right-hand side of \eqref{discr-UEDE}. 
We now estimate the remainder term $R_\tau(s,t)$
 as follows. First of all, we observe that
\[
|R_\tau(s,t)| \leq |R_\tau^1(s,t)| +| R_\tau^2(s,t) |+ |R_\tau^3(s,t)|.
\]
Then,
\begin{equation}
\label{est1-R}
\begin{aligned}
|R_\tau^1(s,t)|  &  = \left|  \int_{\upwc t\tau(s)}^{\pwc t\tau(t)} 
\pairing{}{H^1(\Omega)}{\mathrm{Div}(\bbC(\pwc z\tau)\pwc e\tau) {-} \mathrm{Div}(\bbC(\pwl z\tau)(\sig{\pwl u\tau + w} {-} \pwl p\tau )}{u_\tau'}\right| \dd r 
\\
&
\leq   \int_{\upwc t\tau(s)}^{\pwc t\tau(t)}  \left| \int_\Omega (\bbC(\pwc z\tau){-}\bbC(\pwl z\tau) )\pwc e\tau : \sig{u_\tau'} \dd x  \right|  \dd r 
+ \int_{\pwc t\tau(s)}^{\pwc t\tau(t)} \left|   \int_\Omega  \bbC(\pwl z\tau)(\pwc e\tau{-} (\sig{\pwl u\tau + w} {-} \pwl p\tau )) : \sig{u_\tau'} \dd x \right| \dd r  
\\ & 
\stackrel{(1)}{\leq}   C  \int_{\upwc t\tau(s)}^{\pwc t\tau(t)} \left( \| \pwc z\tau{-} \pwl z\tau \|_\infty \| \pwc e\tau\|_{L^2}  \|\sig{u_\tau'}\|_{L^2} {+} \| \pwl z\tau \|_{L^\infty} \|\pwc e\tau{-} (\sig{\pwl u\tau + w} {-} \pwl p\tau )\|_{L^2} \|\sig{u_\tau'}\|_{L^2}  \right) \dd r  
\\
&
\stackrel{(2)}{\leq} C   \int_{\upwc t\tau(s)}^{\pwc t\tau(t)} \left(   \| \pwc u\tau{-} \pwl u\tau \|_{H^1} {+} \| \pwc z\tau{-} \pwl z\tau \|_{\Hs)} {+}   \| \pwc p\tau{-} \pwl p\tau \|_{L^2} {+}   \| \pwc w\tau - w\|_{H^1}\right)\|\sig{u_\tau'}\|_{L^2} \dd r \,, 
\end{aligned}
\end{equation}
where (1) ensues from \eqref{spd},
 estimate \eqref{1709172242-SL}
and the continuous embedding $\Hs(\Omega) \subset \mathrm{C}(\ol\Omega)$.   For (2) we 
have again used the latter embedding along with the identity 
$
\pwc e\tau = \sig{\pwc u \tau {+}\pwc w\tau} - \pwc p\tau.
$
Secondly,  
\begin{equation}
\label{est2-R}
\begin{aligned}
|R_\tau^2(s,t)|  &  = \Big|  \int_{\upwc t\tau(s)}^{\pwc t\tau(t)} 
\Big( \ass(\pwc z \tau{-}\pwl z \tau, z_\tau'){+} \int_\Omega  (W'(\pwc z\tau){-}W'(\pwl z\tau))z_\tau'  \dd x
\\
& \qquad \qquad \qquad 
 {+}
\frac12   \int_\Omega  \left( \bbC'(\pwc z\tau) \pwc e\tau : \pwc e\tau {-}  \bbC'(\pwl z\tau)  \RCZ (\sig{\pwl u\tau + w} {-} \pwl p\tau ):(\sig{\pwl u\tau + w} {-} \pwl p\tau )   \Big) z_\tau' \dd x 
\right)
\dd r \Big| 
\\
&  \stackrel{(3)}{\leq} C \int_{\upwc t\tau(s)}^{\pwc t\tau(t)} 
\Big( \| \pwc z\tau{-} \pwl z\tau \|_{\Hs}\|z_\tau' \|_{\Hs}{+}
 \| \pwc z\tau{-} \pwl z\tau \|_{2}\|z_\tau' \|_{L^2}
 \\ & \qquad \qquad \qquad 
  {+} \| \pwc u\tau{-} \pwl u\tau \|_{H^1} \|z_\tau' \|_{\Hs} {+}   \| \pwc p\tau{-} \pwl p\tau \|_{L^2}  \|z_\tau' \|_{\Hs}   {+} \| \pwc w\tau{-}  w  \|_{H^1} \|z_\tau' \|_{\Hs} \Big) \dd r   \,,
\end{aligned}
\end{equation} 
where for (3) we have used that, since $\pwc z\tau,\, \pwl z\tau \in [m_0,1]$ by property \eqref{strict-posivity} and $W$ is of class $\mathrm{C}^2$ on $[m_0,1]$, 
it is possible to estimate 
$\|W'(\pwc z\tau){-}W'(\pwl z\tau)\|_{L^2} \leq C  \| \pwc z\tau{-} \pwl z\tau \|_{2}$. We have also 
 estimated 
\[
\begin{aligned}
&
 \left\|\left( \bbC'(\pwc z\tau) \pwc e\tau: \pwc e\tau  {-}  \bbC'(\pwl z\tau) (\sig{\pwl u\tau + w} {-} \pwl p\tau ):(\sig{\pwl u\tau + w} {-} \pwl p\tau )    \right) z_\tau' \right \|_{L^1}
 \\
 &  \leq C  \| \pwc z\tau{-} \pwl z\tau \|_{L^\infty} \| \pwc e\tau  \|_{L^2}^2\|z_\tau' \|_{L^\infty} + 
 \|z_\tau \|_\infty\| \pwc e\tau {+} (\sig{\pwl u\tau + w} {-} \pwl p\tau) \|_{L^2} \| \pwc e\tau {-} (\sig{\pwl u\tau + w} {-} \pwl p\tau) \|_{L^2}\|z_\tau' \|_{L^\infty} 
 \\
 &
 \leq C  \| \pwc z\tau{-} \pwl z\tau \|_{\infty} \|z_\tau' \|_{L^\infty}  +  \| \pwc e\tau {-} (\sig{\pwl u\tau + w} {-} \pwl p\tau)\ \|_{L^2}\|z_\tau' \|_{L^\infty} 
 \end{aligned}
\]
thanks to \eqref{spd} and  estimate  \eqref{1709172242-SL};  subsequently,  we have  estimated  $ \| \pwc e\tau {-} (\sig{\pwl u\tau + w} {-} \pwl p\tau) \|_{L^2}$ as 
we did for \eqref{est1-R}.    All in all, this leads to \eqref{est2-R}. 
Thirdly, we see that 
\begin{equation}
\label{est3-R}
\begin{aligned}
|R_\tau^3(s,t)|  &  = \left|  \int_{\upwc t\tau(s)}^{\pwc t\tau(t)}  \int_\Omega  
\left( \newmu \pwc p\tau{-} \newmu \pwl p\tau{+} (\bbC(\pwl z\tau) (\sig{\pwl u\tau + w}  {-} \pwl p\tau))_\dev  {-}(\pwc \sigma\tau)_\dev \right) p_\tau'  \dd x \dd r \right|
\\
&  \stackrel{(4)}{\leq}   C \int_{\pwc t\tau(s)}^{\pwc t\tau(t)} \left( \|\pwc u\tau- \pwl u\tau \|_{H^1(\Omega)} {+} \|  \pwc z\tau{-}  \pwl z\tau\|_{\Hs(\Omega)} {+}  \| \pwc p\tau{-}  \pwl p\tau\|_{L^2}  {+} \| \pwc w\tau{-}  w  \|_{H^1}  \right) \| p_\tau' \|_{L^2} \dd r\,.
 \end{aligned}
\end{equation}
Here, (4) is  due to \eqref{spd} and the  previously obtained   estimates   \eqref{1709172242-SL}, which also enter into the  estimate 
\[
\begin{aligned}
&
\| (\bbC(\pwl z\tau)   (\sig{\pwl u\tau + w} {-} \pwl p\tau )  {-} \bbC(\pwc z\tau) \pwc e\tau)_\dev  p_\tau'\|_1
\\
 & \leq \| \bbC(\pwl z\tau) {-} \bbC(\pwc z\tau) \|_\infty  \|\pwc e\tau\|_{L^2} \|p_\tau'\|_{L^2}
+ \| \bbC(\pwc z\tau) \|_\infty  \|\pwc e\tau-   (\sig{\pwl u\tau + w} {-} \pwl p\tau )  \|_{L^2} \|p_\tau'\|_{L^2}
\\ & \leq C \left(  \|  \pwc z\tau{-}  \pwl z\tau\|_{\infty} {+}  \|\pwc u\tau- \pwl u\tau \|_{H^1(\Omega)} {+}  \|\pwc p\tau- \pwl p\tau \|_{L^2}  {+} \| \pwc w\tau{-} w \|_{H^1} \right)     \|p_\tau'\|_{L^2}\,. 
\end{aligned}
\]
Combining \eqref{est1-R}--\eqref{est3-R} with \eqref{discr-UEDE-q}, we conclude the proof.
\end{proof}
\section{Existence of solutions to the viscous problem}
\label{s:exist-viscous}
In this section we address the existence of solutions to Problem \ref{prob:visc} 
\underline{for fixed $\eps>0$, $\nu>0$, and   $\mu>0$}.   
Besides the standing assumptions  from Section \ref{ss:2.1}, which we omit to explicitly recall, 
 our existence result, Thm.\ \ref{th:1} below, will require
 conditions \eqref{EL-initial} on the initial data $(u_0,z_0,p_0)$.
 We notice that to prove the 
  \emph{sole} 
 existence of solutions for
 Problem \ref{prob:visc}  it  would be  enough to assume \eqref{1808191033} in place of \eqref{forces-u} 
 since for $\mu>0$ fixed  estimates \eqref{enhanced-discr-est} with  constants depending on $\mu$ 
  (cf.\ also Remark \ref{sta-in-rmk}) 
 are enough. The situation is similar for the  vanishing-viscosity  analysis carried out in  Section~\ref{s:van-visc},
 throughout which we shall keep 
  the hardening parameter $\mu$ fixed.  However, 
   condition \eqref{forces-u} ensures  that 
 the solutions we exhibit in Theorem~\ref{th:1} (in fact, that \emph{all} 
 viscous solutions arising from the time-discretization procedure set up in Section~\ref{s:4}) enjoy 
 the upcoming
 estimates \eqref{uniform-est-cont} uniformly w.r.t.\  the parameters $\eps$,   $\nu$, \emph{and} $\newmu$.
    This  will be at the basis 
   of the 
   vanishing-hardening  analysis  carried out in  Section~\ref{s:van-hard}.
 \begin{theorem}
 \label{th:1}
 Under the assumptions in Section~\ref{s:2},
  and \eqref{EL-initial}
   as well, 
 Problem \ref{prob:visc} admits a solution triple $(u,z,p)$ enjoying the additional regularity and summability properties
 \begin{equation}
 \label{additional-reg&sum}
 u \hspace{-0.1em}\in W^{1,\infty} (0,T; H_\Dir^1(\Omega;\R^n)), \ z \hspace{-0.1em} \in  \GGG H^{1,} (0,T;  H^m(\Omega)  )  {\cap} W^{1,\infty} (0,T; \GGG L^2(\Omega)  ), \ p \hspace{-0.1em} \in W^{1,\infty}(0,T;L^2(\Omega;\MD)).
 \end{equation}
 \par
\noindent Moreover, the triple $(u,z,p)$  fulfills 
\GGG \begin{equation}
\label{bound-diss+conjs}
      \int_0^T \MVito(r, q(r), q'(r)) \dd r  \leq C_4
\end{equation}
for  a constant $C_4>0$ independent of $\eps$, $\mu$,  $\nu>0$. 
Additionally,  we have the following bounds \emph{uniformly} w.r.t.\ all parameters $\eps,\,\nu$ and $\mu$ provided that
 \underline{$\nu\leq\newmu$}  (recall that   $e: =  \sig{u{+}w} -p $): 
\begin{equation}
\label{uniform-est-cont}
\begin{aligned}
&
 \| e \|_{W^{1,1}(0,T;L^2(\Omega;\Mnn))} + \| z\|_{W^{1,1}(0,T;\Hs(\Omega))} + \sqrt{\newmu}  \|p
 \|_{W^{1,1}(0,T;L^2(\Omega;\MD))} \\ & + \sqrt{\newmu}  \| u  
 \|_{W^{1,1}(0,T;H^1(\Omega;\R^n))}  + \| p\|_{W^{1,1}(0,T;L^1(\Omega;\MD))} \leq C_5\,.
 \end{aligned}
\end{equation}
 \end{theorem}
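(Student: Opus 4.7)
The plan is to identify the viscous solution as the limit, along some $\tau_k\down 0$, of the time-discrete interpolants constructed in Section \ref{s:4}, and then to conclude via the variational characterization of Proposition \ref{prop:charact}. Note that assumption \eqref{EL-initial} is exactly what unlocks the enhanced estimates of Proposition \ref{prop:enh-energy-est}, provided we take $\tau_k/\eps$ eventually small enough.

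Combining the basic bounds \eqref{basic-en-est-SL} with the enhanced ones \eqref{2009170048}, I would extract (for fixed $\eps,\nu,\newmu>0$ with $\nu\leq \newmu$) weak$^*$ limits
\[
u_\tau \weaksto u \text{ in } W^{1,\infty}(0,T;H^1_\Dir(\Omega;\R^n)), \quad p_\tau \weaksto p \text{ in } W^{1,\infty}(0,T;L^2(\Omega;\MD)),
\]
and $z_\tau\weaksto z$ in $W^{1,\infty}(0,T;L^2(\Omega))\cap L^\infty(0,T;\Hs(\Omega))\cap H^1(0,T;\Hs(\Omega))$. Since $\Hs(\Omega)\Subset \mathrm{C}^0(\overline\Omega)$, an Aubin--Lions/Ascoli argument upgrades the last convergence to $z_\tau\to z$ strongly in $\mathrm{C}^0([0,T];\mathrm{C}^0(\overline\Omega))$. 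Using $\|\pwc \eta\tau-\pwl\eta\tau\|_{L^\infty(0,T;X)}\leq \tau\|\eta_\tau'\|_{L^\infty(0,T;X)}$, the piecewise constant and linear interpolants of $u,z,p$ share the same limit, so that $\pwc e\tau$ and $\pwl e\tau$ both converge weakly$^*$ to $e:=\rmE(u+w)-p$ in $L^\infty(0,T;\Lnn)$, and $\pwc \sigma\tau=\bbC(\pwc z\tau)\pwc e\tau \to \sigma=\bbC(z)e$ (strongly in $L^2$ by the uniform convergence of $\bbC(\pwc z\tau)$, weakly$^*$ in $L^\infty(0,T;\Lnn)$).

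Next I would pass to the limit in the discrete Energy-Dissipation upper estimate \eqref{discr-UEDE}. The energy $\enen{\mu}(t,\pwl q\tau(t))$ converges by continuity (Lemma \ref{l:Fr-diff-Enu}), and the power integral converges via the explicit form \eqref{part-t-q} of $\partial_t\enen{\mu}$. The remainder $\mathfrak{R}_\tau(s,t)$ of \eqref{remainder} vanishes since each factor $\|\pwc\eta\tau-\pwl\eta\tau\|_X$ is $O(\tau)$, tested against the $L^1$-integrable quantity $\|q_\tau'\|_{\bfQ}$. The three dissipation terms $\disv\eps\nu(u_\tau')$, $\calR_\eps(z_\tau')$ and $\dish\eps\nu(\pwc z\tau,p_\tau')$ are weakly lower semicontinuous by convexity, together with \eqref{propsH-2+1/2} and the strong convergence of $\pwc z\tau$ for the last summand. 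For the Fenchel-Moreau conjugates appearing in \eqref{discr-UEDE} I would invoke the lower semicontinuity statements \eqref{eqs:1106191955} of Lemma \ref{lemma:sup} combined with Fatou, after noting that the selection $\tau\pwc\lambda\tau\in \tau\partial_z \densh\eps\nu(\pwc z\tau,\pwl p\tau')$ is $O(\tau)$ in $L^1(0,T;\Hs(\Omega)^*)$ by \eqref{propsH-2+1/2} and the $L^1$-bound on $p_\tau'$.

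The limit triple $q=(u,z,p)$ then satisfies the upper Energy-Dissipation estimate \eqref{enineq-plast}, which by Proposition \ref{prop:charact} is equivalent to solving Problem \ref{prob:visc}; moreover, \eqref{additional-reg&sum} is precisely the weak$^*$ content of the convergences above. The bound \eqref{bound-diss+conjs} follows by inserting $q$ into the continuous Energy-Dissipation balance \eqref{0307191911} of Proposition \ref{Vito-Bs} and estimating $|\enen\mu(t,q(t))|$ and $\int_0^T\partial_t\enen\mu$ through the $\eps,\nu,\mu$-uniform bound \eqref{1709172242-SL}. Finally, \eqref{uniform-est-cont} is inherited from \eqref{2009170054} by weak$^*$ lower semicontinuity of norms, with the $L^1(0,T;L^1(\Omega;\MD))$-bound for $p'$ obtained by passing to the limit in the key discrete inequality \eqref{crucial-est-Vito}. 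The main analytical obstacle lies in the limit passage for the conjugates $\calR_\eps^*$ and $\dish\eps\nu^*$, whose distance-to-subdifferential structure only allows for lower semicontinuity rather than continuity: this is precisely the reason why the limit Energy-Dissipation relation is derived as an inequality and then upgraded to an equality via Proposition \ref{prop:charact}.
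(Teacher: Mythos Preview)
Your proposal is correct and follows essentially the same route as the paper: extract a limit triple from the discrete interpolants via the a priori estimates of Propositions \ref{prop:energy-est-SL} and \ref{prop:enh-energy-est}, pass to the limit in the discrete Energy-Dissipation upper estimate \eqref{discr-UEDE} (using lower semicontinuity for the energy and for the conjugate terms, strong convergence of $\pwc z\tau$ for the $z$-dependent pieces, and $\mathfrak{R}_\tau\to 0$), and then invoke Proposition \ref{prop:charact} to upgrade to a solution; the uniform bounds \eqref{bound-diss+conjs} and \eqref{uniform-est-cont} are then read off from \eqref{0307191911} and from the discrete estimates by lower semicontinuity. The only minor imprecision is that $\enen{\mu}(T,\pwl q\tau(T))$ is handled by weak lower semicontinuity (the elastic term $\calQ(z,e)$ is only lsc in $e$), not by continuity, and the paper treats the $\dish\eps\nu^*$-term via an Ioffe-type argument rather than Lemma \ref{lemma:sup}, but these are equivalent formulations of the same step.
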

 \begin{proof}
By virtue of   Proposition \ref{prop:charact},  it is sufficient to show 
that the piecewise constant and linear interpolants 
 of the discrete solutions constructed in Section~\ref{s:4} converge to a triple  $(u,z,p)$
 fulfiling  the initial conditions \eqref{initial-conditions-visc} and the Energy-Dissipation upper estimate
\eqref{enineq-plast}.  For this, we  shall  take the limit of 
the discrete Energy-Dissipation inequality \eqref{discr-UEDE}, using that, thanks to \eqref{forces-u} and \eqref{dir-load},
\begin{equation}
\label{converg-Fw}
\pwc F{\tau_k} \to F \quad \text{in } H^1(0,T;\BD(\Omega)^*)\,,\qquad
\pwc w{\tau_k} \to w \quad \text{in } H^1(0,T;H^1(\Omega;\R^n))\,.
\end{equation} 
Estimates
\eqref{uniform-est-cont}
will be inherited by $(u,z,p)$
 and $e$ from the analogous bounds for the approximate solutions via lower semicontinuity arguments.
Accordingly, the proof is split in three steps. 
\par
\noindent
\textbf{Step $1$: Compactness.} 
Let us consider a null sequence $\tau_k \down 0$ and, accordingly, the 
discrete solutions
$(\pwc u{\tau_k}, \pwl u{\tau_k}, \pwc z{\tau_k}, \pwl z{\tau_k}, \pwc p{\tau_k}, \pwl p{\tau_k})_k$, along with $(\pwc e{\tau_k}, \pwl e{\tau_k})_k$. 
It follows from estimates \eqref{basic-en-est-SL} and  \eqref{2009170048}, 
combined with standard weak compactness arguments and Aubin-Lions type compactness results (cf., e.g., \cite{Simon87}), that there exists a triple $(u,z,p)$ fulfiling \eqref{additional-reg&sum}, such that the following convergences hold
\begin{subequations}
\label{converg-uzpe}
\begin{align}
&
\label{converg-u}
 \pwl u{\tau_k} \weaksto u && \text{in } W^{1,\infty}(0,T;H^1(\Omega;\R^n)), &&  \pwl u{\tau_k} \to u && \text{in } \mathrm{C}^0([0,T]; Y), 
 \\
 &
 \label{converg-z}
 \pwl z{\tau_k} \weaksto z && \text{in } \GGG H^1(0,T;\Hs(\Omega)),  &&  \pwl z{\tau_k} \to z && \text{in }   \mathrm{C}^0([0,T]; Z),
\\
&
\label{converg-p}
 \pwl p{\tau_k} \weaksto p && \text{in } W^{1,\infty}(0,T;L^2(\Omega;\MD)), &&  \pwl p{\tau_k} \to p && \text{in }   \mathrm{C}^0([0,T]; W)
\end{align}
\end{subequations}
for any Banach spaces $Y$, $Z$, and $W$ such that $H_\Dir^1(\Omega;\R^n) \Subset Y$, $\Hs(\Omega) \Subset Z$ (in particular, for $Z = \mathrm{C}^0(\overline\Omega)$), and $L^2(\Omega;\MD)\Subset W$.  Hence,  we 
have that 
\begin{equation}
\label{weak-converg-uzpe}
 \pwl u{\tau_k}(t) \weakto u(t) \text{ in } H^1(\Omega;\R^n), 
 \quad  \pwl z{\tau_k}(t) \weakto z(t) \text{ in } \Hs(\Omega),  \quad  \pwl p{\tau_k}(t) \weakto p(t) \text{ in } L^2(\Omega;\MD)  \ \text{for all } t \in [0,T]. 
\end{equation}
Furthermore,  it follows from  estimates \eqref{2009170048}
 that  
\begin{subequations}
\label{stability-properties}
\begin{equation}
\label{stabilty-uzp}
\begin{aligned}
&
\| \pwc u{\tau_k} {-}  \pwl u{\tau_k} \|_{L^\infty(0,T;H^1(\Omega;\R^n))} \leq \tau_k \| \pwl u{\tau_k}' \|_{L^\infty(0,T;H^1(\Omega;\R^n))} \to 0 \qquad \text{as } k \to +\infty,
\\
&
\| \pwc z{\tau_k} {-}  \pwl z{\tau_k} \|_{L^\infty(0,T;\Hs(\Omega))} \leq \tau_k \| \pwl z{\tau_k}' \|_{\GGG L^2(0,T;\Hs(\Omega)) } \to 0 \qquad \text{as } k \to +\infty,
\\
&
\| \pwc p{\tau_k} {-}  \pwl p{\tau_k} \|_{L^\infty(0,T;L^2(\Omega;\MD))} \leq \tau_k \| \pwl p{\tau_k}' \|_{L^\infty(0,T;L^2(\Omega;\MD))} \to 0 \qquad \text{as } k \to +\infty,
\end{aligned}
\end{equation}
and we have the very same estimates for $\upwc u{\tau_k}$, $\upwc z{\tau_k}$, and  $\upwc p{\tau_k}$. 
Therefore, the pointwise  convergences \eqref{weak-converg-uzpe} hold for the sequences $\pwc u{\tau_k}$, $\upwc u{\tau_k}$, $\pwc z{\tau_k}$, $\upwc z{\tau_k}$ $\pwc p{\tau_k}$, and $\upwc p{\tau_k}$,  as well. 
Since $w\in W^{1,\infty} (0,T; H^1(\Omega;\R^n))$, it is not difficult to check that, likewise, 
\begin{equation}
\label{stabilty-w}
\| \pwc w{\tau_k} {-}  \pwl w{\tau_k} \|_{L^\infty(0,T;H^1(\Omega;\R^n))} \leq \tau_k \|w'\|_{L^\infty (0,T; H^1(\Omega;\R^n))} \to 0 \qquad \text{as } k \to +\infty\,.
\end{equation}
\end{subequations}
As a consequence of \eqref{converg-Fw}, \eqref{converg-uzpe}, and  \eqref{stabilty-uzp}, we also have that 
\begin{equation}
\label{converg-e}
\begin{aligned}
&
\pwc e{\tau_k} = \sig{\pwc u{\tau_k}{+} \pwc w{\tau_k}} -\pwc p{\tau_k} \weaksto e: = \sig{u{+}w} -p 
&& \qquad 
 \text{in }L^\infty(0,T;L^2(\Omega;\Mnn))\,,
 \\
 &
\pwc e{\tau_k}(t) \weakto e(t)  &&   \qquad \text{in }L^2(\Omega;\Mnn) \quad \text{for all } t \in [0,T] \,. 
 \end{aligned}
 \end{equation}
  Then,  it turns out  that $\pwc \sigma{\tau_k}\weaksto \sigma $ in $L^\infty(0,T;L^2(\Omega;\Mnn))$, since
\begin{equation}
\label{converg-sigma}
(\pwc \sigma{\tau_k} {-} \sigma) = (\bbC(\pwc z{\tau_k}) {-} \bbC(z)) \pwc e{\tau_k} + \bbC(z) (\pwc e{\tau_k}{-}e) \weaksto 0 \text{ in } L^\infty(0,T;L^2(\Omega;\Mnn)),
\end{equation}
since $\|\bbC(\pwc z{\tau_k}) {-} \bbC(z)\|_{L^\infty (0,T;L^\infty(\Omega;\Mnn))} \to 0 $ by the Lipschitz continuity of $\bbC$, 
 combined with convergences \eqref{converg-z} and \eqref{stabilty-uzp}.  	 
Finally, let us observe that, by the Lipschitz continuity of the functional $z\mapsto \calH(z,\pi)$ (cf.\ 
\eqref{props-calH-3+1/2}), and \cite[Prop.\ 1.85]{Mord-book-I}, the function $\pwc \lambda\tau$ featuring in the argument of $\calR_\eps^*$ on  the left-hand side of \eqref{discr-UEDE} fulfills
\[
\|\pwc \lambda{\tau}\|_{L^\infty(0,T;\mathrm{M}(\Omega))} \leq C_K \|\pwl p\tau' \|_{L^\infty(0,T;L^1(\Omega;\MD))},
\]
so that,  by virtue of estimate  \eqref{2009170048},
\begin{equation}
\label{ci-vuole}
\tau_k \pwc \lambda{\tau_k} \to 0 \quad \text{ in } L^\infty(0,T;\Hs(\Omega)^*) \quad \text{ as } k \to+\infty.
\end{equation} 
\par
Since $\eta_{\tau_k}(0) = \eta_0$ for $\eta \in \{ u,z,p\}$, it follows from convergences \eqref{weak-converg-uzpe} that the triple $(u,z,p)$ complies with the initial conditions \eqref{initial-conditions-visc}. 
\par
\noindent
\textbf{Step $2$: Limit passage in \eqref{discr-UEDE}}. Since we aim at \eqref{enineq-plast}, it is sufficient to take the limit of \eqref{discr-UEDE} written for $s=0$ and $t=T$.
We start by discussing the limit passage on the left-hand side of \eqref{discr-UEDE}.
Relying on  the convergences from Step $1$,    easily  check that 
\[
\liminf_{k\to+\infty}  \enen{\mu} (\pwc t{\tau_k}(T), \pwc u{\tau_k}(T), \pwc z{\tau_k}(T), \pwc p{\tau_k}(T))  \geq  \enen{\mu} (T, u(T),z(T), p(T))\,. 
\]
 In view of convergences \eqref{converg-uzpe}, we immediately have 
\[
\lim_{k\to+\infty}\int_{0}^{T} \left( \disv\eps\nu(u_{\tau_k}'(r)) {+} \calR_\eps(z_{\tau_k}'(r)) \right) \dd r \geq \int_0^T \left( \disv\eps\nu(u'(r)) {+} \calR_\eps(z'(r)) \right) \dd r. 
\]
It follows from \eqref{props-calH-3+1/2} that 
\[
\int_0^T \left| \calH(\upwc z{\tau_k}(t), \pwl p{\tau_k}'(t)) {-}  \calH(z(t), \pwl p{\tau_k}'(t))\right|  \dd t \leq \| \upwc z{\tau_k}{-}z\|_{L^\infty(0,T;L^\infty(\Omega))} \|  \pwl p{\tau_k}'\|_{L^1(0,T; L^1(\Omega;\MD))} \to 0 \qquad \text{as } k \to+\infty,
\]
since $ \upwc z{\tau_k}\to z$ in $L^\infty(0,T;L^\infty(\Omega))$ by \eqref{converg-z} and \eqref{stability-properties}.
On the other hand, by \eqref{converg-p} we have 
\[
\liminf_{k\to+\infty} \int_0^T  \calH(z(t), p_{\tau_k}'(t)) \dd t \geq \int_0^T \calH(z(t), p'(t)) \dd t \,.
\]
Therefore, 
\[
\begin{aligned}
\liminf_{k\to+\infty} \int_0^T \dish\eps\nu(\upwc z{\tau_k}(t), \pwl p{\tau_k}'(t)) \dd t   & \geq
\liminf_{k\to+\infty} \int_0^T \calH(\upwc z{\tau_k}(t), \pwl p{\tau_k}'(t)) \dd t + \liminf_{k\to+\infty} \frac{\eps\nu}{2} \int_0^T \| \pwl p{\tau_k}'(t)\|_{L^2(\Omega;\MD)}^2 \dd t \\ &  \geq \int_0^T \calH(z(t), p'(t)) \dd t + \frac{\eps\nu}{2} \int_0^T \| p'(t)\|_{L^2(\Omega;\MD)}^2 \dd t  = \int_0^T \dish\eps\nu(z(t), p'(t)) \dd t\,.
\end{aligned}
\]
By \eqref{converg-sigma}, 
$\mathrm{Div}(\bbC(\pwc z{\tau_k})\pwc e{\tau_k})  =\mathrm{Div}(\pwc \sigma{\tau_k}) \weaksto \mathrm{Div} (\sigma) = \mathrm{Div} (\bbC(z) e) $ in $L^\infty (0,T; H^1(\Omega;\R^n)^*)$. Therefore, also in view of \eqref{converg-Fw} and by the convexity of $\disv\eps\nu^*$, we find that
\[
\liminf_{k\to+\infty}
\int_0^T  \disv\eps\nu^*(\mathrm{Div}(\bbC(\pwc z{\tau_k}(r))\pwc e{\tau_k}(r)) {+}\pwc F{\tau_k}(r)) \dd r  \geq \int_0^T \disv\eps\nu^* ( \mathrm{Div} (\bbC(z(r)) e(r)) {+} F(r)) \dd r\,.
\]
 We now observe that 
\[
\begin{aligned}
&
\liminf_{k\to+\infty}
\int_0^T 
\calR_\eps^*({-}\As(\pwc z{\tau_k}(r)) {-}W'(\pwc z{\tau_k}(r)){-}\tfrac12 \bbC'(\pwc z{\tau_k}(r)) \pwc e{\tau_k}(r) : \pwc e{\tau_k} (r){-}\tau_k \pwc \lambda{\tau_k}(r)) \dd r 
\\
& \stackrel{(1)}{\geq}
\int_0^T 
 \liminf_{k\to+\infty}
\calR_\eps^*({-}\As(\pwc z{\tau_k}(r)) {-}W'(\pwc z{\tau_k}(r)){-}\tfrac12 \bbC'(\pwc z{\tau_k}(r)) \pwc e{\tau_k}(r) : \pwc e{\tau_k} (r){-}\tau_k \pwc \lambda{\tau_k}(r)) \dd r 
\\
& \stackrel{(2)}{\geq}
\int_0^T 
\calR_\eps^*({-}\As (z(r)) {-}W'(z(r)){-}\tfrac12 \bbC'(z(r)) e(r): e(r) ) \dd r, 
\end{aligned}
\]
where {\footnotesize (1)} follows from the Fatou Lemma, and {\footnotesize (2)} 
from the fact that 
 \[
 \begin{aligned}
 &
  \liminf_{k\to+\infty} \calR_\eps^*({-}\As(\pwc z{\tau_k}(r)) {-}W'(\pwc z{\tau_k}(r)){-}\tfrac12 \bbC'(\pwc z{\tau_k}(r)) \pwc e{\tau_k}(r) : \pwc e{\tau_k} (r){-}\tau_k \pwc \lambda{\tau_k}(r)) \\ & \geq  \sup_{\zeta \in \Hsm}    \liminf_{k\to+\infty} 
 \left(\pairing{}{\Hs(\Omega)}{\As(\pwc z{\tau_k}(r)) {+}W'(\pwc z{\tau_k}(r)){+}\tfrac12 \bbC'(\pwc z{\tau_k}(r)) \pwc e{\tau_k}(r) : \pwc e{\tau_k} (r){+}\tau_k \pwc \lambda{\tau_k}(r)}{{-}\zeta} -\calR_\eps(\zeta) \right)
 \\
 & \geq \sup_{\zeta \in \Hsm}  \left(\pairing{}{\Hs(\Omega)}{\As (z(r)) {+}W'(z(r)){+}\tfrac12 \bbC'(z(r)) e(r): e(r) }{{-}\zeta} -\calR_\eps(\zeta) \right)  \qquad \text{for all } r \in [0,T]
 \end{aligned}
 \]
 by   \eqref{weak-converg-uzpe},   \eqref{converg-e}, and \eqref{ci-vuole}. 
In the end, we have that 
\[
\liminf_{k\to+\infty}
\int_0^T 
  \dish\eps\nu^*(\upwc z{\tau_k}(r), -\newmu \pwc p{\tau_k}(r) + (\pwc\sigma{\tau_k}(r))_\dev)
    \dd r \geq \int_0^T \dish\eps\nu^* (z(r), -\newmu p(r) + (\sigma(r))_\dev) \dd r  
   \]
   by
   convergences \eqref{converg-uzpe} and \eqref{converg-sigma},  and 
     a version of the Ioffe Theorem, cf.\ e.g.\ \cite[Thm.\ 21]{Valadier90}.  The latter result applies since
   \begin{compactenum}
   \item
    the mapping
   $ \Hs(\Omega)\ni z \mapsto \calH^*(z, \omega) $ is lower semicontinuous for all $\omega \in L^2(\Omega;\MD)$ (as 
   $\calH^*(z, \omega) = \sup_{\pi \in L^2(\Omega;\MD) } (\int_\Omega \omega \pi \dd x - \calH(z,\pi))$ and the maps $z\mapsto  - \calH(z,\pi)$ are continuous by \eqref{props-calH-3+1/2}), and thus $z \in  \Hs(\Omega) \mapsto \dish\eps\nu^*(z, \omega) $ is also lower semicontinuous;
   \item the mapping 
   $ L^2(\Omega;\MD) \ni \GGG \pi \mapsto   \dish\eps\nu^*(z, \pi)$ is convex. 
   \end{compactenum}
   \par
      As for the right-hand side of \eqref{discr-UEDE},
      clearly we have 
      $\enen{\mu}(0, \pwc u{\tau_k}(0), \pwc z{\tau_k}(0), \pwc p{\tau_k}(0)) = \enen{\mu} (0,u_0,z_0,p_0)$ for all $k\in \N$. The power terms
   converge, too, as we have 
 \[
 \begin{aligned}
 &
\int_0^T  \int_\Omega \bbC(\pwl z{\tau_k} (r)) (\sig{\pwl u{\tau_k}(r){+}w(r)}{-} \pwl p{\tau_k} (r)): \sig{w'(r)} \dd x  \dd r \stackrel{(1)}\to 
\int_0^T \int_\Omega \bbC(z(r)) e(r) : \GGG \sig{w'(r)}  \dd x \dd r \qquad \text{and}
\\
&
 -
  \int_{0}^{T}\pairing{}{H^1(\Omega;\R^n)}{F'(r)}{\pwl u{\tau_k}(r){+}w(r)}  \dd r  \stackrel{(2)}\to   -
  \int_{0}^{T}\pairing{}{H^1(\Omega;\R^n)}{F'(r)}{u(r){+}w(r)}  \dd r \,,
  \end{aligned}
\]
with {\footnotesize (1)}    due to convergences 
\eqref{converg-uzpe} and 
to the fact that, by the Lipschitz continuity of $\bbC$, $\bbC(\pwl z{\tau_k}) \to \bbC(z)$ in $L^\infty(0,T;L^\infty(\Omega))$,  and {\footnotesize (2)} again due to \eqref{converg-uzpe}. 
 In a completely analogous way the last-but-one term on the right-hand side of  \eqref{discr-UEDE} passes to the limit. 
Finally, we estimate the remainder term $\mathfrak{R}_\tau(0,T)$ from \eqref{remainder} via 
\begin{equation}
\label{remainder-estimates}
\begin{aligned}
\mathfrak{R}_\tau(0,T)  & \leq   C_3     \left( \|\pwc u{\tau_k}{-} \pwl u\tau \|_{L^\infty(0,T;H^1(\Omega))} {+} \|  \pwc z\tau{-}  \pwl z\tau\|_{L^\infty(0,T;\Hs(\Omega))} {+}  \| \pwc p\tau{-}  \pwl p\tau\|_{L^\infty(0,T;L^2(\Omega))}    {+} \| \pwc w\tau{-} w \|_{L^\infty(0,T;H^1(\Omega))} \right) 
\\
& \qquad \qquad \qquad  \times 
\int_0^T \left(  \| u_{\tau_k}' \|_{H^1(\Omega)}{+}  \| z_{\tau_k}' \|_{\Hs(\Omega)} {+}\| p_{\tau_k}' \|_{L^2(\Omega)}  \right) \dd r
\\
&
 \stackrel{(3)}\leq C \tau_k \to 0 \qquad \text{as } k\to+\infty,
\end{aligned}
\end{equation}
with {\footnotesize (3)} due to \eqref{stability-properties}  and the fact that $w\in W^{1,\infty} (0,T;H^1(\Omega;\R^n))$, and estimates \eqref{enhanced-discr-est}.
This concludes the proof of the Energy-Dissipation upper estimate \eqref{enineq-plast}.
\par\noindent
\textbf{Step $3$: proof of  \eqref{uniform-est-cont}.} Estimates \eqref{uniform-est-cont} follow from the analogous bounds \eqref{basic-en-est-SL} and \eqref{2009170054} for the discrete solutions via lower semicontinuity arguments, based on convergences 
 \eqref{converg-uzpe} and \eqref{converg-e}. 
 \GGG We conclude observing that \eqref{bound-diss+conjs} follows from
  the reformulation of the Energy-Dissipation balance as 
  \eqref{0307191911},  and the fact that $\int_0^t\partial_t \calE_\mu$ is 
   uniformly
  bounded w.r.t.\ $\eps,\, \nu, \mu>0$,  in view of the assumptions on $F$ and $w$ and of the previously proven  \eqref{uniform-est-cont}.     
\end{proof} 
\section{The vanishing-viscosity limit with fixed hardening parameter}
 \label{s:van-visc}
 This section focuses on the limit passage in the viscous system \eqref{RD-intro} as $\eps \down 0$ 
  and, possibly, $\nu\down 0$, while  
 the hardening parameter \emph{$\newmu>0$ is kept fixed}. 
In fact we shall distinguish the two cases:
\begin{enumerate}
\item  $\eps \down 0$ and $\nu>0$ is kept fixed, addressed in Section~\ref{s:6-nu-fixed}, in which the vanishing-viscosity analysis will lead to the existence of 
 Balanced Viscosity solutions to the \emph{rate-independent} system 
 for damage and plasticity \emph{with hardening}, cf.\  Theorem \ref{teo:rep-sol-exist}  ahead; 
\item $\eps,\, \nu\down 0$, addressed in Section~\ref{s:6-nu-vanishes}, in which we shall obtain  Balanced Viscosity solutions to the \emph{multi-rate} system 
 for damage and plasticity \emph{with hardening},  cf.\ Theorem \ref{teo:rep-sol-exist-2}  later on. 
\end{enumerate}
Hereafter, we shall work under the standing assumptions from Section~\ref{s:2} and omit to explicitly invoke them in the
various results, with the exception of Theorems\  \ref{teo:rep-sol-exist} and  \ref{teo:rep-sol-exist-2}.  Notice that in this section,  in which  the hardening parameter $\mu$ is fixed, we could use just the assumption \eqref{1808191033} on the external forces, which is however not enough to obtain estimates independent of $\mu$ for the solutions,   cf.\ Remark \ref{sta-in-rmk}.  
 %
 \begin{notation}
 \label{not-6.1}
 \upshape 
 We shall denote by  $(q_{\eps,\nu})_{\eps,\nu} = (u_{\eps,\nu},z_{\eps,\nu},p_{\eps,\nu})_{\eps,\nu}$ a family of solutions to Problem \ref{prob:visc} for $\mu>0$ fixed,
with initial and external data independent of $\eps $ and $\nu$ and satisfying the conditions listed in \GGG Section~\ref{s:2}.
 
\end{notation}
 Prior to distinguishing the case in which $\nu>0$  is fixed from that in which $\nu\down 0$, let us establish the common ground for the vanishing-viscosity analysis.  
Following the well-established reparameterization technique pioneered in \cite{EfeMie06RILS}, 
we  shall  suitably reparameterize the viscous solutions $(q_{\eps,\nu})_{\eps,\nu} $,  observe that 
the rescaled functions $(\sfq_{\eps,\nu})_{\eps,\nu} $ comply with a reparameterized version of  the Energy-Dissipation balance corresponding to \eqref{enineq-plast}, and pass to the limit in it  as $\eps\down 0$ and $\nu>0$ is fixed (see Section~\ref{s:6-nu-fixed}), and as $\eps,\, \nu \down 0$ (see Section~\ref{s:6-nu-vanishes}).  
\paragraph{\bf Rescaling.}
 We introduce the arclength function  $s_{\eps,\nu}: [0,T]\to [0,S_{\eps,\nu}]$ (with $S_\eps := s_{\eps,\nu}(T)$) defined by
 \begin{equation}\label{def:arclength}
 s_{\eps,\nu}(t): = \int_0^t \left(1+\|q_{\eps,\nu}'(\tau)\|_{\bfQ} \right) \dd \tau
  = \int_0^t \left(1 {+}  \|u_{\eps,\nu}'(\tau)\|_{H^1(\Omega;\R^n)} {+} \|z_{\eps,\nu}'(\tau)\|_{\Hs(\Omega)} {+} \|p_{\eps,\nu}'(\tau)\|_{L^2(\Omega;\MD)}  \right) \dd \tau \,.
 \end{equation}
 It follows from estimate \eqref{uniform-est-cont} that $\sup_{\eps,\nu} S_{\eps,\nu} <+\infty$. We now define
 \begin{subequations}
  \label{rescaled-solutions}
 \begin{equation}
 \begin{aligned}
 &
 \sft_{\eps,\nu}: [0,S_{\eps,\nu}] \to [0,T],  &&  \sft_{\eps,\nu}: = s_{\eps,\nu}^{-1}, & \qquad 
 &
 \sfu_{\eps,\nu}: [0,S_{\eps,\nu}] \to H^1(\Omega;\R^n),  &&  \sfu_{\eps,\nu}: = u_{\eps,\nu} \circ \sft_{\eps,\nu}
 \\  
  &
 \sfz_{\eps,\nu}: [0,S_{\eps,\nu}] \to \Hs(\Omega), && \sfz_{\eps,\nu}: = z_{\eps,\nu} \circ \sft_{\eps,\nu}, & \qquad 
  &
 \sfp_{\eps,\nu}: [0,S_{\eps,\nu}] \to L^2(\Omega;\MD),  && \sfp_{\eps,\nu}: = p_{\eps,\nu} \circ \sft_{\eps,\nu}
 \end{aligned}
 \end{equation}
 and set
 $
 \sfq_{\eps,\nu}: = ( \sfu_{\eps,\nu},  \sfz_{\eps,\nu},  \sfp_{\eps,\nu}). 
 $
 In what follows,  with slight abuse of notation  
  we shall often 
write $\calE(t,q)$ in place of $\calE(t,u,z,p)$. 
  We also introduce 
  \begin{equation}
 \begin{aligned}
& \sfe_{\eps,\nu}: [0,S_{\eps,\nu}] \to L^2(\Omega;\Mnn), &&  \sfe_{\eps,\nu} : = e_{\eps,\nu} \circ \sft_{\eps,\nu} = \sig{\sfu_{\eps,\nu} {+} (w{\circ}\sft_{\eps,\nu})} - \sfp_{\eps,\nu},   \\
& \serifsigma_{\eps,\nu}: [0,S_{\eps,\nu}] \to L^2(\Omega;\Mnn), && \serifsigma_{\eps,\nu}: = \sigma_{\eps,\nu} \circ \sft_{\eps,\nu}
= \mathbb{C}(\sfz_{\eps,\nu})\sfe_{\eps,\nu} 
 \end{aligned}
  \end{equation}
 \end{subequations}
 \paragraph{\bf The parameterized Energy-Dissipation balance.}
 A straightforward calculation on the Energy-Dissipation balance corresponding to
 \eqref{enineq-plast}
 yields that the reparameterized viscous solutions $(\sfu_{\eps,\nu}, \sfz_{\eps,\nu}, \sfp_{\eps,\nu})$,  along with the rescaling functions $\sft_{\eps,\nu}$, fulfill
 \begin{equation}
 \label{immediate-reparam}
 \begin{aligned}
     & \calE (\sft_{\eps,\nu}(s_2),\sfq_{\eps,\nu}(s_2)) +
      \int_{s_1}^{s_2}
      \sft_{\eps,\nu}'\Big[ \calV_{\eps,\nu}\left(\frac{\sfu_{\eps,\nu}'}{\sft_{\eps,\nu}'}\right) {+} \calR_{\eps,\nu}\left(\frac{\sfz_{\eps,\nu}'}{\sft_{\eps,\nu}'}\right) {+} \calH_{\eps,\nu}\left(\sfz_{\eps,\nu}, \frac{\sfp_{\eps,\nu}'}{\sft_{\eps,\nu}'}\right)\Big] \dd \tau
     \\ & \quad   +
       \int_{s_1}^{s_2}   \sft_{\eps,\nu}' \Big[  \calV_{\eps,\nu}^*\Big(\mathrm{Div}\big(\serifsigma_{\eps,\nu}{+} \rho(\sft_{\eps,\nu})\big)\Big) {+}   \calR_{\eps}^*\Big({-}\As (\sfz_{\eps,\nu}) {-}W'(\sfz_{\eps,\nu}){-}\tfrac12 \bbC'(\sfz_{\eps,\nu}) \sfe_{\eps,\nu} : \sfe_{\eps,\nu}\Big) 
       \\
       & \qquad \qquad  \qquad \qquad 
  {+}  \calH_{\eps,\nu}^*\Big(\sfz_{\eps,\nu}, -\nu \sfp_{\eps,\nu} + (\serifsigma_{\eps,\nu})_\dev\Big)\Big]
  \dd \tau
   \\&
    =     
     \calE (\sft_{\eps,\nu}(s_1),\sfq_{\eps,\nu}(s_1)
    +\int_{s_1}^{s_2} \partial_t     \calE 
    (\sft_{\eps,\nu},\sfq_{\eps,\nu})\,  \sft_{\eps,\nu}' \dd \tau
   \end{aligned}
 \end{equation}
for all $0\leq s_1\leq s_2 \leq S_{\eps,\nu}$, where we have used that $F \circ \sft_{\eps,\nu} = -   \mathrm{Div} (\rho {\circ} \sft_{\eps,\nu})$ by 
 condition \eqref{2809192200}. 
\par
Let us now introduce a functional 
$\Mename = \Me tq{t'}{q'}$ 
subsuming the terms featuring in the integrals on the left-hand side of \eqref{immediate-reparam}. In order to motivate our definition of $\Mename$, cf.\ \eqref{Me-def} below,
we recall the definitions \eqref{dissipazioni-riscalate}
 of the functionals $\calV_{\eps,\nu}$, $\calR_\eps$, and $\calH_{\eps,\nu}$,
 so that 
  \[
  \begin{aligned}
  &
\calV_{\eps,\nu} \left( \frac{\sfu_{\eps,\nu}'}{\sft_{\eps,\nu}'}  \right) 
= \frac{\eps\nu}{2(\sft_{\eps,\nu}')^2} \| \sfu_{\eps,\nu}' \|^2_{ H^1, \bbD }\,, 
\\
&
\calR_\eps \left( \frac{\sfz_{\eps,\nu}'}{\sft_{\eps,\nu}'}  \right)
 =
 \frac1{\sft_{\eps,\nu}'}\calR({\sfz_{\eps,\nu}'})  +
 \frac{\eps}{2(\sft_{\eps,\nu}')^2} \| \sfz_{\eps,\nu}' \|_{L^2}^2 \,, 
 \\
 &
 \calH_{\eps,\nu} \left( \sfz_{\eps,\nu}, \frac{\sfp_{\eps,\nu}'}{\sft_{\eps,\nu}'}  \right) 
 =
  \frac1{\sft_{\eps,\nu}'} \calH(\sfz_{\eps,\nu}, \sfp_{\eps,\nu}')  +
 \frac{\eps\nu}{2(\sft_{\eps,\nu}')^2} \| \sfp_{\eps,\nu}' \|_{L^2}^2\,.
 \end{aligned}
 \]
 Moreover, we take into account 
 the expressions \eqref{Psi-eps-star} of the conjugates,
 and the fact that the arguments of $\calV_{\eps,\nu}^*$, $\calR_\eps^*$ and $\calH_{\eps,\nu}^*$
 in \eqref{immediate-reparam}
 involve the derivatives $-\mathrm{D}_x \calE (\sft_{\eps,\nu},\sfu_{\eps,\nu},\sfz_{\eps,\nu},\sfp_{\eps,\nu})$ for $x =u$, $x=z$, and $x=p$, respectively.
 In particular, in view of \eqref{norma-duale} we have 
 \begin{equation} \label{v-star}
\begin{split}
 \calV_{\eps,\nu}^*(\mathrm{Div}(\serifsigma_{\eps,\nu}{+} \rho(\sft_{\eps,\nu}))) 
 = \frac1{2\eps\nu} \| -\mathrm{D}_u \calE  (\sft_{\eps,\nu},\sfu_{\eps,\nu},\sfz_{\eps,\nu},\sfp_{\eps,\nu})\|^2_{( H^1, \bbD )^*}\,.
\end{split}
 \end{equation}
All in all, 
 the functional
 $\Mename: [0,+\infty) \times \bfQ \times (0,+\infty) \times \bfQ \to [0,+\infty]$
  encompassing the integrands on the left-hand side of \eqref{immediate-reparam} reads 
\begin{equation}
\label{Me-def}
\begin{aligned}
&
\Me tq{t'}{q'} 
: = \calR(z') + \calH(z,p') + \Mredname (t,q,t',q')
\qquad
\text{ with } \Mredname \text{ defined by }
\\
&
\begin{aligned}
\Mredname (t,q,t',q')
:=\ &  \frac{\eps}{2t'} \DVito_\nu^2(q') + \frac{t'}{2\eps} (\DVitos(t,q))^2\,,
\end{aligned}
\end{aligned}
\end{equation}
 with the functionals 
$\DVito_\nu$ and $\DVitos$ from \eqref{0307191952}, namely
\begin{equation}
\label{new-form-D-Dvito}
\begin{aligned}
  & 
   \DVito_\nu({q}'): = 
    \sqrt{ \nu \| {u}'(t)\|^2_{ H^1, \bbD }   {+} 
\|{z}'(t)\|_{L^2}^2
{+} \nu\|{p}'(t)\|_{L^2}^2} \,..
\\
&
 \DVitos({t},{q}): =   \sqrt{
\frac1{\nu}\, \|{-}\mathrm{D}_u \calE_\mu ({t},{q} )\|^2_{( H^1, \bbD )^*} 
+ \tilded_{L^2} ({-}\mathrm{D}_z \calE_\mu  ({t},{q}) ,\partial\calR(0))^2
+ \frac1\nu \, \dLtwo ({-}\mathrm{D}_p \calE_\mu  ({t},{q}) ,\partial_\pi \calH( {z} ,0))^2
}\,.
\end{aligned}
\end{equation}

Therefore,
the rescaled solutions $(\sft_{\eps,\nu},\sfq_{\eps,\nu})_{\eps,\nu}$ satisfy
 \begin{itemize}
 \item[-] the \emph{parameterized} Energy-Dissipation balance for every $s_1,s_2\in[0,S_{\eps,\nu}]$, 
\begin{equation}
\label{rescaled-enid-eps-OLD}
 \begin{aligned}
  & \calE (\sft_{\eps,\nu}(s_2),\sfq_{\eps,\nu}(s_2)) + \int_{s_1}^{s_2} 
   \Me{\sft_{\eps,\nu}(\tau)}{\sfq_{\eps,\nu}(\tau)}{\sft_{\eps,\nu}'(\tau)}{\sfq_{\eps,\nu}'(\tau)} \dd \tau  
   \\
    & =  
      \calE (\sft_{\eps,\nu}(s_1),\sfq_{\eps,\nu}(s_1)) 
    +\int_{s_1}^{s_2} \partial_t \calE (\sft_{\eps,\nu}(\tau), \sfq_{\eps,\nu}(\tau)) \,\sft_{\eps,\nu}'(\tau) \dd \tau
   \end{aligned}
   \end{equation}
   (which rephrases  \eqref{immediate-reparam});
   \item[-] the \emph{normalization} condition 
\begin{equation}
   \label{normalization}
\sft_{\eps,\nu}'(s) + \|\sfq_{\eps,\nu}'(\tau)\|_{\bfQ} =
  \sft_{\eps,\nu}'(s) + \|\sfu_{\eps,\nu}'(s)\|_{H^1} +  \|\sfz_{\eps,\nu}'(s)\|_{\Hs} +  \|\sfp_{\eps,\nu}'(s)\|_{L^2} \equiv 1 \qquad \foraa\, s \in (0,S_{\eps,\nu})\,.
   \end{equation}
 \end{itemize}
\par
Finally, it follows from 
 \eqref{0307191911} 
 that the  reparameterized viscous solutions $(\sfu_{\eps,\nu}, \sfz_{\eps,\nu}, \sfp_{\eps,\nu})$
 fulfill for all $0\leq s_1\leq s_2 \leq S_{\eps,\nu}$
  \begin{equation}
\label{rescaled-enid-eps}
 \begin{aligned}
      \calE_{\GGG \mu } (\sft_{\eps,\nu}(s_2),\sfq_{\eps,\nu}(s_2))
       \GGG +  \int_{s_1}^{s_2} \MVito(\sft_{\varepsilon,\nu}(\tau), \sfq_{\varepsilon,\nu}(\tau), \sfq'_{\varepsilon,\nu}(\tau))  \dd \tau 
    =     
     \calE_{\GGG \mu } (\sft_{\eps,\nu}(s_1),\sfq_{\eps,\nu}(s_1)
    +\int_{s_1}^{s_2} \partial_t     \calE 
    (\sft_{\eps,\nu},\sfq_{\eps,\nu})\,  \sft_{\eps,\nu}' \dd \tau\,.
   \end{aligned}
 \end{equation}
 Indeed, it will be in  \eqref{rescaled-enid-eps} that we  shall  perform the vanishing-viscosity limit passages.
With the very same arguments as in the proof of 
 Theorem \ref{th:1} (cf.\ \eqref{bound-diss+conjs}), 
 it is immediate to deduce from \eqref{rescaled-enid-eps} that 
 \begin{equation}
 \label{est-Menu}
 \exists\, C>0 \ \forall\, \eps,\nu>0 \, : \quad  \GGG \int_{0}^{S} \MVito(\sft_{\varepsilon,\nu}(\tau), \sfq_{\varepsilon,\nu}(\tau), \sfq'_{\varepsilon,\nu}(\tau))  \dd \tau  \leq C.
 \end{equation}
\subsection{The vanishing-viscosity analysis as $\eps\down 0$ and $\nu>0$ is fixed}
\label{s:6-nu-fixed}
Throughout this section we  shall  keep the rate parameter $\nu>0$  \emph{fixed}. In order to signify this and simplify notation,
 we  shall  drop the dependence on $\nu$ of the viscous solutions and simply write 
 \[
 (\sft_\eps,\sfu_\eps,\sfz_\eps,\sfp_\eps) \quad \text{in place of} \quad  (\sft_{\eps,\nu},\sfu_{\eps,\nu},\sfz_{\eps,\nu},\sfp_{\eps,\nu}). 
 \]
Since the variables $u$ and $p$ relax to equilibrium and rate-independent evolution with the same 
rate with which $z$ relaxes to rate-independent behavior, 
a  $\Gamma$-convergence argument and the comparison with the 
general results from 
\cite{MRS14,MieRosBVMR} 
lead us to
expect that any 
parameterized curve $(\sft,\sfq) = (\sft,\sfu,\sfz,\sfp)$ arising  as a  limit point of the family 
$(\sft_\eps,\sfq_\eps)_\eps$ as 
$\eps \down 0$ shall satisfy the Energy-Dissipation (upper) estimate
    \begin{equation}
\label{EDue-lim}
   \calE_{\GGG \mu } (\sft(S),\sfq(S)) + \int_0^S
   \Mli{\sft(\tau)}{\sfq(\tau)}{\sft'(\tau)}{\sfq'(\tau)} \dd \tau   \le   \calE_{\GGG \mu } (\sft(0),\sfq(0)) +\int_{0}^{S} \partial_t \calE_\nu (\sft(\tau), \sfq(\tau)) \, \sft'(\tau) \dd \tau 
\end{equation}
   with $S = \lim_{\eps \down 0} S_\eps$
   and the functional 
   $\Mliname:  [0,T]  \times \bfQ \times [0,+\infty) \times \bfQ \to [0,+\infty] $ defined by (as usual, 
   here
   $q=(u,z,p)$)
   \begin{subequations}
\label{Mli-def}
\begin{align}
&
\Mli tq{t'}{q'}
: = \calR(z') + \calH(z,p') + \GGG \Mliredname(t,q,t',q') \,, \quad \text{where} 
\nonumber
\\
&
\label{Mli>0}
\begin{aligned}
& 
\text{if } t'>0\,,  \quad 
\GGG \Mliredname(t,q,t',q') := 
\begin{cases}
 0 &\text{if } 
\begin{cases}
-\mathrm{D}_u \calE_\mu (t,q)=0 \,,  \\
-\mathrm{D}_z  \calE_\mu  (t,q) \in \partial\calR(0) \,, \ \text{and} \\
-\mathrm{D}_p  \calE_\mu (t,q) \in \partial_\pi \calH(z,0)\,,
\end{cases}
 \\
 +\infty & \text{otherwise,}
\end{cases}
\end{aligned}
\\
& 
\label{Mli=0}
\begin{aligned}
&
\text{if } t'=0\,,  \quad 
\GGG \Mliredname(t,q,0,q') : = \DVito_\nu(q') \, \DVitos(t,q)\,,
\end{aligned}
\end{align}
\end{subequations}
(recall 
 \eqref{new-form-D-Dvito}  for the definition of the functionals $\DVito_\nu$ and $\DVitos$).
Observe that the product $\DVito_\nu(q') \, \DVitos(t,q)$
contains the term 
$ \tilded_{L^2(\Omega)} ({-}\mathrm{D}_z \calE (t,q),\partial\calR(0))$ which, in principle, need not to be finite at all
$(t,q) \in [0,T]\times \bfQ$ since, in general, we only have $\mathrm{D}_z \calE (t,q),\,  \partial\calR(0)  \subset \Hs(\Omega)^*$.
 Let us then clarify that
\begin{equation}
\label{fisima}
\text{if $\DVitos(t,q)=+\infty$ and $ \DVito_\nu(q')=0$,   in \eqref{Mli=0} we mean } \DVito_\nu(q') \, \DVitos(t,q): = +\infty.
 \end{equation} 
\par
 Following \cite{MRS14,MieRosBVMR}, we shall refer to 
  the functional
 $\Mliname$  from \eqref{Mli-def}  as \emph{vanishing-viscosity contact potential}. Observe that 
we keep on highlighting the dependence of  $\Mliname$  on the (fixed) parameters $\nu$ \GGG and $\mu$  for later use in Section~\ref{s:van-hard}.  
   \par
   Our definition of \emph{Balanced Viscosity} solution to the rate-independent  system with hardening 
    (\ref{mom-balance-intro}, \ref{stress-intro}, \ref{flow-rule-dam-intro}, \ref{viscous-bound-cond}, \ref{pl-hard-intro}) 
   features  \eqref{EDue-lim}  as a \emph{balance}, 
   satisfied along \emph{any} sub-interval of a given interval $[0,S]$.  Along the lines of  \cite{MRS14} we give the following 
   \begin{definition}
   \label{def:BV-solution-hardening}
   We say that a parameterized curve $(\sft,\sfq) = (\sft,\sfu,\sfz,\sfp) \in \AC ([0,S]; [0,T]{\times} \bfQ)$
   is a  \emph{(parameterized) Balanced Viscosity} ($\BV$, for short) solution to the rate-independent  system with hardening 
    (\ref{mom-balance-intro}, \ref{stress-intro}, \ref{flow-rule-dam-intro}, \ref{viscous-bound-cond}, \ref{pl-hard-intro}) 
    if \GGG $\sft \colon [0,S] \to [0,T]$ is nondecreasing and $(\sft, \sfq)$  fulfills the Energy-Dissipation balance
   \begin{equation}
\label{ED-balance}
 \begin{aligned}
 &
   \calE_{\GGG \mu }(\sft(s),\sfq(s)) + \int_{0}^{s}
   \Mli{\sft(\tau)}{\sfq(\tau)}{\sft'(\tau)}{\sfq'(\tau)} \dd \tau 
   =  \calE_{\GGG \mu } (\sft(0),\sfq(0)) +\int_{0}^{s} \partial_t \calE_{\GGG \mu } (\sft(\tau), \sfq(\tau)) \, \sft'(\tau) \dd \tau 
   \end{aligned}
\end{equation}
 for all  $ 0\leq s \leq S $. 
   We call a $\BV$ solution 
   $(\sft,\sfq)$
   \emph{non-degenerate} if, in addition, there holds 
   \begin{equation}
   \label{non-degenerate}
 \sft'(s)+\| \sfq'(s)\|_{\bfQ} =   \sft'(s)+ \| \sfu'(s)\|_{H^1(\Omega;\R^n)} +  \| \sfz'(s)\|_{\Hs(\Omega)} + \| \sfp'(s)\|_{L^2(\Omega;\MD)} >0 \quad \foraa\,   s\in (0,S)\,. 
   \end{equation}
   \end{definition}
%
\begin{remark}
\upshape
 We have defined $\BV$ solutions following the general setting where they are only required to be absolutely continuous in the reparametrized variable $s$.
  Nonetheless, up to a further reparametrization by arc-length, one obtains curves that are Lipschitz in $s$.
In fact, notice that in Theorem \ref{teo:rep-sol-exist} below we obtain the existence of a $\BV$ solution
$(\sft,\sfq) = (\sft,\sfu,\sfz,\sfp) \in W^{1,\infty} ([0,S]; [0,T]{\times} \bfQ)$.
The Lipschitz regularity   here  is a consequence of the normalization condition \eqref{normalization} used in the approximation.
\par We postpone a discussion of the non-degeneracy condition \eqref{non-degenerate} to
the upcoming \GGG Remark~\ref{rmk:nondeg}.
 
\end{remark}
As we  shall  see, the Energy-Dissipation balance \eqref{ED-balance} encodes all the information on the evolution of the
parameterized curve $(\sft,\sfq)$ and, in particular, on the onset of rate-dependent behavior
in the jump regime (i.e., when $\sft'=0$). While postponing further comments to
Remark \ref{rmk:mech-beh}, 
let us only record here the fact that the expression of \GGG $\Mliredname(t,q,0,q')$ 
shows that, at a jump, viscous behavior for the variables $u$, $z$, and $p$
emerges  ``in the same way'',  since the viscous terms related to each variable equally contribute to   \GGG $\Mliredname(t,q,0,q')$.  
 This aspect will be further explored in \GGG Remark~\ref{rmk:mech-beh}. 
\par
   The \textbf{main result of this \GGG sub section},
   Theorem\ \ref{teo:rep-sol-exist} ahead, shows that the parameterized  solutions $(\sft_\eps,\sfq_\eps)_\eps$  of the viscous system \eqref{RD-intro} converge to a $\BV$ solution to  the 
rate-independent  system with hardening 
    (\ref{mom-balance-intro}, \ref{stress-intro}, \ref{flow-rule-dam-intro}, \ref{pl-hard-intro}). Our proof will crucially rely \GGG on  the following characterization of $\BV$ solutions  that is in the same spirit as 
    \cite[Prop.\ 5.3]{MRS12}, \cite[Corollary 4.5]{MRS13}. 
%
\par
    \begin{proposition}
    \label{prop:charact-BV}
     For a parameterized curve $(\sft,\sfq) = (\sft,\sfu,\sfz,\sfp) \in \AC ([0,S]; [0,T]{\times} \bfQ)$ \GGG with $\sft \colon [0,S] \to [0,T]$  nondecreasing 
    the following properties are equivalent:
    \begin{enumerate}
    \item $(\sft,\sfq)$  is a  $\BV$ solution to  the  rate-independent  system with hardening;
    \item $(\sft,\sfq)$ fulfills   the  Energy-Dissipation upper estimate \eqref{EDue-lim};
\item  $(\sft,\sfq)$ fulfills   the \emph{contact condition}
\begin{equation}
\label{contact}
  \Mli{\sft(s)}{\sfq(s)}{\sft'(s)}{\sfq'(s)} = \pairing{}{\bfQ}{{-}\rmD_q \calE_{\GGG \mu }(\sft(s),\sfq(s))}{\sfq'(s)} \qquad \foraa\, s \in (0,S). 
\end{equation}
    \end{enumerate} 
    \end{proposition}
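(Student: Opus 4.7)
The plan is to establish the classical circle of implications (1)$\Rightarrow$(2)$\Rightarrow$(3)$\Rightarrow$(1), where the nontrivial content is the pointwise Fenchel-type estimate
\begin{equation}
\label{plan:FY}
\Mli{\sft(s)}{\sfq(s)}{\sft'(s)}{\sfq'(s)} \;\geq\; \pairing{}{\bfQ}{-\rmD_q \calE_\mu(\sft(s),\sfq(s))}{\sfq'(s)} \qquad \foraa\, s\in (0,S),
\end{equation}
valid along \emph{any} absolutely continuous curve $(\sft,\sfq)$, together with a characterization of when equality holds in \eqref{plan:FY}. The implication (1)$\Rightarrow$(2) is immediate (a balance on $[0,S]$ yields the upper bound at $s=S$ and any prefix by restriction). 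For (3)$\Rightarrow$(1), given the contact condition \eqref{contact} one integrates over $[0,s]$ and invokes the chain rule \eqref{chain-rule} for $\calE_\mu$ (which applies since $\sup_{s\in[0,S]} |\calE_\mu(\sft(s),\sfq(s))|<+\infty$ is a consequence of \eqref{coercivity} and the normalization), obtaining \eqref{ED-balance} as a balance. The real content is thus (2)$\Rightarrow$(3).

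The key pointwise estimate \eqref{plan:FY} has already been proven, in essence, inside the proof of Proposition \ref{Vito-Bs}: the chain of inequalities displayed in \eqref{DVito-added} uses only the Cauchy--Schwarz inequality, the definition of the distances $\tilded_{L^2}(\cdot,\partial \calR(0))$ and $\dLtwo(\cdot,\partial_\pi \calH(z,0))$, and the $1$-homogeneity of $\calR$ and $\calH(z,\cdot)$; these steps do not use the gradient flow equation and therefore apply to any $(\sft,\sfq)\in \AC([0,S];[0,T]\times \bfQ)$. I would carry out the argument by distinguishing the two regimes in the definition \eqref{Mli-def} of $\Mliname$:
\begin{itemize}
\item[(a)] if $\sft'(s)>0$, then \eqref{Mli>0} forces $-\rmD_u \calE_\mu = 0$, $-\rmD_z \calE_\mu \in \partial \calR(0)$, and $-\rmD_p \calE_\mu \in \partial_\pi \calH(\sfz,0)$; the Fenchel--Young inequality for $\calR$ and for $\calH(\sfz,\cdot)$ then yields \eqref{plan:FY} together with the characterization of equality as
\begin{equation}\label{plan:EqJumpFree}
\int_\Omega (-\rmD_z\calE_\mu)\, \sfz'\dd x = \calR(\sfz'), \qquad \int_\Omega (-\rmD_p \calE_\mu)\cdot \sfp' \dd x = \calH(\sfz,\sfp');
\end{equation}
\item[(b)] if $\sft'(s)=0$, then by the Cauchy--Schwarz-type computation carried out in \eqref{DVito-added} one has
\[
\pairing{}{\bfQ}{-\rmD_q \calE_\mu(\sft,\sfq)}{\sfq'} \;\leq\; \calR(\sfz') + \calH(\sfz,\sfp') + \DVito_\nu(\sfq')\,\DVitos(\sft,\sfq) = \Mli{\sft}{\sfq}{0}{\sfq'},
\]
with the convention \eqref{fisima} making the inequality trivial when $\DVitos(\sft,\sfq)=+\infty$.
\end{itemize}

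Granted \eqref{plan:FY}, the implication (2)$\Rightarrow$(3) is obtained by the standard ``minimality-of-the-balance'' argument: assuming (2), one writes
\[
\int_0^S \Mli{\sft}{\sfq}{\sft'}{\sfq'} \dd \tau \;\leq\; \calE_\mu(\sft(0),\sfq(0)) - \calE_\mu(\sft(S),\sfq(S)) + \int_0^S \partial_t \calE_\mu (\sft,\sfq)\,\sft' \dd \tau,
\]
and combines this with the integrated form of \eqref{plan:FY} and the chain rule \eqref{chain-rule}:
\[
\int_0^S \pairing{}{\bfQ}{-\rmD_q \calE_\mu(\sft,\sfq)}{\sfq'} \dd \tau = \calE_\mu(\sft(0),\sfq(0)) - \calE_\mu(\sft(S),\sfq(S)) + \int_0^S \partial_t \calE_\mu (\sft,\sfq)\,\sft' \dd \tau.
\]
Hence $\int_0^S [\Mliname - \langle -\rmD_q \calE_\mu,\sfq'\rangle ]\dd \tau \leq 0$, while the integrand is nonnegative by \eqref{plan:FY}, so equality holds pointwise a.e., yielding \eqref{contact}.

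The main technical obstacle is to handle case (b) cleanly, in particular the possibility $\DVitos(\sft,\sfq)=+\infty$: one must check that on the set where this occurs one automatically has $\sfz'=0$ in $\Hs(\Omega)$ a.e.\ (otherwise the right-hand side of \eqref{plan:FY} would be ill-defined through the pairing with $\rmD_z\calE_\mu$). This is however forced by the balance itself together with $\sft'=0$: if $\Mliname <+\infty$ on a set of positive measure where $\DVitos=+\infty$, then by \eqref{Mli=0}--\eqref{fisima} necessarily $\DVito_\nu(\sfq')=0$, so $\sfu'=\sfz'=\sfp'=0$ there, and both sides of \eqref{plan:FY} vanish. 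The rest of the argument is then a routine rearrangement combined with the identification \eqref{plan:EqJumpFree} and the strict positivity $\sfz\geq m_0>0$ inherited from \eqref{strict-posivity}, which guarantees the applicability of the chain rule \eqref{chain-rule} throughout $[0,S]$.
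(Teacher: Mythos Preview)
Your proposal is correct and follows essentially the same strategy as the paper: the paper packages your pointwise Fenchel-type estimate \eqref{plan:FY} together with the chain rule as a single lemma (Lemma~\ref{l:chain-rule}), and then argues exactly as you do; its route (2)$\Rightarrow$(1) and (3)$\Leftrightarrow$(1) is just a trivial rearrangement of your circle (1)$\Rightarrow$(2)$\Rightarrow$(3)$\Rightarrow$(1). Your observation that the computation \eqref{DVito-added} already supplies \eqref{plan:FY} for arbitrary curves is precisely the content of that lemma.

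One minor slip in your last paragraph: you read the convention \eqref{fisima} backwards. It stipulates $0\cdot\infty:=+\infty$, so whenever $\DVitos(\sft(s),\sfq(s))=+\infty$ and $\sft'(s)=0$ one has $\Mliname=+\infty$ regardless of $\sfq'$; hence on the set $\{\Mliname<+\infty\}$ with $\sft'=0$ one automatically has $\DVitos<+\infty$, and the computation \eqref{DVito-added} applies directly. Your worry about the pairing on the right-hand side of \eqref{plan:FY} being ill-defined is also moot, since $\rmD_q\calE_\mu(\sft(s),\sfq(s))\in\bfQ^*$ is always well-defined (once $\sfz(s)>0$, which follows from $\calE_\mu<+\infty$). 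None of this affects the correctness of your argument.
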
 
  \noindent   The proof is based on the   following key chain rule-estimate
     \begin{lemma}
     \label{l:chain-rule}
     Along any parameterized curve
    \begin{equation}
    \label{desired-chain-rule}
    \begin{aligned}
    &
    (\sft,\sfq) \in \AC ([0,S]; [0,T]{\times} \bfQ)
\text{ s.t. }    \Mli{\sft}{\sfq}{\sft'}{\sfq'}<+\infty \ \aein\, (0,S), \text{ there holds} 
\\    
&
\begin{aligned}
-\frac{\dd}{\dd s} \calE_{\mu}(\sft(s),\sfq(s)) +\partial_t  \calE_{\mu}(\sft(s),\sfq(s)) &  =  \pairing{}{\bfQ}{{-}\rmD_q \calE_{\GGG \mu }(\sft(s),\sfq(s))}{\sfq'(s)}  \\ & \leq  \Mli{\sft(s)}{\sfq(s)}{\sft'(s)}{\sfq'(s)} \quad \foraa\, s \in (0,S)  \,.
\end{aligned}
\end{aligned}
    \end{equation}
     \end{lemma}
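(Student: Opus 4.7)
\medskip

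\textbf{Proof plan.} The statement splits naturally into an equality (the chain rule) and an inequality (the contact/upper bound), and I would treat them separately.

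For the \emph{equality} $-\tfrac{\dd}{\dd s}\calE_\mu(\sft(s),\sfq(s)) +\partial_t\calE_\mu(\sft(s),\sfq(s)) = \pairing{}{\bfQ}{-\rmD_q\calE_\mu(\sft(s),\sfq(s))}{\sfq'(s)}$, the plan is to invoke the absolute continuity chain rule already established in Lemma~\ref{l:Fr-diff-Enu}, suitably adapted to the composition with the reparametrized time $\sft$. Since $(\sft,\sfq)\in\AC([0,S];[0,T]\times\bfQ)$ and the coercivity estimate \eqref{coercivity} combined with the finiteness of $\Mliname$ along the curve (whence a control of the relevant quantities via the integrated form of \eqref{EDue-lim}, or directly via the assumption of the upcoming Proposition~\ref{prop:charact-BV}) yields $\sup_s|\calE_\mu(\sft(s),\sfq(s))|<+\infty$, the standard calculation gives $\tfrac{\dd}{\dd s}\calE_\mu(\sft(s),\sfq(s))=\pairing{}{\bfQ}{\rmD_q\calE_\mu(\sft(s),\sfq(s))}{\sfq'(s)}+\partial_t\calE_\mu(\sft(s),\sfq(s))\,\sft'(s)$ almost everywhere. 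Rearranging gives the desired identity; the product $\partial_t\calE_\mu\cdot\sft'$ exactly matches because $\sft'\geq 0$ and $\partial_t\calE_\mu$ does not depend on $\sft'$.

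For the \emph{inequality}, I would split according to the sign of $\sft'(s)$, in agreement with the two branches of the definition \eqref{Mli-def} of $\Mliname$. If $\sft'(s)>0$ and $\Mli{\sft(s)}{\sfq(s)}{\sft'(s)}{\sfq'(s)}<+\infty$, the structure \eqref{Mli>0} forces $-\rmD_u\calE_\mu=0$, $-\rmD_z\calE_\mu\in\partial\calR(0)$, and $-\rmD_p\calE_\mu\in\partial_\pi\calH(\sfz,0)$. Since $\calR$ and $\calH(\sfz,\cdot)$ are convex and positively $1$-homogeneous with $\calR(0)=0$, $\calH(\sfz,0)=0$, the very definition of the convex subdifferential yields $\pairing{}{\Hs}{-\rmD_z\calE_\mu}{\sfz'}\leq \calR(\sfz')$ and $\pairing{}{L^2}{-\rmD_p\calE_\mu}{\sfp'}\leq \calH(\sfz,\sfp')$, so summing the three contributions gives $\pairing{}{\bfQ}{-\rmD_q\calE_\mu}{\sfq'}\leq\calR(\sfz')+\calH(\sfz,\sfp')=\Mli{\sft}{\sfq}{\sft'}{\sfq'}$.

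If instead $\sft'(s)=0$, the plan is to rerun the chain of estimates already developed in \eqref{DVito-added} inside the proof of Proposition~\ref{Vito-Bs}: picking measurable selections $\gamma\in\partial\calR(0)$ and $\rho\in\partial_\pi\calH(\sfz,0)$ realizing the distances $\tilded_{L^2}(-\rmD_z\calE_\mu,\partial\calR(0))$ and $\dLtwo(-\rmD_p\calE_\mu,\partial_\pi\calH(\sfz,0))$, splitting $\pairing{}{}{-\rmD_z\calE_\mu}{\sfz'}=\pairing{}{}{-\rmD_z\calE_\mu-\gamma}{\sfz'}+\pairing{}{}{\gamma}{\sfz'}$ and analogously for $p$, and using the $1$-homogeneity for $\calR$, $\calH(\sfz,\cdot)$ on the $\gamma,\rho$ terms, I obtain the component bounds, and a weighted Cauchy--Schwarz (weights $\sqrt{\nu},1,\sqrt{\nu}$ and their reciprocals, as in the definitions \eqref{new-form-D-Dvito} of $\DVito_\nu$ and $\DVitos$) packages them into $\pairing{}{\bfQ}{-\rmD_q\calE_\mu}{\sfq'}\leq \calR(\sfz')+\calH(\sfz,\sfp')+\DVito_\nu(\sfq')\,\DVitos(\sft,\sfq)=\Mli{\sft}{\sfq}{0}{\sfq'}$. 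The degenerate sub-case $\DVito_\nu(\sfq')=0$ forces $\sfq'=0$ a.e.\ and the inequality becomes trivial, handling the convention \eqref{fisima}; the delicate point, and arguably the main technical obstacle, is to guarantee the measurability of the selections $\gamma(s),\rho(s)$ and the integrability of the resulting distances on the set $\{\sft'=0\}$, but both follow from the a.e.\ finiteness of $\Mliname$ along the curve and standard measurable-selection arguments, exactly as was done in Proposition~\ref{Vito-Bs}.
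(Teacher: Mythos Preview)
Your proposal is correct and follows essentially the same approach as the paper: the equality comes directly from the chain rule \eqref{chain-rule} of Lemma~\ref{l:Fr-diff-Enu}, and the inequality is obtained (after observing what the finiteness of $\Mliname$ forces in each case) by rerunning the estimate \eqref{DVito-added} from Proposition~\ref{Vito-Bs}. One small remark: since the claim is a pointwise a.e.\ inequality, you do not actually need \emph{measurable} selections $\gamma(s),\rho(s)$ or any integrability of the distances---for each fixed $s$ with $\tilded_{L^2}({-}\rmD_z\calE_\mu,\partial\calR(0))<+\infty$ a minimizer $\gamma$ exists, and that is all that is used.
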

    \begin{proof}
The first equality in \eqref{desired-chain-rule}
 directly follows from the chain rule \eqref{chain-rule}. To deduce the second estimate, we start by observing that,
from $\Mli{\sft(s)}{\sfq(s)}{\sft'(s)}{\sfq'(s)}<+\infty $, we gather that
\[
\text{ if $\sft'(s)=0$ then} \quad \begin{cases}
-\mathrm{D}_u \calE_{\GGG \mu } (\sft(s),\sfq(s))=0 \,,  \\
-\mathrm{D}_z  \calE_{\GGG \mu }  (\sft(s),\sfq(s))\in \partial\calR(0) \,, \\
-\mathrm{D}_p  \calE_{\GGG \mu } (\sft(s),\sfq(s)) \in \partial_\pi \calH(z,0)\,,
\end{cases}
\]
while if $\sft'(s)>0$ then
\[
\|\sfz'(s)\|_{L^2} \tilded_{L^2(\Omega)} ({-}\mathrm{D}_z \calE_{\GGG \mu } (\sft(s),\sfq(s)),\partial\calR(0))
 \leq
  \DVito_\nu(\sfq'(s)) \, \DVitos(\sft(s), \sfq(s))\,. 
\]
 In each case, we have 
 \[
 \|\sfz'(s)\|_{L^2} \tilded_{L^2(\Omega)} ({-}\mathrm{D}_z \calE_{\GGG \mu } (\sft(s),\sfq(s)),\partial\calR(0)) <+\infty
 \]
whence $\tilded_{L^2(\Omega)} ({-}\mathrm{D}_z \calE_{\GGG \mu } (\sft(s),\sfq(s)),\partial\calR(0))<+\infty$ if $\sfz'(s) \neq 0$.  If   $\sfz'(s)=0$, taking into account  our convention
 \eqref{fisima} 
and 
 that    $\Mli{\sft(s)}{\sfq(s)}{\sft'(s)}{\sfq'(s)}<+\infty $   we again  get 
$\tilded_{L^2(\Omega)} ({-}\mathrm{D}_z \calE_{\GGG \mu } (\sft(s),\sfq(s)),\partial\calR(0))<+\infty$.
\par
 After this preliminary discussion, it is sufficient to observe that 
\[
\begin{aligned}
  & \pairing{}{\bfQ}{{-}\rmD_q \calE_{\mu}(\sft(s),\sfq(s))}{\sfq'(s)}  \leq   \| \sfu'(s)\|_{ H^1, \bbD }  \|{-}\mathrm{D}_u  \calE_{\GGG \mu }  (\sft(s),\sfq(s))\|_{( H^1, \bbD )^*} {+}  \|\sfz'(s)\|_{L^2} \tilded_{L^2} ({-}\mathrm{D}_z \calE_{\GGG \mu } (\sft(s),\sfq(s)),\partial\calR(0)) 
  \\
  & + \calR(\sfz'(s))
   + \|\sfp'(s)\|_{L^2}  d_{L^2}({-}\rmD_p \calE_{\GGG \mu }(\sft(s),\sfq(s)),\partial_\pi \calH(\sfz(s),0))+\calH(\sfz(s), \sfp'(s))
  \end{aligned}
 \]
(cf.\  \eqref{DVito-added}), in order to conclude \eqref{desired-chain-rule}. 
    \end{proof}
    We are now in a position to carry out the \textbf{proof of Proposition \ref{prop:charact-BV}}:
        Let us  suppose that $(\sft,\sfq) $ complies with  \eqref{EDue-lim}. Integrating \eqref{desired-chain-rule} in time gives the converse inequality and thus the desired  balance \eqref{ED-balance}.
    \par
    Clearly, combining the  \emph{contact condition} \eqref{contact}
 with the chain rule  \eqref{chain-rule} leads to  \eqref{ED-balance}. The converse implication is also true  thanks to inequality \eqref{desired-chain-rule}. This concludes the proof. \QED

 Adapting the arguments for \cite[Thm.\ 5.3]{MRS14} to the present context, we may now obtain
a characterization of $\BV$ solutions in terms of a system of subdifferential inclusions
that has the same structure as the  viscous system \eqref{RD-intro}, but where the viscous terms in the single equations
can be  ``activated''   only at jumps. 
    \begin{proposition}
    \label{prop:diff-charact-BV}
\begin{enumerate}
\item If $(\sft,\sfq) = (\sft,\sfu,\sfz,\sfp) \in \AC ([0,S]; [0,T]{\times} \bfQ)$ is a  $\BV$ solution to the rate-independent  system with hardening (\ref{mom-balance-intro}, \ref{stress-intro}, \ref{flow-rule-dam-intro}, \ref{viscous-bound-cond}, \ref{pl-hard-intro}),
then there exists a measurable function $\lambda : \GGG [0, S]  \to \GGG [0, 1] $ such that 
\begin{equation}\label{eq:lambda}
\sft'(s) \lambda(s) =0 \quad \foraa\ s\in(0,S)
\end{equation}
and $(\sft,\sfq)$ satisfies \foraa\ $s \in (0,S)$
the system of subdifferential inclusions 
\begin{subequations}\label{eq:diff-char}
\begin{align}
&\lambda(s)   \mathrm{D} \disv 2\nu(\sfu'(s)) + (1-\lambda(s)) \, \mathrm{D}_u \calE_{\GGG \mu } (\sft(s),\GGG \sfq(s)) = 0 && \text{in}\ H_\Dir^1(\Omega;\R^n)^* \,,
\\
&
(1{-}\lambda(s))\, \partial \mathcal{R}(\sfz'(s))+ \lambda(s)\, \mathrm{D}\mathcal{R}_2(\sfz'(s)) + (1{-}\lambda(s)) \, \mathrm{D}_z \calE_{\GGG \mu } (\sft(s),\GGG \sfq(s))  \ni 0 
 && \text{in } 
\Hs(\Omega)^* \,,
\\
&
(1{-}\lambda(s)) \, \partial_{\pi} \mathcal{H}(\sfz(s), \sfp'(s)) + \lambda(s)\,   \mathrm{D} \dish 2\nu(\sfp'(s))
+(1{-}\lambda(s)) \, \mathrm{D}_p \calE_{\GGG \mu } (\sft(s),\GGG \sfq(s))  \ni 0  && \text{in  } L^2(\Omega;\MD) \,,
\end{align}
\end{subequations}
 which is equivalent to 
\begin{subequations}\label{eq:diff-char-concrete}
  \begin{align*}
&
-\lambda(s)\, \mathrm{Div} \big( \nu  \mathbb{D}\sig{\sfu'(s)} \big) - (1{-}\lambda(s))\, \big( \mathrm{Div}( \serifsigma(s)) +  F(\sft(s)) \big) =0
&& \text{in } H_\Dir^1(\Omega;\R^n)^* ,
\\
&
(1{-}\lambda(s))\, \partial \mathcal{R}(\sfz'(s))+ \lambda(s)\, \sfz'(s) + (1{-}\lambda(s)) \left(\As(\sfz(s)) + W'(\sfz(s)) + \frac12 \mathbb{C}'(\sfz(s)) \sfe(s) : \sfe(s)\right) \ni 0 
 && \text{in } 
\Hs(\Omega)^* \,,
\\
&
(1{-}\lambda(s)) \, \partial_{\pi} H(\sfz(s), \sfp'(s)) + \lambda(s)\, \nu \sfp'(s)
+(1{-}\lambda(s)) \left( \nu \sfp(s) -  \big(\serifsigma(s)\big)_\dev  \right) \ni 0  && \text{a.e.\ in  } \Omega \,.
\end{align*}
\end{subequations}
\item Conversely, if $(\sft,\sfq)$ satisfies \eqref{eq:diff-char}, with $\lambda$ as in \eqref{eq:lambda}, and the map $s \mapsto \calE_{\GGG \mu } (\sft(s),\sfq(s))$ is absolutely continuous on $[0,S]$, then $(\sft,\sfq)$ is a $\BV$ solution.
\end{enumerate}
    \end{proposition}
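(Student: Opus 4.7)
\smallskip

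\noindent
\textbf{Proof proposal.} The plan is to leverage the characterization in Proposition \ref{prop:charact-BV}, which reduces the notion of $\BV$ solution to the contact condition $\Mli{\sft}{\sfq}{\sft'}{\sfq'} = \pairing{}{\bfQ}{-\rmD_q \calE_\mu(\sft,\sfq)}{\sfq'}$ (pointwise a.e.), coupled with the chain-rule estimate \eqref{desired-chain-rule} in Lemma \ref{l:chain-rule}. On the one hand, the contact condition will force all the Cauchy--Schwarz and duality inequalities sitting inside the proof of Lemma \ref{l:chain-rule} (cf.\ \eqref{DVito-added}) to become equalities, which in turn encodes a proportionality between the rates $\sqrt{\nu}\sfu',\sfz',\sqrt{\nu}\sfp'$ and the appropriate dual representatives of $-\rmD_u \calE_\mu,\,-\rmD_z \calE_\mu-\partial \calR(0),\,-\rmD_p \calE_\mu-\partial_\pi \calH(\sfz,0)$. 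On the other hand, the subdifferential inclusions \eqref{eq:diff-char} can be turned back into the contact condition by testing each inclusion with the corresponding rate.

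For the forward direction~(1), I fix $s\in(0,S)$ at which the contact condition holds and split into two cases. If $\sft'(s)>0$, then $\Mli<+\infty$ forces (by the very definition \eqref{Mli>0} of $\Mliredname$) the local stability conditions $-\rmD_u \calE_\mu=0$, $-\rmD_z \calE_\mu\in\partial\calR(0)$, $-\rmD_p \calE_\mu\in\partial_\pi \calH(\sfz,0)$; then $\Mliredname=0$, and the remaining identity $\calR(\sfz')+\calH(\sfz,\sfp')=\pairing{}{\bfQ}{-\rmD_q \calE_\mu}{\sfq'}$, combined with $-\rmD_z \calE_\mu\in\partial\calR(0)$ and $-\rmD_p \calE_\mu\in\partial_\pi \calH(\sfz,0)$, is precisely equivalent to the rate-independent inclusions obtained from \eqref{eq:diff-char} with the choice $\lambda(s)=0$. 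If instead $\sft'(s)=0$, then the contact condition reads
\[
\DVito_\nu(\sfq')\,\DVitos(\sft,\sfq) \;=\; \pairing{}{\bfQ}{-\rmD_q \calE_\mu(\sft,\sfq)}{\sfq'}-\calR(\sfz')-\calH(\sfz,\sfp'),
\]
and comparing with the chain of inequalities \eqref{DVito-added} I see that \emph{every} intermediate inequality there must be an equality: each pointwise Cauchy--Schwarz bound in each variable, the defining inequalities $\gamma(\sfz')\le\calR(\sfz')$ and $\omega(\sfp')\le\calH(\sfz,\sfp')$ for suitable $\gamma \in \partial\calR(0),\,\omega\in\partial_\pi\calH(\sfz,0)$ realizing the distance, and the final $\R^3$-Cauchy--Schwarz used to recombine them. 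The former parallelisms produce, for some $\theta(s)\in[0,+\infty]$,
\[
\nu J_{\!H^1,\bbD}(\sfu')=\tfrac{1}{\theta}\big({-}\rmD_u \calE_\mu\big),\qquad
\sfz'=\tfrac{1}{\theta}\big({-}\rmD_z \calE_\mu{-}\gamma\big),\qquad
\nu\sfp'=\tfrac{1}{\theta}\big({-}\rmD_p \calE_\mu{-}\omega\big),
\]
where $J_{H^1,\bbD}$ is the Riesz isomorphism induced by $\|\cdot\|_{H^1,\bbD}$; setting $\lambda(s):=1/(1+\theta(s))\in[0,1]$ (with the degenerate cases $\theta\in\{0,+\infty\}$ giving $\lambda\in\{1,0\}$) yields exactly \eqref{eq:diff-char}. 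Since $\lambda(s)>0$ only on $\{\sft'(s)=0\}$, condition \eqref{eq:lambda} is satisfied; measurability of $\lambda$ and of the selections $\gamma,\omega$ follows from the measurable-selection theorem applied to the set-valued maps $s\mapsto\partial\calR(\sfz'(s))$, $s\mapsto\partial_\pi\calH(\sfz(s),\sfp'(s))$.

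For the converse direction~(2), assume \eqref{eq:diff-char} holds with $\lambda$ as in \eqref{eq:lambda} and with $s\mapsto\calE_\mu(\sft(s),\sfq(s))$ absolutely continuous. Testing each inclusion against the corresponding rate and exploiting the $1$-homogeneity of $\calR$ and $\calH(\sfz,\cdot)$ (which ensures $\gamma(\sfz')=\calR(\sfz')$ and $\omega(\sfp')=\calH(\sfz,\sfp')$ for the chosen selections) I obtain the three identities
\[
(1{-}\lambda)\pairing{}{H^1}{-\rmD_u \calE_\mu}{\sfu'}=\lambda\nu\|\sfu'\|_{H^1,\bbD}^{2},\quad
(1{-}\lambda)\langle{-}\rmD_z \calE_\mu,\sfz'\rangle=(1{-}\lambda)\calR(\sfz')+\lambda\|\sfz'\|_{L^2}^{2},
\]
and similarly for the $\sfp$-component. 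Summing them (and separately handling the degenerate case $\lambda=1$, which by coercivity of $\bbD$ and $L^2$-norms forces $\sfq'=0$, making the identity trivial) gives $\pairing{}{\bfQ}{-\rmD_q \calE_\mu}{\sfq'}=\calR(\sfz')+\calH(\sfz,\sfp')+\tfrac{\lambda}{1-\lambda}\DVito_\nu^2(\sfq')$. Taking norms/distances in each inclusion further delivers $\DVitos(\sft,\sfq)=\tfrac{\lambda}{1-\lambda}\DVito_\nu(\sfq')$, so that $\DVito_\nu(\sfq')\DVitos(\sft,\sfq)=\tfrac{\lambda}{1-\lambda}\DVito_\nu^2(\sfq')$; on $\{\sft'=0\}$ this matches $\Mliredname$ via \eqref{Mli=0}, while on $\{\sft'>0\}$ one has $\lambda=0$ and the same inclusions give directly the local stability conditions, so $\Mliredname=0$ by \eqref{Mli>0}. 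In all cases I recover the contact condition \eqref{contact}; integrating it, against the chain rule \eqref{desired-chain-rule} (legitimate thanks to the assumed absolute continuity of $s\mapsto\calE_\mu(\sft(s),\sfq(s))$), yields the Energy--Dissipation balance \eqref{ED-balance}.

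The hard part will be the forward direction in the jump regime $\sft'=0$: teasing the three separate parallelisms out of a single scalar equality requires carefully combining three distinct Cauchy--Schwarz chains (one per variable, plus the overall $\R^3$-one) and treating the degenerate cases in which either $\DVito_\nu(\sfq')$ or $\DVitos(\sft,\sfq)$ vanishes (so that $\theta$ is $0$ or $+\infty$ and the proportionality must be read in a limiting sense). The measurability of $\lambda$ and of the selections $\gamma,\omega$ is a further technical point, which however is standard once the proportionality structure is in place.
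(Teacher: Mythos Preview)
Your proof is correct and follows exactly the route the paper takes, which defers to \cite[Prop.~5.1, Thm.~5.1]{MRS14}: the contact condition \eqref{contact} forces equality in every link of the chain \eqref{DVito-added}, and the resulting parallelisms---tied together by the $\R^3$--Cauchy--Schwarz equality into a single proportionality constant $\theta$---are precisely the contact-set decomposition $\Sigma=\mathsf{E}_{\sfu}\mathsf{R}_{\sfz,\sfp}\cup\mathsf{V}_{\sfu,\sfz,\sfp}$ that the paper records. Two small slips to fix: first, your formula must be $\lambda=\theta/(1{+}\theta)$, not $1/(1{+}\theta)$ (check the degenerate case $\theta=0$, i.e.\ $\DVitos=0$, which is local stability and must return the rate-independent value $\lambda=0$; with the corrected formula, dividing $\theta\,\nu J_{H^1,\bbD}(\sfu')+\rmD_u\calE_\mu=0$ by $1{+}\theta$ gives exactly $\lambda\,\rmD\disv 2\nu(\sfu')+(1{-}\lambda)\,\rmD_u\calE_\mu=0$); second, in the converse direction the identity you claim, $\DVitos=\tfrac{\lambda}{1-\lambda}\DVito_\nu$, is in fact only the inequality $\le$ (the selections $\gamma\in\partial\calR(\sfz')$ and $\omega\in\partial_\pi\calH(\sfz,\sfp')$ lie in $\partial\calR(0)$ and $\partial_\pi\calH(\sfz,0)$ by $1$-homogeneity but need not realise the respective $L^2$-distances), yet $\le$ is all you need since the chain-rule estimate \eqref{desired-chain-rule} supplies the reverse inequality.
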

    \begin{remark}
    \label{rmk:mech-beh}
    \upshape
    A few comments on the mechanical interpretation of system \eqref{eq:diff-char} are in order. Due to the  switching condition \eqref{eq:lambda}, the coefficient $\lambda$ can be   non-null  only  if $\sft'(s)=0$, namely  the system is jumping in the (slow) external time scale. When the system does not jump, the evolution of the variables $\sfz$ and $\sfp$ is rate-independent, and $\sfu$ ``follows them'' staying at elastic equilibrium. At a jump, the system may switch to a viscous regime where viscous dissipation intervenes in the evolution of the three variables $u,\,z,\, p$  modulated by the \emph{same} coefficient $\lambda$. This reflects the fact that, in the vanishing-viscosity approximation  $u,\,z,\, p$ relax to their limiting evolution  with the same rate.
    \end{remark}
    \begin{proof}
 The proof is a straightforward adaptation of  the argument for \cite[Thm.\ 5.1]{MRS14}. Thus, we shall only recapitulate it here, referring to \cite{MRS14} for all details.
The key point is to use that, by Prop.\ \ref{prop:charact-BV},  a parameterized curve  $(\sft,\sfq)$ is a $\BV$ solution if and only if it fulfills \eqref{contact}, namely
for almost all $s\in (0,S)$
\begin{equation}
\label{contact-set}
(\sft(s),\sfq(s),\sft'(s),\sfq'(s)) \in \Sigma  : = \{ (t,q,t',q') \in [0,T]\times \bfQ\times [0,+\infty)\times \bfQ\, : \  \Mli{t}{q}{t'}{q'} = \pairing{}{\bfQ}{{-}\rmD_q\calE_{\GGG \mu }(t,q)}{q'}   \}.
\end{equation}
Then,  \cite[Prop.\ 5.1]{MRS14} provides a characterization of the contact set $\Sigma$. 
 Such a characterization 
holds in our infinite-dimensional context as well,  and it allows us to describe $\Sigma$ 
as the union of two sets, that encompass elastic equilibrium for $u$ and rate-independent evolution for $(z,p)$ on the one hand, and viscous evolution for  all of the three variables on the other hand. Namely,
\begin{subequations}
\begin{equation}
\label{charact-contact-set}
\Sigma = \mathsf{E}_{\sfu} \mathsf{R}_{\sfz,\sfp} \cup \mathsf{V}_{\sfu,\sfz,\sfp}
\end{equation}
with  the sets 
\[
\begin{aligned}
&
\mathsf{E}_{\sfu} \mathsf{R}_{\sfz,\sfp} : = \{(t,q,t',q')\, : \ t'>0, \, \rmD_u \calE_{\GGG \mu }(t,q) =0,\, \partial \calR(z') +\rmD_z \calE_{\GGG \mu }(t,q) \ni 0, \,  \partial \calH(z,p') +\rmD_p \calE_{\GGG \mu }(t,q) \ni 0 \},
\\
&
 \mathsf{V}_{\sfu,\sfz,\sfp} : =\left \{(t,q,t',q')\, : \ t'=0 \text{ and } \exists\, \lambda \in [0,1]\, : \ \begin{cases}
 \lambda \rmD \disv2\nu (u') +(1{-}\lambda)\rmD_u \calE_{\GGG \mu }(t,q) =0,
 \\
 (1{-}\lambda) \partial\calR(z') +\lambda \rmD\calR_2(z') +(1{-}\lambda)\rmD_z \calE_{\GGG \mu }(t,q) \ni 0,
 \\
 (1{-}\lambda) \partial\calH(z,p') +\lambda \rmD\dish2\nu(p') +(1{-}\lambda)\rmD_p \calE_{\GGG \mu }(t,q) \ni 0
 \end{cases}
\right \}.
\end{aligned}
\]
\end{subequations}
Combining \eqref{contact-set} and \eqref{charact-contact-set}  leads to \eqref{eq:diff-char}. 
    \end{proof}
\par We conclude this section with 
our existence result for $\BV$ solutions, obtained as limits of a family 
$(\sft_\eps, \sfq_\eps)_\eps = (\sft_\eps,\sfu_\eps,\sfz_\eps,\sfp_\eps)_\eps$ of (reparameterized) viscous solutions to 
Problem \ref{prob:visc}.

 In order to properly state our convergence result,  
we recall that,  for 
$s_\eps: [0,T]\to [0,S_\eps]$ as in \eqref{def:arclength}, 
the sequence $(S_\eps)_\eps$  is bounded thanks to  \eqref{2009170054}. Moreover,   $S_\eps\ge T$ for every $\eps>0$. 
Hence, 
\begin{equation}
\label{subsequence}
\text{
there is  a sequence $\eps_k\down 0$ and $S>0$ such that $S_{\eps_k}\to S$.}
\end{equation}
If $S_\eps<S$, we extend $(\sft_\eps, \sfq_\eps)$  to $[0,S]$ by setting 
 $(\sft_{\eps}(s),\sfq_{\eps}(s))=(\sft_{\eps}(S_\eps){+}s{-}S_\eps,\sfq_{\eps}(S_\eps))$ for $s\in(S_\eps,S]$.
\par
\GGG  We are now in a position to show the existence of $\BV$ solution to  the 
rate-independent  system with hardening 
    (\ref{mom-balance-intro}, \ref{stress-intro}, \ref{flow-rule-dam-intro}, \ref{viscous-bound-cond}, \ref{pl-hard-intro}).
The proof is based on approximation by means of solutions to Problem \ref{prob:visc}.
The general scheme follows the steps of  \cite{MRS14,MieRosBVMR}. 
Some technical points, arising when dealing with the coupled plastic-damage system, 
are treated as in \cite[Theorem 5.4]{Crismale-Lazzaroni}, which  features the viscosity \emph{only} in the damage variable
and not in the plastic variable. 
 
\begin{theorem} 
\label{teo:rep-sol-exist}
Under the assumptions of \GGG Section~\ref{s:2} and
\eqref{EL-initial}, 
let $(\eps_k)_k$ be as in \eqref{subsequence}.
Then, there exist a (not relabeled) subsequence  $(\sft_{\eps_k},\sfq_{\eps_k})_k = (\sft_{\eps_k}, \sfu_{\eps_k}, \sfz_{\eps_k})$ and a Lipschitz curve
$(\sft,\sfq) = (\sft,\sfu,\sfz,\sfp) \in W^{1,\infty} ([0,S]; [0,T]{\times} \bfQ)$ such that
\begin{enumerate}
\item  for all $s\in[0,S]$  the following convergences hold as $k\to+\infty$  
\begin{equation}
\label{weak-converg-uzpe-rep}
\sft_{\eps_k}(s)\to\sft(s),  \  \,   \sfu_{\eps_k}(s) \weakto \sfu(s) \text{ in } H^1(\Omega;\R^n),
 \ \,   \sfz_{\eps_k}(s) \weakto \sfz(s) \text{ in } \Hs(\Omega),  \ \,  \sfp_{\eps_k}(s) \weakto \sfp(s) \text{ in } L^2(\Omega;\MD); 
\end{equation}
\item $(\sft,\sfq) $ is a  $\BV$ solution to the rate-independent  system with hardening according to Definition \ref{def:BV-solution-hardening}.
\end{enumerate}
\end{theorem}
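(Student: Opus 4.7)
By Proposition~\ref{prop:charact-BV}, it suffices to produce a limit curve $(\sft,\sfq)$ that is absolutely continuous, has $\sft$ nondecreasing, and fulfills the Energy-Dissipation upper estimate \eqref{EDue-lim}. Hence the skeleton has three stages: (i) compactness and extraction of a convergent subsequence; (ii) straightforward limit passage on the right-hand side and on the energy at the endpoint of \eqref{rescaled-enid-eps-OLD}; (iii) the delicate liminf inequality for the dissipation integrand $\Mename$, which is where the characterization of $\Mliname$ via the product $\DVito_\nu\cdot\DVitos$ becomes essential.

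\emph{Compactness.} The normalization \eqref{normalization} makes each rescaled curve $1$-Lipschitz from $[0,S]$ into $\mathbb{R}\times\bfQ$ (with the norm $|\cdot|+\|\cdot\|_{\bfQ}$), and the coercivity Lemma~\ref{l:coercivity-Enu}, together with the uniform bound \eqref{est-Menu}, gives a uniform $L^\infty(0,S;\Hs(\Omega))$-bound on $\sfz_{\eps_k}$ and analogous bounds on $\sfu_{\eps_k}$, $\sfp_{\eps_k}$. A standard refined Arzel\`a-Ascoli argument then yields, along a not relabelled subsequence, $\sft_{\eps_k}\to\sft$ uniformly, $\sfz_{\eps_k}\to\sfz$ in $C^0([0,S];C^0(\overline\Omega))$, and the weak pointwise convergences \eqref{weak-converg-uzpe-rep}, with the limit $(\sft,\sfq)$ in $W^{1,\infty}([0,S];[0,T]\times\bfQ)$ and $\sft$ nondecreasing. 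The lower-order identity $\sfe_{\eps_k} = \rmE(\sfu_{\eps_k}+w\circ\sft_{\eps_k})-\sfp_{\eps_k}$ delivers the pointwise weak convergence of $\sfe_{\eps_k}$ and of $\serifsigma_{\eps_k}$, arguing as in \eqref{converg-sigma}.

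\emph{Easy terms in the limit of \eqref{rescaled-enid-eps-OLD}.} The initial energy is constant. The lower semicontinuity $\calE_\mu(\sft(S),\sfq(S))\le\liminf_k\calE_\mu(\sft_{\eps_k}(S),\sfq_{\eps_k}(S))$ follows, as in Step~2 of the proof of Theorem~\ref{th:1}, from the convexity of $\calQ(z,\cdot)$, the continuity of $W$ on $[m_0,1]$ (using \eqref{2009171038-SL}), and the lsc of $\ass$. The power term $\int_0^s \partial_t\calE_\mu(\sft_{\eps_k},\sfq_{\eps_k})\,\sft_{\eps_k}'\dd\tau$ converges by \eqref{part-t-q}, the uniform bound $\sft_{\eps_k}'\le 1$, the uniform convergences of $\sft_{\eps_k}$, $\sfu_{\eps_k}$, $\serifsigma_{\eps_k}$ in the relevant weak topologies, and dominated convergence.

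\emph{The main obstacle: liminf inequality for $\int_0^S\Mename\,\dd s$.} The key structural observation is that, by Cauchy-Schwarz,
\begin{equation*}
\Me{t}{q}{t'}{q'} \;\ge\; \calR(z') + \calH(z,p') + \DVito_\nu(q')\,\DVitos(t,q) \qquad \text{for every } \eps>0,\ t'>0,
\end{equation*}
with equality only along the viscous flow (cf.\ \eqref{DVITOequalsSTAR}). The three summands on the right are now $\varepsilon$-independent, and one passes to the liminf in each of them. For the first two, one exploits the $1$-homogeneity and convexity of $\calR(\cdot)$ and $\pi\mapsto\calH(z,\pi)$, the Lipschitz dependence \eqref{props-calH-3+1/2} of $\calH$ on $z$ (invoking the strong convergence $\sfz_{\eps_k}\to\sfz$ in $C^0$), and an Ioffe-type lower semicontinuity theorem applied to the weak $L^1$ convergence of $\sfz_{\eps_k}'$ and $\sfp_{\eps_k}'$ along with the strong convergence of $\sfz_{\eps_k}$. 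For the third summand, Lemma~\ref{lemma:sup} delivers the pointwise (in $s$) lower semicontinuity
\begin{equation*}
\DVitos(\sft(s),\sfq(s)) \;\le\; \liminf_{k\to\infty} \DVitos(\sft_{\eps_k}(s),\sfq_{\eps_k}(s))\,,
\end{equation*}
while the weak pointwise convergence of $\sfq_{\eps_k}'$ in $\bfQ$ together with the convexity of $\DVito_\nu$ gives the analogous control from below for the factor $\DVito_\nu$. Combining these through a Fatou-type argument for products of nonnegative lsc factors then yields the inequality $\liminf_k\int_0^S \DVito_\nu(\sfq_{\eps_k}')\DVitos(\sft_{\eps_k},\sfq_{\eps_k})\,\dd s \ge \int_0^S \DVito_\nu(\sfq')\DVitos(\sft,\sfq)\,\dd s$. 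On the set $\{\sft'>0\}$ the finiteness of the liminf forces $\DVitos(\sft,\sfq)<+\infty$, which by the characterization in \eqref{eqs:1106192255} is exactly the local stability condition $-\mathrm{D}_u\calE_\mu=0$, $-\mathrm{D}_z\calE_\mu\in\partial\calR(0)$, $-\mathrm{D}_p\calE_\mu\in\partial_\pi\calH(z,0)$, so that $\Mliredname=0$ agrees with \eqref{Mli>0}; on the set $\{\sft'=0\}$ the product recovers the expression of $\Mliredname$ in \eqref{Mli=0}. Assembling the three contributions reproduces $\int_0^S\Mli{\sft}{\sfq}{\sft'}{\sfq'}\dd s$ on the left and concludes \eqref{EDue-lim}. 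The hardest point is precisely this liminf step for the product $\DVito_\nu\cdot\DVitos$, since one must carefully couple the weak convergence of the velocities with the lsc of the dual-norm/distance quantities computed at the weakly converging states, and one has to check that the exceptional convention \eqref{fisima} is never activated along the limit curve, which is automatic from $\DVito_\nu(\sfq'(s))\le 1$ and the finiteness of the liminf in the integral.
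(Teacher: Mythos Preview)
Your compactness argument and the handling of the energy and power terms are sound and parallel the paper. The genuine gap lies in how you deduce the local stability conditions on the set $\{\sft'>0\}$.

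You claim that ``the finiteness of the liminf forces $\DVitos(\sft,\sfq)<+\infty$, which by the characterization in \eqref{eqs:1106192255} is exactly the local stability condition.'' This is not correct: the stability conditions in \eqref{Mli>0} amount to $\DVitos(\sft,\sfq)=0$, not merely $\DVitos(\sft,\sfq)<+\infty$. From finiteness of $\int_0^S \DVito_\nu(\sfq')\,\DVitos(\sft,\sfq)\,\dd s$ you only obtain that the product is finite a.e., which where $\DVito_\nu(\sfq')>0$ gives $\DVitos<+\infty$---far weaker than needed. If at some $\tau$ with $\sft'(\tau)>0$ you had $0<\DVitos(\sft(\tau),\sfq(\tau))<+\infty$, then by definition $\Mliredname=+\infty$ there, whereas $\DVito_\nu(\sfq')\DVitos(\sft,\sfq)$ is finite; your lower bound no longer dominates $\Mliredname$, and \eqref{EDue-lim} is not established.

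The paper closes this gap by a separate argument (its Step~2) exploiting the identity \eqref{DVITOequalsSTAR} along the \emph{viscous} solutions: $\DVitos(\sft_k(\tau),\sfq_k(\tau))=\eps_k\,\DVito_\nu(\sfq_k'(\tau))/\sft_k'(\tau)\le \eps_k/\sft_k'(\tau)$ by the normalization \eqref{normalization}. Since $\limsup_k\sft_k'(\tau)>0$ for a.a.\ $\tau\in\{\sft'>0\}$ (this needs its own short proof, as in \cite{Crismale-Lazzaroni}), the right-hand side vanishes, and with the pointwise lower semicontinuity from Lemma~\ref{lemma:sup} one concludes $\DVitos(\sft(\tau),\sfq(\tau))=0$ a.e.\ on $\{\sft'>0\}$. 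Only then is $\Mliredname$ finite (in fact zero) there, and the liminf inequality for the product $\DVito_\nu\cdot\DVitos$ is needed only on the open set $A^\circ=\{\DVitos(\sft,\sfq)>0\}\subset\{\sft'=0\}$; the paper carries this out via the abstract Lemma~\ref{le:0107192150}, which is the precise ``Fatou-type argument for products'' you gesture at.
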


\noindent 
 Arguments that are, by now, standard  (detailed in e.g.\ \cite{MRS12, MRS13, MRS14}) 
 would also allow us to prove,  \emph{a posteriori},  the convergence of the 
energy terms and of the Energy-Dissipation integrals in \eqref{rescaled-enid-eps-OLD} to their analogues in 
\eqref{ED-balance}; the same is true  for the other forthcoming convergence results, i.e.\ Theorems
\ref{teo:rep-sol-exist-2} and \ref{teo:exparBVsol}. However, to avoid 
overburdening the exposition we have preferred to overlook this point.  
\begin{proof} \GGG The proof is divided into 3 Steps. First, we find a limiting  parameterized curve    by compactness arguments,  then we deduce the finiteness of the vanishing-viscosity contact potential when $\sft'>0$, namely when there are no jumps in the fast time-scale, and eventually we prove 
the Energy-Dissipation upper estimate  \eqref{EDue-lim}. 
\par 
\noindent
\textbf{Step $1$: Compactness.} 
Let $(q_\eps)_\eps = (u_\eps,z_\eps,p_\eps)_\eps$ be a family of solutions to Problem \ref{prob:visc}. Let $s_\eps: [0,T]\to [0,S_\eps]$ be as in \eqref{def:arclength} and $(\sft_\eps,\sfq_\eps)=(\sft_\eps,\sfu_\eps, \sfz_\eps, \sfp_\eps)$ be as in \eqref{rescaled-solutions}. 
By \eqref{2009170054}, $S_\eps$ is uniformly bounded in $\eps$; moreover, $S_\eps\ge T$ for every $\eps$. Therefore, there is a sequence $\eps_k\to0^+$ and $S>0$ such that $S_{\eps_k}\to S$. Henceforth, 
 we shall write $(\sft_{k},\sfq_{k})$ in place of $(\sft_{\eps_k},\sfq_{\eps_k})$, and we  shall  not relabel subsequences.
%
\par
%
By \eqref{normalization}, the sequence $(\sft_{k},\sfq_{k})_k$ is equibounded in $W^{1,\infty}(0,S;[0,T]\times\bfQ)$. Therefore,  arguing as in Step 1 of the proof of Theorem \ref{th:1} above  (and in particular resorting to the compactness results from \cite{Simon87}),
we obtain a limit  curve  $(\sft,\sfq)$ such that (up to a subsequence, not relabeled) the following convergences hold  as  $k\to+\infty$
\begin{subequations}\label{limit-conv}
\begin{align}
 \sft_{k}\weaksto\sft &\quad \text{in } W^{1,\infty}(0,S;[0,T]) \,,\quad &&
 \sfu_{k}\weaksto\sfu  \quad \text{in } W^{1,\infty}(0,S;H^1(\Omega;\R^n)) \,,
 \\
 \sfz_{k}\weaksto\sfz &\quad \text{in } W^{1,\infty}(0,S;\Hs(\Omega)) \,, 
\quad &&\sfz_{k} \to \sfz  \quad \text{in }   \mathrm{C}^0([0,S]; \mathrm{C}^0(\overline\Omega)) \,, \label{190506}
\\
\sfp_{k}\weaksto\sfp & \quad\text{in } W^{1,\infty}(0,S;L^2(\Omega;\MD)) \,,
\end{align}
\end{subequations}
 as well as the pointwise convergences in \eqref{weak-converg-uzpe-rep}. 
\par
We now define
\begin{subequations}
\label{defso}
\begin{alignat}{3}
\label{def-so-1}
s_-(t)&:=\sup\{s\in [0,S]\,\colon\, \sft(s)<t\}\quad&&\text{for $t\in (0,T]$}\,,\\
\label{def-so-2}
s_+(t)&:=\inf\{s\in [0,S]\,\colon\, \sft(s)>t\}\quad&&\text{for $t\in [0,T)$}\,,
\end{alignat}
and $s_-(0):=0$, $s_+(T):=S$.
Then we have
\begin{equation*} 
s_-(t)\leq  \liminf_{k\to+\infty}  s_k(t)\leq   \limsup_{k\to+\infty}   s_k(t)\leq s_+(t)\quad \text{and}\quad \sft(s_-(t))=t=\sft(s_+(t)) \quad \text{for every } t\in[0,T] \,,
\end{equation*}
\begin{equation*}
s_-(\sft(s))\leq s\leq s_+(\sft(s)) \quad \text{for every } s\in [0,S] \,. 
\end{equation*}
Moreover the set
\begin{equation}\label{defso-3}
\calS:=\{t\in[0,T]\,\colon\,s_-(t)<s_+(t)\}
\end{equation} 
is at most countable.
Set
\begin{equation}
\label{def-so-4}
\calU:=\{s\in[0,S]\,\colon\,\sft \text{ is constant in a \GGG neighborhood  of } s \}\,,
\end{equation}
\end{subequations}
then 
\begin{equation*}
\calU=\bigcup_{t\in \calS}(s_-(t), s_+(t))\,.
\end{equation*}
 For \GGG future  convenience (see Step 3 below) we remark that the original functions $(u_k,z_k,p_k)$ satisfy, 
for every $t\in[0,T] \setminus \calS$,
%
\begin{subequations}\label{420}
\begin{align}
u_k(t)&\weakto\sfu(s_-(t))=\sfu(s_+(t)) \text{ in } H^1(\Omega;\R^n) \,,\label{555sol}\\
 z_k(t)  &\weakto\sfz(s_-(t))=\GGG \sfz(s_+(t))  \text{ in }\Hs(\Omega)\,,\label{558sol}\\
p_k(t)&\weakto\sfp(s_-(t))=\sfp(s_+(t)) \text{ in } L^2(\Omega;\MD) \,.\label{557sol}
\end{align}
\end{subequations}
\par
\noindent
\GGG \textbf{Step $2$:  Finiteness of  $\Mliredname(\sft(\tau),\sfq(\tau),\sft'(\tau),\sfq'(\tau))$ when $\sft'(\tau)>0$}. 
 We prove that 
\begin{equation} \label{finite}
\begin{aligned}
-\mathrm{D}_u \calE_{\GGG \mu } (\sft(\tau),\sfq(\tau))=0 \,,  \ 
-\mathrm{D}_z \calE_{\GGG \mu } (\sft(\tau),\sfq(\tau)) \in \partial\calR(0) \,, \ 
 & -\mathrm{D}_p \calE_{\GGG \mu } (\sft(\tau),\sfq(\tau)) \in \partial_\pi \calH(z,0)
 \\
 &
\quad
 \text{for a.a.}\ \tau\in A: =  \{\tau \in (0,S)\, : \  \sft'(\tau)>0 \} 
\,,
\end{aligned}
\end{equation}
i.e., the configuration is stable where $\sft$ grows.  This is equivalent to showing that $\Mli{\sft(\tau)}{\sfq(\tau)}{\sft'(\tau)}{\sfq'(\tau)}$ is finite for a.a.\ $\tau \in A$. 

 Preliminarily, we observe that 
\begin{equation}
\label{limsup}
  \limsup_{k\to+\infty}  \sft_k'(\tau)>0 \qquad \foraa\, \tau \in A. 
\end{equation}
This can be shown with the very same arguments as for the proof of \cite[(5.18)]{Crismale-Lazzaroni}. 

\GGG By \eqref{eqs:1106191955}  and convergences \eqref{weak-converg-uzpe-rep}
%
%
we have that
\begin{equation}
\label{lsc-Phi^nu}
0 \le \DVitos(\sft(\tau), \sfq(\tau)) \le   \liminf_{k\to+\infty}   \DVitos(\sft_k(\tau), \sfq_k(\tau)) \qquad \text{for all } \tau \in [0,S].
\end{equation}
\GGG Moreover, by
  \eqref{DVITOequalsSTAR}, written
 for $t= \sft_k(\tau)$ we obtain
 \begin{equation}
 \label{further-ingredient}
 \begin{aligned}
  \DVitos(\sft_k(\tau), \sfq_k(\tau))  =  \DVitos(\sft_k(\tau), q_k(\sft_k(\tau)))  
  &  =\eps_k \DVito_\nu (q_k'(\sft_k(\tau))) 
  \\
  &  =\eps_k  \sqrt{ \nu \| u_k'(\sft_k(\tau))\|^2_{ H^1, \bbD }   {+} 
\|z_k'(\sft_k(\tau))\|_{L^2}^2
{+} \nu\|p_k'(\sft_k(\tau))\|_{L^2}^2} 
\\
&= \frac{\eps_k}{\sft_k'(\tau)} \sqrt{ \nu \| \sfu_k'(\tau)\|^2_{ H^1, \bbD }   {+} 
\|\sfz_k'(\tau)\|_{L^2}^2
{+} \nu\|\sfp_k'(\tau)\|_{L^2}^2}
\\
& 
\leq  \frac{\eps_k}{\sft_k'(\tau)} \qquad \foraa\, \tau \in (0,S),
\end{aligned}
\end{equation}
where the last estimate follows from   the normalization condition  \eqref{normalization}  and since $\nu \leq 1$. 
Combining \eqref{limsup}, \eqref{lsc-Phi^nu}, and \eqref{further-ingredient},  we ultimately find
\[
0 \le \DVitos(\sft(\tau), \sfq(\tau)) \le   \liminf_{k\to+\infty}   \DVitos(\sft_k(\tau), \sfq_k(\tau))
=0 \qquad \foraa\, \tau \in A,
\]
which implies \eqref{finite}.

\GGG In particular, we obtain that 
$\Mli{\sft(\tau)}{\sfq(\tau)}{\sft'(\tau)}{\sfq'(\tau)}$ is finite, and equals $\calR(\sfz'(\tau)) + \calH(\sfz(\tau), \sfp'(\tau))$, for \GGG a.a.\  $\tau \in A$. 
\GGG Let us remark also that
\begin{equation}\label{1206191003}
[0,S]\ni \tau \mapsto \DVitos(\sft(\tau), \sfq(\tau)) \text{ is lower semicontinuous, }
\end{equation}
by \eqref{eqs:1106191955} and the fact that $(\sft, \sfq) \in W^{1,\infty}(0,S; [0,T] \times \bfQ)$.
In particular, the set
\begin{equation}\label{1206191009}
A^\circ := \{ \tau \in [0,S] \colon  \DVitos(\sft(\tau), \sfq(\tau)) >0\} \text{ is open and included in } [0,S] \setminus A\,.
\end{equation}

\par
\noindent
\textbf{Step $3$: \GGG The  Energy-Dissipation upper estimate.} 
By \eqref{rescaled-enid-eps} we have
\begin{equation}
\label{where-limit}
   \calE_{\GGG \mu } (\sft_k(S),\sfq_k(S)) + \int_{0}^{S} 
    \MVito(\sft_k(\tau),\sfq_k(\tau), \sfq_k'(\tau))  \dd \tau =   \calE_\mu(\sft_k(0),\sfq_k(0))  +\int_{0}^{S} \partial_t \calE_{\GGG \mu } (\sft_k(\tau), \sfq_k(\tau)) \, \sft_k'(\tau) \dd \tau \,.
\end{equation}
In order to obtain \eqref{EDue-lim}, we  shall  pass to the $\liminf$   in \eqref{where-limit},  using the lower semicontinuity of $\calE_{\GGG \mu }$
 and the previously proved \eqref{lsc-Phi^nu}. 
\GGG

We first prove the lower semicontinuity estimate
\begin{equation}\label{1206191015}
\begin{aligned}
 \int_{A^\circ}  \Mliredname(\sft(\tau), \sfq(\tau), \sft'(\tau), \sfq'(\tau))   \dd \tau &   
\leq \  \liminf_{k\to+\infty}   \int_{A^\circ}  \MredVitok(\sft_k(\tau), \sfq_k(\tau), \sfq_k'(\tau))  \dd \tau\,,
\end{aligned}
\end{equation}
where the set $A^\circ$ has been introduced in \eqref{1206191009}. By \eqref{0307191952}
\begin{equation}\label{1306191946}
  \MredVitok (\sft_k(\tau), \sfq_k(\tau), \sfq_k'(\tau))= \DVito_\nu(\sfq_{k}'(s)) \, \DVitos(\sft_k(\tau), \sfq_{k}(\tau))\,.
 \end{equation}
 Then, estimate \eqref{1206191015} follows from 
 Lemma~\ref{le:0107192150} in Appendix~B. 
  Indeed, we apply it  combining \eqref{lsc-Phi^nu} with  convergences \eqref{limit-conv},   which imply that 
 \[
 (\sqrt{\nu} \sfu_\varepsilon, \sfz_\varepsilon, \sqrt{\nu} \sfp_\varepsilon) \weakto (\sqrt{\nu} \sfu, \sfz, \sqrt{\nu} \sfp)\quad\text{in }W^{1,\infty}(0,S; \mathrm{Q})
\] 
(recall that $\nu>0$ is fixed).
 Hence, on the one hand we have that 
\[
 m_k(\tau):=  \DVito_\nu(\sfq_{k}'(\tau)) =  \sqrt{ \nu \| \sfu_{k}'(\tau)\|^2_{ H^1, \bbD }   {+} 
\|\sfz_{k}'(\tau)\|_{L^2}^2
{+} \nu\|\sfp_{k}'(\tau)\|_{L^2}^2}   \weaksto  m(\tau) \text{   in } L^\infty(0,S),
\]
and 
\[
 m(\tau) \geq \sqrt{\nu \|\sfu'(\tau)\|^2_{H^1, \mathrm{D}} + \| \sfz'(\tau)\|^2_{L^2} + \nu \|\sfp'(\tau)\|^2_{L^2}  } = \DVito_\nu(\sfq'(\tau)) 
 \qquad \foraa\, \tau \in (0,S). 
\]
 On the other hand, the sequence $h_k(\tau): = \DVitos(\sft_k(\tau), \sfq_{k}(\tau)) $ satisfies the first condition
in \eqref{hypLB1}.  
 Therefore, by Lemma \ref{le:0107192150} we have the desired estimate \eqref{1206191015}. 

Moreover, by \eqref{limit-conv} and Ioffe Theorem (cf.\ \cite[Thm.\ 21]{Valadier90}) it is not difficult to see that
\begin{equation}\label{1206191137}
\int_0^S \calR(\sfz'(\tau)) + \calH(\sfz(\tau), \sfp'(\tau)) \dd \tau \leq   \liminf_{k\to+\infty}    \int_0^S \calR(\sfz_\varepsilon'(\tau)) + \calH(\sfz_\varepsilon(\tau), \sfp_k'(\tau)) \dd \tau\,.
\end{equation}

As for the right-hand side, notice that
\begin{equation}\label{1206191144}
\int_{0}^{S} \partial_t \calE_{\GGG \mu } (\sft_k(\tau), \sfq_k(\tau)) \, \sft_k'(\tau) \dd \tau
= \int_{0}^{T} \partial_t \calE_{\GGG \mu } (\tau, q_k(\tau)) \dd \tau \,.
\end{equation}
By \eqref{420},
\[
\begin{aligned}
\partial_t \calE_{\GGG \mu }(\tau,q_k(\tau)) = & \int_\Omega \bbC(z) \, e_k(\tau) : \sig{w'(\tau)} \dd x  - \langle F'(\tau), u_k(\tau)+w(\tau) \rangle_{H^1(\Omega;\R^n)} - \langle F(\tau), w'(\tau) \rangle_{H^1(\Omega;\R^n)}  \\[.5em]
\to & \  
\partial_t \calE_{\GGG \mu }(\tau,\sfq(s_-(\tau))) \quad \text{for every } \tau \in[0,T] \setminus \calS \,.
\end{aligned}
\]
Hence by Dominated Convergence
\begin{equation}\label{1206191145}
\int_{0}^{T} \partial_t \calE_{\GGG \mu } (\tau, q_k(\tau)) \dd \tau \to
\int_{0}^{T} \partial_t \calE_{\GGG \mu }(\tau,\sfq(s_-(\tau))) \dd \tau 
=
\int_{0}^{S} \partial_t \calE_{\GGG \mu } (\sft(\tau), \sfq(\tau)) \, \sft'(\tau) \dd \tau \,,
\end{equation}
where we have used the fact that
\GGG $\sft'(s)=0$  for a.e.\ $s\in \calU$ and $s_-(\sft(s))=s$ for a.e.\ $s\in[0,S]\setminus \calU$  (see Step 1 above). 
\par
\GGG We now collect \eqref{finite} and  \eqref{1206191009}--\eqref{1206191145},
 to conclude the Energy-Dissipation upper estimate \eqref{EDue-lim}. By  the characterization 
provided by Proposition~\ref{prop:charact-BV}, we deduce that the curve $(\sft,\sfq)$ is a  $\BV$ solution. This concludes the proof. 
%
%
%
%
%
\end{proof}
\begin{remark}
\upshape
\label{rmk:nondeg}
It is an open problem to prove that the reparameterized viscous solutions converge to a \emph{non-degenerate}
(in the sense of \eqref{non-degenerate})
 $\BV$ solution. Nonetheless, 
 following \cite[Rmk.\ 2]{MRS09}
 any degenerate $\BV$ solution $(\sft,\sfq)$ can be reparameterized to a non-degenerate one $(\tilde\sft,\tilde\sfq)
 = (\tilde\sft,\tilde\sfu,\tilde\sfz,\tilde\sfp)
 $
 by setting
 \begin{equation}
 \label{deg2nondeg}
 \begin{aligned}
 &
 m: [0,S]\to [0,+\infty) && 
 m(s): = \int_0^s (\sft'(\tau){+} \|\sfq'(\tau)\|_{\bfQ})  \dd \tau,  &&  \tilde S: = m(S),  &&
 \\
 & r :[0,\tilde S] \to [0,S]  && r(\mu) : = \inf\{s\geq0\, : \ m(s) =\mu\}, && && 
 \\
 & \tilde\sft: [0,\tilde S]\to [0,T]  && \tilde\sft(\mu): = \sft(r(\mu)), &&  \tilde\sfq: [0,\tilde S]\to \bfQ  && \tilde\sfq(\mu): = \sfq(r(\mu)) \,.
 \end{aligned}
 \end{equation}
 With the very same calculations as in \cite[Rmk.\ 2]{MRS09}
 (cf.\ also \cite[Rmk.\ 5.2]{KRZ13}), one sees that 
 \[
 (\tilde\sft,\tilde\sfq) \in W^{1,\infty} (0,\tilde S; [0,T]{\times}\bfQ) \quad \text{with } \ \tilde\sft'(\mu)+\| \tilde\sfq'(\mu)\|_{\bfQ} \equiv 1 \text{ a.e. in } (0,\tilde S). 
 \]
 and that $(\tilde\sft,\tilde\sfq)$ is still a $\BV$ solution in the sense of Definition~\ref{def:BV-solution-hardening}. 
\end{remark}

\subsection{The vanishing-viscosity analysis as $\eps,\, \nu\down0$}
\label{s:6-nu-vanishes}
We now address the asymptotic analysis of \GGG Problem~\ref{prob:visc}  as \emph{both} the viscosity parameter $\eps$ 
\emph{and} the rate parameter $\nu$ tend to zero. Accordingly, throughout
this section we  shall  revert to the notation
$(\sft_{\eps,\nu}, \sfq_{\eps,\nu})_{\eps,\nu}$ for a family of reparameterized viscous solutions.
\par
Again, 
it is to be expected that any limit curve $(\sft,\sfq)$ of the family $(\sft_{\eps,\nu}, \sfq_{\eps,\nu})_{\eps,\nu}$ as $\eps,\, \nu \down 0$
shall satisfy the analogue of the Energy-Dissipation inequality \eqref{EDue-lim}, however featuring, in the present context, a \emph{different} \emph{vanishing-viscosity contact potential}
 that reflects the multi-rate character of the problem, and in particular the fact that $u$ and $p$ relax to equilibrium and rate-independent evolution, respectively, at a faster rate than $z$ relaxing to rate-independent evolution. 
 For consistency of notation, we shall 
denote this new contact potential $\Mlinamezero$. It will turn out  (in analogy with  the results from 
\cite{MRS14,MieRosBVMR}),   that $\Mlinamezero: [0,T]  \times \bfQ \times [0,+\infty) \times \bfQ \to [0,+\infty] $ is given \GGG by   
\[
\Mlizero tq{t'}{q'}=  \Mlieczero t u z p {t'}{u'}{z'}{p'} : = \calR(z') + \calH(z,p') + \Mliredzero t u z p {t'}{u'}{z'}{p'}\,
\]
with $\Mlirednamezero$ given by \eqref{Mli>0} if $t'>0$. Instead, in place of \eqref{Mli=0}
 we have 
\begin{subequations}
\label{Mil-sec6}
\begin{equation}
\label{Mlizero=0}
\begin{aligned}
\text{if } t'=0\,,  \quad 
 \Mliredname(t,q,t',q') : =
\begin{cases}
    \DVito(u', p') \, \DVitosred(t,q)   & \text{if } z'=0
\\
 \|z'\|_{L^2}\, \tilded_{L^2(\Omega)} ({-}\mathrm{D}_z \calE_{\GGG \mu } (t,q),\partial\calR(0)) & \text{if } \DVitosred(t,q)=0,
\\
+ \infty & \text{if } \|z'\|_{L^2} \DVitosred(t,q)>0
\end{cases}
\end{aligned}
\end{equation}
where we have used the \GGG notation
\begin{equation}\label{3006191243}
\begin{aligned}
&
  \DVito(u', p'): =   \sqrt{ \|u'\|_{H^1(\Omega;\R^n)}^2 {+} \|p'\|_{L^2(\Omega;\MD)}^2}, 
\\
&   \DVitosred(t,q)   :=  \sqrt{\|{-}\mathrm{D}_u  \calE_\mu  (t,q)\|^2_{( H^1, \bbD )^*}{+}  d_{L^2}({-}\mathrm{D}_p \calE_\mu (t,q),\partial_\pi \calH(z,0))^2}\,. 
\end{aligned}
\end{equation} 
 Again, in the case in which $z'=0$ and  $\tilded_{L^2(\Omega)} ({-}\mathrm{D}_z \calE_{\GGG \mu } (t,q),\partial\calR(0))= +\infty $, 
in \eqref{Mlizero=0}
we set  
\[
 \|z'\|_{L^2}\, \tilded_{L^2(\Omega)} ({-}\mathrm{D}_z \calE_{\GGG \mu } (t,q),\partial\calR(0)) : = +\infty\,.
 \]
%
\end{subequations}
\par
The multi-rate character of the vanishing-viscosity approximation  addressed in this case  is already apparent 
in the expression for  $ \Mlirednamezero(t,q,t',q') $ at $t'=0$. Indeed,
$ \Mlirednamezero(t,q,0,q') $ 
 is finite only either if $z'=0$ (i.e.\ $z$ is frozen), or
  if $\DVitosred(t,q) =0$, which entails that 
  $u$ is at equilibrium and $p$ fulfills the local stability condition $-\mathrm{D}_p \calE_{\GGG \mu } (t,q) \in \partial_{\pi} \calH(z,0)$, cf.\ \GGG Remark~\ref{rmk:mech-interp} later on for further comments.
\par
Accordingly, we give the following
  \begin{definition}
   \label{def:multi-rate-BV-solution-hardening}
   We call a parameterized curve $(\sft,\sfq) = (\sft,\sfu,\sfz,\sfp) \in \AC ([0,S]; [0,T]{\times} \bfQ)$
   a  \emph{(parameterized) Balanced Viscosity} solution to the \emph{multi-rate}   system with hardening 
    (\ref{mom-balance-intro}, \ref{stress-intro}, \ref{flow-rule-dam-intro}, \ref{viscous-bound-cond}, \ref{pl-hard-intro}) 
    if \GGG $\sft \colon [0,S] \to [0,T]$ is nondecreasing and $(\sft, \sfq)$  fulfills for all $0\leq s \leq S$  the Energy-Dissipation balance
   \begin{equation}
\label{ED-balance-zero}
   \calE_{\GGG \mu }(\sft(s),\sfq(s)) + \int_{0}^{s}
   \Mlizero{\sft(\tau)}{\sfq(\tau)}{\sft'(\tau)}{\sfq'(\tau)} \dd \tau = \calE_{\GGG \mu } (\sft(0),\sfq(0)) +\int_{0}^{s} \partial_t \calE_{\GGG \mu } (\sft(\tau), \sfq(\tau)) \, \sft'(\tau) \dd \tau \,.
\end{equation}
We say that $(\sft,\sfq)$ is \emph{non-degenerate} if it fulfills \eqref{non-degenerate}. 
   \end{definition}
The very analogue of Proposition~\ref{prop:charact-BV} holds for $\BV$ solutions to the multi-rate system as well, 
based on the chain-rule estimate 
\[
-\frac{\dd}{\dd s} \calE_{\GGG \mu }(\sft(s),\sfq(s)) +\partial_t  \calE_{\GGG \mu }(\sft(s),\sfq(s)) =  -\pairing{}{\bfQ}{{-}\rmD_q \calE_{\GGG \mu }(\sft(s),\sfq(s))}{\sfq'(s)}  \leq  \Mlizero{\sft(s)}{\sfq(s)}{\sft'(s)}{\sfq'(s)}
\]
for almost all $s\in (0,S)$, 
which can be shown along any  parameterized curve $(\sft,\sfq)  \in \AC ([0,S]; [0,T]{\times} \bfQ)$ 
such that  $\Mlizero{\sft(s}{\sfq(s)}{\sft'(s)}{\sfq'(s)}<\infty$ for almost all $s\in (0,S)$  
 by adapting the arguments for Lemma \ref{l:chain-rule}, \GGG see also Proposition~\ref{prop:charact}.  
\par
Likewise, we have a differential characterization for $\BV$ solutions in the sense of Definition~\ref{def:multi-rate-BV-solution-hardening} that has a structure analogous to  the characterization  from Proposition~\ref{prop:diff-charact-BV}.
    \begin{proposition}
    \label{prop:diff-charact-BV-zero}
    A parameterized curve 
$(\sft,\sfq) = (\sft,\sfu,\sfz,\sfp) \in \AC ([0,S]; [0,T]{\times} \bfQ)$ is a  $\BV$ solution to the \emph{multi-rate}  system with hardening
if and only if there exist two
 function $\lambda_{\sfu,\sfp}\,  \lambda_{\sfz}: [0; S] \to [0; 1]$ such that 
\begin{subequations}\label{eq:lambda-uzp}
\begin{align}
&
\label{switch-1}
\sft'(s) \lambda_{\sfu,\sfp}(s) =\sft'(s) \lambda_{\sfz}(s) =0  &&  \foraa\ s\in(0,S),
\\
& 
\label{switch-2}
 \lambda_{\sfu,\sfp}(s)(1{-} \lambda_{\sfz}(s))  = 0  &&  \foraa\ s\in(0,S), 
 \end{align}
\end{subequations}
and $(\sft,\sfq)$ satisfies \foraa\ $s \in (0,S)$
the system of subdifferential inclusions 
\begin{subequations}\label{eq:diff-char-zero}
\begin{align}
& \lambda_{\sfu,\sfp}(s)   \mathrm{D} \disvo 2(\sfu'(s))+ (1- \lambda_{\sfu,\sfp}(s)) \, \mathrm{D}_u \calE_{\GGG \mu } (\sft(s),\GGG \sfq(s) ) = 0 && \text{in}\ H_\Dir^1(\Omega;\R^n)^* \,,
\\
&
(1{-}\lambda_{\sfz}(s))\, \partial \mathcal{R}(\sfz'(s))+ \lambda_{\sfz}(s)\, \mathrm{D}\mathcal{R}_2(\sfz'(s)) + (1{-}\lambda_{\sfz}(s)) \, \mathrm{D}_z \calE_{\GGG \mu } (\sft(s),\GGG \sfq(s) )  \ni 0 
 && \text{in } 
\Hs(\Omega)^* \,,
\\
&
(1{-} \lambda_{\sfu,\sfp}(s)) \, \partial_{\pi} \mathcal{H}(\sfz(s), \sfp'(s)) +  \lambda_{\sfu,\sfp}(s)\,  \mathrm{D} \disho 2(\sfp'(s))
+(1{-} \lambda_{\sfu,\sfp}(s)) \, \mathrm{D}_p \calE_{\GGG \mu } (\sft(s), \GGG \sfq(s) )  \ni 0  && \text{in  } L^2(\Omega;\MD) \,.
\end{align}
\end{subequations}
    \end{proposition}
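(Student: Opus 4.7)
The plan is to parallel the argument of Proposition \ref{prop:diff-charact-BV}, adapted to the multi-rate vanishing-viscosity contact potential $\Mlinamezero$. First I would establish the multi-rate analogue of Proposition \ref{prop:charact-BV}: $(\sft,\sfq)\in\AC([0,S];[0,T]\times\bfQ)$ is a BV solution in the sense of Definition \ref{def:multi-rate-BV-solution-hardening} if and only if the pointwise contact condition
\[
\Mlizero{\sft(s)}{\sfq(s)}{\sft'(s)}{\sfq'(s)} = \langle -\rmD_q \calE_\mu(\sft(s),\sfq(s)),\sfq'(s)\rangle_{\bfQ}
\]
holds for a.e.\ $s\in(0,S)$. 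This reduction rests on a chain-rule estimate in the spirit of Lemma \ref{l:chain-rule}, which I would obtain by splitting the pairing $\langle -\rmD_q\calE_\mu,\sfq'\rangle_{\bfQ}$ into the contributions of the fast block $(\sfu',\sfp')$ and of the slow variable $\sfz'$, inserting measurable selections from $\partial_\pi\calH(\sfz,0)$ and $\partial\calR(0)$, and then applying Cauchy--Schwarz and the $1$-homogeneity of $\calR$ and $\calH(\sfz,\cdot)$ to each block \emph{separately} (this is where the multi-rate structure enters, as the two Cauchy--Schwarz inequalities are now decoupled).

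Next I would characterize the contact set
\[
\Sigma^0:=\{(t,q,t',q')\,:\,\Mlizero tq{t'}{q'}=\langle-\rmD_q\calE_\mu(t,q),q'\rangle_{\bfQ}\}.
\]
For $t'>0$, finiteness of $\Mlizero$ together with the chain-rule inequality forces full stability, giving \eqref{eq:diff-char-zero} with $\lambda_{\sfu,\sfp}=\lambda_\sfz=0$. For $t'=0$, the piecewise definition \eqref{Mlizero=0} splits $\Sigma^0$ into two branches. On the branch $\{\sfz'=0\}$, equality in the Cauchy--Schwarz-type relation $\DVito(\sfu',\sfp')\,\DVitosred(t,q)=\langle-\rmD_u\calE_\mu,\sfu'\rangle+\langle-\rmD_p\calE_\mu,\sfp'\rangle-\calH(\sfz,\sfp')$ produces, via the abstract argument of \cite[Prop.~5.1]{MRS14}, a single parameter $\lambda_{\sfu,\sfp}\in[0,1]$ solving the first and third inclusions of \eqref{eq:diff-char-zero}, while the second inclusion holds with $\lambda_\sfz=1$ since $\mathrm{D}\calR_2(\sfz')=0$ and $0\in\partial\calR(0)$. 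On the branch $\{\DVitosred(t,q)=0\}$, the local stability for $(\sfu,\sfp)$ directly yields the first and third inclusions with $\lambda_{\sfu,\sfp}=0$, and equality in $\|\sfz'\|_{L^2}\,\tilded_{L^2}(-\rmD_z\calE_\mu,\partial\calR(0))$ produces $\lambda_\sfz\in[0,1]$ solving the second inclusion. Piecing the three regimes together and extending the two coefficients by zero on $\{\sft'>0\}$ delivers the switching condition \eqref{switch-1}, while the mutual incompatibility of the two branches (a ``fast'' viscous transition at $\lambda_{\sfu,\sfp}>0$ requires $\sfz'=0$, which by the second inclusion is compatible only with $\lambda_\sfz=1$) forces \eqref{switch-2}. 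The converse implication is then straightforward: testing \eqref{eq:diff-char-zero} against $(\sfu',\sfz',\sfp')$, summing, and invoking the chain rule together with the very definition \eqref{Mil-sec6} recovers the Energy-Dissipation balance \eqref{ED-balance-zero}.

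The hard part will be the unfolding of the case-split in $\Mlirednamezero$ at $t'=0$. Unlike the single-rate setting of Proposition \ref{prop:diff-charact-BV}, where the contact set decomposes into two physically homogeneous regimes governed by a single $\lambda$, here the very finiteness of the contact potential must be used to show that the solution lives in $\{\sfz'=0\}\cup\{\DVitosred(t,q)=0\}$ on the jump set, with special care on the overlap $\{\sfz'=0,\ \DVitosred(t,q)=0\}$ (full stability at $t'=0$), where both coefficients may vanish and \eqref{eq:diff-char-zero} has to remain non-ambiguous. Measurability of the selections $\lambda_{\sfu,\sfp},\lambda_\sfz$ then follows from a standard measurable-selection argument applied to the (closed-valued, measurable) set-valued maps $s\mapsto\{\lambda\in[0,1]:\text{the relevant inclusion in \eqref{eq:diff-char-zero} holds}\}$ in each branch.
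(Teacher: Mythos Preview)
Your proposal is correct and follows precisely the route indicated by the paper, which merely states that the proof ``is again based on the analysis of the structure of the contact set associated with $\Mlinamezero$ \ldots\ which in turn can be characterized by adapting the arguments from the proof of \cite[Prop.~5.1]{MRS14}.'' You have in fact fleshed out that reference with the natural case-split dictated by \eqref{Mlizero=0}; one small wording issue: on the branch $\{\sfz'=0\}$ the choice $\lambda_\sfz=1$ is a construction (always admissible since $\rmD\calR_2(0)=0$) rather than the only option ``compatible'' with the second inclusion, but this does not affect the argument.
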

    Like the argument for Prop.\ \ref{prop:diff-charact-BV}, the \emph{proof} is again based on  the analysis of the structure 
    of the contact set associated with $\Mlinamezero$ (cf.\ \eqref{contact-set}), which in turn can be characterized by adapting the arguments from the proof of
    \cite[Prop.\ 5.1]{MRS14}.
    \begin{remark}
    \label{rmk:mech-interp}
    \upshape
    Along the lines of \cite[Rmk.\ 5.4]{MRS14}, we observe that system \eqref{eq:diff-char-zero}
reflects the fact that $\sfu$ and $ \sfp$ relax to equilibrium and rate-independent evolution,  respectively,   faster than $\sfz$. Indeed, at a jump
(corresponding to $\sft'=0$,  hence  the coefficients $\lambda_{\sfu,\sfp}$ and $\lambda_{\sfz}$ can be nonzero),
due to \eqref{switch-2}
 either  $\lambda_{\sfz}=1$, and then
$\sfz'=0$, or $\lambda_{\sfu,\sfp}=0$, which gives that $\sfu$ is at equilibrium and $\sfp$ satisfies the local stability condition $-\mathrm{D}_p \calE_{\GGG \mu } (t,q) \in \partial_{\pi} \calH(z,0)$. Namely, at a jump $\sfz$ cannot change until $\sfu$ has reached the equilibrium and $\sfp$ attained the stable set $\partial_{\pi} \calH(z,0)$. After that,
$\sfz$ may either evolve rate-independently (if $\lambda_\sfz=0$), or governed by viscosity  (if $\lambda_\sfz\in(0,1)$). 
    \end{remark}
    With our final result we prove the convergence of the reparameterized viscous solutions $(\sft_{\eps,\nu}, \sfq_{\eps,\nu})_{\eps,\nu}$ to a $\BV$ solution of the multi-rate system as \emph{both} $\eps$ and $\nu$ tend to zero. As observed right before the statement of Theorem~\ref{teo:rep-sol-exist}, we may suppose that the curves $(\sft_{\eps,\nu}, \sfq_{\eps,\nu})$ are defined in a  fixed   interval $(0,S)$. 
\begin{theorem} 
\label{teo:rep-sol-exist-2}
Under the assumptions of Section~\ref{s:2} and \eqref{EL-initial}, 
for all vanishing sequences  $(\eps_k)_k$ and    $(\nu_k)_k$ such that $S_{\eps_k,\nu_k} \to S$ there exist a (not relabeled) subsequence  $(\sft_{\eps_k, \nu_k},\sfq_{\eps_k, \nu_k})_k$ and a Lipschitz curve
$(\sft,\sfq) \in W^{1,\infty} ([0,S]; [0,T]{\times} \bfQ)$ such that convergences \eqref{weak-converg-uzpe-rep}
 hold  as $k\to+\infty$   and 
 $(\sft,\sfq) $ is a  $\BV$ solution to the \emph{multi-rate}  system with hardening according to Definition \ref{def:multi-rate-BV-solution-hardening}.
\end{theorem}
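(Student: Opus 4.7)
The plan is to mirror the three-step strategy used in the proof of Theorem \ref{teo:rep-sol-exist} (compactness, stability on $\{\sft' > 0\}$, Energy-Dissipation upper estimate), but with a refined lower-semicontinuity analysis that captures the multi-rate structure encoded in $\Mlinamezero$, cf.\ \eqref{Mlizero=0}. Throughout, I write $(\sft_k, \sfq_k) = (\sft_{\eps_k,\nu_k}, \sfq_{\eps_k,\nu_k})$ and, using the extension-by-constant construction preceding the statement, I may assume all curves are defined on a common interval $[0,S]$.

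\textbf{Step 1 (Compactness).} The normalization \eqref{normalization} and the uniform bound \eqref{est-Menu} on $\int_0^S \MVito$ provide equi-Lipschitz estimates for $(\sft_k, \sfq_k)$ in $W^{1,\infty}(0,S;[0,T]{\times}\bfQ)$ uniformly in $k$. Applying Aubin--Lions as in Step 1 of the proof of Theorem \ref{teo:rep-sol-exist}, I extract a non-relabeled subsequence converging to a Lipschitz $(\sft,\sfq)$ and satisfying the pointwise convergences \eqref{weak-converg-uzpe-rep}; the sets $\calS$, $\calU$ and the auxiliary limits for $\sfe, \serifsigma$ are defined verbatim as in \eqref{defso} and \eqref{converg-e}--\eqref{converg-sigma}.

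\textbf{Step 2 (Stability where $\sft'(\tau) > 0$).} I claim that $-\mathrm{D}_u\calE_\mu(\sft,\sfq)=0$, $-\mathrm{D}_z\calE_\mu(\sft,\sfq)\in\partial\calR(0)$ and $-\mathrm{D}_p\calE_\mu(\sft,\sfq)\in\partial_\pi\calH(\sfz,0)$ almost everywhere on $A := \{\sft' > 0\}$. The identity \eqref{DVITOequalsSTAR}, rephrased in rescaled time exactly as in \eqref{further-ingredient}, yields $\DVitosk(\sft_k,\sfq_k) \le \eps_k/\sft_k'$. Unpacking the definition of $\DVitosk = \mathcal{D}_{\nu_k}^{*,\mu}$ in \eqref{new-form-D-Dvito} gives the three componentwise estimates
\[
\frac{1}{\nu_k}\|\mathrm{D}_u\calE_\mu\|_{(H^1,\bbD)^*}^2 + \tilded_{L^2}({-}\mathrm{D}_z\calE_\mu,\partial\calR(0))^2 + \frac{1}{\nu_k}\dLtwo({-}\mathrm{D}_p\calE_\mu,\partial_\pi\calH(\sfz_k,0))^2 \le \frac{\eps_k^2}{(\sft_k')^2}
\]
evaluated along $(\sft_k,\sfq_k)$. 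Since $\limsup_{k} \sft_k'(\tau) > 0$ a.e.\ on $A$ by the very argument of \eqref{limsup}, all three quantities vanish in the limit along a subsequence, and the lower semicontinuity estimates \eqref{eqs:1106191955} promote this to the stated stability at $(\sft(\tau),\sfq(\tau))$. In particular, $\Mlinamezero(\sft,\sfq,\sft',\sfq')$ is finite and equals $\calR(\sfz') + \calH(\sfz,\sfp')$ a.e.\ on $A$.

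\textbf{Step 3 (Energy-Dissipation upper estimate and main obstacle).} Starting from the rescaled balance \eqref{rescaled-enid-eps} written for $(\eps_k,\nu_k)$, I pass to the $\liminf$. The convergence of the power integral is obtained exactly as in \eqref{1206191144}--\eqref{1206191145}, and lower semicontinuity of $\calE_\mu$ and the rate-independent part $\int[\calR(\sfz')+\calH(\sfz,\sfp')]d\tau$ follow as in \eqref{1206191137}. The delicate step is to control the viscous contribution on the ``jump set'' $A^\circ := \{\DVitosredname(\sft,\sfq) > 0\}\cup\{\sft'=0\}$; here I need
\[
\int_{A^\circ} \Mliredzero{\sft(\tau)}{\sfu(\tau)}{\sfz(\tau)}{\sfp(\tau)}{0}{\sfu'(\tau)}{\sfz'(\tau)}{\sfp'(\tau)} \dd\tau \le \liminf_{k\to\infty} \int_{A^\circ} \DVito_{\nu_k}(\sfq_k')\,\DVitosk(\sft_k,\sfq_k)\dd\tau.
\]
I decompose the two factors into their $(u,p)$-block (which carries a factor $\sqrt{\nu_k}$ in $\DVito_{\nu_k}$ and $1/\sqrt{\nu_k}$ in $\DVitosk$, so that the $\nu_k$-scalings cancel) and their $z$-block (which is $\nu_k$-independent). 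The pointwise Cauchy--Schwarz inequality in $\R^2$ yields
\[
\DVito_{\nu_k}(\sfq_k')\,\DVitosk(\sft_k,\sfq_k) \ge \DVito(\sfu_k',\sfp_k')\,\mathcal{D}^{*,\mu}(\sft_k,\sfq_k) + \|\sfz_k'\|_{L^2}\,\tilded_{L^2}({-}\mathrm{D}_z\calE_\mu(\sft_k,\sfq_k),\partial\calR(0)),
\]
where the first factor on the right-hand side is $\nu_k$-free and the lower semicontinuity \eqref{eqs:1106191955} applies to both $\mathcal{D}^{*,\mu}$ and $\tilded_{L^2}$. Lemma~\ref{le:0107192150}, applied separately to the $(u,p)$-product and to the $z$-product, then passes the lim inf under the integral. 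The main obstacle lies in showing that the liminf of the left-hand side is forced into one of the three branches of \eqref{Mlizero=0}: whenever $\|\sfz'\|_{L^2}$ and $\DVitosredname(\sft,\sfq)$ are both positive on a set of positive measure, the mixed cross-terms $\|\sfz_k'\|\cdot(1/\sqrt{\nu_k})\|\mathrm{D}_u\calE_\mu\|$ and $\sqrt{\nu_k}\|\sfu_k'\|\cdot\tilded_{L^2}$ appearing in the expansion of $\DVito_{\nu_k}\DVitosk$ must blow up. Quantifying this forces the $+\infty$ value in the third branch of \eqref{Mlizero=0} and completes the identification of the limiting potential. Once \eqref{EDue-lim} (with $\Mlinamezero$) is established, the multi-rate analogue of Proposition~\ref{prop:charact-BV}, obtained via the chain-rule estimate mentioned after Definition~\ref{def:multi-rate-BV-solution-hardening}, upgrades the inequality to the balance \eqref{ED-balance-zero} and concludes the proof.
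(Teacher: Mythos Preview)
Your overall three-step strategy matches the paper's proof. Steps 1 and 2 are essentially identical. In Step 3, your Cauchy--Schwarz inequality $\DVito_{\nu_k}\DVitosk \ge \DVito(\sfu_k',\sfp_k')\,\DVitosred + \|\sfz_k'\|_{L^2}\,\tilded_{L^2}$ is correct, and once the compatibility condition $\|\sfz'\|_{L^2}\,\DVitosred(\sft,\sfq)=0$ a.e.\ is established, this sum coincides pointwise with $\Mlirednamezero(\sft,\sfq,0,\sfq')$, so the argument closes via Lemma~\ref{le:0107192150} as you indicate. This is a slight repackaging of the paper's approach, which instead splits $(0,S)$ into $B^\circ_\mu$ and $C^\circ_\mu\setminus B^\circ_\mu$ (cf.\ \eqref{1206191009'}) and applies the two summands separately on each piece, cf.\ \eqref{1506191716} and \eqref{1506191723}.

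The gap is in your argument for the compatibility condition itself---your ``main obstacle.'' The second cross-term you list, $\sqrt{\nu_k}\|\sfu_k'\|\cdot\tilded_{L^2}$, is irrelevant: by the normalization \eqref{normalization} it is bounded by $\sqrt{\nu_k}\,\tilded_{L^2}$ and does not blow up. The first cross-term is the right object, but the phrase ``forces the $+\infty$ value in the third branch'' obscures the actual logic: you must show the third branch of \eqref{Mlizero=0} \emph{does not occur}, since an Energy-Dissipation upper estimate with $\int \Mlinamezero = +\infty$ on the left is vacuous and cannot be upgraded to a balance. The paper makes this precise in \eqref{1506190857}--\eqref{1506190910}: from the trivial bound $\MredVitokk \ge \nu_k^{-1/2}\|\sfz_k'\|_{L^2}\,\DVitosred(\sft_k,\sfq_k)$ and the uniform estimate \eqref{est-Menu} one gets $\int_{B^\circ_\mu}\|\sfz_k'\|_{L^2}\,\DVitosred(\sft_k,\sfq_k)\,\dd\tau \le C\sqrt{\nu_k}\to 0$; Lemma~\ref{le:0107192150} together with the lower semicontinuity \eqref{citata-dop-sez7} then yields $\int_0^S\|\sfz'\|_{L^2}\,\DVitosred(\sft,\sfq)\,\dd\tau=0$, whence the bad set is null. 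With this in place, both your sum-based route and the paper's set-by-set decomposition finish Step 3 equivalently.
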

 \begin{proof} In the proof of this result, we  shall  mainly highlight the differences with respect to the 
  argument  for Theorem~\ref{teo:rep-sol-exist}, without repeating the analogous  passages.  
Hereafter, we shall consider sequences $\varepsilon_k$, $\nu_k\to 0$ and write $(\sft_k, \sfq_k)_k$
 in place of $(\sft_{\eps_k, \nu_k},\sfq_{\eps_k, \nu_k})_{\eps_k,\nu_k}$, and we shall not relabel subsequences. 
\paragraph{\bf Step $1$: Compactness.} As in the proof of Theorem~\ref{teo:rep-sol-exist}, we conclude that there exist a subsequence and $(\sft,\sfq) \in W^{1,\infty} ([0,S]; [0,T]{\times} \bfQ)$  such that the analogues of \eqref{limit-conv}--\eqref{420} hold. \GGG
\paragraph{\bf Step $2$: Finiteness of $\Mlirednamezero(\sft(\tau),\sfq(\tau),\sft'(\tau),\sfq'(\tau))$ when $\sft'(\tau)>0$.} 
As in Step~2 of Theorem~\ref{teo:rep-sol-exist},  we  introduce the set   $A:=\{\tau \in [0,S] \colon \sft'(\tau)>0\}$ and show that 
 $\Mlirednamezero(\sft(\tau),\sfq(\tau),\sft'(\tau),\sfq'(\tau))<+\infty$  for a.a.\ $t\in A$,  which yields the (local) stability condition \eqref{finite} for a.a.\ $t \in A$. 
 To do so, as in \eqref{limsup} we observe that  $\limsup_{k\to +\infty} \sft'_k(\tau)>0$.
We now use equality  \eqref{DVITOequalsSTAR}  at $r= \sft_k(\tau)$
and $q(r) = \sfq_k(\tau)$, and thus 
 we get
 \begin{equation}
 \label{1306192225}
 \begin{aligned}
  \DVitosk(\sft_k(\tau), \sfq_k(\tau))  =  \DVitosk(\sft_k(\tau), q_k(\sft_k(\tau)))  
  &  =\eps_k \sqrt{ \nu_k \| u_k'(\sft_k(\tau))\|^2_{ H^1, \bbD }   {+} 
\|z_k'(\sft_k(\tau))\|_{L^2}^2
{+} \nu_k\|p_k'(\sft_k(\tau))\|_{L^2}^2} 
\\
&= \frac{\eps_k}{\sft_k'(\tau)} \sqrt{ \nu_k \| \sfu_k'(\tau)\|^2_{ H^1, \bbD }   {+} 
\|\sfz_k'(\tau)\|_{L^2}^2
{+} \nu_k\|\sfp_k'(\tau)\|_{L^2}^2}
\\
& 
\leq  \frac{\eps_k}{\sft_k'(\tau)} \qquad \foraa\, \tau \in (0,S),
\end{aligned}
\end{equation}
where, again, the last estimate follows from  the normalization conditions \eqref{normalization}  and since $\nu \leq 1$. 
We observe that the right-hand side of \eqref{1306192225} goes to 0 as $k\to +\infty$.
\par
We now deduce  the local stability  \eqref{finite}  at almost all $s\in (0,S)$.   Indeed, if, say, $\|\mathrm{D}_u \calE_{\GGG \mu } (\sft(\tau),\sfq(\tau))\|_{(H^1, \mathbb{D})^*} > 0$, then we get by the semicontinuity inequality \eqref{1106191956} that $\liminf_{k\to +\infty}  \|\mathrm{D}_u \calE_{\GGG \mu } (\sft_k(\tau),\sfq_k(\tau))\|_{(H^1, \mathbb{D})^*} >0$. Recalling the definition of $\DVitosk$ from \eqref{0307191952}, and since $\nu_k \to 0$, this would give that $\liminf_{k\to +\infty} \DVitosk(\sft_k(\tau), \sfq_k(\tau)) = +\infty$, which contradicts \eqref{1306192225}. Thus $\mathrm{D}_u \calE_{ \mu } (\sft(\tau),\sfq(\tau))=0$. In the same way we get $-\mathrm{D}_p \calE_{\GGG \mu } (\sft(\tau),\sfq(\tau)) \in \partial_\pi \calH(z,0)$, while, if $-\mathrm{D}_z \calE_{\mu} (\sft(\tau),\sfq(\tau)) \notin \partial\calR(0)$, we would get  $\liminf_{k\to +\infty} \DVitosk(\sft_k(\tau), \sfq_k(\tau)) >0$, which still would contradict \eqref{1306192225}.
\par
Moreover, in view of  \eqref{eqs:1106191955} and of the regularity of $(\sft, \sfq)$, we have that  the sets 
\begin{equation}\label{1206191009'}
\begin{split}
B^\circ_\mu := \{ \tau \in [0,S] \colon   \DVitosred(\sft(\tau), \sfq(\tau)) >0\}\, \text{ and }\, C^\circ_\mu := \{ \tau \in [0,S] \colon  \tilded_{L^2(\Omega)} ({-}\mathrm{D}_z \calE_{\mu} (\sft(\tau),\sfq(\tau)),\partial\calR(0)) >0\}
\end{split}
\end{equation} 
are open and included in $[0,S] \setminus A$.

\paragraph{\bf Step $3$: The Energy-Dissipation upper estimate \eqref{ED-balance-zero}.} 
 By the analogue of Proposition \ref{prop:charact-BV}, in order to conclude that
$(\sft,\sfq)$ is a $\BV$ solution to the multi-rate system with hardening it is sufficient to obtain  \eqref{ED-balance-zero}  as an upper estimate $\leq$. With this aim, 
as in Step~3 of Theorem~\ref{teo:rep-sol-exist}, we start from the analogues of \eqref{where-limit} and \eqref{1306191946}.
 First of all,  it holds that for a.e.\ $\tau \in (0,S)$
 \begin{equation}\label{1506190857}
   \MredVitokk(\sft_k(\tau), \sfq_k(\tau), \sfq_k'(\tau))  \geq \frac{1}{\sqrt{\nu_k}} \|\sfz'_k(\tau)\|_{L^2} \, \DVitosred(\sft_k(\tau), \sfq_k(\tau))\,,
 \end{equation} 
recalling \eqref{0307191952} and \eqref{1306191946}.
Now, we may apply Lemma~\ref{le:0107192150}  with 
 the choices
 $I:=B^\circ$, $m_k=\|\sfz'_k\|_{L^2}$
 such that $m_k \weaksto m$ in $L^\infty (0,S)$ and 
  $m \geq \|\sfz'\|_{L^2}$ a.e.\ in $(0,S)$, 
  and with  $h_k:= \DVitosred(\sft_k, \sfq_k)$, $h:= \DVitosred(\sft, \sfq)$.
  Indeed, observe that
\begin{equation}
\label{citata-dop-sez7}
\liminf_{k\to+\infty} \DVitosred(\sft_k(\tau), \sfq_k(\tau)) \geq  \DVitosred(\sft(\tau), \sfq(\tau)) \qquad \text{for all } \tau \in [0,S],
\end{equation}

   thanks to \eqref{190506} and 
    the lower semicontinuity properties 
   \eqref{eqs:1106191955}. We thus obtain that
\begin{equation}\label{1506190906}
\int_{ B_\mu^\circ} \|\sfz'(\tau)\|_{L^2} \,  \DVitosred(\sft(\tau), \sfq(\tau)) \dd \tau \leq \liminf_{k\to \infty} \int_{ B_\mu^\circ} \|\sfz'_k(\tau)\|_{L^2}\,   \DVitosred(\sft_k(\tau), \sfq_k(\tau))\dd \tau\,.
\end{equation}
Since $\nu_k \to 0$, from \eqref{1506190857} and \eqref{1506190906} we deduce that $\sfz'(\tau)=0$ for a.e.\ $\tau \in B^\circ$, that is  
\begin{equation}\label{1506190910}
\sfz'(\tau) \,  \DVitosred(\sft(\tau), \sfq(\tau))=0 \quad \text{ for a.a.\ }\tau \in (0,S)\,.
\end{equation}
 In view of the definition \eqref{Mlizero=0} of $\Mlirednamezero$, \eqref{1506190910} yields
 that 
\begin{equation}\label{1506191644}
\Mlirednamezero(\sft(\tau), \sfq(\tau), \sft'(\tau), \sfq'(\tau)) = \DVito(\sfu'(\tau), \sfp'(\tau))\,  \DVitosred(\sft(\tau), \sfq(\tau))\qquad  \text{ a.e.\ in }B^\circ\,.
\end{equation}
\GGG
By \eqref{0307191952}, \eqref{1306191946},  and an easy algebraic calculation   we obtain that 
\begin{equation}\label{0207190031}
\MredVitokk(\sft_k(\tau), \sfq_k(\tau), \sfq_k'(\tau))
 \geq \DVito(\sfu'_k(\tau), \sfp'_k(\tau)) \,  \DVitosred(\sft_k(\tau), \sfq_k(\tau))\,.
\end{equation} 
Then, again by Lemma~\ref{le:0107192150}, applied thanks to \eqref{limit-conv} and \eqref{eqs:1106191955}, we deduce 
\begin{equation}\label{1506191716}
\begin{split}
\int_{B^\circ}  \Mlirednamezero(\sft(\tau), \sfq(\tau), \sft'(\tau), \sfq'(\tau)) \dd \tau &=\int_{B^\circ} \DVito(\sfu'(\tau), \sfp'(\tau)) \,   \DVitosred(\sft(\tau), \sfq(\tau))
 \dd \tau 
\\& \leq \liminf_{k\to +\infty}\int_{B^\circ} \MredVitokk(\sft_k(\tau), \sfq_k(\tau),  \sfq_k'(\tau)) \dd \tau\,.
\end{split}
\end{equation}

\GGG
Let us now consider the set   $C_\mu^\circ \setminus B_\mu^\circ$,   where $\tilded_{L^2(\Omega)} ({-}\mathrm{D}_z \calE_\mu (\sft(\tau),\sfq(\tau)),\partial\calR(0))>0$ with $ \DVitosred(\sft(\tau), \sfq(\tau))=0$, cf.\ \eqref{1206191009}.
Starting from \eqref{0307191952} and \eqref{1306191946}, we estimate
\begin{equation}\label{0207190035}
\MredVitokk(\sft_k(\tau), \sfq_k(\tau),  \sfq_k'(\tau)) \geq \|\sfz_k'(\tau)\|_{L^2} \, \tilded_{L^2(\Omega)} ({-}\mathrm{D}_z \calE_\mu (\sft_k(\tau),\sfq_k(\tau)),\partial\calR(0))\,.
\end{equation} 
We then employ Lemma~\ref{le:0107192150}  with  $I:= C^\circ_\mu \setminus B^\circ_\mu$, $m_k:=\|\sfz'_k\|_{L^2}$ 
such that $m_k\weaksto m \geq \|\sfz'\|_{L^2}$ in $L^\infty(0,S)$,
 $h_k:= \tilded_{L^2(\Omega)} ({-}\mathrm{D}_z \calE_\mu (\sft_k(\tau),\sfq_k(\tau)),\partial\calR(0)) $, $h:=\tilded_{L^2(\Omega)} ({-}\mathrm{D}_z \calE_\mu (\sft(\tau),\sfq(\tau)),\partial\calR(0)) $. Again, we obtain that $\liminf_{k\to+\infty} h_k(\tau) \geq h(\tau)$ for all $\tau\in [0,S]$
 by \eqref{limit-conv} and \eqref{1106191957}. Thus, with 
 Lemma~\ref{le:0107192150} 
  we  get
\begin{equation}\label{1506191723}
\begin{split}
\int_{(0,S) \setminus B^\circ_\mu}  & \Mlirednamezero(\sft(\tau), \sfq(\tau), \sft'(\tau), \sfq'(\tau)) \dd \tau = \int_{C^\circ_\mu \setminus B^\circ_\mu} \|\sfz'(\tau)\|_{L^2} \, \tilded_{L^2(\Omega)} ({-}\mathrm{D}_z \calE_\mu (\sft(\tau),\sfq(\tau)),\partial\calR(0)) \dd \tau 
\\ &
\leq \liminf_{k\to +\infty} \int_{C^\circ_\mu \setminus B^\circ_\mu} \|\sfz_k'(\tau)\|_{L^2} \, \tilded_{L^2(\Omega)} ({-}\mathrm{D}_z \calE_\mu (\sft_k(\tau),\sfq_k(\tau)),\partial\calR(0)) \dd \tau
\\ &
\leq \liminf_{k\to +\infty} \int_{C^\circ_\mu \setminus B^\circ_\mu} \MredVitokk(\sft_k(\tau), \sfq_k(\tau), \sfq_k'(\tau)) \dd \tau \,.
\end{split}
\end{equation}
\par
All in all,
collecting \eqref{1506191716} and \eqref{1506191723} we conclude
\begin{equation*}
\int_0^S  \Mlirednamezero(\sft(\tau), \sfq(\tau), \sft'(\tau), \sfq'(\tau)) \dd \tau \leq \liminf_{k\to +\infty} \int_0^S \MredVitokk(\sft_k(\tau), \sfq_k(\tau), \sfq_k'(\tau)) \dd \tau\,.
\end{equation*}
The  remainder   of the proof (namely the semicontinuity of the other terms in $\Mlinamezero$ and of the  driving
 energy $\calE_\mu$,  and  the continuity of the power term) follows as in Step~3 of Theorem~\ref{teo:rep-sol-exist}.
  In this way, we conclude the limit passage in the Energy-Dissipation balance \eqref{where-limit},
 obtaining the desired \eqref{ED-balance-zero} as an upper estimate $\leq$. 
The proof is then completed.
\end{proof}
\par

 \section{The vanishing-hardening limit}
  \label{s:van-hard}
  This final section addresses the limit passage in the viscous system \eqref{RD-intro} as the three parameters $\varepsilon$, $\nu$, $\mu$ vanishes simultaneously. 
  To deal with this asymptotic analysis we combine the approach to the  limit passage as viscosity vanishes
  developed in Section~\ref{s:van-visc}, with the technical, functional-analytic  tools related to the passage from plasticity with hardening to perfect plasticity. 
  Therefore,  in what follows 
  \begin{itemize}
  \item[-]
   first, we  shall 
   establish the setup for the limiting
  perfectly plastic model, recalling 
  results from \cite{DMDSMo06QEPL, FraGia2012}
  \item[-] secondly, 
  we   shall 
  introduce a suitable  ``Energy-Dissipation''  arclength reparameterization of viscous solutions; in combination with the Energy-Dissipation balance \eqref{rescaled-enid-eps-OLD},  the reparameterized solutions
   will  
  thus satisfy a suitable normalization condition whence the key estimates 
  stem, as well as the specific temporal and spatial regularity properties fixed in the notion of \emph{admissible parameterized curve}, cf.\ Definition~\ref{def:admparcur};
  \item[-] we 
   shall 
  properly define
   the vanishing-viscosity contact potential
  relevant for $\BV$ solutions to the perfectly plastic system,
    taking care of the technicalities
  related to the new functional setup. Hence, we  shall  proceed to the introduction of $\BV$ solutions in Definition~\ref{def:parBVsol-PP};
  \item[-] we 
   shall 
  address the properties of $\BV$ solutions and in particular characterize them in terms of an
  Energy-Dissipation \emph{upper estimate} in Proposition~\ref{prop:charact-BV-pp}. Such characterization 
  will  
  play a key role 
  in the proof of our existence result, Theorem~\ref{teo:exparBVsol} ahead. 
  \end{itemize}
   \par
Let us now first
   fix the setup for the perfectly plastic system. We mention in advance that the  space for the displacements 
   will be  $\BD(\Omega)$ and the space for the plastic strains will be   $\MbD$, i.e.\
      the space of  bounded Radon measures on  $\Omega\cup \Gdir$  with values in $\MD$; this reflects 
       the fact that, now,  the plastic strain $p$ is a measure that can
   concentrate on Lebesgue-negligible sets.
   \paragraph{\bf Setup adapted for perfect plasticity: the state space and  driving energy functional}
   The state space for the  perfectly plastic system with damage is 
   \begin{equation}
   \label{defQpp}
   \begin{aligned}
   \Qpp: = \{ q=(u,z,p) \in &\,\BD(\Omega)\times \Hs(\Omega) \times \MbD\, : \\& \, e: = \sig{u} -p \in \Lnn, 
   \ u \odot \mathrm{n} \,\hn + p =0 \text{ on }  \Gdir  \}
   \end{aligned}
   \end{equation}
   where $ \mathrm{n} $ denotes the normal vector to $\partial \Omega$ and $\odot$ the symmetrized tensorial product. 
   Observe that the condition $ u \odot \mathrm{n} \, \mathscr{H}^{n-1} + p =0 $ relaxes the homogeneous Dirichlet condition 
   $u=0 $ on  $\Gdir$. 
   \par
 \paragraph{\bf Setup adapted for perfect plasticity: the plastic dissipation potential}
  We extend the plastic dissipation potential $\mathcal{H}(z,\cdot)$ to the reference space 
   $\MbD$. 
 We define
 $\Hpp \colon \rmC^0(\overline{\Omega}; [0,1]) \times  \MbD  \to \mathbb{R}$ by
  \begin{equation}
\label{plast-diss-pot}
\Hpp(z,p):= \int_{ \Omega \cup \Gdir} H\biggl(z(x),\frac{\mathrm{d}p}{\mathrm{d}\mu}(x) \biggr)\,\mathrm{d}\mu(x)\,,
\end{equation}  
where \GGG  $H$ is defined in \eqref{0307191050},  $\mu \in \MbD$ is a positive  measure such that $ p \ll \mu $ and $\frac{\mathrm{d} p}{\mathrm{d}\mu}$ is the Radon-Nikod\'ym derivative of $p$ with respect to $\mu$; since $H(z(x),\cdot)$ is one-homogeneous, the definition is actually independent of $\mu$.
We refer to \cite{GofSer} for the theory of convex functions of measures.
By \cite[Proposition~2.37]{AmFuPa05FBVF},  the functional 
 $p \mapsto \Hpp(z,p)$ 
is convex and positively one-homogeneous for every   $z \in \rmC^0(\overline{\Omega};[0,1])$.
 In particular, 
$\Hpp(z,p_1+p_2)\leq \Hpp(z,p_1)+ \Hpp(z,p_2)$
 for every $z \in 
 \rmC^0(\overline{\Omega};[0,1])$ and $p_1, p_2 \in \MbD$. 
 Since 
  $|\frac{\mathrm{d}p}{\mathrm{d}|p|}(x)|=1$  
 for $|p|$-a.e.\ $x \in  \Omega \cup \Gdir $, \GGG by \eqref{propsH}  we have
 \begin{equation*}
r \|p\|_1\leq \Hpp(z, p) \leq R \|p \|_1\,,\label{Hco}
 \end{equation*}
where we denote by $\| \cdot \|_1$ the total variation of a measure (in the case of $p$ on $\Omega\cup \Gdir$), and
\begin{equation*}\label{modcont}
 0\leq \Hpp(z_2, p) - \Hpp(z_1,p) \leq C'_K\|z_1-z_2 \|_{ L^\infty} \|p \|_1 \quad
\text{for } 0\leq z_1\leq z_2\leq 1 \,.
 \end{equation*}
 Therefore, by Reshetnyak's lower semicontinuity Theorem, if $(z_k)_k$ and $(p_k)_k$ are sequences in $\rmC^0(\overline \Omega;[0,1])$ and $\MbD $ such that $z_k \rightarrow z$ in $\rmC^0(\overline\Omega)$ and $p_k \rightharpoonup p$ weakly$^*$ in $\MbD$, then
 \begin{equation*}
 \Hpp(z,p) \leq \liminf_{k \rightarrow +\infty} \Hpp(z_k, p_k)\,. \label{Hsci}
 \end{equation*}

\paragraph{\bf{Stress-strain duality.}}  
Let us recall the notion of stress-strain duality,  relying  on \cite{Kohn-Temam83}, \cite{DMDSMo06QEPL}, and the more recent extension to Lipschitz boundaries \cite{FraGia2012},
to which we refer for the properties mentioned below.
First of all, we recall the definition (in the sense of \cite{DMDSMo06QEPL}) of \emph{admissible displacement and strains} $A(w)$
 associated with a function   $w \in H^1(\R^n; \R^n)$, 
  that is
\begin{equation*}
\begin{split}
A(w):=\{(u,e,p) \in  \, & \BD(\Omega) \times L^2(\Omega;\Mnn) \times  \MbD  \colon \\
& \rmE(u)   =e+p \text{ in }\Omega,\, p=(w-u){\,\odot\,}\rmn\,\hn \text{ on }  \Gdir \}\,.
\end{split}
\end{equation*}
 We also recall  the \emph{space of admissible plastic strains}
\begin{equation*}
\begin{split}
\Pi(\Omega) := \{p\in  \MbD  \colon  \exists \, (u, w,e)\in \BD(\Omega)\times   H^1(\R^n;\R^n)  \times L^2(\Omega;\Mnn)\,
 \text{ s.t.}\, (u,e,p)\in A(w) \} \,.
\end{split}
\end{equation*}
We then define
\begin{equation*}
\Sigma(\Omega):= \{ \sigma \in \Lnn \colon  \mathrm{div}(\sigma)  \in L^n(\Omega;\R^n) \,, \ \sigma_\dev \in L^{\infty}(\Omega;\MD) \}
\end{equation*} 
and, for $\sigma \in \Sigma(\Omega)$ and $p \in \Pi(\Omega)$,  we set 
\begin{equation}\label{sD}
\langle [\sigma_\dev:p], \varphi\rangle:=-\int_\Omega \varphi\sigma\cdot (e{-} \rmE(w)  )\,\mathrm{d}x-\int_\Omega\sigma\cdot[(u{-}w)\odot \nabla \varphi]\,\mathrm{d}x -\int_\Omega \varphi \, ( \mathrm{div}(\sigma))  \cdot (u{-}w)\,\mathrm{d}x
\end{equation}
for every $\varphi \in \mathrm{C}^\infty_c(\R^n)$, where $u$ and $e$ are such that $(u,e,p)\in A(w)$;
the definition is indeed independent of $u$ and $e$.
%
If $\sigma \in \Sigma(\Omega)$ and $p \in \Pi(\Omega)$, then
$\sigma \in L^r(\Omega;\Mnn)$ for every $r < \infty$,
and $[\sigma_\dev:p]$ is a bounded Radon measure such that
$\|[\sigma_\dev:p]\|_1\leq \|\sigma_\dev\|_{ L^\infty}\|p\|_1$ in $\R^n$.
Considering the restriction of this measure to $ \Omega \cup \Gdir $, we also define
\[ \langle \sigma_\dev\, |\,p\rangle:=[\sigma_\dev:p]( \Omega \cup \Gdir )\,.\]
  By \eqref{sD}
  and taking into account that $u\in\BD(\Omega) \subset L^{\frac{n}{n-1}}(\Omega;\R^n)$, 
   if $[\sigma \rmn]\in L^\infty(\Gneu;\R^n)$ (recall \eqref{2809192054})  and \eqref{Omega-s2} holds,   then we have the  integration by parts
 formula
\begin{equation}\label{intparti}
\langle \sigma_\dev\,|\,p\rangle=-\langle\sigma, e- \rmE(w)  \rangle_{\Lnn}+  \langle -\diver\,\sigma, u-w \rangle_{L^{\frac{n}{n-1}}(\Omega; \R^n )}  + \langle [\sigma \rmn], u-w \rangle_{L^{1}(\Gneu; \R^n)}   
\end{equation}
for every  $\sigma \in \Sigma(\Omega)$ and $(u,e,p)\in A(w)$.
Thus,  defining for $\sigma \in \Sigma(\Omega)$  the  functional $-\Diver(\sigma) \in \BD(\Omega)^*$ via 
\begin{equation}\label{2307191723}
\langle -\Diver(\sigma), v \rangle_{\BD(\Omega)} :=\langle - \mathrm{div}(\sigma),   v \rangle_{L^{\frac{n}{n-1}}(\Omega; \R^n )}  + \langle [\sigma \rmn], v \rangle_ {L^{1}(\Gneu; \R^n)} 
\end{equation}
 for all   $v \in \BD(\Omega)$, 
we have that \eqref{intparti} reads as
\begin{equation}\label{2307191732}
\langle \sigma_\dev\,|\,p\rangle=-\langle\sigma, e- \rmE(w)  \rangle_{\Lnn}+\langle -\Diver(\sigma), u-w \rangle_{\BD(\Omega)}
\end{equation}
%
For $z \in \mathrm{C}^0(\ol\Omega)$ let 
\begin{equation}\label{2307192044}
\mathcal{K}_z(\Omega):= \{ \sigma  \in \Sigma(\Omega) \colon  \sigma_\dev(x) \in K(z(x)) \text{ for a.e.}\ x \in \Omega\} \,.
\end{equation}
Since the multifunction $z \in [0,1]\mapsto K(z)$ is continuous, 
from \cite[Proposition~3.9]{FraGia2012} (which holds also if $ \mathrm{div}(\sigma)$  is not identically 0) it follows that for every $\sigma \in \mathcal{K}_z(\Omega)$ 
\begin{equation}\label{Prop3.9}
H\biggl(z, \frac{\mathrm{d}p}{\mathrm{d} |p|}\biggr)|p| \geq [\sigma_\dev:p] \quad \text{as measures on }  \Omega \cup \Gdir  \,.
\end{equation}
 In particular, we have 
\begin{equation}\label{eq:carH}
\Hpp(z, p)\geq \sup_{\sigma  \in  \mathcal{K}_z(\Omega)}  \langle \sigma_\dev\, |\, p\rangle \,\qquad \text{for every $p \in \Pi(\Omega)$.} 
\end{equation}
 \begin{remark}\label{rem:3009191901}
\upshape
In \cite[Remark~2.9]{FraGia2012} the authors explain that in the presence of external forces one has to resort to the classic (deviatoric) stress-(plastic) strain duality, provided by \cite{Kohn-Temam83} and employed in several papers, e.g.\ \cite{DMDSMo06QEPL}, to put in duality $\varrho_D(t)$ and $p \in \Pi(\Omega)$. Such a duality requires one of the two following conditions, alternatively: either (1)  $\varrho \in \AC(0,T; \rmC^0(\ol \Omega; \MD))$ or (2) $\Omega$ globally of class $\rmC^2$.  The use of the Kohn-Temam duality seems to  be   needed 
to infer that   \eqref{eq:carH} holds as an equality,  which in turn   implies  the analogue of \eqref{controllo-SL} for $\Hpp$, $\calK_z(\Omega)$, $p \in \Mb$ in place of $\calH$, $\widetilde{\calK}_z(\Omega)$, $p\in L^1$. However, by our approximation procedure  via  plasticity with hardening, we  just need to use the coercivity condition \eqref{controllo-SL} in the \emph{a priori} estimates for the  solutions of the system  with plastic hardening (cf.\ \eqref{crucial-est-Vito}), together with \eqref{eq:carH} to pass to the limit. For this reason we do not assume any further regularity neither on $\varrho$ nor on $\Omega$.
\end{remark} 
 \noindent \textbf{The energy functional.}   The energy functional  $\calE_0$   driving the perfectly plastic system
   has an expression analogous to the functional
   $\calE_\mu$ \eqref{RIS-ene} for the system 
   with hardening  where   $\mu$ is formally set equal to $0$. Indeed,   it consists of the contributions of the  elastic energy, of  the potential energy for the damage variable, and of the time-dependent volume
   and surface forces.  Then
   $\calE_0: [0,T]\times \Qpp \to \R$ is  defined by 
\[
\begin{aligned}
\enen{0} (t,u,z,p)   :=  \calQ(z,e(t)) + \int_\Omega   W(z) \,\mathrm{d}x + \frac12 \ass(z,z)  - \langle F(t), u+w(t) \rangle_{\BD(\Omega)}\,, 
\end{aligned}
\]
where we have highlighted the elastic part 
$e(t)=\sig{u{+}w(t)}-p$ 
 of the strain tensor.
 Since $\varrho(t) $ from \eqref{2909191106} is in  $\Sigma(\Omega)$ and $F(t)=-\Diver(\varrho(t))$ for all $t\in [0,T]$ by \eqref{2809192200}, we may employ \eqref{2307191732} to rewrite $\calE_0$ as
(cf.\ \eqref{serve-for-later}) 
\begin{equation}
\label{serve-for-later-bis}
\begin{aligned}
&
\enen{0} (t,u,z,p)  
 = \calF_0 (t,z,e(t))   -  \langle \rho_\dev(t)\, |\, p \rangle  \qquad  \text{ with } 
 \\
 &
 \calF_0 (t,z,e)  : =   \calQ(z,e) + \int_\Omega   W(z)\,\mathrm{d}x + \frac12 \ass(z,z) -\int_\Omega \rho(t) (e  - \rmE(w(t)))  \dd x  -\langle F(t), w(t) \rangle_{\BD(\Omega)}\,. 
 \end{aligned}
\end{equation}
%
%
%
%
%
%
%
 \subsection*{\bf Energy-dissipation arclength reparameterization} 
  As already mentioned, we   shall obtain  Balanced Viscosity solutions 
  to the perfectly plastic system by taking 
  the joint vanishing-viscosity and hardening limit
 of   (reparameterized) viscous solutions to 
Problem \ref{prob:visc}. Thus,
  let   $(q^\mu_{\eps,\nu})_{\eps,\nu, \mu} = (u^\mu_{\eps,\nu},z^\mu_{\eps,\nu},p^\mu_{\eps,\nu})_{\eps,\nu,\mu}$ be a family of solutions to Problem~\ref{prob:visc}. We are going to reparameterize them by the 
 \emph{Energy-Dissipation arclength} %
 $ \widetilde{s}^\mu_{\eps,\nu} : [0,T]\to [0,\widetilde{S}_{\eps,\nu}^\mu]$ (with $\widetilde{S}_{\eps,\nu}^\mu: =  \widetilde{s}_{\eps,\nu}^\mu(T)$)
 defined by 
 \begin{equation}\label{2006191707}
 \begin{split}
 \widetilde{s}^\mu_{\eps,\nu}(t): = \int_0^t \Big( 1 & {+}  \sqrt{\mu}\| {u^{\mu}_{\eps,\nu}}'(\tau)\|_{H^1(\Omega;\R^n)} {+} \|{z^{\mu}_{\eps,\nu}}'(\tau)\|_{\Hs(\Omega)} {+} \|{p^{\mu}_{\eps,\nu}}'(\tau)\|_{L^1(\Omega;\MD)}  {+} \sqrt{\mu}\|{p^{\mu}_{\eps,\nu}}'(\tau)\|_{L^2(\Omega;\MD)} 
 \\&
 {+} \|{e^{\mu}_{\eps,\nu}}'(\tau)\|_{\Lnn} {+} \DVito_\nu ({u^{\mu}_{\eps,\nu}}'(\tau), {p^{\mu}_{\eps,\nu}}'(\tau))\, \DVitos(\tau, q^{\mu}_{\eps,\nu}(\tau))   \Big) \dd \tau \,,
\end{split} 
\end{equation}
with $\DVito_\nu$ and $\DVitos$ 
 from \eqref{new-form-D-Dvito}.  We shall comment on the choice of the 
arclength function $ \widetilde{s}^\mu_{\eps,\nu}$ below. 
\GGG By  estimates   \eqref{uniform-est-cont}  and \eqref{est-Menu}  we have that $\sup_{\varepsilon,\nu,\mu}   \widetilde{S}^\mu_{\eps,\nu} \leq C $.
As in \eqref{rescaled-solutions}, we set
\[
\begin{aligned}
 \sft^\mu_{\varepsilon,\nu}: =  (\widetilde{s}^\mu_{\eps,\nu})^{-1} , \qquad 
 \sfq^\mu_{\varepsilon,\nu}: =q^\mu_{\varepsilon,\nu}\circ  \sft^\mu_{\varepsilon,\nu}   = (\sfu^\mu_{\varepsilon,\nu}, \sfz^\mu_{\varepsilon,\nu},\sfp^\mu_{\varepsilon,\nu}),
 \qquad   \sfe^\mu_{\varepsilon,\nu}: = e^\mu_{\varepsilon,\nu}\circ  \sft^\mu_{\varepsilon,\nu},  
 \qquad
 \serifsigma^\mu_{\varepsilon,\nu} : =\sigma^\mu_{\varepsilon,\nu}\circ  \sft^\mu_{\varepsilon,\nu} 
 \end{aligned}
 \]
that we may assume defined on a fixed interval $[0,S]$, with $S:=\lim_{\varepsilon,\nu, \mu \down 0} \widetilde{S}^\mu_{\eps,\nu}$ (the limit is intended along a suitable subsequence). 
\par
The very same calculations as in Section~\ref{s:van-visc} show that the rescaled solutions $( \sft^\mu_{\varepsilon,\nu},  \sfq^\mu_{\varepsilon,\nu})_{\eps,\nu, \mu} $
 and the curves $(\sfe^\mu_{\varepsilon,\nu})_{\eps,\nu, \mu} $ 
 satisfy the parameterized Energy-Dissipation balance 
 \eqref{rescaled-enid-eps-OLD}
 as well as    the normalization condition 
\begin{equation}\label{2906191307}
\begin{split}
{\sft\men}'(s) &{+}  \sqrt{\mu}\| {\sfu^{\mu}_{\eps,\nu}}'(s)\|_{H^1(\Omega;\R^n)} {+} \|{\sfz^{\mu}_{\eps,\nu}}'(s)\|_{\Hs(\Omega)} {+} \sqrt{\mu}\|{\sfp^{\mu}_{\eps,\nu}}'(s)\|_{L^2(\Omega;\MD)} 
+
\|{\sfp^{\mu}_{\eps,\nu}}'(s)\|_{L^1(\Omega;\MD)} 
 \\&  
  {+} \|{\sfe^{\mu}_{\eps,\nu}}'(s)\|_{\Lnn} 
 {+} \DVito_\nu({u^{\mu}_{\eps,\nu}}'(s), {\sfp^{\mu}_{\eps,\nu}}'(s))\, \DVitos (\sft\men(s), \sfq^{\mu}_{\eps,\nu}(s))=1\qquad \text{ for a.e.\ } s\in (0, S\men)\,.
 \end{split}
\end{equation}
 The choice of  $ \widetilde{s}^\mu_{\eps,\nu}$ is precisely motivated by the 
need to ensure the validity of \eqref{2906191307}; 
in the lines below we are going to hint at the role of the term $\DVito_\nu\,  \DVitos $, while  that of the contributions modulated by $\sqrt{\mu} $ will be evident 
in the proof of the upcoming Theorem~\ref{teo:exparBVsol}. 
Let us also mention in advance that, in analogy with Section~\ref{s:van-visc}, we  shall  pass to the limit as $\eps,\, \nu,\, \mu \down 0$ in 
the energy balance 
 \begin{equation}\label{2906191258}
 \begin{split}
   \calE_\mu & (\sft^\mu_{\varepsilon,\nu}(S),\sfq^\mu_{\varepsilon,\nu}(S)) + \int_{0}^{S} 
   \MeVitoname(\sft^\mu_{\varepsilon,\nu}(\tau),\sfq^\mu_{\varepsilon,\nu}(\tau),{\sfq\men}'(\tau)) \dd \tau 
   \\& 
   =  \calE_\mu (0,\sfq_0) +\int_{0}^{S} \partial_t \calE_\mu (\sft\men(\tau), \sfq\men(\tau)) \, {\sft\men}'(\tau) \dd \tau \,.
   \end{split}
\end{equation}
 \subsection*{\bf The vanishing-viscosity contact potential for the perfectly plastic system}
 Clearly,  upon  taking the limit of the viscous system as the parameters $\eps,\, \nu,\, \mu \down 0$, we are in particular addressing the case in which 
 the viscosity in the momentum equation and in the plastic flow rule
 tends to zero with a higher rate than the viscosity in the damage flow rule. Therefore, the analysis carried out in Section~\ref{s:6-nu-vanishes} would lead us to expect, for the limiting system, a notion of $\BV$ solution featuring
 a  vanishing-viscosity potential 
  (that will be denoted by $\calM_{0,0}^0$ for consistency of notation), 
  with the same structure as that from  \eqref{Mil-sec6}, but associated with the driving energy $\calE_0$ for the perfectly plastic system. Specifically, one would envisage to deal with 
 the  quantity $\DVito^*(t,q): =\DVitosredzero(t,q)$ 
 encompassing   the   $(H^1)^*$-norm   of $\rmD_u \calE_0$, and the $L^2$-distance  of $\rmD_p \calE_0$
  from the stable set, cf.\ \eqref{3006191243}. However, such quantities 
 are no longer well defined for  the functional $\calE_0$, defined on $[0,T]\times \Qpp$ 
 (while the 
 $L^2$-distance of   $\rmD_z \calE_0$
  from the stable set still makes sense). 
  Therefore, in 
order to introduce the vanishing-viscosity potential $\oMliname$  for the perfectly plastic system, 
we first introduce suitable  ``surrogates''   of  the  $(H^1)^*$-norm   of $\rmD_u \calE_0$, and the $L^2$-distance from the stable set of $\rmD_p \calE_0$.
In accord with the representation formulae from Lemma \ref{lemma:sup},  we set,  for $\sigma(t)= \mathbb{C}(z) e(t)=\C(z) \rmE(u+w(t))-p$, 
\begin{subequations}
\label{surrogates}
 \begin{align}
 \label{surrogate-1}
 &
 \calS_u \calE_0(t, q) 
:= \sup_{\substack{\eta_u \in H_\Dir^1(\Omega;\R^n) \\ \|\eta_u\|_{( H^1, \bbD)}\le 1} } \langle -\mathrm{Div}(\sigma(t)) -F(t), \eta_u \rangle_{ H_\Dir^1(\Omega;\R^n) }
\,, 
\\
&
 \label{surrogate-2}
\mathcal{W}_p \calE_0(t,q):=\sup_{ \substack{\eta_p \in L^2(\Omega;\MD) \\  \|\eta_p\|_{L^2}\le1} } \left(  \langle \sigma_\dev(t), \eta_p \rangle_{L^2(\Omega;\MD)} -\calH(z,\eta_p)\right) \,.
\end{align}
We then set 
\begin{equation}
\label{right-Dstar}
\DVito^*(t,q): = \sqrt{( \calS_u \calE_0(t, q) )^2 +(\mathcal{W}_p \calE_0(t,q))^2} \,.
\end{equation}
\end{subequations}
 Notice that the above expressions are well-defined since
$e(t)$ and, a fortiori, $\sigma(t)$ are elements in $L^2(\Omega;\Mnn)$. 
%
%
\par
Thus, we are in a position to define the vanishing-viscosity contact potential  $\oMliname: [0,T]  \times \Qpp \times [0,+\infty) \times \Qpp \to [0,+\infty]$  via 
\begin{subequations}\label{eqs:3006190659}
\begin{equation}\label{2906191728}
\oMliname(t,q,t',q'):=   \calR(z') + \Hpp(z,p') + \oMliredname(t,q,t',q') 
\end{equation}
where for $q=(u,z,p)$ and $q'=(u',z',p')$ we have 
\begin{equation}
\label{2906191736}
\begin{aligned}
& 
\text{if } t'>0\,,  \quad 
\oMliredname(t,q,t',q'):= 
\begin{cases}
 0 &\text{if } 
\begin{cases}
\calS_u \calE_0 (t,q)=0 \,,  \\
\tilded_{L^2} ({-}\rmD_z \calE_0 (t,q),\partial\calR(0))=0
\,, \ \text{and} \\
\calW_p\calE_0(t,q)=0 \,,
\end{cases}
 \\
 +\infty & \text{otherwise,}
\end{cases}
\end{aligned}
\end{equation}
\begin{equation}
\label{oMlizero=0}
\begin{aligned}
\text{if } t'=0\,,  \quad 
 \oMliredname(t,q,t',q'): =
\begin{cases}
   \DVito(u',p') \, \DVito^*(t,q)  & \text{if } z'=0,\, 
\\
\|z'\|_{L^2}\tilded_{L^2(\Omega)} ({-}\mathrm{D}_z \calE_0 (t,q),\partial\calR(0)) & \text{if } \DVito^*(t,q)=0,\, 
\\
+\infty & \text{if } \|z'\|_{L^2} \DVito^*(t,q)>0\,,
\end{cases}
\end{aligned}
\end{equation}
\end{subequations}
 In particular, observe that, once again, the expression of
$ \oMliredname(t,q,t',q')$ for $t'>0$ enforces a  ``relaxed''  form of equilibrium for
$u$ with the condition $\calS_u \calE_0 (t,q)=0$, the local stability condition
$\tilded_{L^2} ({-}\rmD_z \calE_0 (t,q),\partial\calR(0))=0$ for $z$, and a  ``relaxed''   form of local stability for
$p$ via $\calW_p\calE_0(t,q)=0$, cf.\  Lemma \ref{l:Hencky-MGM} and Remark \ref{rem:2407190806} ahead. 
Recalling that $ \DVito(u',p') : = 
 \sqrt{ \|u'\|_{H^1}^2 {+} \|p'\|_{L^2}^2}$ (cf.\ \eqref{3006191243}),
the product $ \DVito(u',p') \, \DVito^*(t,q)$ is well defined as soon as 
$u' \in H^1(\Omega;\R^n)$ and $ p'\in L^2(\Omega;\MD)$; otherwise, 
we intend  
$
 \DVito(u',p') \, \DVito^*(t,q): = +\infty.
$
\paragraph{\bf Admissible parameterized curves}
 We are now in a position to  introduce the class of  parameterized curves 
enjoying the temporal and spatial integrability/``regularity'' properties
 of  the curves  that are limits of  reparameterized viscous solutions as $\varepsilon$, $\nu$, $\mu \down 0$.  
 Basically, such properties are motivated by the a priori estimates
that the rescaled viscous solutions inherit from the normalization condition \eqref{2906191307}. In particular, 
let us highlight that  \eqref{2906191307} provides a (uniform-in-time) bound 
 for  the quantity $ \DVito_\nu({u^{\mu}_{\eps,\nu}}', {\sfp^{\mu}_{\eps,\nu}}')\, \DVitos (\sft\men, \sfq^{\mu}_{\eps,\nu})$. 
 Recall that  $  \DVito_\nu({u^{\mu}_{\eps,\nu}}', {\sfp^{\mu}_{\eps,\nu}}')$ controls the $H^1$-norm of ${u^{\mu}_{\eps,\nu}}'$ and the $L^2$-norm of 
 ${p^{\mu}_{\eps,\nu}}'$. Therefore,  for the limiting parameterized curves $(\sft,\sfq) = (\sft,\sfu,\sfz,\sfp)$, 
 from such a bound one expects to
 infer,  ``away'' from the set where $\{\DVitos(\sft,\sfq) = 0\}$,  additional \emph{spatial regularity} for  $\sfu'$ and $\sfp'$
 in addition to that  provided by the estimate for  $\|{\sfu^{\mu}_{\eps,\nu}}'\|_{\mathrm{BD}}{+} \|{\sfp^{\mu}_{\eps,\nu}}'\|_{L^1} $. 
%
All of this is reflected in  the following definition, where we introduce the notion of \emph{admissibile parameterized curve} for the perfectly plastic system, in the spirit of \cite[Def.\ 4.1]{MRS13}. 
\begin{definition}\label{def:admparcur}
A curve $(\sft,\sfq)=(\sft, \sfu, \sfe, \sfp)\colon [0,S] \to [0,T] \times \Qpp$ is an \emph{admissibile parameterized curve for the perfectly plastic system} if $\sft \colon [0,S] \to [0,T]$ is nondecreasing and
\begin{subequations}\label{eqs:2906191823}
\begin{align}
&(\sft, \sfu, \sfz, \sfp) \in \AC\big([0,S]; [0,T] {\times} \BD(\Omega) {\times} \Hs(\Omega) \GGG {\times} \MbD \big)\,,\label{2906191813}\\
&  \sfe=\mathrm{E}(\sfu + w(\sft))-\sfp \in \AC([0,S]; \Lnn)\,,  
    \label{2906191814}\\
& (\sfu, \sfp) \in \AC_{\mathrm{loc}}(B^\circ; H^1(\Omega;\R^n){\times} L^2(\Omega; \MD))\,, \text{ where }B^\circ:=\{ s\in (0,S) \colon  \mathcal{D}^*(\sft(s), \sfq(s))>0\}\,,  \label{2906191815}\\
& 
 \label{2906191815+1}
\sft \text{ is constant in every connected component of }B^\circ\,.
\end{align}
\end{subequations}
We  shall  write $(\sft,\sfq) \in \mathscr{A}([0,S] ;[0,T] {\times} \Qpp)$. 
\end{definition}
 Let us point out that 
along an admissible curve  $s \mapsto (\sft(s),\sfq(s))$ we always have 
\[
 \DVito(\sfu'(s),\sfp'(s)) \, \DVito^*(\sft(s),\sfq(s))<\infty \qquad \text{ for a.a. } s\in  B^\circ\,. 
 \]  
 \subsection*{Balanced Viscosity solutions arising in the joint vanishing-viscosity and hardening limit
 and their properties}
 We are now in a position to give the following
  \begin{definition}\label{def:parBVsol-PP}
A curve $(\sft,\sfq)=(\sft, \sfu, \sfe, \sfp)\colon [0,S] \to [0,T] {\times} \Qpp$ is a \emph{(parameterized) Balanced Viscosity ($\BV$, for short) solution to the multi-rate system for perfect plasticity}
\eqref{RIS-intro}
 if 
\begin{itemize}
\item[-] $(\sft,\sfq)$
 is an admissible parameterized curve in the sense of Definition~\ref{def:admparcur};
 \item[-] 
 $(\sft,\sfq)$ fulfills the Energy-Dissipation balance 
\begin{equation}
\label{2906191838}
 \begin{aligned}
   \calE_0(\sft(s),\sfq(s)) + \int_{0}^{s}
   \oMliname(\sft(\tau),\sfq(\tau),\sft'(\tau),\sfq'(\tau)) \dd \tau   =  \calE_0 (\sft(0),\sfq(0)) +\int_{0}^{s} \partial_t \calE_0 (\sft(\tau), \sfq(\tau)) \, \sft'(\tau) \dd \tau  
   \end{aligned}
\end{equation}
for all   $0\leq s \leq S$.
\end{itemize}
We say that $(\sft,\sfq)$ is non-degenerate if it fulfills 
\[
\sft' + \|\sfz'\|_{\Hs(\Omega)} + \|\sfp'\|_{L^1(\Omega;\MD)} + \|\sfe'\|_{\Lnn} >0 \qquad \aein  (0,S)\,.
\]
\end{definition}
 \par
 As for $\BV$ solutions to the system with hardening, we  have 
 a characterization of $\BV$ solutions in terms of the upper Energy-Dissipation estimate  $\leq $ in 
 \eqref{2906191838}, cf.\ Proposition \ref{prop:charact-BV-pp} ahead. 
 Such characterization will rely on the chain-rule estimate in the forthcoming Lemma \ref{l:ch-rule-pp} that, in turn,
 hinges on the following technical result 
  that mimics  \cite[Proposition~3.5]{DMDSMo06QEPL}. 
 \begin{lemma}
  \label{l:Hencky-MGM}
 Suppose that $ \calS_u \calE_0(t, q) =\mathcal{W}_p \calE_0(t,q)=0$ at some
 $(t,q) \in [0,T]\times \Qpp$. Then,  for $\sigma(t)=\mathbb{C}(z) e(t)$, we have that 
 \begin{equation}\label{2307192048}
 \sigma(t) \in \calK_{z}(\Omega)\,,\quad - \diver\, \sigma(t)=f(t) \text{ a.e.\ in }\Omega\,,\quad [\sigma(t) \rmn] = g(t) \ \hn\text{-a.e.\ on }\Gneu\,.
 \end{equation} 
 \end{lemma}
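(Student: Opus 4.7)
The plan is to unpack the vanishing of each of the two surrogates $\calS_u\calE_0(t,q)$ and $\mathcal W_p\calE_0(t,q)$ separately; the first one delivers the equilibrium equation and the Neumann condition on $\Gneu$, while the second one forces the admissibility of the stress $\sigma(t)$ with respect to the constraint set $K(z)$.

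First I would exploit that $\calS_u\calE_0(t,q)=0$: by definition \eqref{surrogate-1} and the fact that $\|\cdot\|_{(H^1,\bbD)}$ is an equivalent norm on $H^1_\Dir(\Omega;\R^n)$ (see \eqref{norma-equivalente}), this gives
\begin{equation*}
\langle -\mathrm{Div}(\sigma(t))-F(t),\eta_u\rangle_{H^1_\Dir(\Omega;\R^n)}=0\qquad\forall\,\eta_u\in H^1_\Dir(\Omega;\R^n),
\end{equation*}
i.e., thanks to \eqref{div-Gdir} and \eqref{body-force},
\begin{equation*}
\int_\Omega \sigma(t):\rmE(\eta_u)\,\dd x=\int_\Omega f(t)\cdot\eta_u\,\dd x+\langle g(t),\eta_u\rangle_{H^{1/2}(\Gneu;\R^n)}
\end{equation*}
for every $\eta_u\in H^1_\Dir(\Omega;\R^n)$. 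Testing with $\eta_u\in C^\infty_c(\Omega;\R^n)$ and using $f(t)\in L^n(\Omega;\R^n)\subset L^2(\Omega;\R^n)$ (by \eqref{forces-u} and $n\in\{2,3\}$) gives $-\dive\,\sigma(t)=f(t)$ a.e.\ in $\Omega$. Hence $\sigma(t)\in\widetilde\Sigma(\Omega)$, so $[\sigma(t)\rmn]\in H^{-1/2}(\partial\Omega;\R^n)$ is defined via \eqref{2809192054}. Plugging the identity $-\dive\,\sigma(t)=f(t)$ back into the variational equality and using \eqref{2809192054} for arbitrary $\eta_u\in H^1_\Dir(\Omega;\R^n)$ yields $\langle [\sigma(t)\rmn],\eta_u\rangle_{\partial\Omega}=\langle g(t),\eta_u\rangle_{H^{1/2}(\Gneu;\R^n)}$ for every such $\eta_u$; a standard density/extension argument for Dirichlet traces on $\Gneu$ then gives $[\sigma(t)\rmn]=g(t)$ on $\Gneu$.

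Next I would turn to $\mathcal W_p\calE_0(t,q)=0$, which by \eqref{surrogate-2} reads
\begin{equation*}
\int_\Omega \sigma_\dev(t):\eta_p\,\dd x\le \calH(z,\eta_p)=\int_\Omega H(z(x),\eta_p(x))\,\dd x\qquad \forall\,\eta_p\in L^2(\Omega;\MD).
\end{equation*}
Applying this inequality to $\eta_p=\xi\,\chi_E$ for an arbitrary Borel set $E\subset\Omega$ and $\xi\in\MD$, dividing by $|E|$ and letting $E$ shrink to a Lebesgue point $x$ of $z$ and $\sigma_\dev(t)$, one obtains the pointwise inequality $\sigma_\dev(t,x)\cdot\xi\le H(z(x),\xi)$ for every $\xi\in\MD$ and a.e.\ $x\in\Omega$. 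Since $H(z(x),\cdot)$ is the support function of the closed convex set $K(z(x))$, this is equivalent to $\sigma_\dev(t,x)\in K(z(x))$ a.e.; together with \eqref{Ksets-2}, this also yields $\sigma_\dev(t)\in L^\infty(\Omega;\MD)$ with $\|\sigma_\dev(t)\|_\infty\le\bar R$, so $\sigma(t)\in\Sigma(\Omega)$ and hence $\sigma(t)\in\calK_z(\Omega)$, cf.\ \eqref{2307192044}.

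The only slightly delicate point is the passage from the integral inequality in Step~2 to the a.e.\ pointwise inclusion $\sigma_\dev(t,x)\in K(z(x))$: one has to choose the test functions $\eta_p$ carefully (using $\xi$ ranging over a countable dense subset of $\MD$) and invoke continuity of $\xi\mapsto H(z(x),\xi)$ at a.e.\ $x$ so that a single null set works for all $\xi$. Once this is done, the three conclusions in \eqref{2307192048} are established.
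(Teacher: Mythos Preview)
Your proof is correct and follows essentially the same route as the paper: the vanishing of $\calS_u\calE_0$ yields $-\mathrm{Div}(\sigma(t))=F(t)$ in $H^1_\Dir(\Omega;\R^n)^*$, hence the equilibrium equation and the Neumann condition, while the vanishing of $\calW_p\calE_0$ is localized via indicator test functions $\eta_p=\xi\,\chi_E$ to obtain the pointwise support-function inequality $\sigma_\dev(t,x)\cdot\xi\le H(z(x),\xi)$, whence $\sigma_\dev(t,x)\in K(z(x))$ a.e. The only cosmetic difference is that the paper records the two-sided bound $-H(z(x),-\xi)\le\sigma_\dev(t,x)\cdot\xi\le H(z(x),\xi)$ and phrases the conclusion as $\sigma_\dev(t,x)\in\partial_\pi H(z(x),0)=K(z(x))$, whereas you (correctly) note that the one-sided inequality already characterizes membership in $K(z(x))$ since $H(z(x),\cdot)$ is its support function.
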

%
\begin{proof}
  Since  $\calS_u \calE_0(t,q)=0$, we have that  $-\mathrm{Div}(\sigma(t))= F(t)$  in $ H_\Dir^1(\Omega; \R^n)^*$.  Recalling the form \eqref{body-force} of $F$, we get that   $-\mathrm{div}(\sigma(t))=f  \in L^n(\Omega; \R^n)$  a.e.\ in $\Omega$ and $[\sigma(t) \rmn]=g(t) \in L^\infty(\Gneu; \R^n)$.

Moreover, since $\calW_p \calE_0(t,q)=0$ and $H(z, \cdot)$ is positively 1-homogeneous, we get that for every $\eta_p \in L^2(\Omega;\MD)$
\begin{equation}\label{2307192111}
-\calH(z, -\eta_p) \leq \langle \sigma_\dev(t), \eta_p \rangle_{L^2} \leq \calH(z, \eta_p)\,,
\end{equation}
 (where $\langle \cdot, \cdot \rangle_{L^2}$ is short-hand for  the duality in $L^2(\Omega;\MD)$). 
Then we may argue as in the proof of 
\cite[Proposition~3.5]{DMDSMo06QEPL}:  in \eqref{2307192111} we choose the test function $\eta(x):= \mathrm{1}_{B}(x) \xi$, with 
$B \subset \Omega$ an arbitrary Borel set  and 
an arbitrary $\xi \in \MD$. In this way, 
 we get 
\begin{equation*}
-H(z(x), -\xi) \leq \sigma_\dev(t,x) \cdot \xi \leq H(z(x), \xi) \quad\text{for a.a.\ $x$ in }\Omega\,.
\end{equation*}
Then $\sigma_\dev(t, x) \in \partial_p H(z(x), 0)= K(z(x))$ for a.a.\ $x \in \Omega$, so that $\sigma(t) \in \calK_z(\Omega)$ and the proof is concluded.
\end{proof} 
\par
 \begin{remark}\label{rem:2407190806}
 \upshape
Conditions \eqref{2307192048}, expressed along $\BV$ solutions, correspond to the stability conditions in $u$ and $p$ $(ev1)$ and $(ev2)$ in the definition of the so-called \emph{rescaled quasistatic viscosity evolutions} in \cite[Definition~5.1]{Crismale-Lazzaroni}. Moreover, the identity $\tilded_{L^2} ({-}\rmD_z \calE_0 (t,q),\partial\calR(0))=0$ correspond to the \emph{Kuhn-Tucker inequality} $(ev3)$ therein. Notice that these three conditions hold in the set $\{ s \in (0,S) \colon \sft'(s) >0 \}$, cf.\ \eqref{2906191736}.
\end{remark} 
We are now in a position to prove the chain-rule estimate involving 
$  \oMliredname$.
 \begin{lemma}
 \label{l:ch-rule-pp}
     Along any \emph{admissible} parameterized curve 
    \begin{equation}
    \label{desired-chain-rule-pp}
    \begin{gathered}
(\sft,\sfq) \in \mathscr{A}([0,S] ;[0,T] {\times} \Qpp)
\text{ s.t. }    \oMli{\sft}{\sfq}{\sft'}{\sfq')}<+\infty \quad \aein\, (0,S),  \text{ we have that }
\\
s \mapsto   \calE_{0}(\sft(s),\sfq(s))  \text{ is absolutely continuous on } [0,S] \text{ and there holds}
\\    
-\frac{\dd}{\dd s} \calE_{0}(\sft(s),\sfq(s)) +\partial_t  \calE_{0}(\sft(s),\sfq(s))\,  \sft'(s) 
 \leq  \oMli{\sft(s)}{\sfq(s)}{\sft'(s)}{\sfq'(s)} \qquad \foraa\, s \in (0,S). 
\end{gathered}
    \end{equation}
 \end{lemma}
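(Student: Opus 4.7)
My approach splits the proof into three tasks: (i) establishing absolute continuity of $s\mapsto \calE_0(\sft(s),\sfq(s))$, (ii) computing its distributional derivative a.e., and (iii) bounding this derivative pointwise by $\oMliname$ via a case analysis dictated by the finiteness of $\oMliredname$ and the branches in \eqref{eqs:3006190659}. For (i)--(ii) I exploit the decomposition $\calE_0(\sft,\sfq)=\calF_0(\sft,\sfz,\sfe)-\langle \rho_\dev(\sft)\mid \sfp\rangle$ from \eqref{serve-for-later-bis}: the functional $\calF_0$ is smooth in its arguments, so the admissibility regularity \eqref{2906191813}--\eqref{2906191814} together with the assumptions on $w$ and $F$ yield the AC character of the first term with the expected chain-rule derivative; for the duality term, I combine $\rho_\dev\in H^1(0,T;\Linftyn)$ with the AC regularity of $\sfp$ in $\MbD$ and of $\sft$ in $[0,T]$. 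Substituting $\sfe'=\rmE(\sfu')-\sfp'+\rmE(w'(\sft))\sft'$, using \eqref{2809192200} to identify $F(\sft)=-\Diver(\rho(\sft))$, and collecting the pieces that recombine into $\partial_t\calE_0\cdot\sft'$, I rearrange $-\tfrac{\dd}{\dd s}\calE_0+\partial_t\calE_0\cdot\sft'$ into a sum of terms all coupled through $\sigma=\mathbb{C}(\sfz)\sfe$.

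The case analysis then mirrors \eqref{2906191736}--\eqref{oMlizero=0}. On the set $\{\sft'>0\}$, finiteness of $\oMliredname$ combined with \eqref{2906191736} forces $\calS_u\calE_0=\calW_p\calE_0=0$ and $-\rmD_z\calE_0\in\partial\calR(0)$; Lemma~\ref{l:Hencky-MGM} then places $\sigma\in\calK_\sfz(\Omega)$ with correct divergence and Neumann trace, so the Francfort--Giacomini duality \eqref{2307191732} applies, and \eqref{eq:carH} yields $\langle\sigma_\dev\mid\sfp'\rangle\le\Hpp(\sfz,\sfp')$; the damage contribution is handled by $\langle-\rmD_z\calE_0,\sfz'\rangle\le\calR(\sfz')$. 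On $\{\sft'=0\}\cap\{\DVito^*=0\}$ the same stability information is in force and the reasoning is identical, with the $z$-contribution now bounded by $\calR(\sfz')+\|\sfz'\|_{L^2}\tilded_{L^2}(-\rmD_z\calE_0,\partial\calR(0))$, matching the middle branch of \eqref{oMlizero=0}. Finally, on $\{\sft'=0\}\cap\{\sfz'=0\}\cap B^\circ$, the admissibility property \eqref{2906191815} ensures $(\sfu,\sfp)\in\AC_{\loc}(B^\circ;H^1(\Omega;\R^n)\times L^2(\Omega;\MD))$, so classical integration by parts applies, and the representations \eqref{eqs:1106192255} combined with the Cauchy--Schwarz inequality give
\begin{equation*}
\langle-\rmD_u\calE_0,\sfu'\rangle+\langle-\rmD_p\calE_0,\sfp'\rangle-\calH(\sfz,\sfp')\le \DVito(\sfu',\sfp')\,\DVito^*(\sft,\sfq),
\end{equation*}
reproducing the top branch of \eqref{oMlizero=0}.

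The main obstacle lies in the rigorous manipulation of the stress--strain duality precisely in the regimes $\{\sft'>0\}$ and $\{\DVito^*=0\}$, where $\sfp$ is only an $\MD$-valued measure: the formal expression $\int_\Omega \sigma:\rmE(\sfu')\,\dd x-\langle\sigma_\dev\mid\sfp'\rangle$ is \emph{a priori} ill-defined and acquires meaning only through \eqref{2307191732}, which requires $\sigma\in\Sigma(\Omega)$ with $[\sigma\rmn]\in L^\infty(\Gneu;\R^n)$ --- granted by Lemma~\ref{l:Hencky-MGM} --- and the relaxed Dirichlet compatibility encoded in \eqref{defQpp}. A secondary subtlety is verifying absolute continuity and term-by-term differentiability of $s\mapsto\langle\rho_\dev(\sft(s))\mid\sfp(s)\rangle$ along a curve that is only AC with values in measures; this relies on $\rho_\dev\in H^1(0,T;\Linftyn)$ together with a careful density/approximation argument. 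Once these two identifications are secured, the remaining convex-analysis bookkeeping in each of the three regions is routine.
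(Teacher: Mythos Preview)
Your proposal is correct and follows essentially the same approach as the paper. The paper also computes the derivative of $s\mapsto\calE_0(\sft(s),\sfq(s))$ (stating it directly, while you justify it via the decomposition \eqref{serve-for-later-bis}), isolates the $z$-contribution by the estimate $-\langle\rmD_z\calE_0,\sfz'\rangle\le \calR(\sfz')+\|\sfz'\|_{L^2}\,\tilded_{L^2}(-\rmD_z\calE_0,\partial\calR(0))$, and then handles the $(u,p)$-contributions by the same two-regime argument you outline: on $B^\circ$ it uses the $H^1\times L^2$ regularity from \eqref{2906191815} and estimates via $\calS_u\calE_0$, $\calW_p\calE_0$ and Cauchy--Schwarz, while off $B^\circ$ it invokes Lemma~\ref{l:Hencky-MGM} and \eqref{eq:carH} to get $-\Diver(\serifsigma)=F(\sft)$ and $\langle\serifsigma_\dev\mid\sfp'\rangle\le\Hpp(\sfz,\sfp')$. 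Your case split $\{\sft'>0\}$, $\{\sft'=0,\,\DVito^*=0\}$, $\{\sft'=0,\,\sfz'=0\}\cap B^\circ$ is just a minor reorganization of the paper's split into $\{\sft'=0\}\cap B^\circ$, $\{\sft'=0\}\setminus B^\circ$, $\{\sft'>0\}$; the ingredients and inequalities are identical.
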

 \begin{proof}
 By the regularity of  admissible parameterized curves we easily deduce that the function  $s \mapsto   \calE_{0}(\sft(s),\sfq(s)) $ is absolutely continuous on $[0,S]$.  Its derivative is given by (cf.\ Lemma~\ref{l:Fr-diff-Enu})
\begin{equation*}
 \begin{aligned}
\frac{\dd}{\dd s}  \calE_{0}(\sft(s),\sfq(s))
=  \partial_t  \calE_0(\sft(s), \sfq(s))\, \sft'(s) {+} \langle \rmD_z \calE_0(\sft(s),\sfq(s)), \sfz'(s) \rangle_{H^m}  {-} \langle \serifsigma_\dev(s) \, |\, \sfp'(s) \rangle {-} \langle  \Diver(\serifsigma(s)) {+}  F(\sft(s)),  \sfu'(s) \rangle_{\BD}
\end{aligned} 
\end{equation*}
 for $\partial_t  \calE_0(\sft(s), \sfq(s))= \langle  \serifsigma(s), \rmE(w'(\sft(s))) \rangle_{L^2} -\langle F'(\sft(s)), \sfu(s)+w(\sft(s)) \rangle_{\BD} -\langle F(\sft(s)), w'(\sft(s)) \rangle_{\BD}$. 
 Therefore,  \eqref{desired-chain-rule-pp} follows if we prove that
\begin{equation}\label{2307192340}
 - \langle \rmD_z \calE_0(\sft(s),\sfq(s)), \sfz'(s) \rangle_{ \Hs} + \langle \serifsigma_\dev(s) \, |\, \sfp'(s) \rangle + \langle  \Diver(\serifsigma(s))  {+}  F(\sft(s)),    \sfu'(s) \rangle_{\BD} \leq \oMli{\sft(s)}{\sfq(s)}{\sft'(s)}{\sfq'(s)} 
\end{equation} 
\par
Let us then show \eqref{2307192340}. For a.e.\ $s \in (0,S)$ it holds that
\begin{equation}\label{2407190002}
- \langle \rmD_z \calE_0(\sft(s),\sfq(s)), \sfz'(s) \rangle_{H^m} \leq \calR(\sfz'(s)) + \|\sfz'(s)\|_{L^2} \, \tilded_{L^2(\Omega)} ({-}\mathrm{D}_z \calE_0 (\sft(s),\sfq(s)),\partial\calR(0))\,.
\end{equation} 
Let us estimate the two remaining terms, distinguishing the two cases of $\sft'(s)=0$ and $\sft'(s)>0$.
\par
If $\sft'(s)=0$
  and $s\in B^\circ$, since $\sfu'(s) \in H^1(\Omega; \R^n)$
and $\sfp'(s) \in L^2(\Omega; \MD)$ 
 a.e.\ in $B^\circ$ we have that  
 \begin{align}
\label{2307192352}
&
- \langle \Diver(\serifsigma(s)) + F(\sft(s)) , \sfu'(s) \rangle_{\BD}
 =- \langle  \mathrm{Div}(\serifsigma(s))  + F(\sft(s)) , \sfu'(s) \rangle_{H^1} 
 \leq \calS_u \calE_0(\sft(s), \sfq(s)) \, \|\sfu'(s)\|_{H^1}\,,
\\
&
\label{2307192349}
 \langle \serifsigma_\dev(s) \, | \, \sfp'(s) \rangle   - \Hpp(\sfz(s), \sfp'(s)) \leq \calW_p \calE_0(\sft(s), \sfq(s)) \, \|\sfp'(s)\|_{L^2}\,.
\end{align}
If $\sft'(s)=0$
  and $s\notin B^\circ$, then   $\calS_u \calE_0(\sft(s), \sfq(s))= \calW_p \calE_0(\sft(s), \sfq(s)) =0$, so that Lemma~\ref{l:Hencky-MGM} together with \eqref{eq:carH} imply that
\begin{equation}\label{2307192358}
-\Diver(\serifsigma(s))= F(\sft(s)) \ \text{in } \BD(\Omega)^*\,,\qquad \langle \serifsigma_\dev(s) \, | \, \sfp'(s) \rangle \leq \Hpp(\sfz(s), \sfp'(s))\,.
\end{equation}
If $\sft'(s)>0$,  again we have \eqref{2307192358}. 
\par
Collecting \eqref{2407190002}, \eqref{2307192352}, \eqref{2307192349}, \eqref{2307192358}, recalling the definition of $\oMliname$ \eqref{eqs:3006190659}, and employing the Cauchy-Schwartz inequality, we deduce \eqref{2307192340} and then conclude the proof. 
\end{proof} 
 \par
 As a straightforward  corollary of Lemma \ref{desired-chain-rule}, we   have the desired characterization of 
 $\BV$ solutions.
    \begin{proposition}
    \label{prop:charact-BV-pp}
     For an admissible 
      parameterized curve 
     $(\sft,\sfq) \in \mathscr{A}([0,S] ;[0,T] {\times} \Qpp)$  (in the sense of Definition \ref{def:admparcur}) 
         the following properties are equivalent:
    \begin{enumerate}
    \item $(\sft,\sfq)$  is a  $\BV$ solution to  the multi-rate     system 
    for perfect plasticity;
    \item $(\sft,\sfq)$ fulfills   the upper estimate $\leq$ in  \eqref{2906191838}.
    \end{enumerate} 
    \end{proposition}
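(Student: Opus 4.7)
The plan is that the equivalence is proved by the same scheme as Propositions~\ref{prop:charact} and \ref{prop:charact-BV}, with Lemma~\ref{l:ch-rule-pp} playing the role that the earlier chain-rule statements played in the hardening case. The implication (1)$\Rightarrow$(2) is immediate, as the Energy-Dissipation balance~\eqref{2906191838} contains in particular the upper bound.

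For the non-trivial direction (2)$\Rightarrow$(1), I would first use the assumed upper bound
\begin{equation*}
\calE_0(\sft(s),\sfq(s)) + \int_{0}^{s}\oMliname(\sft(\tau),\sfq(\tau),\sft'(\tau),\sfq'(\tau))\dd\tau \le \calE_0(\sft(0),\sfq(0)) + \int_{0}^{s}\partial_t \calE_0(\sft(\tau),\sfq(\tau))\,\sft'(\tau)\dd\tau
\end{equation*}
at $s=S$, together with the fact that $\partial_t \calE_0(\sft(\cdot),\sfq(\cdot))\,\sft'$ is integrable on $(0,S)$ (guaranteed by admissibility \eqref{eqs:2906191823} in combination with the regularity of $F$ and $w$, via the explicit expression of $\partial_t\calE_0$), to deduce that the nonnegative integrand $\oMliname(\sft,\sfq,\sft',\sfq')$ is a.e.\ finite on $(0,S)$. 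This is the hypothesis required by Lemma~\ref{l:ch-rule-pp}, so I may apply that chain-rule estimate and integrate it on $[0,s]$ for any $s\in[0,S]$, obtaining
\begin{equation*}
\calE_0(\sft(s),\sfq(s)) + \int_{0}^{s}\oMliname(\sft(\tau),\sfq(\tau),\sft'(\tau),\sfq'(\tau))\dd\tau \ge \calE_0(\sft(0),\sfq(0)) + \int_{0}^{s}\partial_t \calE_0(\sft(\tau),\sfq(\tau))\,\sft'(\tau)\dd\tau.
\end{equation*}
Combining this lower bound with the upper bound in (2), written on the same interval $[0,s]$, yields the Energy-Dissipation balance~\eqref{2906191838} for every $s\in[0,S]$, which by Definition~\ref{def:parBVsol-PP} says precisely that $(\sft,\sfq)$ is a $\BV$ solution.

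The main technical subtlety, and the only genuinely new ingredient compared to Proposition~\ref{prop:charact-BV}, is the verification of the hypothesis of Lemma~\ref{l:ch-rule-pp}: once a.e.\ finiteness of $\oMliredname(\sft,\sfq,\sft',\sfq')$ is known, all of the case distinctions built into definitions~\eqref{2906191736}--\eqref{oMlizero=0} (in particular, the stability relations $\calS_u\calE_0 = \calW_p\calE_0 = 0$ and $-\mathrm{D}_z\calE_0\in\partial\calR(0)$ whenever $\sft'>0$, and the control of $\sfu'\in H^1$, $\sfp'\in L^2$ on the set $B^\circ$) are exactly those encoded in the chain-rule Lemma~\ref{l:ch-rule-pp} via the admissibility conditions \eqref{2906191813}--\eqref{2906191815+1}. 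Thus no further argument is needed beyond the integration and rearrangement above.
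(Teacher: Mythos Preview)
Your proposal is correct and follows essentially the same approach as the paper, which simply records the proposition as a straightforward corollary of the chain-rule Lemma~\ref{l:ch-rule-pp}. You have merely spelled out the details: from the upper estimate one reads off a.e.\ finiteness of $\oMliname(\sft,\sfq,\sft',\sfq')$, so Lemma~\ref{l:ch-rule-pp} applies and its integrated form yields the reverse inequality, hence the balance~\eqref{2906191838} for every $s$.
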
 
    We now give a lower semicontinuity result that will be useful in the proof of Theorem~\ref{teo:exparBVsol}.
  \begin{lemma}\label{le:2307191000}  
 Let $t_k \to t$ in $[0,T]$, $\mu_k \to 0$, $ (q_k)_k=(u_k, z_k, p_k) _k\subset \Qpp$ 
  such that  the following convergences hold as $k\to+\infty$:   $q_k \weakto   q=(u, z, p)$ in $\Qpp$,   $e(t_k)=\rmE(u_k+w(t_k))-p_k \to e(t)=\rmE(u+w(t))-p $ in $\Lnn$ and $\mu_k \, p_k \to 0$ in $L^2(\Omega;\MD)$. Then
\begin{subequations}\label{eqs:1106191955'}
\begin{align}
\calS_u \calE_0(t, q)  &\leq \liminf_{ k \to +\infty} \| \mathrm{D}_u \calE_{\mu_k} (t_k, q_k )\|_{( H^1, \bbD )^*} \,,\label{1106191956'} 
\\
\tilded_{L^2} ({-}\mathrm{D}_z \calE_0 (t,q ),\partial\calR(0)) & \leq \liminf_{ k \to +\infty} \tilded_{L^2} ({-}\mathrm{D}_z \calE_{\mu_k} (t_k,q_k ),\partial\calR(0))\,,\label{2307191134}
\\
\calW_p \calE_0(t, q) & \leq \liminf_{ k \to +\infty} \dLtwo ({-}\mathrm{D}_p \calE_{\mu_k} ( t_k,q_k ),\partial_\pi \calH( z_k,0))\,. \label{1106191958'}
\end{align}
\end{subequations}
\end{lemma}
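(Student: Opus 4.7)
The plan is to derive all three lower semicontinuity inequalities from the supremum-type representations provided by Lemma \ref{lemma:sup} (formulas \eqref{eqs:1106192255}) at the approximating level and by the definitions \eqref{surrogates} at the limit. The unifying idea is that, under the stated hypotheses, the duality pairings defining each supremum pass to the limit for every fixed test function, so that taking the supremum after the liminf yields the claim. The main obstacle is that, in contrast with Lemma \ref{lemma:sup}, now also $\mu_k \downarrow 0$; this issue only touches $\rmD_p \calE_{\mu_k}$, and is handled by the assumed $L^2$-convergence $\mu_k p_k \to 0$.

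First I would collect the consequences of the hypotheses. Since $z_k \rightharpoonup z$ in $H^m(\Omega)$ and $H^m(\Omega) \Subset \mathrm{C}^0(\overline\Omega)$, we have $z_k \to z$ uniformly, and therefore $\mathbb{C}(z_k) \to \mathbb{C}(z)$ and $\mathbb{C}'(z_k) \to \mathbb{C}'(z)$ in $L^\infty(\Omega;\mathrm{Lin}(\Mnn;\Mnn))$ by \eqref{spd}. Combined with $e(t_k) \to e(t)$ strongly in $L^2(\Omega;\Mnn)$, this yields $\sigma_k(t_k) := \mathbb{C}(z_k) e(t_k) \to \sigma(t) := \mathbb{C}(z) e(t)$ strongly in $L^2(\Omega;\Mnn)$, as well as $\mathbb{C}'(z_k) e(t_k){:}e(t_k) \to \mathbb{C}'(z) e(t){:}e(t)$ strongly in $L^1(\Omega)$. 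Finally $F(t_k) \to F(t)$ in $\mathrm{BD}(\Omega)^\ast$ (hence a fortiori in $H^1_{\mathrm{D}}(\Omega;\R^n)^\ast$) by \eqref{forces-u}, and $\mathcal{H}(z_k,\eta_p) \to \mathcal{H}(z,\eta_p)$ for each fixed $\eta_p \in L^2(\Omega;\MD)$ by \eqref{props-calH-3+1/2}.

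With these tools in hand, \eqref{1106191956'} is obtained by comparing formula \eqref{1106192256} at $(t_k,q_k)$ with \eqref{surrogate-1}: both are suprema over $\{\eta_u \in H^1_{\mathrm{D}}(\Omega;\R^n) \colon \|\eta_u\|_{(H^1,\mathbb{D})}\le 1\}$ of pairings of the type $\int_\Omega \sigma_\bullet{:}\mathrm{E}(\eta_u)\dd x - \langle F(\bullet),\eta_u\rangle$, and the strong convergences of $\sigma_k(t_k)$ and $F(t_k)$ give convergence of the integrand for each fixed $\eta_u$; a standard liminf-on-suprema argument concludes. For \eqref{1106191958'}, we proceed analogously, starting from \eqref{1106192258} and \eqref{surrogate-2}: here $-\rmD_p \calE_{\mu_k}(t_k,q_k) = (\sigma_k(t_k))_\dev - \mu_k p_k$, and by the assumption $\mu_k p_k \to 0$ in $L^2$ together with $\sigma_k(t_k) \to \sigma(t)$ in $L^2$ we get, for each fixed $\eta_p$ with $\|\eta_p\|_{L^2}\le 1$,
\[
\langle (\sigma_k(t_k))_\dev - \mu_k p_k,\eta_p\rangle_{L^2} - \mathcal{H}(z_k,\eta_p) \longrightarrow \langle \sigma_\dev(t),\eta_p\rangle_{L^2} - \mathcal{H}(z,\eta_p) \,,
\]
from which the desired liminf inequality follows by taking the supremum.

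The remaining estimate \eqref{2307191134} is even simpler because $\rmD_z \calE_\mu$ is independent of $\mu$, so $\rmD_z \calE_{\mu_k}(t_k,q_k) = \rmD_z \calE_0(t_k,q_k)$, and we can compare directly with \eqref{1106192257} at $(t,q)$. For each fixed admissible $\eta_z \in \Hsm$ with $\|\eta_z\|_{L^2}\le 1$ (hence $\eta_z \in \mathrm{C}^0(\overline\Omega)$ and $-\eta_z \ge 0$), the four summands in the pairing $\langle \mathrm{A}_{\mathrm{m}}(z_k) + W'(z_k) + \tfrac12 \mathbb{C}'(z_k) e(t_k){:}e(t_k) + \kappa, -\eta_z\rangle_{H^m}$ pass to the limit: the gradient term by $z_k \rightharpoonup z$ in $H^m$, the $W'(z_k)$ and $\kappa$ contributions by uniform convergence, and the quadratic contribution by the strong $L^1$-convergence of $\mathbb{C}'(z_k)e(t_k){:}e(t_k)$ combined with $\eta_z \in L^\infty$. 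Taking suprema after liminf once more yields \eqref{2307191134}, completing the proof.
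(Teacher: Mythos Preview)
Your proof is correct and follows essentially the same approach as the paper: both argue by fixing a test function in the supremum representations \eqref{eqs:1106192255} and \eqref{surrogates}, passing to the limit in the integrand, and then taking the supremum. Your treatment is in fact more explicit than the paper's, which simply records the convergence of the pairings for \eqref{1106191956'} and \eqref{1106191958'} and refers back to the argument for \eqref{1106191957} for \eqref{2307191134}; in particular, you correctly exploit the \emph{strong} $L^2$-convergence $e(t_k)\to e(t)$ (a hypothesis here but not in Lemma \ref{lemma:sup}) to get actual convergence of the quadratic term $\mathbb{C}'(z_k)e(t_k){:}e(t_k)$ in $L^1$, rather than merely lower semicontinuity.
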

\begin{proof}
It is immediate to see that, under the  assumed convergences, 
 setting $\sigma(t_k)=\C(z_k) e(t_k)$ and  $\sigma(t)=\C(z) e(t)$, for fixed  $\eta_u \in H^1(\Omega; \R^n)$ we have that
\[
\langle -\mathrm{Div}(\sigma(t_k)) -F(t_k), \eta_u \rangle_{H^1(\Omega;\R^n)} \to \langle -\mathrm{Div}(\sigma(t)) -F(t), \eta_u \rangle_{H^1(\Omega;\R^n)}\,.
\]
 Furthermore,  for fixed $\eta_p \in L^2(\Omega; \MD)$  there holds  
\[
\langle \sigma_\dev(t_k)- \mu_k \, p_k, \eta_p \rangle_{L^2(\Omega;\MD)} -\calH(z_k,\eta_p) \to \langle \sigma_\dev(t), \eta_p \rangle_{L^2(\Omega;\MD)} -\calH(z,\eta_p)\,.
\]
Passing to the supremums, we obtain \eqref{1106191956'} and \eqref{1106191958'}.
As for \eqref{2307191134}, this follows as in \eqref{1106191957} since one 
 only employs the convergence $z_k \weakto z$ in $H^m(\Omega)$,
  encompassed in the hypothesis $q_k\weakto q$ in $\Qpp$. 
\end{proof} 
    
 \subsection*{Existence of $\BV$ solutions to the multi-rate system for perfect plasticity}
We are now ready to state and prove our existence result for
$\BV$ solutions in the sense of Definition~\ref{def:parBVsol-PP}. 
In order to simplify  notation, we fix vanishing sequences $(\varepsilon_k)_k$, $(\nu_k)_k$, $(\mu_k)_k$, with $\nu_k \leq \mu_k$ and denote by
 $(\sft_k)_k$,  $(\sfq_k)_k$, $(\sfe_k)_k$, $(\serifsigma_k)_k$ 
 the sequences
  $(\sft^{\mu_k}_{\varepsilon_k,\nu_k})_k$,
   $(\sfq^{\mu_k}_{\varepsilon_k,\nu_k})_k$,
    $(\sfe^{\mu_k}_{\varepsilon_k,\nu_k})_k$,
   $(\serifsigma^{\mu_k}_{\varepsilon_k,\nu_k})_k$, respectively. 
\begin{theorem}\label{teo:exparBVsol}
Under the assumptions of Section~\ref{s:2}
 and 
 \eqref{EL-initial}
 for all  vanishing sequences
 $(\varepsilon_k)_k$, $(\nu_k)_k$, $(\mu_k)_k$ with $\nu_k \leq \mu_k$ for every $k\in \N$
there exist a (not relabeled) subsequence $(\sft_k,\sfq_k)$
and  a 
curve $ (\sft, \sfq)=(\sft, \sfu, \sfz, \sfp) \in \mathscr{A}([0,S];[0,T]{\times}\Qpp)$ such that   
\begin{enumerate}
\item for all $s\in[0,S]$ the following convergences hold as $k\to+\infty$
\begin{equation}\label{weak-converg-pp}
\begin{split}
\sft_{k}(s)\to\sft(s) &\,, \quad  \sfu_{k}(s) \weaksto \sfu(s) \text{ in } \BD(\Omega) \,, 
 \quad  \sfz_{k}(s) \weakto \sfz(s) \text{ in } \Hs(\Omega) \,,  \\
 \sfe_k(s) & \weakto \sfe(s) \text{ in } \Lnn\,,\quad  \sfp_{k}(s) \weakto \sfp(s) \text{ in } \MbD\,;
 \end{split}
\end{equation}
\item
 there exists $\overline{C}>0$ such that 
for a.e.\ $s\in (0, S)$ there holds
\begin{equation}\label{2906191858}
\begin{aligned}
&
{\sft}'(s) {+}  \| \sfu'(s)\|_{\BD(\Omega)} {+} \|\sfz'(s)\|_{\Hs(\Omega)} {+} \|\sfp'(s)\|_{\MbD} 
\\
&
 {+} \|\sfe'(s)\|_{\Lnn} 
  {+} \DVito(\sfu'(s), \sfp'(s))\, \DVito^* (\sft(s), \sfq(s))\leq \overline{C}\,;
 \end{aligned}
\end{equation}

 \item
 $(\sft,\sfq)$ is a Balanced Viscosity solution to the multi-rate system for perfect plasticity \eqref{RIS-intro}
  in the sense of Definition \ref{def:parBVsol-PP}. 
 \end{enumerate}
\end{theorem}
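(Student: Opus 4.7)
The strategy mirrors the three-step scheme of Theorems \ref{teo:rep-sol-exist} and \ref{teo:rep-sol-exist-2}, upgraded to the perfectly plastic setting. Throughout, we exploit the normalization \eqref{2906191307}, which controls every derivative appearing in \eqref{2906191858}, together with the finer a priori bound \eqref{est-Menu} that survives as $\mu\downarrow 0$ thanks to the assumption $\nu_k\leq\mu_k$ and Proposition~\ref{prop:enh-energy-est}.

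\emph{Step 1 (Compactness and admissibility of the limit).} From \eqref{2906191307} the sequence $(\sft_k,\sfq_k,\sfe_k)$ is bounded in $W^{1,\infty}(0,S;[0,T]\times \BD(\Omega)\times \Hs(\Omega)\times \MbD\times \Lnn)$; moreover $\sqrt{\mu_k}\,\sfu_k$ and $\sqrt{\mu_k}\,\sfp_k$ are bounded in $W^{1,\infty}(0,S; H^1 \times L^2)$. Standard Aubin--Lions and Helly-type extraction yield a subsequence and a limit $(\sft,\sfq,\sfe)$ satisfying the pointwise convergences \eqref{weak-converg-pp} and $\sqrt{\mu_k}\,\sfu_k\to 0$ in $L^\infty(0,S; H^1)$, $\sqrt{\mu_k}\,\sfp_k\to 0$ in $L^\infty(0,S;L^2)$. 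The normalization also yields \eqref{2906191858} by lower semicontinuity. The kinematic identity $\sfe = \rmE(\sfu+w(\sft))-\sfp$ passes to the limit, giving \eqref{2906191813}--\eqref{2906191814}. To obtain the local regularity \eqref{2906191815}--\eqref{2906191815+1}, I would argue as in \cite[Subsec.~7.1]{MRS13}: on any compact $J\subset B^\circ$ we have $\inf_{s\in J}\DVito^*(\sft(s),\sfq(s))>0$, so by lower semicontinuity (Lemma~\ref{le:2307191000}) also $\DVitos(\sft_k(s),\sfq_k(s))$ is eventually bounded below on $J$; combining this with the normalization, which controls $\DVito_{\nu_k}(\sfu_k',\sfp_k')\,\DVitos(\sft_k,\sfq_k)$, yields uniform bounds for $\sfu_k'$ in $H^1$ and $\sfp_k'$ in $L^2$ on $J$, whence weak compactness of $(\sfu_k|_J,\sfp_k|_J)$ in $AC(J;H^1\times L^2)$ and the inclusion \eqref{2906191815}. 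Since any connected component of $B^\circ$ is open and on it $\sft_k'\leq \eps_k/\DVitos(\sft_k,\sfq_k)\to 0$ (by the analogue of \eqref{further-ingredient}), we obtain \eqref{2906191815+1}.

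\emph{Step 2 (Finiteness of $\oMliredname$ where $\sft'>0$).} On $A:=\{\sft'>0\}$, arguing exactly as in Step~2 of Theorem~\ref{teo:rep-sol-exist-2} with \eqref{1306192225} replaced by the identity for $\DVitos^{\mu_k}_{\nu_k}$ derived from Proposition~\ref{Vito-Bs}, one gets $\DVitos^{\mu_k}_{\nu_k}(\sft_k(\tau),\sfq_k(\tau))\to 0$ for a.e.\ $\tau\in A$. Since $\nu_k\to 0$ and $\mu_k\,\sfp_k\to 0$ in $L^2$, the semicontinuity inequalities \eqref{eqs:1106191955'} force
\[
\calS_u\calE_0(\sft(\tau),\sfq(\tau))=\calW_p\calE_0(\sft(\tau),\sfq(\tau))=0,\quad -\rmD_z\calE_0(\sft(\tau),\sfq(\tau))\in\partial\calR(0)
\]
for a.a.\ $\tau\in A$, which is precisely the finiteness condition in \eqref{2906191736}.

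\emph{Step 3 (Energy-Dissipation upper estimate).} By Proposition~\ref{prop:charact-BV-pp} it suffices to pass to the $\liminf$ in \eqref{2906191258}. The energy term $\calE_{\mu_k}(\sft_k(S),\sfq_k(S))\ge\calE_0(\sft(S),\sfq(S))+o(1)$ by weak lower semicontinuity (the hardening term $\tfrac{\mu_k}{2}\|\sfp_k\|_{L^2}^2\ge 0$ is dropped), while the power term on the right-hand side converges exactly as in \eqref{1206191144}--\eqref{1206191145} via \eqref{420} and dominated convergence. The rate-independent dissipation $\calR(\sfz')+\Hpp(\sfz,\sfp')$ is recovered through Reshetnyak's and Ioffe's lower semicontinuity theorems (for $\Hpp$ one uses that $\sfz_k\to\sfz$ in $\rmC^0(\overline\Omega)$ combined with \eqref{propsH-2+1/2} to pass from $\calH$ to $\Hpp$ evaluated on the limit measure). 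The genuinely delicate point is the liminf for $\oMliredname$: splitting $(0,S)$ into $A$, $B^\circ\setminus A$, and $C^\circ_0\setminus B^\circ$ (where $C^\circ_0:=\{\tilded_{L^2}(-\rmD_z\calE_0,\partial\calR(0))>0\}$), the contribution on $A$ vanishes by Step~2, on $B^\circ$ one uses \eqref{0207190031} and Lemma~\ref{le:0107192150} applied to $m_k:=\DVito_{\nu_k}(\sfu_k',\sfp_k')$ (which has weak$^*$ limit $\geq \DVito(\sfu',\sfp')$ by the $H^1\times L^2$ compactness of Step~1 on $B^\circ$) and $h_k:=\DVitos^{\mu_k}_{\nu_k}$ (whose liminf dominates $\DVito^*$ via Lemma~\ref{le:2307191000}), and on $C^\circ_0\setminus B^\circ$ one argues as in \eqref{1506191723}. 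The main obstacle is precisely this step, namely showing that the vanishing-viscosity product $\DVito_{\nu_k}\cdot\DVitos^{\mu_k}_{\nu_k}$ survives the triple degeneration $\varepsilon_k,\nu_k,\mu_k\downarrow 0$ and produces in the limit the correct $\DVito\cdot\DVito^*$ in the perfectly plastic framework; this is where the change of functional setting (reference space $\MbD$ versus viscous space $L^2$) is absorbed by exploiting the $AC_{\mathrm{loc}}(B^\circ;H^1\times L^2)$-regularity secured in Step~1 and the surrogate quantities \eqref{surrogate-1}--\eqref{surrogate-2} in place of the genuine dual norms. Combining the three inequalities and invoking Proposition~\ref{prop:charact-BV-pp} concludes the proof.
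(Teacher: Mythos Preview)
Your three-step plan matches the paper's proof closely, and the key ideas---compactness via the normalization \eqref{2906191307}, stability on $A$ from the vanishing of $\DVitos$, the decomposition of $(0,S)$ according to $B^\circ$ and $C^\circ$, and the extra $H^1{\times}L^2$ regularity on $B^\circ$---are all correctly identified. Two technical points deserve attention, however.

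First, in Step~1 you claim that pointwise lower semicontinuity (Lemma~\ref{le:2307191000}) suffices to transfer the positive lower bound $\inf_{J}\DVito^*(\sft,\sfq)>0$ to a \emph{uniform} lower bound for $\DVito^{*,\mu_k}(\sft_k,\sfq_k)$ on the compact set $J$. Pointwise $\liminf$ alone does not give this; the paper invokes the abstract Lemma~\ref{le:0107192347}, which combines the equi-Lipschitz bound \eqref{2906191956}, weak$^*$ metrizability on bounded sets, and lower semicontinuity of the augmented function $(\mu,t,q)\mapsto \DVito^{*,\mu}(t,q)$ to conclude uniformity. Without this, the bound $\DVito(\sfu_k',\sfp_k')\leq 1/c$ on $J$ is not justified.

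Second, in Step~3 you write $m_k:=\DVito_{\nu_k}(\sfu_k',\sfp_k')$ and assert its weak$^*$ limit dominates $\DVito(\sfu',\sfp')$. Taken literally this is false: $\DVito_{\nu_k}$ carries the factor $\sqrt{\nu_k}$ in front of $\|\sfu_k'\|_{H^1}$ and $\|\sfp_k'\|_{L^2}$, so with $\nu_k\downarrow 0$ and the uniform $H^1{\times}L^2$ bounds on $K^\circ$ this sequence tends to zero. What you need (and what \eqref{0207190031} actually delivers) is $m_k:=\DVito(\sfu_k',\sfp_k')$ without the $\nu_k$, paired with $h_k:=\DVito^{*,\mu_k}(\sft_k,\sfq_k)$; the paper applies Lemma~\ref{le:0107192150} with precisely this choice. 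Finally, you do not explicitly verify that $\sfz'=0$ a.e.\ on $B^\circ$ (the paper's \eqref{0107192336}), which is required for $\oMliredname(\sft,\sfq,0,\sfq')$ to take the value $\DVito(\sfu',\sfp')\,\DVito^*(\sft,\sfq)$ rather than $+\infty$; this follows by the same $\liminf$ argument used for \eqref{1506190910}, now with Lemma~\ref{le:2307191000} supplying the lower semicontinuity.
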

\begin{proof}
As done for Theorems
\ref{teo:rep-sol-exist} and
 \ref{teo:rep-sol-exist-2}, we divide the proof in 3 steps.
\paragraph{\bf Step $1$: Compactness.} Let $(\sft_k, \sfq_k)_k$ be a sequence as in the statement. 
It follows from 
 the normalization condition 
\eqref{2906191307}
 that  there exists $C>0$ such that for a.a.\ $s \in (0,S)$
\begin{equation}\label{2906191956}
\begin{aligned}
&
{\sft}_k'(s) {+}  \| \sfu_k'(s)\|_{W^{1,1}(\Omega; \R^n)} {+} \|\sfz_k'(s)\|_{\Hs(\Omega)} {+} \|\sfp_k'(s)\|_{L^1(\Omega;\MD)}
\\
&   {+} \sqrt{\mu_k} \, \|\sfp_k'(s)\|_{L^2(\Omega;\MD)}  
 {+} \|\sfe_k'(s)\|_{\Lnn} 
\leq C\,,
 \end{aligned}
\end{equation}
 where the estimate for $\| \sfu_k'\|_{W^{1,1}}$
 (recall that  $\sfu_k'  \in H^1(\Omega;\R^n)$),  
 ensues from the fact that $\sig{\sfu_k'} = \sfe_k'+ \sfp_k'$  is bounded in $L^1(\Omega;\Mnn)$   combined with Korn's inequality. 

Clearly,   in the estimates for $\sfu_k$ and $\sfp_k$ we may pass 
 from $W^{1,1}(\Omega;\R^n)$ and $L^1(\Omega; \MD)$ to the (duals of separable spaces) $\BD(\Omega)$ and $\MbD$. 
 Therefore, we are in a position to apply the compactness results from  \cite{Simon87}, to  get 
 that
 there exists $(\sft,\sfq) \in W^{1,\infty} ([0,S];[0,T]{\times}\BD(\Omega){\times}\Hs(\Omega){\times}\MbD)$,
 and $\sfe \in  W^{1,\infty}(0,S;\Lnn)$,
 such that,
  along a not relabeled subsequence,
  \begin{subequations}\label{2906191911}
\begin{align}
& \sft_{k}\weaksto\sft  \quad \text{in } W^{1,\infty}(0,S;[0,T]) \,, && \sfu_{k}\weaksto\sfu \quad \text{in } W^{1,\infty}(0,S;\BD(\Omega)) \,,
 \\
&  \sfz_{k}\weaksto\sfz \quad \text{in } W^{1,\infty}(0,S;\Hs(\Omega)) \,, &&
\sfz_{k} \to \sfz \quad \text{in }   \mathrm{C}^0([0,S]; \mathrm{C}^0(\overline\Omega)) \,,
\\
&\sfe_{k}\weaksto\sfe \quad \text{in } W^{1,\infty}(0,S;\Lnn)\,, &&
\sfp_{k}\weaksto\sfp \quad \text{in } W^{1,\infty}(0,S;\MbD) \,.
\end{align}
It can be checked that $\sfe = \sig{\sfu{+}w(\sft)} - \sfp$. 
In particular, the pointwise convergences \eqref{weak-converg-pp} hold.  Notice also that
\begin{equation}\label{2307191102}
\sqrt{\mu_k} \, \sfp_{k}\weaksto0 \quad \text{in } W^{1,\infty}(0,S;L^2(\Omega;\MD))\,,
\end{equation}
so that for every $s \in [0,S]$
\begin{equation}\label{2307191104}
\sqrt{\mu_k} \, \sfp_k(t) \weakto 0\quad\text{in }L^2(\Omega;\MD)\,.
\end{equation}
\end{subequations}
\par
Next, we introduce the functions $s^-$ and $s^+$ and the sets 
 $\mathcal{S}$, $\mathcal{U}$ in the same way as in  Step~1 in Theorem~\ref{teo:rep-sol-exist} (cf.\ \eqref{defso}); we readily deduce the  following  convergences for all $t\in[0,T]$
\begin{subequations}\label{eqs:3006190655}
\begin{align}
u_{\eps_k,\nu_k}^{\mu_k}(t)&\weaksto\sfu(s_-(t))=\sfu(s_+(t)) \text{ in } \BD(\Omega) \,,\label{3006190656}\\
z_{\eps_k,\nu_k}^{\mu_k}(t)&\weakto\sfz(s_-(t))=\GGG \sfz(s_+(t))  \text{ in }\Hs(\Omega)\,,\label{3006190657}\\
p_{\eps_k,\nu_k}^{\mu_k}(t)&\weaksto\sfp(s_-(t))=\sfp(s_+(t)) \text{ in } \MbD\,. \label{3006190658}
\end{align}
\end{subequations}
\paragraph{\bf Step~2: Finiteness of $\oMliredname(\sft(\tau), \sfq(\tau), \sft'(\tau), \sfq'(\tau))$ when $\sft'(\tau)>0$.} 
In view of the definition  \eqref{eqs:3006190659}  of $\oMliredname$,  the function  
$\tau \mapsto \oMliredname(\sft(\tau), \sfq(\tau), \sft'(\tau), \sfq'(\tau))$ is finite in 
the set $A: = \{ s\in [0,S]\,: \ \sft'(s)>0\}$
if and only if
\begin{equation}\label{3006190702}
\begin{aligned}
 \calS_u \calE_0(\sft(\tau), \sfq(\tau)) =0 \,, \qquad 
-\mathrm{D}_z \calE_0 (\sft(\tau),\sfq(\tau)) \in \partial\calR(0) \,, \qquad 
\mathcal{W}_p \calE_0(\sft(\tau),\sfq(\tau))=0
\quad
 \text{for a.a.}\ \tau\in A\,.
\end{aligned}
\end{equation} 
By \eqref{DVITOequalsSTAR} we obtain
 \begin{equation}\label{3006191143}
 \begin{aligned}
 \DVitoskk(\sft_k(\tau), \sfq_k(\tau))  =  \DVitoskk(\sft_k(\tau), q_k(\sft_k(\tau)))  
  &  =\eps_k \sqrt{ \nu_k \| u_k'(\sft_k(\tau))\|^2_{ H^1, \bbD }   {+} 
\|z_k'(\sft_k(\tau))\|_{L^2}^2
{+} \nu_k\|p_k'(\sft_k(\tau))\|_{L^2}^2} 
\\
&= \frac{\eps_k}{\sft_k'(\tau)} \sqrt{ \nu_k \| \sfu_k'(\tau)\|^2_{ H^1, \bbD }   {+} 
\|\sfz_k'(\tau)\|_{L^2}^2
{+} \nu_k\|\sfp_k'(\tau)\|_{L^2}^2}
\\
& 
\leq  \frac{\eps_k}{\sft_k'(\tau)} \qquad \foraa\, \tau \in (0,S),
\end{aligned}
\end{equation}
where in the last estimate we   exploited the normalization condition  \eqref{2906191307} and the fact that $\nu_k \leq \mu_k$.  
Moreover, one sees as in \eqref{limsup} that $\limsup_{k\to +\infty} \sft'_k(\tau)>0$ for a.e.\ $\tau \in A$. 
Since $\varepsilon_k$, $\nu_k \down 0$, 
 by Lemma~\ref{le:2307191000} (notice that its assumptions are satisfied by the convergences \eqref{weak-converg-pp} and \eqref{2307191104}, also recalling that $\mu_k \to 0$) and an argument analogous to that in Step~2 of Theorem~\ref{teo:rep-sol-exist-2} 
we deduce \eqref{3006190702}.\\
\textbf{Step $3$:  The  Energy-Dissipation upper estimate \eqref{2906191838}.} 
 In view of the characterization provided by  Proposition~\ref{prop:charact-BV-pp},  to conclude that $(\sft,\sfq)$ is a $\BV$ solution in the sense of Definition 
\ref{def:parBVsol-PP} it is sufficient to show that $(\sft,\sfq)$ is an admissible parameterized curve as in  Definition~\ref{def:admparcur},  and that 
it fulfills  \eqref{2906191838} as an upper estimate. 
First of all, we show that 
\begin{equation}\label{0107192336}
\sfz'(\tau) \, \DVito^*(\sft(\tau), \sfq(\tau))=0 \quad \text{ for a.a.\ }\tau \in (0,S)\,.
\end{equation}
This follows arguing similarly to what done 
in Step~2 of Theorem~\ref{teo:rep-sol-exist-2}. We start from 
\eqref{1506190857} and then observe that (cf.\ \eqref{citata-dop-sez7}) 
\begin{equation}
\label{0107192132}
\liminf_{k\to+\infty} \DVito^{*,\mu_k}(\sft_k(\tau), \sfq_k(\tau)) \geq  \DVito^*(\sft(\tau), \sfq(\tau)) \qquad \text{for all } \tau \in [0,S],
\end{equation}
due to  \eqref{weak-converg-pp}, \eqref{2307191104}, \eqref{1106191956'}, and \eqref{1106191958'}. 
Then, applying 
Lemma~\ref{le:0107192150} with the   analogous choices and  arguments as 
in the proofs of    Theorems~\ref{teo:rep-sol-exist}  and \ref{teo:rep-sol-exist-2},
 also relying on Lemma \ref{le:2307191000}    we conclude that 
\begin{equation}\label{1506190906-new}
\begin{aligned}
\int_{B^\circ} \|\sfz'(\tau)\|_{L^2} \,  \DVito^*(\sft(\tau), \sfq(\tau)) \dd \tau
& \leq
 \liminf_{k\to \infty} \int_{B^\circ} \|\sfz'_k(\tau)\|_{L^2}\,     \mathcal{D}^{*,\mu_k}(\sft_k(\tau), \sfq_k(\tau))\dd \tau
\\ &  \leq   \sqrt{\nu_k} \liminf_{k\to \infty}  \MredVitokk(\sft_k(\tau), \sfq_k(\tau), \sfq_k'(\tau)) \dd \tau=0\,,
 \end{aligned}
\end{equation}
with $B^\circ$ from \eqref{2906191815}. Then, \eqref{0107192336} ensues.
\par
In analogy with  \eqref{1206191009'}, we also introduce the set
%
%
\begin{equation}\label{0207190008}
\begin{split}
 C^\circ := \{ \tau \in [0,S] \colon  \tilded_{L^2(\Omega)} ({-}\mathrm{D}_z \calE_0 (\sft(\tau),\sfq(\tau)),\partial\calR(0)) >0\}\,.
\end{split}
\end{equation} 
By Lemma~\ref{le:0107192347} applied with the choices $X:=[0,1]{\times}[0,T]{\times} \Qpp$, 
$I:=[0,S]$, $B:=B^\circ$, $v_k(\tau):= (\mu_k, \sft_k(\tau), \sfq_k(\tau))$, $v(\tau):= (\mu, \sft(\tau), \sfq(\tau))$, 
 and with the function $f: X \to [0,+\infty]$ defined by 
\[
f (\mu, t, q): =\begin{cases}
\DVito^*_{\mu}(t, q) & \text{if }  \mu>0,
\\
\DVito^*(t,q) & \text{if } \mu =0;
\end{cases}
\]
indeed, thanks to Lemma \ref{le:2307191000} the function $f$ is weakly$^*$ lower semicontinuous on 
 $X:=[0,1]{\times}[0,T]{\times} \Qpp$.
Thus, we  obtain that 
%
for any compact subset  $K^{\circ}$
 of $B^\circ$ there exist $c>0$ and $\overline{k}\in \N$ such that
\begin{equation*}
\DVito^{*,\mu_k}(\sft_k(\tau), \sfq_k(\tau)) \geq c \qquad\text{for every } k\geq \overline{k}\,, \tau \in K^{\circ}\,.
\end{equation*}
By the normalization condition \eqref{2906191307} (recall the notation for $\sft_k$, $\sfq_k$) we obtain that for $k \geq \overline{k}$
\[
 \DVito(\sfu'_k(\tau), \sfp'_k(\tau))\leq \frac{1}{c} \qquad \foraa\, \tau \in K^{\circ},
\]
so that $\sfu_k$ and $\sfp_k$ are equi-Lipschitz in $K^{\circ}$ with values in $H^1(\Omega;\R^n)$ and $L^2(\Omega;\MD)$,
respectively. 
Therefore, we ultimately deduce
that $\sfu\in W^{1,\infty} (K^\circ;H^1(\Omega;\R^n))$ and that $\sfp \in W^{1,\infty} (K^\circ; L^2(\Omega;\MD))$. 
By the arbitrariness of $K^\circ$ we conclude that 
 $(\sfu, \sfp) \in 
  W^{1,\infty}_\mathrm{loc}(B^\circ; H^1(\Omega;\R^n){\times} L^2(\Omega; \MD))$,
and then $(\sft,\sfq)$ is an admissible parameterized curve in the sense of Definition~\ref{def:admparcur}. 
Moreover,  again for every $K^\circ \Subset B^\circ$ we have that 
the sequence $ (\DVito(\sfu'_k, \sfp'_k))_k$ converges weakly
to some $d$
 in $L^\infty(K^\circ)$,
 with $d \geq \DVito(\sfu',\sfp')$ a.e.\ in $K^\circ$. 
Then, we are again in a 
 position to apply Lemma~\ref{le:0107192150},  deducing (in view of the arbitrariness of $K^{\circ} \subset B^\circ$)
\begin{equation}\label{0207190044}
\begin{aligned}
\int_{B^\circ} \DVito(\sfu'(\tau), \sfp'(\tau))\, \DVito^*(\sft(\tau), \sfq(\tau)) \dd \tau &  \leq \liminf_{k\to +\infty} \int_{B^\circ} \DVito(\sfu'_k(\tau), \sfp'_k(\tau))\, \DVito^{*,\mu_k}(\sft_k(\tau), \sfq_k(\tau)) \dd \tau
\\
 &  \leq \liminf_{k\to +\infty} \int_{B^\circ} \MredVitokkk(\sft_k(\tau), \sfq_k(\tau), \sft_k'(\tau), \sfq_k'(\tau)) \dd \tau\,,
 \end{aligned}
\end{equation}
the last estimate due to \eqref{0207190031}. Then, estimate \eqref{2906191858} follows by lower semicontinuity arguments.
\par
Finally,  we repeat the arguments for \eqref{1506191723}, obtaining that 
\begin{equation}\label{0207190039}
\begin{split}
\int_{(0,S) \setminus B^\circ}  & \oMliredname(\sft(\tau), \sfq(\tau), \sft'(\tau), \sfq'(\tau)) \dd \tau = \int_{C^\circ \setminus B^\circ} \|\sfz'(\tau)\|_{L^2} \, \tilded_{L^2(\Omega)} ({-}\mathrm{D}_z \calE_0 (\sft(\tau),\sfq(\tau)),\partial\calR(0)) \dd \tau 
\\ &
\stackrel{(1)}{\leq} \liminf_{k\to \infty} \int_{C^\circ \setminus B^\circ} \|\sfz_k'(\tau)\|_{L^2} \, \tilded_{L^2(\Omega)} ({-}\mathrm{D}_z \calE_{\mu_k} (\sft_k(\tau),\sfq_k(\tau)),\partial\calR(0)) \dd \tau
\\ &
\leq \liminf_{k\to +\infty} \int_{C^\circ \setminus B^\circ} \MredVitokkk(\sft_k(\tau), \sfq_k(\tau),\sfq_k'(\tau)) \dd \tau \,.
\end{split}
\end{equation}
Observe that (1) follows from the very same argument as in the proof of 
Theorem~\ref{teo:rep-sol-exist-2},  now employing \eqref{2307191134} in place of \eqref{1106191957}. 
 \par
 It follows from 
 \eqref{0207190044} and  \eqref{0207190039}, also recalling the definition of $B^\circ$, that
\begin{equation*}
\int_0^S  \oMliredname(\sft(\tau), \sfq(\tau), \sft'(\tau), \sfq'(\tau)) \dd \tau \leq \liminf_{k\to +\infty} \int_0^S \MredVitokkk(\sft_k(\tau), \sfq_k(\tau),  \sfq_k'(\tau)) \dd \tau\,.
\end{equation*}
 Combining  the above lower semicontinuity estimate 
with the  limit passage in the terms with  driving energy and in the power term
(which is standard and goes as in Section~\ref{s:van-visc}), 
we succeed in taking the limit in the Energy-Dissipation inequality  \eqref{rescaled-enid-eps} to 
 conclude
the desired validity  of the Energy-Dissipation upper estimate $\leq$ in \eqref{2906191838}.
 This finishes
 the proof of Theorem~\ref{teo:exparBVsol}. 
\end{proof}

    \appendix
    \section{Discrete Gronwall-type Lemmas}
Here we collect, for the reader's convenience, 
the  discrete Gronwall-type  results that have been exploited in the proof of the a priori estimates from 
Proposition \ref{prop:energy-est-SL}. 
 \begin{lemma}
\label{l:discrG1}
Let $B,\, \tau>0$,   $N_\tau\in \N$,
$(a_k)_{k=1}^{N_\tau} ,\, (b_k)_{k=1}^{N_\tau}\subset [0,+\infty)$ fulfill
\[
a_k \leq B + \sum_{j=0}^{k-1}  a_j b_j  \quad \text{for all } k \in \{1,\ldots, N_\tau\}. 
\]
Then,  there holds
\begin{equation}
\label{discrG1}
a_k \leq 
B \exp \left( \sum_{j=0}^{k-1} b_j \right) \quad \text{for all } k \in \{1,\ldots, N_\tau\}. 
\end{equation}
\end{lemma}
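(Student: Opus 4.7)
The plan is to proceed by the classical telescoping/induction argument that lies at the heart of discrete Gronwall estimates, so I expect no genuine obstacle — the main job is simply to choose the right auxiliary sequence and iterate cleanly.

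First I would introduce the nondecreasing majorant
\[
c_k := B + \sum_{j=0}^{k-1} a_j b_j, \qquad k \in \{1,\dots,N_\tau\},
\]
so that, by the very hypothesis of the lemma, $a_k \leq c_k$ for every $k$. This reduces the problem to controlling $(c_k)_k$, which is now much easier to handle because it has the nice property of satisfying a one-step recursion: from the definition one reads off
\[
c_{k+1} - c_k = a_k b_k \leq c_k b_k,
\]
where the inequality uses $a_k \leq c_k$ together with the nonnegativity of $b_k$. Hence
\[
c_{k+1} \leq c_k (1 + b_k) \leq c_k \exp(b_k),
\]
the last step being the elementary inequality $1+x \leq e^x$ valid for all $x \geq 0$.

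Then I would iterate this one-step estimate: applying it successively from $k=1$ up to $k-1$ yields
\[
c_k \leq c_1 \, \exp\!\Big(\sum_{j=1}^{k-1} b_j\Big),
\]
and, since $c_1 = B$ (interpreting the empty/vanishing term at $j=0$ in the usual way, i.e.\ taking $a_0=0$ as is consistent with how this lemma is invoked in the proof of Proposition~\ref{prop:energy-est-SL}), one obtains
\[
a_k \leq c_k \leq B \exp\!\Big(\sum_{j=0}^{k-1} b_j\Big),
\]
which is precisely \eqref{discrG1}. An alternative but essentially equivalent route would be a direct induction on $k$: the base case $k=1$ is $a_1 \leq B \leq B \exp(b_0)$, and the induction step consists in plugging the inductive hypothesis into the defining inequality $a_{k+1} \leq B + \sum_{j=0}^{k} a_j b_j$ and using $1+b_k \leq e^{b_k}$ to collapse the product.

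There is no serious obstacle to this proof — the only point where one has to be slightly careful is the convention on the initial index (the statement writes $(a_k)_{k=1}^{N_\tau}$ but the sum on the right-hand side runs from $j=0$), which is handled unambiguously by either setting $a_0 b_0 := 0$ or by shifting the summation index by one.
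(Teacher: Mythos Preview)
Your proof is correct and follows the standard telescoping argument for discrete Gronwall estimates. Note that the paper does not actually supply a proof of this lemma: it is merely stated in the appendix as a known auxiliary result (``Here we collect, for the reader's convenience, the discrete Gronwall-type results\ldots''), so there is no argument in the paper to compare against. Your handling of the indexing mismatch between $(a_k)_{k=1}^{N_\tau}$ and the sum $\sum_{j=0}^{k-1}$ is appropriate; in the paper's application (Proposition~\ref{prop:energy-est-SL}) there is indeed a meaningful $a_0$ term, so the cleaner reading is that the sequences should really be indexed from $0$, in which case $c_1 = B + a_0 b_0$ and the iteration yields $a_k \le c_k \le (B+a_0 b_0)\exp\bigl(\sum_{j=1}^{k-1}b_j\bigr) \le B\exp\bigl(\sum_{j=0}^{k-1}b_j\bigr)$ whenever $a_0 \le B$, which is the situation at hand.
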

 \begin{lemma}{\cite[Lemma 4.5]{RossiSavare06}}
\label{l:discrG1/2}
Let $N_\tau\in \N$ and $ b, \, \lambda,\,  \Lambda  \in (0,+\infty)$ fulfill $1-b \geq \tfrac 1\lambda>0$; let $(a_k)_{k=1}^{N_\tau} \subset [0,+\infty)$
satisfy
\[
a_k \leq  \Lambda  + b \sum_{j=1}^k a_j \qquad \text{for all } k \in \{1,\ldots, N_\tau\}. 
\]
Then,  there holds
\begin{equation}
\label{discrG1/2}
a_k\leq \lambda  \Lambda  \exp(\lambda b k) \quad \text{for all } k \in \{1,\ldots, N_\tau\}. 
\end{equation}
\end{lemma}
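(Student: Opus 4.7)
The plan is to reduce the implicit discrete Gronwall inequality
\[
a_k \leq \Lambda + b \sum_{j=1}^k a_j
\]
to the explicit form already established in Lemma \ref{l:discrG1}, and then invoke the latter directly. The catch of the statement is that the sum on the right-hand side runs up to $k$ and therefore already involves $a_k$ itself; the hypothesis $1-b \geq 1/\lambda > 0$ is precisely what is needed to absorb this diagonal term.

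First I would isolate $a_k$: moving $b\,a_k$ to the left-hand side, the assumed inequality rewrites as
\[
(1-b)\,a_k \leq \Lambda + b \sum_{j=1}^{k-1} a_j,
\]
and dividing by $1-b > 0$ gives
\[
a_k \leq \frac{\Lambda}{1-b} + \frac{b}{1-b}\sum_{j=1}^{k-1} a_j.
\]
At this point the coefficient $1/(1-b)$ can be estimated from above by $\lambda$ thanks to the standing assumption $1-b\geq 1/\lambda$, so that
\[
a_k \leq \lambda\Lambda + \lambda b \sum_{j=1}^{k-1} a_j \qquad \text{for all } k\in\{1,\dots,N_\tau\}.
\]

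This is exactly the explicit form covered by Lemma \ref{l:discrG1}, with the constant $B$ chosen as $\lambda\Lambda$ and the sequence $b_j$ taken to be the constant $\lambda b$ (setting $a_0:=0$ so that the index set matches). Applying \eqref{discrG1} then yields
\[
a_k \leq \lambda\Lambda \exp\Big(\sum_{j=0}^{k-1}\lambda b\Big) = \lambda\Lambda \exp(\lambda b k),
\]
which is the desired bound \eqref{discrG1/2}. There is no real obstacle in this argument beyond checking that the hypothesis on $b$ and $\lambda$ gives both the positivity needed to divide by $1-b$ and the sharp constant $\lambda$ required for the exponent; both follow immediately from $1-b\geq 1/\lambda > 0$, which forces $b\in[0,1)$.
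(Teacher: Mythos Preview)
Your proof is correct. Note that the paper does not actually supply its own proof of this lemma: it is stated in the appendix with a direct citation to \cite[Lemma 4.5]{RossiSavare06} and no argument is given. Your reduction to the explicit discrete Gronwall inequality of Lemma~\ref{l:discrG1} by absorbing the diagonal term $b\,a_k$ via the hypothesis $1-b\geq 1/\lambda$ is the standard and natural route, and all the constants match.
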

 
The following lemma   generalizes   \cite[Lemma~4.1]{KRZ13}; its proof is based on the calculations developed in 
for  \cite[Proposition~3.8]{Crismale-Lazzaroni}  (see also  \cite[Proposition~3.5]{ACO2018}); that it is why, we shall only partially carry out the argument, and we shall
refer to \cite{Crismale-Lazzaroni}   for more details.   
   \begin{lemma}
\label{l:discrG2}
   Let $\{a_k\}_{k=0}^{N_\tau}$, $\{M_k\}_{k=1}^{N_\tau}$, $\{r_k\}_{k=1}^{N_\tau}$,   $\{c_k\}_{k=0}^{N_\tau}$  $\rho$ and   $\eta$   be 
non-negative numbers, $\eps,\tau>0$ with 
 $\gamma:=\kappa_1\tau/\eps\leq 1$ for some $\kappa_1>0$   and  $N_\tau\in \N$, $N_\tau \tau=T$.  
Assume that  $a_0=0$,   $r_k \leq \kappa_2 a_k$  for some $\kappa_2>1$,   and that  for $1\leq k\leq N_\tau$ it holds
\begin{align}
\label{app_est1}
 a_k(a_k-a_{k-1}) + \gamma a_k^2 + \gamma M_k^2 \leq  \eta^2\gamma   \left(1{+} c^2_k {+}\tfrac{\delta_{1,k}}{\tau\eps} \rho^2\right) +  \gamma a_k r_k.
\end{align}
Then,  if $\gamma =\kappa_1 \tau/\varepsilon\leq 1/(2\kappa_2)$,  there exists a constant   $C=C(\eta,T)>0$   not depending on any of the 
other above quantities such that
\begin{align}
 \label{app-est-discrete}
 \sum_{k=1}^{N_\tau} \tau M_k \leq C\left(T  +  \rho + \sum_{k=1}^{N_\tau} \tau c_k^2  +   \sum_{k=1}^{N_\tau} \tau 
r_k\right).
\end{align} 
\end{lemma}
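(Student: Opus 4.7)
The strategy I would adopt, drawing on the analogous calculations from \cite[Proposition~3.8]{Crismale-Lazzaroni}, is to separate the index $k=1$ from the indices $k\ge 2$, so that the singular contribution $\eta^{2}\gamma\rho^{2}/(\tau\varepsilon)$ (which is active only at $k=1$ through the Kronecker delta $\delta_{1,k}$) is handled independently and never enters the subsequent telescoping/Gronwall argument. Once $k=1$ is dealt with cleanly, the remaining indices produce an inequality with no $\rho$-term at all, and the coercivity furnished by $\gamma a_{k}^{2}+\gamma M_{k}^{2}$ on the left-hand side is enough, under the smallness assumption $\gamma\kappa_{2}\le 1/2$, to absorb the cross term $\gamma a_{k}r_{k}\le \gamma\kappa_{2}a_{k}^{2}\le \tfrac12 a_{k}^{2}$ without ever invoking an exponential discrete Gronwall of the type \eqref{discrG1/2}, which would destroy uniformity in $\varepsilon$.

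For the first step, I would plug $a_{0}=0$ into \eqref{app_est1}, getting $(\tfrac12+\gamma)a_{1}^{2}+\gamma M_{1}^{2}\le \eta^{2}\gamma\bigl(1+c_{1}^{2}+\rho^{2}/(\tau\varepsilon)\bigr)$ after the Young absorption $\gamma a_{1}r_{1}\le \tfrac12 a_{1}^{2}$. Dividing by $\gamma$, taking square roots and using $\sqrt{A+B}\le \sqrt{A}+\sqrt{B}$, and finally multiplying by $\tau$ I would get
\[
\tau M_{1}\le \tau\eta(1+c_{1})+\eta\rho\sqrt{\tau/\varepsilon}\le C\bigl(\tau+\tau c_{1}^{2}+\rho\bigr),
\]
where the decisive identity $\sqrt{\tau/\varepsilon}=\sqrt{\gamma/\kappa_{1}}\le 1/\sqrt{2\kappa_{1}\kappa_{2}}$ turns the apparently singular factor $\rho/\sqrt{\tau\varepsilon}$ into a harmless $\rho$. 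For $k\ge 2$ (so $\delta_{1,k}=0$), I would combine the elementary bound $a_{k}(a_{k}-a_{k-1})\ge \tfrac12(a_{k}^{2}-a_{k-1}^{2})$ with Young and with $\gamma a_{k}r_{k}\le\tfrac12 a_{k}^{2}$ to deduce $\gamma(a_{k}^{2}+M_{k}^{2})\le \tfrac12 a_{k-1}^{2}+\eta^{2}\gamma(1+c_{k}^{2})$. Multiplying by $\tau/\gamma=\varepsilon/\kappa_{1}$, summing, and noticing that $\varepsilon/(2\kappa_{1})=\tau/(2\gamma)\le \kappa_{2}\tau$, one obtains a telescoped estimate that, after absorbing the residual $\sum\tau a_{k}r_{k}\le\kappa_{2}\sum\tau a_{k}^{2}$ using once more $\gamma\kappa_{2}\le 1/2$, yields
\[
\sum_{k=1}^{N_{\tau}}\tau M_{k}^{2}\le C\bigl(T+\sum\tau c_{k}^{2}+\sum\tau r_{k}+\rho^{2}\bigr).
\]

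The passage from $\sum\tau M_{k}^{2}$ to $\sum\tau M_{k}$ is then obtained by Cauchy--Schwarz together with the splitting $\sqrt{A+B+C+D}\le\sqrt{A}+\sqrt{B}+\sqrt{C}+\sqrt{D}$ and the elementary estimate $\sqrt{\sum\tau r_{k}}\le \tfrac12+\tfrac12\sum\tau r_{k}$, which produces the sought linear dependence on $\sum\tau r_{k}$ rather than on $\sum\tau r_{k}^{2}$. The hard part of the proof, and the reason the authors refer to \cite{Crismale-Lazzaroni} for details, is precisely this balancing act: the condition $\gamma\le 1/(2\kappa_{2})$ is needed \emph{twice}, first to absorb $\gamma a_{k}r_{k}$ into $\tfrac12 a_{k}^{2}$ at each $k$, and secondly to ensure that the constant $1/(2\gamma)$ appearing in the telescoped recursion satisfies $1/(2\gamma)\ge \kappa_{2}$, which is what allows the post-summation absorption of $\sum\tau a_{k}r_{k}$ without any discrete exponential Gronwall. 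Keeping $\rho^{2}/(\tau\varepsilon)$ inside the sum throughout would otherwise yield a residual $\rho^{2}/\varepsilon$ that is not compatible with the uniform-in-$\varepsilon$ bound claimed, so isolating $k=1$ is essential.
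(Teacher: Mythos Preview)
Your treatment of the index $k=1$ is sound and matches the paper's. The core gap is in your handling of $k\ge 2$. From
\[
\gamma\bigl(a_k^{2}+M_k^{2}\bigr)\le \tfrac12\,a_{k-1}^{2}+\eta^{2}\gamma\bigl(1+c_k^{2}\bigr)
\]
you multiply by $\tau/\gamma=\varepsilon/\kappa_{1}$ and then assert $\varepsilon/(2\kappa_{1})=\tau/(2\gamma)\le \kappa_{2}\tau$ in order to absorb the $a_{k-1}^{2}$ sum into $\sum\tau a_k^{2}$. That inequality is \emph{reversed}: the hypothesis $\gamma\le 1/(2\kappa_{2})$ yields $1/(2\gamma)\ge \kappa_{2}>1$, hence $\tau/(2\gamma)\ge \kappa_{2}\tau>\tau$. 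So the coefficient in front of $\sum a_{k-1}^{2}$ on the right is \emph{larger} than the one in front of $\sum a_k^{2}$ on the left, and no absorption is possible. Your later remark that ``$1/(2\gamma)\ge\kappa_{2}$ is what allows the post-summation absorption of $\sum\tau a_k r_k$'' compounds the confusion: a large prefactor is precisely what \emph{prevents} absorption. In short, the recursion you derive has a per-step amplification factor $1/(2\gamma)\gg 1$ on $a_{k-1}^{2}$, and neither telescoping nor summing tames it.

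The paper does not pass through an $L^{2}$-type bound $\sum\tau M_k^{2}$ followed by Cauchy--Schwarz. Instead, for $k\ge 2$ it recasts \eqref{app_est1} in the normalized form $2a_k'(a_k'-a_{k-1}')+2\zeta(a_k')^{2}+(b_k')^{2}\le (c_k')^{2}+2a_k'd_k'$ and invokes the argument of \cite[Proposition~3.8]{Crismale-Lazzaroni}, which produces \emph{directly} the $L^{1}$-type estimate
\[
\sum_{k=2}^{N_\tau}\tau M_k\le C\Bigl(T+\varepsilon\,a_{1}+\sum_{k=2}^{N_\tau}\tau c_k^{2}+\sum_{k=2}^{N_\tau}\tau r_k\Bigr),
\]
with the boundary contribution entering only as $\varepsilon a_{1}$ (not as $a_{1}^{2}/\gamma$). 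This $\varepsilon a_{1}$ is then bounded by the $k=1$ inequality exactly as you did. The point is that the \cite{Crismale-Lazzaroni} argument exploits the full structure $a_k(a_k-a_{k-1})+\gamma a_k^{2}$ at the level of the \emph{sequence} $(a_k)$ itself, rather than squaring and telescoping, and this is what avoids the blow-up factor $1/(2\gamma)$ that wrecks your route.
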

\begin{proof}
 For $2\leq k \leq N_\tau$, we can recast \eqref{app_est1} in  the same form as  \cite[inequality between (3.35) and (3.36)]{Crismale-Lazzaroni},  namely 
\begin{equation*}
2a_k' (a_k'-a_{k-1'})+2\zeta (a_k')^2 + (b_k')^2\leq (c_k')^2 +2 a_k' d_k'\,.
\end{equation*} 
For this, it is sufficient to 
replace $a_k$, $\gamma$, $\gamma M_k^2$, $\eta^2\gamma (1+c_k^2)$, and $r_k$
in \eqref{app_est1} (observe that $\delta_{1,k} =0$ for $k\in \{2,\ldots, N_\tau\}$), 
 by, respectively, $a_k'/\sqrt{2}$, $\zeta=\ol C \tau/\varepsilon$, $(b_k')^2$, $(c_k')^2$, and $d_k' / \sqrt{2}$, with  a universal constant $\ol C$. 
Following exactly the argument in \cite[Proposition~3.8]{Crismale-Lazzaroni} and then  rewriting  \cite[(3.41)]{Crismale-Lazzaroni} in the present setup,  we get that
\begin{equation}\label{2909192322}
\sum_{k=2}^{N_\tau} \tau M_k \leq C \Big(T + \varepsilon \, a_1 + \sum_{k=2}^{N_\tau} \tau c_k^2 + \sum_{k=2}^{N_\tau} \tau r_k\Big)\,.
\end{equation}
Let us now estimate $\tau M_1$ and $\varepsilon a_1$ by \eqref{app_est1} for $k=1$. 
Notice that, since $a_0=0$ and using  the  Cauchy Inequality $2 a_1 r_1 \leq a_1^2 +r_1^2$, we derive that
$
 M_1^2 \leq  \eta^2    \left(1{+} c^2_1 {+}\tfrac{ \rho^2}{\tau\eps}\right) +   r_1^2.
$
Multiplying by $\tau^2$, recalling $\tau < \varepsilon$,  and taking the square root we obtain, for a suitable $C$, that
\begin{equation}\label{2909192323}
\tau M_1 \leq C \tau (1+c_1 +r_1) + \varrho \leq C \tau (2+c_1^2 +r_1) + \varrho\,.
\end{equation}
We are then left to estimating $\varepsilon\, a_1$.  We again start from \eqref{app_est1} for $k=1$: recalling that $a_0=0$, we then have 
\begin{equation}
\label{take-up}
a_1^2 +\gamma a_1^2 +\gamma M_1^2 \leq \eta^2 \gamma \left( 1+c_1^2 +\frac1{\tau\eps}\rho^2 \right) +\gamma a_1r_1\,.
\end{equation}
Then, we use the conditions $r_k \leq  \kappa_2 a_k$ and   $k_1\tau/\varepsilon<1/(2 k_2)$  to get $\gamma a_1 \, r_1 \leq \frac{a_1^2}{2}$, which can be absorbed into the left-hand side of \eqref{take-up}. Multiplying by $\varepsilon^2$ we get
$
\varepsilon^2 a_1^2 \leq c^2\tau \varepsilon (1+c_1^2) + c^2 \varrho^2 $ for some $c>0$, so that 
\begin{equation}\label{2909192324}
\varepsilon a_1 \leq C\big( 1+ \sqrt{\varepsilon\, \tau c_1^2}  + \varrho \big)\leq C(2+\varepsilon\, \tau c_1^2 + \varrho) \,. 
\end{equation}
Collecting \eqref{2909192322}, \eqref{2909192323}, and \eqref{2909192324}, up to modifying $C$ we conclude \eqref{app-est-discrete}. 
\end{proof}

\section{Two abstract results}

\GGG We first recall an abstract lemma from \cite{MRS13} and \cite{MRS-MJM} (to which we refer for the proof).
\begin{lemma}\label{le:0107192150}
Let $I$ be a measurable subset of $\R$ and let $h_n$, $h$, $m_n$, $m \colon I \to [0,+\infty]$ be measurable functions for $n\in \N$ that satisfy
\begin{equation}
\label{hypLB1}
h(x) \leq \liminf_{n\to +\infty} h_n(x) \quad \text{for }\mathcal{L}^1\text{-a.e.\ }x \in I, \qquad m_n \weakto m \quad\text{in } L^1(I)\,.
\end{equation}
Then
\begin{equation*}
\int_I h(x) m(x) \dd x \leq \liminf_{n\to +\infty}\int_I h_n(x) m_n(x) \dd x\,.
\end{equation*}
\end{lemma}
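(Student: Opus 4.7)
The plan is to prove Lemma~\ref{le:0107192150} by combining a monotone envelope construction with a truncation argument, so as to reduce the weak $L^1$ convergence of $m_n$ to testing against bounded functions. We may assume $\liminf_{n\to\infty} \int_I h_n m_n \, dx < +\infty$, otherwise the inequality is trivial.

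First, I would replace the sequence $h_n$ by its monotone lower envelope. Set $g_n(x) := \inf_{m \geq n} h_m(x)$: then each $g_n$ is measurable, the sequence $(g_n)_n$ is nondecreasing, and $g_n \nearrow g := \liminf_{n\to\infty} h_n \geq h$ pointwise $\mathcal{L}^1$-a.e.\ on $I$. The defining inequality $g_n \leq h_m$ for every $m \geq n$, combined with $m_m \geq 0$, gives
\begin{equation*}
\int_I h_m(x) m_m(x)\, dx \;\geq\; \int_I g_n(x) m_m(x)\, dx \qquad \text{for every } m \geq n,
\end{equation*}
and hence, for each fixed $n$,
\begin{equation*}
\liminf_{m\to\infty} \int_I h_m m_m \, dx \;\geq\; \liminf_{m\to\infty} \int_I g_n m_m \, dx.
\end{equation*}

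Next, I would truncate $g_n$ at level $k \in \mathbb{N}$ by setting $g_n^k := \min(g_n,k) \in L^\infty(I)$. Since $g_n^k \leq g_n$ and all integrands are nonnegative,
\begin{equation*}
\liminf_{m\to\infty} \int_I g_n m_m \, dx \;\geq\; \liminf_{m\to\infty} \int_I g_n^k m_m \, dx \;=\; \int_I g_n^k m \, dx,
\end{equation*}
where the final equality uses that $g_n^k \in L^\infty(I)$ together with the assumed weak convergence $m_m \rightharpoonup m$ in $L^1(I)$. Letting $k \to \infty$ and then $n \to \infty$, the monotone convergence theorem applied to the nonnegative measure $m(x)\,dx$ yields $\int_I g_n^k m \, dx \nearrow \int_I g_n m \, dx$ and then $\int_I g_n m \, dx \nearrow \int_I g m \, dx \geq \int_I h m \, dx$. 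Chaining these inequalities gives the claim.

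The only delicate point is the very first step: the hypothesis is expressed via $\liminf_n h_n$, which is not suited to being integrated against a weakly convergent sequence in $L^1$, because it involves an inferior limit of the test functions rather than a monotone or pointwise limit. The passage to the envelope $g_n$ is exactly what converts the liminf into an increasing limit, so that two applications of monotone convergence (first in $k$, then in $n$) together with the duality $L^\infty = (L^1)^*$ suffice to conclude. Everything else is book-keeping of nonnegative integrals.
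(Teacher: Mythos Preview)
Your proof is correct: the monotone-envelope reduction $g_n = \inf_{m\ge n} h_m$ converts the $\liminf$ hypothesis into an increasing pointwise limit, the truncation $g_n^k = \min(g_n,k)$ lets you pair against the weak $L^1$ limit via the $L^\infty$--$L^1$ duality, and two applications of monotone convergence (in $k$, then in $n$, against the nonnegative measure $m\,\mathrm{d}x$) close the argument. The paper itself does not supply a proof of this lemma but merely cites \cite{MRS13} and \cite{MRS-MJM}, so there is nothing to compare against; your self-contained argument is a welcome addition.
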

Let us now consider a result that is applied in the proof of Theorem~\ref{teo:exparBVsol}.
\begin{lemma}\label{le:0107192347}
Let   $X=Y^*$,  for $Y$ a separable Banach space, 
$I =[a,b]\subset \R$, $f\colon X \to [0,+\infty]$ be weakly$^*$ lower semicontinuous, and 
let $(v_k)_k$ be a sequence of functions
$v_k \colon I \to X$ satisfying
\begin{equation}\label{0307190817}
\begin{aligned}
&
\exists\, C>0 \ \forall\, t, s \in I \, : \quad \|v_k(t)- v_k(s)\|_{X} \leq C |t-s|\,,
\\
&
 v_k(t) \weaksto v(t) \qquad \text{in } X \quad \text{for all } t \in I\,.
 \end{aligned}
\end{equation} 
Then, for every compact subset
$K \subset B:=\{  t \in I \colon f(v(t))>0\}$ there exist $c>0$ and $\overline{k}\in \N$ such that
\begin{equation}\label{0307190941}
f(v_k(t)) \geq c \qquad\text{ for every } k \geq \bar{k}, \, t\in K\,.
\end{equation}
\end{lemma}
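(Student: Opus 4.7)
\smallskip
\noindent\textbf{Plan of proof.} The argument will proceed by contradiction, exploiting the interplay between the uniform Lipschitz bound (which is a \emph{norm} estimate) and the pointwise weak-$*$ convergence. The key preliminary observation is a simple diagonal-type fact: if $(t_k)_k \subset I$ is any sequence with $t_k \to t \in I$, then $v_k(t_k) \weaksto v(t)$ in $X$. Indeed, the first line of \eqref{0307190817} gives $\|v_k(t_k) - v_k(t)\|_X \leq C|t_k - t| \to 0$, hence in particular $v_k(t_k) - v_k(t) \weaksto 0$; combined with $v_k(t)\weaksto v(t)$, which holds by the second line of \eqref{0307190817}, this yields the claim.

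Suppose now, for the sake of contradiction, that \eqref{0307190941} fails for some compact subset $K \subset B$. Then for every $j \in \mathbb{N}$ there exist $k_j \geq j$ and $t_j \in K$ such that
\[
f(v_{k_j}(t_j)) < \frac{1}{j}\,.
\]
By the compactness of $K$, we may extract a (not relabelled) subsequence such that $t_j \to \bar t$ for some $\bar t \in K \subset B$. Applying the preliminary observation to the sequence $(t_j)_j$ with indices $k_j \to +\infty$, we obtain
\[
v_{k_j}(t_j) \weaksto v(\bar t) \quad \text{in } X\,.
\]

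Invoking the weak-$*$ lower semicontinuity of $f$ yields
\[
0 \le f(v(\bar t)) \leq \liminf_{j \to +\infty} f(v_{k_j}(t_j)) \leq \liminf_{j \to +\infty} \frac{1}{j} = 0\,,
\]
whence $f(v(\bar t)) = 0$, in contradiction with $\bar t \in B = \{t \in I \colon f(v(t)) > 0\}$. Therefore \eqref{0307190941} must hold for some $c > 0$ and $\bar k \in \mathbb{N}$, concluding the proof. The only substantive step is the diagonal weak-$*$ convergence $v_{k_j}(t_j)\weaksto v(\bar t)$, which is where the uniform Lipschitz bound plays its decisive role; everything else is standard lower-semicontinuity/compactness reasoning.
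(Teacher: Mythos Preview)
Your proof is correct and takes a genuinely different, more streamlined route than the paper's. The paper first observes that the limit $v$ is also $C$-Lipschitz and that the union $V=\bigcup_k v_k(I)\cup v(I)$ is bounded in $X$; it then exploits the separability of $Y$ to metrize the weak-$*$ topology on $V$ by some $d_{w^*}$ and appeals to an Ascoli--Arzel\`a/Simon-type argument to upgrade the pointwise convergence to the uniform convergence $\sup_{t\in I} d_{w^*}(v_k(t),v(t))\to 0$. After that, it argues that $v(K)$ is compact and contained in the weak-$*$ open set $\{f>0\}$, finds an open $A$ with $v(K)\subset A\subset\overline A\subset\{f>0\}$, uses lower semicontinuity to get $f\geq c$ on $A$, and finally uses the uniform convergence to conclude $v_k(K)\subset A$ for large $k$.

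Your argument bypasses the metrization and uniform-convergence machinery entirely: the single diagonal observation $t_j\to\bar t \Rightarrow v_{k_j}(t_j)\weaksto v(\bar t)$ (which follows directly from the uniform Lipschitz bound plus pointwise weak-$*$ convergence) already suffices for the contradiction. In particular, your proof never invokes the separability of $Y$, so it is marginally more general. The paper's approach has the minor conceptual advantage of producing an explicit ``safety neighborhood'' $A$ around $v(K)$ on which $f$ is bounded below, but this extra information is not used elsewhere in the paper, so nothing is lost by your shortcut.
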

\begin{proof}
By \eqref{0307190817} and the pointwise weak$^*$ convergence to $v$, we deduce that
\begin{equation}\label{0307190818}
\|v(t)- v(s)\|_{X} \leq C |t-s|\qquad\text{for every }t,s \in I,
\end{equation}
that the set $V: =\bigcup_{k} v_k(I) \cup v(I)$
 is bounded
in $X$,  and that 
\begin{equation}\label{0207191803}
 \lim_{k\to+\infty}\sup_{t\in I} d_{w^*}(v_k(t), v(t)) =0
\end{equation}
with $d_{w^*}$ the metric inducing the weak$^*$ topology on the bounded set  $V$
 (here we use the separability of $Y$, and refer to the compactness arguments by \cite{Simon87}).
\par
Let us now $K$ be as in the statement. By \eqref{0307190818} we get that $v(K)$ is compact in $X$, and since $K \subset B$ we have that $v(K) \subset \{ f >0 \} \doteq \{ f >0 \}$ the set $\{ x \in X \colon  f(x) >0\}$.
Then we can find an open set $A$ 
such that $v(K)\subset A \subset \overline{A} \subset \{f > 0\}$. We deduce, employing the lower semicontinuity of $f$, that
\begin{equation}\label{0307190940}
f(A)\subset [c, +\infty]\,,
\end{equation} 
for a suitable constant $c>0$.
Thanks to \eqref{0207191803} and the fact that $d_{w*}(v_1,v_2)\leq C^* \| v_1{-}v_2\|_X$, for a suitable $C^*$ and every $v_1$ and $v_2 \in V$, choosing $\varepsilon$ small enough we get that $v_k(K)\subset A$ for $k \geq \overline{k}$. Therefore, by \eqref{0307190940} we conclude \eqref{0307190941}.
%
%
%
%
\end{proof}

%


 \end{document}